\def\F{\mathbb{F}}
\def\N{\mathbb{N}}
\def\R{\mathbb{R}}
\DeclareMathOperator{\PG}{PG}
\DeclareMathOperator{\col}{col}
\DeclareMathOperator{\im}{im}
\theoremstyle{definition}
\newtheorem{theorem}{Theorem}[section]
\newtheorem{lemma}[theorem]{Lemma}
\newtheorem{definition}[theorem]{Definition}
\newtheorem{remark}[theorem]{Remark}
\newtheorem{corollary}[theorem]{Corollary}
\newtheorem{notation}[theorem]{Notation}
\newtheorem{example}[theorem]{Example}
\newcommand{\comments}[1]{}
\newcommand{\gs}[3]{\genfrac{[}{]}{0pt}{0}{#1}{#2}_{#3}}
\author{
 Maarten De Boeck\footnote{Address: UGent, Department of Mathematics: Algebra and Geometry, Krijgslaan 281 -- S25, 9000 Gent, Flanders, Belgium. \newline Email address: Maarten.DeBoeck@UGent.be},
 Morgan Rodgers\footnote{Address: California State University, Fresno, Department of Mathematics, Fresno, CA, United States\newline Email address: morgan@csufresno.edu, Website: http://cage.ugent.be/$\sim$mrodgers/},
 Leo Storme\footnote{Address: UGent, Department of Mathematics: Analysis, Logic and Discrete Mathematics, Krijgslaan 281 -- S8, 9000 Gent, Flanders, Belgium. \newline Email address: Leo.Storme@UGent.be, Website: http://cage.ugent.be/$\sim$ls} and
 Andrea \v{S}vob\footnote{Address: University of Rijeka, Department of Mathematics, Radmile Matej\v{c}i\'c 2, 51000 Rijeka, Croatia.  \newline Email address: asvob@math.uniri.hr, Website: http://www.math.uniri.hr/$\sim$asvob/}}
\title{Cameron-Liebler sets of generators in finite classical polar spaces}
\date{}
\begin{document}
\maketitle
\begin{abstract}
	Cameron-Liebler sets were originally defined as collections of lines (``line classes'') in $\PG(3,q)$ sharing certain properties with line classes of symmetric tactical decompositions. While there are many equivalent characterisations, these objects are defined as sets of lines whose characteristic vector lies in the image of the transpose of the point-line incidence matrix of $\PG(3,q)$, and so combinatorially they behave like a union of pairwise disjoint point-pencils. Recently, the concept of a Cameron-Liebler set has been generalised to several other settings. In this article we introduce Cameron-Liebler sets of generators in finite classical polar spaces. For each of the polar spaces we give a list of characterisations that mirrors those for Cameron-Liebler line sets, and also prove some classification results.
\end{abstract}

\paragraph*{Keywords:} Cameron-Liebler set, finite classical polar space, distance-regular graph, tight set, 3-transitivity
\paragraph*{MSC 2010 codes:} 51A50, 51E30, 51E12, 05B25, 05C50, 05E30

\section{Introduction}\label{sec:introduction}
In~\cite{cl}, Cameron and Liebler investigated collineation groups of $\PG(n,q)$ having the same number of orbits on points as on lines. A line orbit of such a group possesses special properties, leading to the notion of what are now called Cameron-Liebler line classes in $\PG(n,q)$. These line sets have mostly been studied in $\PG(3,q)$ where many equivalent characterisations, both algebraic and combinatorial, are known; we refer to~\cite{penn} for an overview.
\par In recent years the concept of Cameron-Liebler classes has been generalised to many other contexts, for example Cameron-Liebler $k$-classes in $\PG(2k+1,q)$ were introduced in~\cite{rsv}, and Cameron-Liebler classes in finite sets were described in~\cite{dss}.
The central problem for Cameron-Liebler classes, regardless of the context, is to determine for which parameters $x$ a Cameron-Liebler class exists, and to classify the examples admitting a given parameter $x$.
Constructions of Cameron-Liebler line classes in $\PG(3,q)$ and classification results were obtained in~\cite{bd,cl,ddmr,gmp,TF:14,gmet,gmol,met,met2,rod}.
For Cameron-Liebler classes in finite sets a complete classification has been described in~\cite{dss}.
Classification results for Cameron-Liebler $k$-classes in $\PG(2k+1,q)$ were found in~\cite{met3,rsv}.
\par In this article we will introduce Cameron-Liebler sets of generators in polar spaces (we do not call them classes to avoid confusion with the classes of generators on a hyperbolic quadric). Our generalisation will be motivated by the following purely combinatorial definition for Cameron-Liebler line classes of $\PG(3,q)$.
\begin{definition}\label{CLLineClassDef}
 A set of lines $\mathcal{L}$ in $\PG(3,q)$ is a Cameron-Liebler line class with parameter $x$ if any line $\ell \in \mathcal{L}$ is disjoint to $(x-1)q^{2}$ lines of $\mathcal{L}$, and any line $\ell \notin\mathcal{L}$ is disjoint to $xq^{2}$ lines of $\mathcal{L}$.
\end{definition}
The collection of lines through a fixed point (a \emph{point-pencil}) contains $q^{2}+q+1$ lines, and is a Cameron-Liebler line class in $\PG(3,q)$ with parameter 1.
If we were able to take $\mathcal{L}$ to be a union of $x$ pairwise disjoint point-pencils, a line $\ell \in \mathcal{L}$ would be disjoint to $(x-1)q^{2}$ lines of $\mathcal{L}$, and a line $\ell \notin\mathcal{L}$ would be disjoint to $xq^{2}$ lines of $\mathcal{L}$, hence $\mathcal{L}$ would be a Cameron-Liebler line class with parameter $x$.
While it is impossible to have two disjoint point-pencils in $\PG(3,q)$, a Cameron-Liebler line class in $\PG(3,q)$ with parameter $x$ can be thought of as a set of lines that combinatorially (with respect to disjointness) behaves like a union of $x$ pairwise disjoint point-pencils.
\par Section~\ref{sec:prelim} will introduce the necessary background while Section~\ref{sec:characterisation} is devoted to proving the characterisation results, which will differ among the polar spaces. We will characterise the Cameron-Liebler sets using disjointness, as vectors in an eigenspace, as vectors in the image of a matrix and through their relation with spreads. In Section~\ref{sec:examples} we will present some examples of Cameron-Liebler sets of generators on polar spaces and in Section~\ref{sec:classification} we will present some classification results. The main result is Theorem \ref{mainclassification} in which the Cameron-Liebler sets with parameter at most $q^{e-1}+1$ are classified. Section~\ref{sec:quadrangles} focuses on the rank 2 case, the generators (lines) of generalised quadrangles.

\section{Preliminaries}\label{sec:prelim}

\subsection*{Distance-regular graphs and association schemes}
Our work will rely on many tools from the theory of association schemes, which were introduced in~\cite{bs}. We also refer to~\cite[Chapter 2]{bcn} and~\cite{del1} as important texts on this topic.
While we will follow the approach given in~\cite{vanhove1}, we will primarily focus on how these concepts apply to the specialized case of distance-regular graphs.
More information on this important class of association schemes can be found in~\cite{bcn}.

Throughout this paper, we will denote the matrix of all ones by $J$. The all-one vector of length $m$ will be denoted by $\bm{j_{m}}$; we denote it by $\bm{j}$ if its length is clear from the context.

\begin{definition}
 A \emph{$d$-class association scheme} $\mathcal{R}$ on a finite non-empty set $\Omega$ is a set $\{R_{0},\dots,R_{d}\}$ of $d+1$ symmetric relations on $\Omega$ such that the following properties hold.
 \begin{enumerate}
  \item $R_{0}$ is the identity relation.
  \item $\mathcal{R}$ is a partition of $\Omega^{2}$.
  \item For $x,z\in\Omega$ with $(x,z)\in R_{k}$, the number of $y\in\Omega$ such that $(x,y)\in R_{i}$ and $(y,z)\in R_{j}$, equals a constant $p^{k}_{ij}$ (and is thus independent of $x$ and $z$).
 \end{enumerate}
\end{definition}

Let $\Gamma$ be a graph with diameter $d$, and let $\delta(u,v)$ denote the distance between vertices $u$ and $v$ of $\Gamma$.
The $i$th-neighborhood of a vertex $v$ is the set $\Gamma_{i}(v) = \{ w : d(v,w) = i \}$; we similarly define $\Gamma_{i}$ to be the $i$th distance graph of $\Gamma$, that is, the vertex set of $\Gamma_{i}$ is the same as for $\Gamma$, with adjacency in $\Gamma_{i}$ defined by the $i$th distance relation in $\Gamma$.
We say that $\Gamma$ is \emph{distance-regular} if the distance relations of $\Gamma$ give the relations of a $d$-class association scheme, that is, for every choice of $0 \leq i,j,k \leq d$, all vertices $v$ and $w$ with $\delta(v,w)=k$ satisfy $|\Gamma_{i}(v) \cap \Gamma_{j}(w)| = p^{k}_{ij}$ for some constant $p^{k}_{ij}$.

In a distance-regular graph, we have that $p^{k}_{ij}=0$ whenever $i+j < k$ or $k<|i-j|$.
A distance-regular graph $\Gamma$ is necessarily regular with degree $p^{0}_{11}$; more generally, each distance graph $\Gamma_{i}$ is regular with degree $k_{i}=p^{0}_{ii}$.
An equivalent definition of a distance-regular graph is the existence of the constants $b_{i}=p^{i}_{i+1,1}$ and $c_{i}= p^{i}_{i-1,1}$ for $0 \leq i \leq d$ (notice that $b_{d}=c_{0}=0$).
This collection of values $\{b_{0}, \ldots, b_{d-1};c_{1}, \ldots, c_{d}\}$ is called the \emph{parameter set} of the distance-regular graph, and all of the remaining $p^{k}_{ij}$ are determined by these values.

\par For the remainder of this section, $\Gamma$ will be a distance-regular graph with diameter $d$ and vertex set $\Omega=\{x_{1},\dots,x_{n}\}$.
Let $A_{k}$ be the adjacency matrix of $\Gamma_{k}$ for $0 \leq k \leq d$, that is, $A_{k}$ will have a $1$ in the $(i,j)$ position if $\delta(x_{i}, x_{j})=k$ and $0$ otherwise.
This gives a set of $d+1$ symmetric matrices $A_{0},\dots,A_{d}$ with $A_{0}=I$, $\sum^{d}_{i=0}A_{i}=J$, and $A_{i}A_{j}=\sum^{d}_{k=0}p^{k}_{ij}A_{k}$.
The ordering on $\Omega$ used to form the matrices $A_{i}$ will be considered to be fixed, giving us an ordered basis for the vector space $\R^{\Omega}$.
\par The $\R$-vector space generated by the matrices $A_{0},\dots,A_{d}$ is closed under matrix multiplication and so it is a $(d+1)$-dimensional commutative $\R$-algebra $\mathcal{A}$, which is known as the \emph{Bose-Mesner algebra} of the association scheme. We can also obtain a second important basis for the Bose-Mesner algebra.
A matrix $E$ is \emph{idempotent} if $E^{2}=E$, and an idempotent matrix is called \emph{minimal} if it cannot be written as the sum of two nonzero idempotent matrices.

\begin{theorem}\label{idempotents}
 The Bose-Mesner algebra of a $d$-class association scheme on $\Omega$ has a unique basis $\{E_{0},\dots,E_{d}\}$ of minimal idempotents. Moreover, $E_{i}E_{j}=\delta_{ij}E_{i}$.
 The subspaces $V_{0},\dots,V_{d}$, with $V_{i}=\im(E_{i})$, form an orthogonal decomposition of $\R^{\Omega}$.
\end{theorem}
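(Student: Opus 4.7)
The plan is to apply the spectral theorem to the commuting family $A_0,\dots,A_d$ and to identify the minimal idempotents with the orthogonal projections onto the common eigenspaces. The first step is to check that $\mathcal{A}$ is a commutative algebra of symmetric matrices: each $A_i$ is symmetric because $R_i$ is, and from $A_iA_j=\sum_k p_{ij}^k A_k$ being symmetric (as a linear combination of symmetric matrices) one obtains $A_iA_j=(A_iA_j)^T=A_j^T A_i^T=A_jA_i$. Since $\{R_0,\dots,R_d\}$ partitions $\Omega^2$, the matrices $A_0,\dots,A_d$ have pairwise disjoint supports and are linearly independent, so $\dim\mathcal{A}=d+1$.

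Next I would apply simultaneous diagonalization for commuting real symmetric matrices to obtain an orthogonal decomposition $\R^\Omega=W_0\oplus\cdots\oplus W_r$ into joint eigenspaces of $A_0,\dots,A_d$. Let $F_j$ be the orthogonal projection onto $W_j$; these satisfy $F_j^2=F_j=F_j^T$, $F_iF_j=0$ for $i\neq j$, and $\sum_j F_j=I$. The crucial step, and the one I expect to be the main obstacle, is to show $F_j\in\mathcal{A}$. For this I would use Lagrange interpolation on the minimal polynomial of each $A_i$ to express the spectral projection onto any eigenspace of $A_i$ as a polynomial in $A_i$, hence as an element of $\mathcal{A}$; the joint eigenspace $W_j$ is an intersection of eigenspaces of the individual $A_i$, so $F_j$ is a product of such spectral projections and therefore lies in $\mathcal{A}$.

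Once this is in place, every $M\in\mathcal{A}$ acts as a scalar $\lambda_j(M)$ on $W_j$, so $M=\sum_j\lambda_j(M)F_j$; since the $F_j$ are nonzero and pairwise orthogonal they are linearly independent, and they form a basis of $\mathcal{A}$, forcing $r+1=d+1$. Relabelling $F_0,\dots,F_d$ as $E_0,\dots,E_d$ and setting $V_j=\im(E_j)$, the identities $E_iE_j=\delta_{ij}E_i$ and the orthogonal decomposition $\R^\Omega=V_0\oplus\cdots\oplus V_d$ are built into the construction. Minimality of each $E_j$ follows from the observation that the orthogonality relations force every idempotent in $\mathcal{A}$ to be a $\{0,1\}$-combination of the $E_j$, so no $E_j$ admits a nontrivial splitting into orthogonal idempotents of $\mathcal{A}$; the same observation yields uniqueness of the minimal-idempotent basis. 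The only substantive work is in justifying the inclusion $F_j\in\mathcal{A}$; once that is handled the remaining assertions are standard consequences of the semisimple structure of $\mathcal{A}$.
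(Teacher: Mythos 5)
The paper does not prove this theorem at all: it is quoted as standard background on association schemes (the surrounding text points to Brouwer--Cohen--Neumaier, Delsarte, and Vanhove's thesis), so there is no in-paper argument to compare against. Your proof is the standard one and is correct. The commutativity argument via symmetry of $A_iA_j$, the dimension count from disjoint supports, the simultaneous diagonalisation into joint eigenspaces, and the identification of $\mathcal{A}$ with the span of the projections $F_j$ are all sound. Two small points are worth making explicit if you write this up in full. First, when you assemble $F_j$ as a product of spectral projections of the individual $A_i$, you should note that these projections commute (being polynomials in commuting matrices), and that for commuting orthogonal projections the product is again an orthogonal projection whose image is the intersection of the images; without commutativity this step would fail. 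Second, the minimality and uniqueness claims only make sense if ``idempotent'' is read as ``idempotent \emph{in} $\mathcal{A}$'' (any orthogonal projection of rank at least $2$ splits as a sum of two nonzero projections in the full matrix algebra); your argument that every idempotent of $\mathcal{A}$ is a $\{0,1\}$-combination of the $E_j$ is exactly the right way to settle both claims under that reading, which is the intended one.
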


Since $|\Omega|=n$, we always consider $E_{0}=\frac{1}{n}J$. Since $\{A_{i}\}$ and $\{E_{i}\}$ are both bases for $\mathcal{A}$, there are values $P_{j,i}$ such that $A_{i}=\sum^{d}_{j=0}P_{j,i}E_{j}$.
From this it is clear that $A_{i}E_{j} = P_{j,i}E_{j}$, therefore $V_{j}=\im(E_{j})$ is a subspace of eigenvectors for the matrix $A_{i}$ with corresponding eigenvalue $P_{j,i}$ (the $P_{j,i}$ are thus called the \emph{eigenvalues} of the association scheme).
\par Note that $V_{j}$ is not necessarily an eigenspace for all incidence matrices as it could happen that for a particular matrix $A_{i}$, two eigenvalues coincide.
E.g., for the matrix $A_{0}$ there is only one eigenvalue and hence $\R^{\Omega}$ is the only eigenspace.
However since it is true that each eigenspace is an orthogonal sum of the $V_{j}$, and $\Gamma$ (with adjacency matrix $A_{1}$) has exactly $d+1$ distinct eigenvalues, we refer to the $V_{j}$ as the \emph{eigenspaces} of the underlying association scheme.

The eigenspaces of an association scheme give us a powerful tool for working with the Bose-Mesner algebra, as is shown by the following result which holds for any association scheme.
\begin{lemma}\label{rowspacesubspacesum}
 Let $\mathcal{R}$ be a $d$-class association scheme on the set $\Omega$ and let $\mathcal{A}$ be the corresponding Bose-Mesner algebra. If $A\in\mathcal{A}$, then $\im(A)$ is a direct sum of eigenspaces of the association scheme.
\end{lemma}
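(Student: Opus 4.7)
The plan is to exploit the decomposition of $A$ in the basis of minimal idempotents. Since $\{E_{0},\dots,E_{d}\}$ is a basis for $\mathcal{A}$, I would start by writing
\[
A=\sum_{j=0}^{d}\alpha_{j}E_{j}
\]
for some $\alpha_{j}\in\R$. Using the orthogonality relation $E_{i}E_{j}=\delta_{ij}E_{i}$ from Theorem~\ref{idempotents}, multiplying on the right by $E_{j}$ gives $AE_{j}=\alpha_{j}E_{j}$. Hence every vector in $V_{j}=\im(E_{j})$ is an eigenvector of $A$ with eigenvalue $\alpha_{j}$.

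Next I would use the orthogonal decomposition $\R^{\Omega}=V_{0}\oplus\dots\oplus V_{d}$ also supplied by Theorem~\ref{idempotents}: every $v\in\R^{\Omega}$ decomposes as $v=\sum_{j=0}^{d}E_{j}v$ with $E_{j}v\in V_{j}$. Applying $A$ and using $AE_{j}=\alpha_{j}E_{j}$ yields
\[
Av=\sum_{j=0}^{d}\alpha_{j}E_{j}v.
\]
From this expression it is immediate that $\im(A)\subseteq\bigoplus_{j:\,\alpha_{j}\neq 0}V_{j}$.

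For the reverse inclusion, note that if $\alpha_{j}\neq 0$, then for any $w\in V_{j}$ we have $w=E_{j}w$, so $A(\alpha_{j}^{-1}w)=\alpha_{j}^{-1}\alpha_{j}E_{j}w=w$, showing $V_{j}\subseteq\im(A)$. Combining the two inclusions gives
\[
\im(A)=\bigoplus_{j:\,\alpha_{j}\neq 0}V_{j},
\]
and this sum is direct (indeed orthogonal) because the $V_{j}$ are. There is no real obstacle here; the only subtlety is recognising that the basis change from $\{A_{i}\}$ to $\{E_{j}\}$ essentially diagonalises every element of $\mathcal{A}$ simultaneously, after which the statement becomes a direct computation.
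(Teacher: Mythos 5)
Your proof is correct and follows essentially the same route as the paper's: both expand $A=\sum_j \alpha_j E_j$ in the idempotent basis, use $E_iE_j=\delta_{ij}E_i$ to get the containment $\im(A)\subseteq\bigoplus_{\alpha_j\neq0}V_j$, and invert the nonzero coefficients to obtain the reverse inclusion. Your observation that $AE_j=\alpha_j E_j$ lets you show $V_j\subseteq\im(A)$ one eigenspace at a time, which is a slightly cleaner packaging of the same computation the paper performs on an arbitrary vector of the sum.
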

\begin{proof}
 All matrices in $\mathcal{A}$ are symmetric, so $\im(A) = \im(A^{t})$.
 Let $E_{0},\dots,E_{d}$ be the minimal idempotents of $\mathcal{A}$, giving a corresponding orthogonal decomposition $\R^{\Omega} = V_{0}\perp\dots\perp V_{d}$. Since $A\in\mathcal{A}$, we can write $A=\sum_{i=0}^{d}c_{i}E_{i}$.
 We set $\mathcal{I}$ to be the collection of indices for which $c_{i}\neq0$.
 We will now show that $\im(A)=\left(\perp_{i\in\mathcal{I}}V_{i}\right)$. If $\bm{v}\in\im(A)$ there is a vector $\bm{w}$ such that $\bm{v}=A\bm{w}$, and thus
 \[
  \bm{v}=A\bm{w}=\left(\sum_{i=0}^{d}c_{i}E_{i}\right)\bm{w}=\sum_{i=0}^{d}c_{i}(E_{i}\bm{w})\in\left(\perp_{i\in\mathcal{I}}V_{i}\right)\;,
 \]
 since $V_{i}=\im(E_{i})$.
 On the other hand, for any $\bm{v}\in\left(\perp_{i\in\mathcal{I}}V_{i}\right)$ we have $\bm{v}=\sum_{i\in\mathcal{I}}\bm{v_{i}}$ with each $\bm{v_{i}}\in V_{i}$,
 and thus $\bm{v}=\sum_{i\in\mathcal{I}}E_{i}\bm{w_{i}}$ for some collection of vectors $\bm{w_{i}}\in\mathbb{R}^{\Omega}$.
 Thus we have that
 \[
  A\left(\sum_{i\in\mathcal{I}}\frac{1}{c_{i}}E_{i}\bm{w}_{i}\right)=\left(\sum_{j=0}^{d}c_{j}E_{j}\right)\left(\sum_{i\in\mathcal{I}}\frac{1}{c_{i}}E_{i}\bm{w}_{i}\right)=\sum_{i\in\mathcal{I}}\sum^{d}_{j=0}\frac{c_{j}}{c_{i}}\delta_{ij}E_{i}\bm{w_{i}}=\sum_{i\in\mathcal{I}}E_{i}\bm{w_{i}}=\bm{v}\;,
 \]
 and so $\bm{v}\in\im{(A)}$.
\end{proof}

We also mention the following result about cliques for a relation in an association scheme.
\begin{theorem}[{\cite[Corollary 2.2.9]{vanhove1}}]\label{clique}
 Let $R$ be a relation in an association scheme and let $S$ be a clique of $R$ with characteristic vector $\bm{\chi}$. Let $V$ be an eigenspace of the scheme, with $\lambda$ the eigenvalue of $R$ on $V$, and $k$ the eigenvalue of $V$ corresponding to the identity relation. If $\lambda<0$, then $|S|\leq 1-\frac{k}{\lambda}$, and in case $|S|=1-\frac{k}{\lambda}$ then $\bm{\chi}\in V^{\perp}$.
\end{theorem}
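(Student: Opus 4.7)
The plan is to apply the minimal idempotent $E$ onto $V$ (provided by Theorem~\ref{idempotents}) to the characteristic vector $\bm{\chi}$, and exploit that $\bm{\chi}^{t} E \bm{\chi} = \|E \bm{\chi}\|^{2} \geq 0$ because $E$ is a real symmetric idempotent. Writing $E = \sum_{i=0}^{d} c_{i} A_{i}$ in the Bose-Mesner algebra, and letting $r$ be the index with $R = R_{r}$, the bound will fall out of evaluating this inner product in the basis of relation matrices.

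First I would use the clique assumption. Each ordered pair $(x,y) \in S \times S$ satisfies either $x = y$ or $(x,y) \in R_{r}$, so $\bm{\chi}^{t} A_{i} \bm{\chi} = 0$ for $i \notin \{0,r\}$, while $\bm{\chi}^{t} A_{0} \bm{\chi} = |S|$ and $\bm{\chi}^{t} A_{r} \bm{\chi} = |S|(|S|-1)$. Non-negativity of $\bm{\chi}^{t} E \bm{\chi}$, divided by $|S|>0$, then gives
\[
 c_{0} + (|S|-1)\,c_{r} \geq 0.
\]

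Next I would identify the two relevant coefficients in terms of $k$ and $\lambda$. The entry $c_{0}$ is the common diagonal value of the orthogonal projector $E$, and is strictly positive. For $c_{r}$, the eigenvalue relation $A_{r} E = \lambda E$ is the key tool: expanding the left-hand side via $A_{r} A_{i} = \sum_{\ell} p^{\ell}_{r,i} A_{\ell}$ and comparing the coefficients of $A_{0}$ on both sides, only the $i=r$ term survives (in a symmetric scheme $p^{0}_{r,i}$ is non-zero precisely when $R_{i}=R$, where it equals the valency $k$), yielding $k\,c_{r} = \lambda c_{0}$, hence $c_{r}/c_{0} = \lambda/k$.

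Finally, if $\lambda<0$ then $c_{r}<0$, and the displayed inequality rearranges to $|S| \leq 1 - c_{0}/c_{r} = 1 - k/\lambda$. The equality case forces $\bm{\chi}^{t} E \bm{\chi} = 0$, i.e.\ $E\bm{\chi}=0$, which is precisely $\bm{\chi} \in \ker E = V^{\perp}$. The only genuinely non-routine step is the identification $c_{r} = \lambda c_{0}/k$ via the eigenvalue equation; once that is in hand, the rest is direct linear-algebraic bookkeeping.
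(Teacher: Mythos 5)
The paper states Theorem~\ref{clique} without proof, quoting it from Vanhove's thesis, so there is no in-paper argument to compare against; your proposal is a correct, self-contained proof and is essentially the standard Delsarte--Hoffman ratio-bound computation. The three ingredients all check out: $\bm{\chi}^{t}E\bm{\chi}=\|E\bm{\chi}\|^{2}\geq 0$ because the minimal idempotent $E$ is real, symmetric and idempotent; the clique hypothesis reduces this to $c_{0}|S|+c_{r}|S|(|S|-1)\geq 0$; and comparing the coefficient of $A_{0}$ on both sides of $A_{r}E=\lambda E$ gives $k\,c_{r}=\lambda c_{0}$, using $p^{0}_{ri}=k\,\delta_{ir}$, which is valid precisely because the scheme is symmetric. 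Two small points are worth making explicit: $c_{0}>0$ because it is the common diagonal entry of $E$ and $\mathrm{tr}(E)=\dim V>0$, and the division by $|S|$ tacitly assumes $S\neq\emptyset$ (the empty case is trivial since $1-k/\lambda>0$). You have also, correctly, interpreted $k$ as the valency of $R$, i.e.\ the eigenvalue of $A_{R}$ on $\langle\bm{j}\rangle$ --- this is how the theorem is applied in Lemma~\ref{spreadineigenspaces} --- despite the somewhat garbled phrase ``the eigenvalue of $V$ corresponding to the identity relation'' in the statement. The equality case is handled correctly as well: $\bm{\chi}^{t}E\bm{\chi}=0$ forces $E\bm{\chi}=0$, and $\ker E=(\im E)^{\perp}=V^{\perp}$ since $E$ is the orthogonal projection onto $V$ by Theorem~\ref{idempotents}.
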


\subsection*{Polar spaces}

Polar spaces of rank $d\geq 2$ are a kind of incidence geometries having subspaces of dimension $0,\dots,d-1$.
In this article we will only discuss finite polar spaces (polar spaces having a finite number of subspaces).
The axiomatic introduction of polar spaces is due to Veldkamp \cite{veld1,veld2} and Tits \cite{tits}.
Polar spaces of rank 2 are called generalised quadrangles. We define them separately.

\begin{definition}\label{defgq}
 A \emph{generalised quadrangle of order $(s,t)$}, with $s,t\geq1$, is a point-line geometry satisfying the following axioms.
 \begin{itemize}
  \item Two distinct points are incident with at most one line.
  \item Any line is incident with $s+1$ points and any point is incident with $t+1$ lines.
  \item For any point $P$ and line $\ell$, with $P$ not incident with $\ell$, there is a unique tuple $(P',\ell')$ such that $P$ is incident with $\ell'$, $P'$ is incident with $\ell$ and $P'$ is incident with $\ell'$.
 \end{itemize}
\end{definition}
For any generalised quadrangle $\mathcal{Q}$ of order $(s,t)$ we can define the \emph{dual generalised quadrangle} $\mathcal{Q}^{D}$ by interchanging the roles of points and lines in $\mathcal{Q}$. The generalised quadrangle $\mathcal{Q}^{D}$ has order $(t,s)$.
\par The reason for introducing the polar spaces of rank 2 separately is the famous result by Tits (see \cite{tits}) which states that all finite polar spaces of rank at least $3$ are \emph{classical polar spaces}. Therefore we immediately introduce finite classical polar spaces.

\begin{definition}
 A \emph{finite classical polar space} is an incidence geometry consisting of the totally isotropic subspaces of a non-degenerate quadratic or non-degenerate reflexive sesquilinear form on a vector space $\F^{n+1}_{q}$.
\end{definition}

Throughout this article we will consider the finite classical polar spaces as substructures of $\PG(n,q)$, and describe all dimensions as projective dimensions. The subspaces of dimension $0$ (vector lines), $1$ (vector planes) and $2$ (vector solids) are called \emph{points}, \emph{lines} and \emph{planes}, respectively.

\par A bilinear form for which all vectors are isotropic is called \emph{symplectic}. A sesquilinear form on $V$ is called \emph{Hermitian} if the corresponding field automorphism $\theta$ is an involution and $f(v,w)={f(w,v)}^{\theta}$ for all $v,w\in V$. We now list the finite classical polar spaces of rank $d$.
\begin{itemize}
 \item The hyperbolic quadric $\mathcal{Q}^{+}(2d-1,q)$ embedded in $\PG(2d-1,q)$. It arises from a hyperbolic quadratic form on $V(2d,q)$. Its standard equation is $X_{0}X_{1}+\dots+X_{2d-2}X_{2d-1}=0$.
 \item The parabolic quadric $\mathcal{Q}(2d,q)$ embedded in $\PG(2d,q)$. It arises from a parabolic quadratic form on $V(2d+1,q)$. Its standard equation is $X^{2}_{0}+X_{1}X_{2}+\dots+X_{2d-1}X_{2d}=0$.
 \item The elliptic quadric $\mathcal{Q}^{-}(2d+1,q)$ embedded in $\PG(2d+1,q)$. It arises from an elliptic quadratic form on $V(2d+2,q)$. Its standard equation is $X_{0}X_{1}+\dots+X_{2d-2}X_{2d-1}+g(X_{2d},X_{2d+1})=0$ with $g$ a homogeneous irreducible quadratic polynomial over $\F_{q}$.
 \item The Hermitian polar space $\mathcal{H}(2d-1,q^{2})$ embedded in $\PG(2d-1,q^{2})$. It arises from a Hermitian form on $V(2d,q^{2})$, constructed using the field automorphism $x\mapsto x^{q}$. Its standard equation is $X^{q+1}_{0}+X^{q+1}_{1}+\dots+X^{q+1}_{2d-1}=0$.
 \item The Hermitian polar space $\mathcal{H}(2d,q^{2})$ embedded in $\PG(2d,q^{2})$. It arises from a Hermitian form on $V(2d+1,q^{2})$, constructed using the field automorphism $x\mapsto x^{q}$. Its standard equation is $X^{q+1}_{0}+X^{q+1}_{1}+\dots+X^{q+1}_{2d}=0$.
 \item The symplectic polar space $\mathcal{W}(2d-1,q)$ embedded in $\PG(2d-1,q)$. It arises from a symplectic form on $V(2d,q)$. For this symplectic form we can choose an appropriate basis $\{e_{1},\dots,e_{d},e'_{1},\dots,e'_{d}\}$ of $V(2d,q)$ such that $f(e_{i},e_{j})=f(e'_{i},e'_{j})=0$ and $f(e_{i},e'_{j})=\delta_{i,j}$, with $1\leq i,j\leq d$.
\end{itemize}

\begin{definition}
 The subspaces of maximal dimension (being $d-1$) of a polar space of rank $d$ are called \emph{generators}. We define the \emph{parameter} $e$ of a polar space $\mathcal{P}$ over $\F_{q}$ as $\log_{q}(x-1)$ with $x$ the number of generators through a $(d-2)$-space of $\mathcal{P}$.
\end{definition}

The parameter of a polar space only depends on the type of the polar space and not on its rank. Table \ref{parameter} gives an overview.

\begin{table}[ht]
 \centering
 \begin{tabular}{|c|c|}
  \hline
  polar space               & $e$ \\ \hline\hline
  $\mathcal{Q}^{+}(2d-1,q)$ & 0   \\ \hline
  $\mathcal{H}(2d-1,q)$     & 1/2 \\ \hline
  $\mathcal{W}(2d-1,q)$     & 1   \\ \hline
  $\mathcal{Q}(2d,q)$       & 1   \\ \hline
  $\mathcal{H}(2d,q)$       & 3/2 \\ \hline
  $\mathcal{Q}^{-}(2d+1,q)$ & 2   \\ \hline
 \end{tabular}
 \caption{The parameter of the finite classical polar spaces.}
 \label{parameter}
\end{table}

\par Now we discuss a remarkable property of the hyperbolic quadrics.

\begin{remark}\label{hyperclass}
 We define a relation $\sim$ on the set $\Omega$ of generators of a hyperbolic quadric $\mathcal{Q}^{+}(2d+1,q)$ in the following way: $\pi\sim\pi'\Leftrightarrow\dim(\pi\cap\pi')\equiv d\pmod{2}$. This relation is an equivalence relation and it has two equivalence classes, which necessarily partition the set $\Omega$. These equivalence classes are commonly called the \emph{Greek} and \emph{Latin} generators.
 \par We note that if $d$ is even, then two generators in the same class cannot be disjoint. If $d$ is odd, then two generators from different classes cannot be disjoint.
\end{remark}

We now introduce the \emph{Gaussian binomial coefficient} $\gs{n}{k}{q}$ for positive integers $n,k$ and prime power $q\geq2$:
\[
 \gs{n}{k}{q}=\prod^{k}_{i=1}\frac{q^{n-k+i}-1}{q^{i}-1}=\frac{(q^{n}-1)\cdots(q^{n-k+1}-1)}{(q^{k}-1)\cdots(q-1)}\;.
\]
The number $\gs{n}{k}{q}$ equals the number of $k$-dimensional subspaces of the vector space $\F^{n}_{q}$, equivalently the number of $(k-1)$-spaces in $\PG(n-1,q)$.
The Gaussian binomial coefficients admit the following relation $\gs{n}{k}{q}=\gs{n}{n-k}{q}$, which follows immediately from duality. Another important result regarding Gaussian binomial coefficients is the following analogue of the binomial theorem.
\begin{lemma}\label{qbinomialtheorem}
 For integers $n,k\geq0$ and prime power $q\geq2$ we have
 \[
  \sum^{n}_{k=0}\gs{n}{k}{q}q^{\binom{k}{2}}t^{k}=\prod^{n-1}_{k=0}(1+q^{k}t)\;.
 \]
\end{lemma}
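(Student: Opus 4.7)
The plan is to prove the identity by induction on $n$. Both sides equal $1$ when $n=0$ (empty sum on the left, empty product on the right), giving the base case. For the inductive step, it suffices to establish the functional equation
\[
 \sum^{n}_{k=0}\gs{n}{k}{q}q^{\binom{k}{2}}t^{k} = (1+q^{n-1}t)\sum^{n-1}_{k=0}\gs{n-1}{k}{q}q^{\binom{k}{2}}t^{k},
\]
because then the product formula follows by telescoping all the way down to $n=0$.

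To obtain the functional equation, I would substitute the $q$-Pascal recurrence
\[
 \gs{n}{k}{q} = \gs{n-1}{k}{q} + q^{n-k}\gs{n-1}{k-1}{q}
\]
into the left-hand side. The contribution coming from $\gs{n-1}{k}{q}$ reproduces the inductive sum directly. The contribution coming from $q^{n-k}\gs{n-1}{k-1}{q}$ needs a reindexing $j=k-1$; after this shift, the exponent on $q$ becomes $n-1-j+\binom{j+1}{2}$, which simplifies via the identity $\binom{j+1}{2}=\binom{j}{2}+j$ to $n-1+\binom{j}{2}$. The factor $q^{n-1}$ is then independent of $j$ and factors out together with a power of $t$, yielding precisely $q^{n-1}t$ times the inductive sum. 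Adding the two contributions produces the required factor $(1+q^{n-1}t)$.

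The main obstacle is a subtle choice rather than a calculation: there are two standard $q$-Pascal recurrences, differing by whether the $q$-power attached to $\gs{n-1}{k-1}{q}$ is $q^{k}$ or $q^{n-k}$. Only the form above causes the binomial exponent $q^{\binom{k}{2}}$ to collapse cleanly after reindexing; the alternative form leaves $q$-powers entangled with the summation variable and does not separate out as a scalar multiple of the sum for $n-1$. Once the correct recurrence is chosen and the exponent identity $\binom{j+1}{2}=\binom{j}{2}+j$ is applied, the remainder of the argument is routine.
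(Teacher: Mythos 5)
Your proof is correct. Note that the paper does not actually prove this lemma: it is stated as a known result (the classical $q$-binomial theorem), so there is no in-paper argument to compare against. Your induction is the standard proof and all the steps check out: the recurrence $\gs{n}{k}{q}=\gs{n-1}{k}{q}+q^{n-k}\gs{n-1}{k-1}{q}$ is valid, the term with $k=n$ in the first contribution vanishes since $\gs{n-1}{n}{q}=0$, and after the shift $j=k-1$ the exponent $n-k+\binom{k}{2}=n-1-j+\binom{j}{2}+j=n-1+\binom{j}{2}$ indeed detaches $q^{n-1}t$ as a constant factor, giving the functional equation $F_n(t)=(1+q^{n-1}t)F_{n-1}(t)$ and hence the product by telescoping. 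One small remark on your closing paragraph: the other $q$-Pascal recurrence, $\gs{n}{k}{q}=q^{k}\gs{n-1}{k}{q}+\gs{n-1}{k-1}{q}$, is not actually a dead end. It leads instead to the functional equation $F_n(t)=(1+t)F_{n-1}(qt)$, since $q^{k}q^{\binom{k}{2}}t^{k}=q^{\binom{k}{2}}(qt)^{k}$, and the product formula follows just as well from that recursion. So the choice is one of convenience rather than necessity, though the form you picked is the cleaner one for keeping the argument $t$ fixed throughout the induction.
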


We can express the number of subspaces of a polar space given its parameter and rank using Gaussian binomial coefficients.

\begin{lemma}[{{\cite[Lemma 9.4.1]{bcn}}}]\label{subspacesonpolar}
 For a finite classical polar space $\mathcal{P}$ of rank $d$ with parameter $e$, embedded in a projective space over $\F_{q}$, the number of $k$-spaces of  $\mathcal{P}$ is given by
 \[
  \gs{d}{k+1}{q}\prod^{k+1}_{i=1}(q^{d+e-i}+1).
 \]
\end{lemma}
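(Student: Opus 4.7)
The plan is to prove the formula by induction on $k$, with the base case $k=0$ (counting points) being the only step where the specific geometry of each polar space enters. For the base, one has to verify that the number of points of $\mathcal{P}$ equals $\gs{d}{1}{q}(q^{d+e-1}+1)$. This reduces, by induction on $d$, to the fact that every rank-$1$ polar space has exactly $q^{e}+1$ points, combined with the well-known fact that the residue $P^{\perp}/P$ of a point $P$ of $\mathcal{P}$ is again a polar space of the same type, with rank $d-1$ and parameter $e$. The rank-$1$ count itself is checked type by type against the standard equations listed above: $\mathcal{Q}^{+}(1,q)$ gives $2=q^{0}+1$ points, $\mathcal{W}(1,q)$ and $\mathcal{Q}(2,q)$ give $q+1=q^{1}+1$ points, $\mathcal{H}(1,q^{2})$ gives $q+1$ points (matching $e=1/2$ with the ambient field $\F_{q^{2}}$), $\mathcal{H}(2,q^{2})$ gives $q^{3}+1$ points, and $\mathcal{Q}^{-}(3,q)$ gives $q^{2}+1$ points.

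For the inductive step ($k\geq 1$), I would double count the incident pairs $(\sigma,\pi)$ where $\sigma$ is a $(k-1)$-space of $\mathcal{P}$, $\pi$ is a $k$-space of $\mathcal{P}$, and $\sigma\subset\pi$. Counting by $\pi$, each such $k$-space is a projective $k$-space and so contains exactly $\gs{k+1}{1}{q}$ subspaces of projective dimension $k-1$. Counting by $\sigma$, the $k$-spaces of $\mathcal{P}$ through a fixed $(k-1)$-space $\sigma$ correspond bijectively to the points of the residual polar space $\sigma^{\perp}/\sigma$, which has rank $d-k$ and the same parameter $e$; applying the already-proved base case to this residue yields $\gs{d-k}{1}{q}(q^{d-k+e-1}+1)$ such $k$-spaces.

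Writing $N(d,k,e)$ for the quantity to be computed, the double count gives
\[
 N(d,k,e)\cdot\gs{k+1}{1}{q} \;=\; N(d,k-1,e)\cdot\gs{d-k}{1}{q}\,(q^{d-k+e-1}+1).
\]
Substituting the inductive hypothesis $N(d,k-1,e)=\gs{d}{k}{q}\prod_{i=1}^{k}(q^{d+e-i}+1)$, rewriting $q^{d-k+e-1}+1=q^{d+e-(k+1)}+1$ to absorb the extra factor into the product, and using the elementary identity $\gs{d}{k}{q}\gs{d-k}{1}{q}=\gs{d}{k+1}{q}\gs{k+1}{1}{q}$ (both sides count the flags of a $(k-1)$-space inside a $k$-space in $\PG(d-1,q)$) one recovers exactly $\gs{d}{k+1}{q}\prod_{i=1}^{k+1}(q^{d+e-i}+1)$.

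The only real obstacle is the base case: the point-count is where the parameter $e$ is genuinely geometric, and it requires the type-by-type check over the six families together with the standard rank-induction. Once this is in hand, the inductive step is purely combinatorial, depending only on the (type-independent) residue property of polar spaces and a Gaussian-binomial identity.
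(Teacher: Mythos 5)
The paper does not prove this lemma at all --- it is quoted from \cite[Lemma 9.4.1]{bcn} --- so there is no internal proof to compare against; your double-counting induction on $k$ via residues is essentially the standard argument. Your inductive step is correct and complete: the flag count, the identification of the $k$-spaces through a totally isotropic $(k-1)$-space $\sigma$ with the points of the rank-$(d-k)$ residue $\sigma^{\perp}/\sigma$, and the identity $\gs{d}{k}{q}\gs{d-k}{1}{q}=\gs{d}{k+1}{q}\gs{k+1}{1}{q}$ all check out, and the extra factor $q^{d-k+e-1}+1=q^{d+e-(k+1)}+1$ slots into the product exactly as you say. The rank-$1$ point counts for the six families are also all correct.

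The genuine gap is in the base case $k=0$. The two facts you invoke --- that a rank-$1$ polar space has $q^{e}+1$ points and that the residue $P^{\perp}/P$ is a polar space of rank $d-1$ with the same parameter $e$ --- do not determine the number of points $N_{d}$. The residue only tells you that there are $N_{d-1}$ lines through $P$, hence $1+qN_{d-1}$ points collinear with $P$ (including $P$); it says nothing about the points \emph{not} collinear with $P$, and without that count the recursion does not close. You need one further ingredient, for instance the fact that the number of points opposite a fixed point equals $q^{2d+e-2}$ (the point analogue of Lemma~\ref{skewgenerators}), giving $N_{d}=1+qN_{d-1}+q^{2d+e-2}$, which indeed yields $\gs{d}{1}{q}(q^{d+e-1}+1)$; alternatively one can count isotropic vectors directly from the orthogonal decomposition $V=H\perp W$ with $H$ a hyperbolic line, which is its own small induction on the form rather than on point residues. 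Either repair is routine, but as written the ``standard rank-induction'' you appeal to is not actually specified by the facts you cite, and this is precisely the step where all the content of the lemma lives.
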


In particular, the number of generators of a finite classical polar space of rank $d$ with parameter $e$, embedded in a projective space over $\F_{q}$, equals $\prod^{d-1}_{i=0}(q^{e+i}+1)$, and the number of points equals $\gs{d}{1}{q}(q^{d+e-1}+1)$.

\begin{corollary}\label{pointpencilsize}
 For a finite classical polar space $\mathcal{P}$ of rank $d$ with parameter $e$, embedded in a projective space over $\F_{q}$, the number of $k$-spaces of $\mathcal{P}$ through a fixed $m$-space is given by
 \[
  \gs{d-m-1}{k-m}{q}\prod^{k-m}_{i=1}(q^{d-m+e-i-1}+1).
 \]
\end{corollary}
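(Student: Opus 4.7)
The plan is to reduce the statement to Lemma~\ref{subspacesonpolar} by passing to a quotient polar space. Fix an $m$-space $\pi$ of $\mathcal{P}$, and consider the quotient $\pi^{\perp}/\pi$, where $\perp$ denotes the polarity associated with the form defining $\mathcal{P}$. The totally isotropic subspaces of $\mathcal{P}$ containing $\pi$ are precisely the subspaces contained in $\pi^{\perp}$ that contain $\pi$, so they correspond bijectively (via the quotient map) to the totally isotropic subspaces of the induced form on $\pi^{\perp}/\pi$. In particular, the $k$-spaces of $\mathcal{P}$ through $\pi$ correspond bijectively to the $(k-m-1)$-spaces of the quotient polar space.

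The next step is to identify the type of the quotient. It is a classical fact (which can be verified directly from the standard equations listed in Section~\ref{sec:prelim}) that $\pi^{\perp}/\pi$ is again a finite classical polar space of the \emph{same type} as $\mathcal{P}$, with rank $d-m-1$. In particular its parameter $e$ is unchanged, since by Table~\ref{parameter} the parameter depends only on the type.

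Now apply Lemma~\ref{subspacesonpolar} to this quotient polar space of rank $d' = d-m-1$ and parameter $e$, counting its $k'$-spaces with $k' = k-m-1$. The formula gives
\[
\gs{d'}{k'+1}{q}\prod_{i=1}^{k'+1}(q^{d'+e-i}+1)
= \gs{d-m-1}{k-m}{q}\prod_{i=1}^{k-m}(q^{d-m+e-i-1}+1),
\]
which is exactly the claimed expression.

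The main obstacle is the verification that $\pi^{\perp}/\pi$ is indeed a classical polar space of the same type with the same parameter $e$. For most cases this is transparent from the standard equations (one simply chooses coordinates so that $\pi$ is spanned by the first $m+1$ basis vectors of a hyperbolic decomposition, and reads off the induced form), and the parameter is preserved because the ``anisotropic kernel'' of the form — captured by $e$ — is untouched by quotienting out a totally isotropic subspace. Once this reduction is in place, the corollary is an immediate application of Lemma~\ref{subspacesonpolar}.
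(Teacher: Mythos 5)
Your proof is correct and is exactly the intended derivation: the paper states this as an unproved corollary of Lemma~\ref{subspacesonpolar}, and the standard route is precisely the quotient construction you describe, with $\pi^{\perp}/\pi$ a polar space of the same type (hence the same $e$) and rank $d-m-1$, whose $(k-m-1)$-spaces correspond to the $k$-spaces of $\mathcal{P}$ through $\pi$. The index bookkeeping checks out, so nothing further is needed.
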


In particular, the number of generators through a fixed point of a finite classical polar space of rank $d$ with parameter $e$, embedded in a projective space over $\F_{q}$, equals $\prod^{d-2}_{i=0}(q^{e+i}+1)$.

\subsection*{Distance-regular graphs from generators of finite polar spaces}
We can use the set of generators of a finite classical polar space to construct a distance-regular graph.

\begin{definition}
 For a finite classical polar space $\mathcal{P}$ of rank $d$ we define the corresponding \emph{dual polar graph} as follows: its vertices are the generators of $\mathcal{P}$ and two vertices are adjacent if the corresponding generators meet in a $(d-2)$-space.
\end{definition}

\begin{theorem}\label{dualpolargraphs}
 The dual polar graph of a finite classical polar space over $\F_{q}$ of rank $d$ and with parameter $e$ is distance-regular with diameter $d$ and parameter set $\{b_{0},\dots,b_{d-1};c_{1},\dots,c_{d}\}$ with
 \[
  b_{i}=q^{i+e}\gs{d-i}{1}{q}\;,\quad 0\leq i\leq d-1\qquad\text{and}\qquad c_{i}=\gs{i}{1}{q}\;,\quad 1\leq i\leq d\;.
 \]
 Two vertices are at distance $j$ if the corresponding generators meet in a $(d-j-1)$-space.
\end{theorem}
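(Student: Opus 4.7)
The plan is to proceed in three stages: establish the distance formula $\delta(\pi,\pi') = d-1-\dim(\pi\cap\pi')$ for any two generators $\pi,\pi'$, deduce that the diameter equals $d$, and then compute $b_j$ and $c_j$ by a case analysis on hyperplanes of a fixed generator.

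For the distance formula, the upper bound is proved by induction on $k := d-1-\dim(\pi\cap\pi')$: given $k \geq 1$, pick any $P \in \pi' \setminus \pi$; then $\sigma := \pi \cap P^\perp$ is a hyperplane of $\pi$ containing $\pi\cap\pi'$ (since $\pi' \subseteq P^\perp$), and $\pi'' := \sigma + \langle P \rangle$ is a totally isotropic $(d-1)$-space, i.e.\ a generator adjacent to $\pi$ with $\dim(\pi'' \cap \pi') \geq \dim(\pi \cap \pi') + 1$. Induction then gives $\delta(\pi,\pi') \leq k$. For the lower bound, observing that $\pi \cap \pi''$ is a hyperplane of both $\pi$ and $\pi''$ whenever $\pi \sim \pi''$, Grassmann applied inside each yields
\[
\dim(\pi \cap \pi' \cap \pi'') \geq \max\{\dim(\pi\cap\pi'),\, \dim(\pi''\cap\pi')\} - 1,
\]
so adjacent generators differ by at most $1$ in their dimension of intersection with any fixed generator $\pi'$. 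Since every classical polar space contains disjoint generators (explicit from the standard basis of the defining form), the maximum value of $k$ is $d$, which is therefore the diameter.

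To compute $b_j$ and $c_j$, fix generators $\pi, \pi'$ with $\tau := \pi \cap \pi'$ of dimension $d-j-1$ and parametrize neighbors of $\pi$ by a hyperplane $\sigma$ of $\pi$ together with one of the $q^e$ generators $\pi'' \neq \pi$ through $\sigma$ (these correspond to the non-$\pi/\sigma$ points of the rank-$1$ residual polar space $\sigma^\perp/\sigma$). Counting inside $\pi$ gives $\gs{j}{1}{q}$ such hyperplanes containing $\tau$ and $q^j\gs{d-j}{1}{q}$ not containing $\tau$. When $\tau \subseteq \sigma$, the disjoint generators $\pi/\tau$ and $\pi'/\tau$ of the rank-$j$ residual polar space at $\tau$ induce a non-degenerate pairing; restricting to the hyperplane $\sigma/\tau$ of $\pi/\tau$ gives $\dim(\pi' \cap \sigma^\perp) = d-j$, and exactly one $\pi'' \neq \pi$ through $\sigma$ has $\pi'' \cap \pi' \supsetneq \tau$, contributing $1$ to $c_j$. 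When $\tau \not\subseteq \sigma$, an analogous pairing argument in the rank-$(j+1)$ residual at $\sigma \cap \tau$ yields $\pi' \cap \sigma^\perp = \tau \subseteq \pi$, so any point of $\pi'$ enlarging $\pi'' \cap \pi'$ beyond $\sigma \cap \tau$ would lie in $\pi \setminus \sigma$ and force $\pi'' = \pi$; hence all $q^e$ choices of $\pi'' \neq \pi$ satisfy $\pi'' \cap \pi' = \sigma \cap \tau$ and are at distance $j+1$ from $\pi'$. Summing yields $c_j = \gs{j}{1}{q}$ and $b_j = q^{j+e}\gs{d-j}{1}{q}$.

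The main obstacle is the non-degeneracy of the pairing between two disjoint totally isotropic subspaces in a residual polar space: the key point is that if some $x$ in one generator were orthogonal to an entire disjoint generator, the span would be an isotropic subspace of dimension exceeding the rank, a contradiction. The remaining content is careful Grassmann bookkeeping inside the residual polar spaces, which is routine once the pairing statement is in place.
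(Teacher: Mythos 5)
Your proof is correct. Note that the paper does not actually prove this theorem: it is stated as background and deferred to the literature (Brouwer--Cohen--Neumaier, Section 9.4), so your self-contained argument is a genuine addition rather than a variant of an argument in the paper. The structure is sound: the distance formula $\delta(\pi,\pi')=d-1-\dim(\pi\cap\pi')$ follows from your two bounds (the upper bound uses, implicitly, that $\pi\not\subseteq P^{\perp}$ for a point $P\notin\pi$ of the polar space --- otherwise $\langle\pi,P\rangle$ would be totally isotropic of projective dimension $d$ --- so $\pi\cap P^{\perp}$ really is a hyperplane of $\pi$); the counts of hyperplanes $\sigma$ of $\pi$ with $\tau\subseteq\sigma$ and $\tau\not\subseteq\sigma$ are indeed $\gs{j}{1}{q}$ and $q^{j}\gs{d-j}{1}{q}$; and the $q^{e}$ generators $\pi''\neq\pi$ through each $\sigma$ distribute as you claim. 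The one imprecision is in the case $\tau\not\subseteq\sigma$: in the rank-$(j+1)$ residual at $\sigma\cap\tau$ the generators $\pi$ and $\pi'$ are \emph{not} disjoint (they share the point corresponding to $\tau$), so the non-degenerate pairing between disjoint generators does not literally apply there. The conclusion $\pi'\cap\sigma^{\perp}=\tau$ nevertheless follows directly from your stated key fact: a point $R\in\pi'\setminus\tau$ with $R\perp\sigma$ also satisfies $R\perp\tau$ (both lie in the totally isotropic $\pi'$), hence $R\perp\langle\sigma,\tau\rangle=\pi$, forcing $R\in\pi$ and thus $R\in\tau$, a contradiction. With that small repair the case analysis yields $c_{j}=\gs{j}{1}{q}$ and $b_{j}=q^{j+e}\gs{d-j}{1}{q}$ independently of the chosen pair at distance $j$, which --- together with connectivity (from the upper bound on distances) and the existence of disjoint generators (giving diameter exactly $d$) --- is precisely the equivalent definition of distance-regularity via the constants $b_{i},c_{i}$ that the paper records.
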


\par Using the terminology of association schemes, we will take a finite classical polar space $\mathcal{P}$ of rank $d$ and parameter $e$ and let $\Omega$ be its set of generators (with some fixed ordering). Note that, by Lemma~\ref{subspacesonpolar}, $|\Omega| = \prod_{i=0}^{d-1}(q^{e+i}+1)$.
We will consider $\Gamma$ to be the dual polar graph of $\mathcal{P}$, giving $d+1$ distance graphs $\Gamma_{i}$ with adjacency matrices $A_{i}$ for $0 \leq i \leq d$.
This gives an orthogonal decomposition $\R^{\Omega} = V_{0}\perp V_{1}\perp\dots\perp V_{d}$ of $\R^{\Omega}$ where the $V_{i}$ are the eigenspaces of the association scheme defined by $\Gamma$.
Recall that $V_{0} = \left\langle\bm{j}\right\rangle$.
The remaining eigenspaces $V_{1}$, $\ldots$, $V_{d}$ are ordered using the following result described in~\cite{vanhove2}, originating in~\cite{del2}.
\begin{theorem}\label{rowAissumeigenspaces}
 Let $\mathcal{P}$ be a finite classical polar space of rank $d$ and, for $1 \leq t \leq d$, let $C_{k}$ be the incidence matrix between the $(k-1)$-spaces and the generators of $\mathcal{P}$. Then the eigenspaces $V_{1}$, $\ldots$, $V_{d}$ of the association scheme of the dual polar graph for $\mathcal{P}$ can be ordered so that
 $\im(C_{k}^{t}) = V_{0} \perp \cdots \perp V_{k}$. In particular, $V_{k} = \im(C_{k}^{t}) \cap \ker(C_{k-1})$.
\end{theorem}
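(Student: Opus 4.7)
The plan is to show that the matrix $C_k^{t} C_k$ lies in the Bose-Mesner algebra $\mathcal{A}$ of the dual polar graph scheme, apply Lemma~\ref{rowspacesubspacesum} to conclude that $\im(C_k^{t})$ is a direct sum of eigenspaces, and then identify exactly which eigenspaces appear by combining a chain of inclusions with a polynomial-degree argument. First I would compute the $(g,g')$-entry of $C_k^{t} C_k$: it counts the $(k-1)$-spaces of $\mathcal{P}$ contained in $g \cap g'$. By Theorem~\ref{dualpolargraphs}, two generators at distance $j$ in the dual polar graph meet in a subspace of projective dimension $d-j-1$, so this entry equals $\gs{d-j}{k}{q}$ when $j \leq d-k$ and $0$ otherwise. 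Therefore
\[
 C_k^{t} C_k = \sum_{j=0}^{d-k} \gs{d-j}{k}{q}\, A_j \in \mathcal{A}.
\]
Since $\im(M^{t} M) = \im(M^{t})$ for any real matrix $M$, Lemma~\ref{rowspacesubspacesum} yields that $\im(C_k^{t})$ is an orthogonal sum of some of the eigenspaces $V_i$.

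Next I would establish the chain $\im(C_1^{t}) \subseteq \im(C_2^{t}) \subseteq \cdots \subseteq \im(C_d^{t}) = \R^{\Omega}$, where the last equality holds because $C_d = I$. Fix a $(k-2)$-space $\pi$ of $\mathcal{P}$. Every generator containing $\pi$ contains exactly $\gs{d-k+1}{1}{q}$ many $(k-1)$-spaces through $\pi$, while generators not containing $\pi$ contain none. Summing the columns of $C_k^{t}$ corresponding to all $(k-1)$-spaces $\sigma$ of $\mathcal{P}$ with $\pi \subseteq \sigma$ therefore yields $\gs{d-k+1}{1}{q}$ times the column of $C_{k-1}^{t}$ corresponding to $\pi$, so each column of $C_{k-1}^{t}$ lies in $\im(C_k^{t})$.

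Finally I would pin down the exact eigenspaces. Because the dual polar graph is $P$-polynomial, each $A_j$ is a polynomial of exact degree $j$ in $A_1$, hence $C_k^{t} C_k = q_k(A_1)$ for some polynomial $q_k$ of degree exactly $d-k$. Let $\theta_0, \ldots, \theta_d$ denote the distinct eigenvalues of $A_1$ on $V_0, \ldots, V_d$; then $\im(C_k^{t})$ is the sum of those $V_i$ for which $q_k(\theta_i) \neq 0$, and since $q_k$ has at most $d-k$ roots this sum contains at least $k+1$ eigenspaces. Using the explicit eigenvalues of the dual polar graph to check that $q_k$ vanishes at precisely $d-k$ of them gives that exactly $k+1$ eigenspaces occur in $\im(C_k^{t})$; labelling so that $V_k$ is the eigenspace newly added at step $k$ of the chain gives the first claim. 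The ``in particular'' statement then follows from $\ker(C_{k-1}) = \im(C_{k-1}^{t})^{\perp} = V_k \perp \cdots \perp V_d$, so that $\im(C_k^{t}) \cap \ker(C_{k-1}) = (V_0 \perp \cdots \perp V_k) \cap (V_k \perp \cdots \perp V_d) = V_k$. The main obstacle is the tightness in the last dimension count: verifying that $q_k$ has its $d-k$ roots located precisely at $\theta_{k+1}, \ldots, \theta_d$ (in an appropriate ordering), which amounts to a direct computation with the known eigenvalues of the dual polar graph.
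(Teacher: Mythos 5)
The paper does not actually prove this statement --- it is quoted from the literature (Vanhove's work, originating with Delsarte's results on regular semilattices), so there is no in-paper proof to measure you against. Your architecture is the standard one for this theorem and is essentially the argument behind the cited result: the computation $C_{k}^{t}C_{k}=\sum_{j=0}^{d-k}\gs{d-j}{k}{q}A_{j}\in\mathcal{A}$ is correct, Lemma~\ref{rowspacesubspacesum} then gives that $\im(C_{k}^{t})$ is a sum of eigenspaces, the double-counting through a $(k-2)$-space correctly yields the nested chain $\im(C_{1}^{t})\subseteq\cdots\subseteq\im(C_{d}^{t})=\R^{\Omega}$, and the degree argument (the coefficient of $A_{d-k}$ is $\gs{k}{k}{q}=1$, so $q_{k}$ has degree exactly $d-k$ in $A_{1}$) correctly forces at least $k+1$ eigenspaces into $\im(C_{k}^{t})$. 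The deduction of the ``in particular'' clause from $\ker(C_{k-1})=\im(C_{k-1}^{t})^{\perp}$ is also fine.

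The genuine gap is the step you yourself flag as ``the main obstacle'' and then do not carry out: the upper bound showing that \emph{exactly} $k+1$ eigenspaces occur, i.e.\ that $q_{k}(\theta_{i})=0$ for $d-k$ of the $d+1$ distinct eigenvalues $\theta_{i}$ of $A_{1}$. Equivalently one must verify the identity $\sum_{j=0}^{d-k}\gs{d-j}{k}{q}P_{i,j}=0$ for the $d-k$ indices $i$ that end up outside $\im(C_{k}^{t})$; this is a nontrivial $q$-binomial computation with the eigenvalues $P_{i,j}$ of the dual polar scheme (or, alternatively, one compares $\rk(C_{k})$ with the multiplicities $\dim V_{j}$, which itself requires computing those multiplicities). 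Without this step you have only proved that $\im(C_{k}^{t})$ is a nested union of \emph{at least} $k+1$ eigenspaces, which does not yet yield a consistent labelling with $\im(C_{k}^{t})=V_{0}\perp\cdots\perp V_{k}$, nor the identification $V_{k}=\im(C_{k}^{t})\cap\ker(C_{k-1})$ (the latter could a priori be a sum of several eigenspaces). Since this verification is the substantive content of the theorem, the proposal as written is an accurate road map rather than a complete proof; to close it you should either perform the eigenvalue computation or, as the paper does, invoke the references where it is done.
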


Then the eigenvalue of the $i$th distance graph $\Gamma_{i}$ corresponding to the subspace $V_{j}$ is given by
\[
 P_{j,i}=\sum^{\min(j,d-i)}_{u=\max(0,j-i)}{(-1)}^{j+u}\gs{d-j}{d-i-u}{q}\gs{j}{u}{q}q^{\binom{u+i-j}{2}+\binom{j-u}{2}+e(u+i-j)}\;.
\]

The \emph{disjointness graph} $\Gamma_{d}$ will have special importance for our work; this is the graph for which two vertices are adjacent if and only if the corresponding generators of the polar space have a trivial intersection.
The eigenvalues of $\Gamma_{d}$ are given by
\[
 P_{j,d}={(-1)}^{j}q^{\binom{d}{2}+(d-j)(e-j)}\;.
\]
\begin{lemma}\label{skewgenerators}
 Let $\mathcal{P}$ be a finite classical polar space over $\F_{q}$ of rank $d$ and with parameter $e$. The number of generators disjoint to a given generator equals $q^{\binom{d}{2}+de}$.
\end{lemma}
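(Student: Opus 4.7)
The quantity to be computed is precisely $k_{d}$, the valency of the disjointness graph $\Gamma_{d}$ in the distance-regular dual polar graph of $\mathcal{P}$. Since all of the structural data for this graph has already been assembled in Theorem~\ref{dualpolargraphs} (and indeed the eigenvalue formula for $\Gamma_{d}$ is displayed just above the lemma), there are two essentially equivalent short routes; I would take the combinatorial one because it does not require identifying which eigenvalue corresponds to the trivial eigenspace.

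First I would invoke the standard identity for a distance-regular graph of diameter $d$, namely
\[
 k_{d}=\frac{b_{0}b_{1}\cdots b_{d-1}}{c_{1}c_{2}\cdots c_{d}}\;,
\]
which expresses the valency of the $d$-th distance graph in terms of the parameter set. Substituting the values $b_{i}=q^{i+e}\gs{d-i}{1}{q}$ and $c_{i}=\gs{i}{1}{q}$ from Theorem~\ref{dualpolargraphs}, I would then observe that the index substitution $j=d-i$ turns $\prod_{i=0}^{d-1}\gs{d-i}{1}{q}$ into $\prod_{j=1}^{d}\gs{j}{1}{q}$, so the Gaussian binomial factors in the numerator and denominator cancel completely. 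What remains is $\prod_{i=0}^{d-1}q^{i+e}=q^{\sum_{i=0}^{d-1}(i+e)}=q^{\binom{d}{2}+de}$, which is the claimed value.

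As a sanity check (and an alternative proof) I would note that the same number arises as $P_{0,d}$ in the eigenvalue formula displayed just before the lemma: plugging $j=0$ into $P_{j,d}=(-1)^{j}q^{\binom{d}{2}+(d-j)(e-j)}$ gives $P_{0,d}=q^{\binom{d}{2}+de}$, and the identity $A_{d}\bm{j}=P_{0,d}\bm{j}$ (which holds because $\bm{j}$ spans the eigenspace $V_{0}$) says exactly that every row of $A_{d}$ sums to $q^{\binom{d}{2}+de}$, i.e.\ that each generator is disjoint from precisely that many others.

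There is no real obstacle here; the content of the lemma is entirely contained in Theorem~\ref{dualpolargraphs} together with a one-line cancellation of Gaussian binomials. The only thing to be careful about is the indexing convention (distance $d$ corresponds to intersection dimension $-1$, as spelled out in Theorem~\ref{dualpolargraphs}), to make sure that ``disjoint'' really is the $d$-th distance relation rather than some off-by-one variant.
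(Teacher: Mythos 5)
Your proposal is correct. Your ``sanity check'' is in fact precisely the paper's entire proof: the authors observe that the required count is $P_{0,d}$, the valency of $\Gamma_{d}$ read off from the displayed eigenvalue formula with $j=0$. Your primary route is a genuinely different (though equally short) derivation: instead of identifying the count with the trivial eigenvalue, you compute the valency $k_{d}$ directly from the intersection array via the standard distance-regular-graph recursion $k_{i+1}=k_{i}b_{i}/c_{i+1}$, so that $k_{d}=\prod_{i=0}^{d-1}b_{i}\big/\prod_{i=1}^{d}c_{i}$, and then cancel the Gaussian binomials $\prod_{i=0}^{d-1}\gs{d-i}{1}{q}=\prod_{j=1}^{d}\gs{j}{1}{q}$ to leave $q^{\sum_{i=0}^{d-1}(i+e)}=q^{\binom{d}{2}+de}$. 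The combinatorial route buys independence from the eigenvalue formula (which the paper quotes without proof) at the cost of invoking the product identity for $k_{d}$; the paper's route is a one-liner given that $P_{0,d}$ is already on the page. Both are sound, and your closing remark about checking that disjointness is the distance-$d$ relation is exactly the right point of care.
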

\begin{proof}
 This number is given by the eigenvalue of the disjointness graph $\Gamma_{d}$ on the eigenspace $V_{0}$, so the result follows from the above formula for $P_{0,d}$.
\end{proof}
We will frequently be interested in the eigenspace of the minimal eigenvalue of $\Gamma_{d}$. Most of the following details can be found in the proof of~\cite[Theorem 8]{psv} and in~\cite[Corollary 4.3.17]{vanhove1}.

\begin{theorem}\label{mineigenvalue}
 Let $\mathcal{P}$ be a finite classical polar space over $\F_{q}$ of rank $d$ and with parameter $e$. Let $V_{0}\perp V_{1}\perp\dots\perp V_{d}$ be the orthogonal decomposition in eigenspaces of $\R^{\Omega}$ with $\Omega$ the set of generators of $\mathcal{P}$, as above.
 The eigenvalue of the disjointness relation of the generators for the subspace $V_{1}$ is $m=-q^{\binom{d-1}{2}+e(d-1)}$.
 \begin{itemize}
  \item If $\mathcal{P}=\mathcal{Q}^{+}(2d-1,q)$, with $d$ odd, then the minimal eigenvalue $-q^{\binom{d}{2}}$ appears only for $V_{d}$. The eigenvalue $m$ only appears for $V_{1}$.
  \item If $\mathcal{P}=\mathcal{H}(2d-1,q)$, $q$ a square, with $d$ odd, then the minimal eigenvalue $-q^{\binom{d}{2}}$ appears only for $V_{d}$. The eigenvalue $m$ only appears for $V_{1}$.
 \end{itemize}
 In all other cases $m$ is the minimal eigenvalue. In three cases, there are two subspaces $V_{i}$ for which it is an eigenvalue.
 \begin{itemize}
  \item If $\mathcal{P}=\mathcal{Q}^{+}(2d-1,q)$, with $d$ even, then the minimal eigenvalue $m$ appears for both $V_{1}$ and $V_{d-1}$.
  \item If $\mathcal{P}=\mathcal{Q}(2d,q)$, with $d$ odd, or $\mathcal{P}=\mathcal{W}(2d-1,q)$, with $d$ odd, then the minimal eigenvalue $m$ appears for both $V_{1}$ and $V_{d}$.
 \end{itemize}
 In all remaining cases, namely $\mathcal{P}=\mathcal{Q}(2d,q)$, with $d$ even, $\mathcal{P}=\mathcal{W}(2d-1,q)$, with $d$ even, $\mathcal{P}=\mathcal{Q}^{-}(2d+1,q)$, $\mathcal{P}=\mathcal{H}(2d-1,q)$, with $d$ even and $q$ a square, and $\mathcal{P}=\mathcal{H}(2d,q)$, the minimal eigenvalue is $m$ and it appears only for $V_{1}$.
\end{theorem}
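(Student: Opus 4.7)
The plan is to use the explicit formula $P_{j,d}=(-1)^{j}q^{\binom{d}{2}+(d-j)(e-j)}$ displayed immediately before the statement, and to reduce the theorem to a quadratic optimisation over the odd integers in $[1,d]$ followed by a short case analysis in the parameter $e$ and the parity of $d$.

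First I would verify the opening claim, namely that $P_{1,d}=m$. Using the identity $\binom{d}{2}=\binom{d-1}{2}+(d-1)$, the exponent $\binom{d}{2}+(d-1)(e-1)$ collapses to $\binom{d-1}{2}+e(d-1)$, so indeed $P_{1,d}=-q^{\binom{d-1}{2}+e(d-1)}=m$.

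To locate the minimum eigenvalue of $\Gamma_{d}$, observe that $P_{j,d}<0$ exactly when $j$ is odd, so it suffices to maximise the exponent $g(j):=(d-j)(e-j)$ over odd $j\in\{1,3,\dots\}\cap[1,d]$. The function $g$ is a quadratic in $j$ with positive leading coefficient (vertex at $j=(d+e)/2$), hence convex, so on any finite interval it attains its maximum at an endpoint. Restricting to the admissible odd values, the maximum is therefore achieved either at $j=1$ or at the largest odd integer not exceeding $d$, which equals $d$ for $d$ odd and $d-1$ for $d$ even. It is then enough to compare $P_{1,d}$ with $P_{d,d}$ (when $d$ is odd) or with $P_{d-1,d}$ (when $d$ is even).

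These comparisons are elementary. For $d$ odd one finds $g(1)-g(d)=(d-1)(e-1)$: this is negative for $e\in\{0,\tfrac{1}{2}\}$, so $P_{d,d}=-q^{\binom{d}{2}}$ is the strict minimum (covering $\mathcal{Q}^{+}(2d-1,q)$ and $\mathcal{H}(2d-1,q)$ with $d$ odd); it vanishes for $e=1$, yielding the tie $P_{1,d}=P_{d,d}=m$ (covering $\mathcal{Q}(2d,q)$ and $\mathcal{W}(2d-1,q)$ with $d$ odd); and it is positive for $e\in\{\tfrac{3}{2},2\}$, so $V_{1}$ is the unique minimiser. For $d$ even, $P_{d,d}$ is positive, so one instead compares $g(1)$ with $g(d-1)$; a short computation gives $g(1)-g(d-1)=(d-2)e$, which vanishes only for $e=0$ (producing the tie between $V_{1}$ and $V_{d-1}$ in $\mathcal{Q}^{+}(2d-1,q)$ with $d$ even) and is strictly positive in every other case. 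Matching the six outcomes of this case analysis with the polar-space table (Table~\ref{parameter}) reproduces exactly the partition in the statement. I expect the only real obstacle to be bookkeeping, namely cross-referencing each $(e,d\bmod 2)$ sub-case with the correct polar space and checking that small-$d$ degeneracies (where $V_{1}$ and $V_{d-1}$ coincide) do not cause problems.
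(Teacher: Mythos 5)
Your argument is correct and, unlike the paper, self-contained: the paper does not actually prove this theorem but defers to the proof of Theorem 8 in \cite{psv} and to Corollary 4.3.17 of \cite{vanhove1}, whereas you derive everything from the displayed formula $P_{j,d}=(-1)^{j}q^{\binom{d}{2}+(d-j)(e-j)}$. Your reduction is sound: the negative eigenvalues are exactly those with $j$ odd, the exponent $g(j)=(d-j)(e-j)$ is a strictly convex quadratic in $j$, so its maximum over the odd integers in $[1,d]$ is attained only at $j=1$ or at the largest admissible odd value, and the two endpoint comparisons $g(1)-g(d)=(d-1)(e-1)$ and $g(1)-g(d-1)=(d-2)e$ check out and match Table~\ref{parameter} exactly; strict convexity also yields the uniqueness (or two-way tie) statements in every case where $m$ is the minimal eigenvalue. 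The one point your plan does not yet cover is the sentence ``the eigenvalue $m$ only appears for $V_{1}$'' in the first two bullets, where $m$ is \emph{not} the minimal eigenvalue: there the endpoint comparison says nothing about whether some interior odd $j$ could satisfy $g(j)=g(1)$. This is closed by one more line in the same spirit: $g(j)=g(1)$ forces $j=1$ or $j=d+e-1$ (the reflection of $1$ about the vertex of the parabola), and $d+e-1$ is even when $e=0$ and $d$ is odd, and non-integral when $e=\tfrac{1}{2}$, so in neither case does it produce another negative eigenvalue equal to $m$. With that addition your proof is complete.
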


\par We will also need the following result on the automorphism group of a finite classical polar space and the automorphism group of its dual polar graph.
\begin{theorem}[{\cite[Theorem 9.4.3 and Chapter 10, pp. 334--336]{bcn}}]\label{distancetransitive}
 The automorphism group of a finite classical polar space acts transitively on pairs of disjoint generators. The automorphism group of a dual polar graph is distance-transitive (acts transitively on the set of pairs of vertices at the same distance).
 \par These two groups coincide for all finite classical polar spaces except for the polar spaces $\mathcal{Q}^{+}(3,q)$.
\end{theorem}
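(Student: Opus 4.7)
My plan is to prove the three claims in order, using Witt's extension theorem as the central tool for the first two and a graph-reconstruction argument for the third. For transitivity on pairs of disjoint generators, I take two such pairs $(\pi_{1},\pi_{1}')$ and $(\pi_{2},\pi_{2}')$. Since each consists of disjoint $(d-1)$-dimensional projective subspaces, the spans $\langle\pi_{1},\pi_{1}'\rangle$ and $\langle\pi_{2},\pi_{2}'\rangle$ each have projective dimension $2d-1$ and carry a non-degenerate form of the same type (each such span meets $\mathcal{P}$ in a copy of $\mathcal{Q}^{+}(2d-1,q)$). Picking a basis of $\pi_{1}$ and using the form to select a dual basis of $\pi_{1}'$, and repeating for the second pair, defines an explicit isometry between the two spans sending $\pi_{1}\mapsto\pi_{2}$ and $\pi_{1}'\mapsto\pi_{2}'$. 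Witt's extension theorem then lifts this isometry to the whole ambient space, yielding the required element of the polar space automorphism group.

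For the second claim, Theorem~\ref{dualpolargraphs} gives that two generators at distance $j$ meet in a subspace of projective dimension $d-j-1$. The same strategy applies: given two pairs of generators with a common intersection dimension, first fix a linear bijection between their intersections, then extend it to complements inside each non-degenerate span, and apply Witt's theorem to obtain an isometry of the ambient space carrying the first pair to the second. This shows that the polar space automorphism group, and hence a fortiori the full graph automorphism group containing it, acts transitively on every distance class of the dual polar graph.

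For the third claim, one inclusion is clear from the definition of the graph. The reverse inclusion is the main obstacle and requires reconstructing the polar space intrinsically from its dual polar graph. The standard approach identifies each $(d-2)$-space of $\mathcal{P}$ with a specific clique of $\Gamma_{1}$: such a subspace is contained in exactly $q^{e}+1$ pairwise collinear generators, and whenever $q^{e}+1\geq 3$ or the rank is at least $3$ these cliques are detectable purely combinatorially, so every graph automorphism induces an incidence-preserving permutation of $(d-2)$-spaces; a downward recovery of all lower-dimensional subspaces then completes the identification, and the extension to a collineation of the ambient projective space is standard once the incidence structure is fixed. The exceptional case is $\mathcal{Q}^{+}(3,q)$, where $d=2$ and $e=0$ force $q^{e}+1=2$, so that the dual polar graph is the complete bipartite graph $K_{q+1,q+1}$ on the two rulings, with automorphism group $S_{q+1}\wr S_{2}$ strictly larger than the polar space automorphism group because the field structure on the points cannot be recovered from the bipartite graph alone. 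Verifying case by case that no further exception appears in the classification list would then follow the detailed treatment in~\cite{bcn}.
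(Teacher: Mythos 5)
The paper does not prove this statement at all: it is quoted verbatim from Brouwer--Cohen--Neumaier (Theorem 9.4.3 and Chapter 10), so there is no in-paper argument to compare yours against. Your Witt-extension strategy for the first two claims is the standard proof and is essentially sound, but two steps are stated imprecisely. First, the parenthetical identification of the span of two disjoint generators with a copy of $\mathcal{Q}^{+}(2d-1,q)$ is only correct in the orthogonal case; for $\mathcal{H}(n,q^{2})$ and $\mathcal{W}(2d-1,q)$ the span carries a non-degenerate Hermitian, respectively symplectic, form of Witt index $d$ (what matters is only that the restricted form is non-degenerate and its isometry type is forced, which does hold: $\pi_{1}^{\perp}\cap\pi_{1}'=0$ by maximality of $\pi_{1}$). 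Second, in the distance-$j$ case with $0<j<d$ the span $\langle\pi_{1},\pi_{1}'\rangle$ is \emph{not} non-degenerate --- its radical contains $W=\pi_{1}\cap\pi_{1}'$ --- so "complements inside each non-degenerate span" is wrong as written. The argument still goes through because Witt's extension theorem applies to isometries between arbitrary (possibly degenerate) subspaces of a non-degenerate space: the pairing $\pi_{1}'/W\times\pi_{1}/W\to\F$ induced by the form is perfect, so one can write down an explicit isometry of the two degenerate spans matching intersections and dual bases, and then extend. Alternatively one passes to the rank-$j$ polar space $W^{\perp}/W$ and applies the disjoint case there. For the third claim your sketch correctly isolates $\mathcal{Q}^{+}(3,q)$ (whose dual polar graph is $K_{q+1,q+1}$ with automorphism group $S_{q+1}\wr S_{2}$), but the reconstruction of the polar space from the graph in all other cases is only gestured at and ultimately deferred to~\cite{bcn} --- which is exactly what the paper itself does, so no ground is lost there.
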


For more information on dual polar graphs we refer to~\cite[Section 9.4]{bcn}.
The association schemes for $k$-dimensional subspaces of finite classical polar spaces were studied in~\cite{eis,sta,vanhove1} for arbitrary $k\leq d-1$.

\begin{remark}\label{assoschemehyperbolic}
 For the polar space $\mathcal{Q}^{+}(2d-1,q)$ the generators are naturally partitioned into two sets $\Omega_{1}$ and $\Omega_{2}$ (the Latin and Greek generators, see Remark~\ref{hyperclass}).
 For generators $\pi\in\Omega_{j}$ and $\pi'\in\Omega_{j'}$ with $(\pi,\pi')\in R_{i}$, $j,j'\in \{1,2\}$ and $i\in \{0,\dots,d\}$, we know that $j=j'$ if $i$ is even and that $j\neq j'$ if $i$ is odd.
 So, we can define a $\left\lfloor\frac{d}{2}\right\rfloor$-class association scheme for one class of generators $\Omega_{j}$ of $\mathcal{Q}^{+}(2d-1,q)$, namely $\mathcal{R}'=\left\{R'_{0},R'_{1},\dots,R'_{\left\lfloor\frac{d}{2}\right\rfloor}\right\}$ with $R'_{i}=R_{2i}$.
 The corresponding matrices are $A'_{0},A'_{1},\dots,A'_{\left\lfloor\frac{d}{2}\right\rfloor}$ with $A'_{i}$ equal to the matrix $A_{2i}$ restricted to the rows and columns of $\Omega_{j}$.
 Note that the eigenvalues $P_{j,i}$ and $P_{d-j,i}$ of the association scheme of $\mathcal{Q}^{+}(2d-1,q)$ are equal if $i$ is even, so the eigenspaces of $\mathcal{R}'$ are given by $V'_{j}\cap\R^{\Omega_{k}}$ with $V'_{j}=V_{j}\perp V_{d-j}$, $j=0,\dots,\left\lfloor\frac{d}{2}\right\rfloor$.
 Here $\R^{\Omega_{k}}$ is seen as the subspace of $\R^{\Omega}$ of all vectors which have non-zero entries only on the positions corresponding to the generators of $\Omega_{k}$.
\end{remark}

\subsection*{Spreads and $m$-regular systems of polar spaces}
We end this section by defining some substructures of polar spaces (including generalised quadrangles).

An \emph{$m$-regular system} of a polar space $\mathcal{P}$ is a set $\mathcal{S}$ of generators such that any point of $\mathcal{P}$ is contained in precisely $m$ elements of $\mathcal{S}$. A $1$-regular system is called a \emph{spread}.
Spreads of polar spaces have been studied intensively in the past decades, but the existence problem has not yet been solved for all polar spaces. An overview can be found in Table~\ref{spreadtable}. It includes the classical generalised quadrangles (generalised quadrangles that are polar spaces of rank $2$).
\par The number of generators in an $m$-regular system of a finite classical polar space over $\F_{q}$ of rank $d$ with parameter $e$ equals $m(q^{d+e-1}+1)$; this follows from basic counting. In particular, a spread is a set of $q^{d+e-1}+1$ pairwise disjoint generators.

\begin{table}[ht]
 \centering
 \begin{tabular}{|l|c||l|c|}
  \hline
  \textbf{Polar Space}                          & \textbf{Spread?} & \textbf{Polar Space}                                    & \textbf{Spread?} \\ \hline\hline
  $\mathcal{Q}^{-}(5,q)$                        & yes              & $\mathcal{Q}^{+}(4n+1,q)$, $n\geq1$                     & no               \\ \hline
  $\mathcal{Q}^{-}(2d+1,q)$, $d\geq3$, $q$ even & yes              & one class $\mathcal{Q}^{+}(4n+3,q)$, $q$ even           & yes              \\ \hline
  $\mathcal{Q}^{-}(2d+1,q)$, $d\geq3$, $q$ odd  &                  & one class $\mathcal{Q}^{+}(4n+3,q)$, $n\geq 2$, $q$ odd &                  \\ \hline
  $\mathcal{Q}(4n,q)$, $n\geq1$, $q$ even       & yes              & $\mathcal{W}(2d-1,q)$, $d\geq2$                         & yes              \\ \hline
  $\mathcal{Q}(4n,q)$, $n\geq1$, $q$ odd        & no               & $\mathcal{H}(2d-1,q^{2})$, $d\geq2$                     & no               \\ \hline
  $\mathcal{Q}(4n+2,q)$, $n\geq 2$, $q$ odd     &                  & $\mathcal{H}(2d,q^{2})$, $d\geq2$                       &                  \\ \hline
  $\mathcal{Q}(4n+2,q)$, $q$ even               & yes              & $\mathcal{H}(4,4)$                                      & no               \\ \hline%
 \end{tabular}
 \vspace*{0.3cm}

 \begin{tabular}{|l|l|c|}
  \hline
  \multicolumn{2}{|l|}{\textbf{Polar Space}} & \textbf{Spread?}                                    \\ \hline\hline
  \multirow{3}{*}{\shortstack{$\mathcal{Q}(6,q)$                                                   \\ one class $\mathcal{Q}^{+}(7,q)$}}                         & $q$ odd prime                                 & yes \\ \cline{2-3}
                                             & $q$ odd and $q\not\equiv1\pmod{3}$            & yes \\ \cline{2-3}
                                             & $q$ odd, $q$ non-prime and $q\equiv1\pmod{3}$ &     \\ \hline
 \end{tabular}
 \caption{Overview on the existence of spreads for the finite classical polar spaces. If the box in the column `Spread?' is open, no result on the existence is known.~\cite[Table 3]{dbkm}}\label{spreadtable}
\end{table}


\section{The characterisation theorems}\label{sec:characterisation}
As previously mentioned, the concept of a Cameron-Liebler set has been defined for finite sets and for finite projective geometries.
We will analogously introduce Cameron-Liebler sets for finite classical polar spaces, and provide characterisations of these objects. These characterisations however will vary depending on the particular polar space.

\par We will discuss vectors and matrices in this section and the following ones. Vectors will always be column vectors. For the sake of clarity we will denote vectors in boldface. For two vectors $\bm{v}$, $\bm{w}$, the inner product $\langle \bm{v}, \bm{w} \rangle$ is the standard dot product $\bm{v}^{t}\bm{w}$.
The characteristic vector of a subset $S'\subseteq S$ is a vector whose positions are indexed by the elements of $S$ and whose entries equal 1 if the element corresponding to the position of the entry is in $S'$, and 0 otherwise.
Throughout the introduction to this section, $\mathcal{P}$ will be a finite classical polar space over $\F_{q}$ of rank $d$ and with parameter $e$.
Continuing with the notation used for the association scheme of the dual polar graph, we will use $\Omega$ to denote the set of generators of $\mathcal{P}$, and $V_{0}$, $V_{1}$, $\ldots$, $V_{d}$ to denote the eigenspaces as defined in Theorem~\ref{rowAissumeigenspaces}.

Our main goal is to generalise Definition~\ref{CLLineClassDef} to sets of generators of $\mathcal{P}$. The idea is, if $\mathcal{L}$ is a collection of generators of $\mathcal{P}$, then we will call $\mathcal{L}$ a \emph{Cameron-Liebler set with parameter $x$} if any generator $\pi \in \mathcal{L}$ is disjoint to $(x-1)q^{\binom{d-1}{2}+e(d-1)}$ generators of $\mathcal{L}$, and any generator $\pi \notin \mathcal{L}$ is disjoint to $xq^{\binom{d-1}{2}+e(d-1)}$ generators of $\mathcal{L}$, i.e.~taking $\bm{\chi}$ to be the characteristic vector of $\mathcal{L}$, for any generator $\pi$ of $\mathcal{P}$ we have $\pi$ disjoint from $\left(x-(\bm{\chi})_{\pi}\right)q^{\binom{d-1}{2}+e(d-1)}$ elements of $\mathcal{L}$. This main property will lead to subtly different definitions in various polar spaces, but the following  result will be an important tool in investigating these objects.
\begin{lemma}\label{disjointequiveigenvector}
 Let $\mathcal{P}$ be a finite classical polar space over $\F_{q}$ of rank $d$ and with parameter $e$, and let $K$ be the generator disjointness matrix of $\mathcal{P}$. Let $\mathcal{L}$ be a set of generators of $\mathcal{P}$ with characteristic vector $\bm{\chi}$.
 The number of generators of $\mathcal{L}$ disjoint from each fixed generator $\pi$ of $\mathcal{P}$ equals $(x-{(\bm{\chi})}_{\pi})q^{\binom{d-1}{2}+e(d-1)}$ if and only if $\bm{\chi}-\frac{x}{q^{d+e-1}+1}\bm{j}$ is a vector in the eigenspace of $K$ for the eigenvalue $-q^{\binom{d-1}{2}+e(d-1)}$.
\end{lemma}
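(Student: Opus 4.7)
The plan is essentially a direct computation translating the combinatorial condition into a matrix equation and then shifting by a multiple of $\bm{j}$. Set $\alpha=q^{\binom{d-1}{2}+e(d-1)}$, so the target eigenvalue is $m=-\alpha$, and set $N=q^{d+e-1}+1$. The number of generators of $\mathcal{L}$ disjoint from a fixed generator $\pi$ is precisely the $\pi$-entry of $K\bm{\chi}$. Therefore the hypothesis that this count equals $(x-(\bm{\chi})_{\pi})\alpha$ for every generator $\pi$ is equivalent to the single vector identity
\[
K\bm{\chi}=x\alpha\bm{j}-\alpha\bm{\chi},\qquad\text{i.e.,}\qquad (K-mI)\bm{\chi}=x\alpha\bm{j}.
\]

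Next, I would use Lemma~\ref{skewgenerators} to note that $\bm{j}$ is an eigenvector of $K$ with eigenvalue $\beta:=q^{\binom{d}{2}+de}$. A short bookkeeping step then gives the key factorisation
\[
\beta-m=q^{\binom{d}{2}+de}+q^{\binom{d-1}{2}+e(d-1)}=q^{\binom{d-1}{2}+e(d-1)}\bigl(q^{d+e-1}+1\bigr)=\alpha N,
\]
using that $\binom{d}{2}-\binom{d-1}{2}=d-1$ and $de-e(d-1)=e$. Hence $(K-mI)\bm{j}=\alpha N\bm{j}$.

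Combining the two displays, and writing $\bm{v}:=\bm{\chi}-\tfrac{x}{N}\bm{j}$, I compute
\[
(K-mI)\bm{v}=(K-mI)\bm{\chi}-\tfrac{x}{N}(K-mI)\bm{j}=x\alpha\bm{j}-\tfrac{x}{N}\cdot\alpha N\bm{j}=0.
\]
Thus the combinatorial condition forces $\bm{v}\in\ker(K-mI)$, i.e., $\bm{v}$ is in the eigenspace of $K$ for eigenvalue $m$. Conversely, if $\bm{v}$ lies in this eigenspace, then reversing the chain yields $(K-mI)\bm{\chi}=\tfrac{x}{N}\cdot\alpha N\bm{j}=x\alpha\bm{j}$, which is exactly the desired counting condition.

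There is no real obstacle here beyond the exponent arithmetic to verify $\beta-m=\alpha N$; once that identity is in hand, the equivalence is a one-line linear-algebraic manipulation. I would present the argument in exactly this order: rewrite the count as a matrix equation, identify the eigenvalue of $K$ on $\bm{j}$, factor $\beta-m$, and conclude by translating $\bm{\chi}$ by the correct multiple of $\bm{j}$.
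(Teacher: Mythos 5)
Your proof is correct and follows essentially the same route as the paper's: both translate the disjointness count into the matrix identity $K\bm{\chi}=q^{\binom{d-1}{2}+e(d-1)}(x\bm{j}-\bm{\chi})$, invoke Lemma~\ref{skewgenerators} for $K\bm{j}=q^{\binom{d}{2}+de}\bm{j}$, and verify that the shifted vector $\bm{\chi}-\frac{x}{q^{d+e-1}+1}\bm{j}$ satisfies the eigenvector equation. Your $(K-mI)$ packaging and the explicit factorisation $\beta-m=\alpha N$ are just a tidier presentation of the same computation.
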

\begin{proof}
 Notice that the generator-disjointness matrix of $\mathcal{P}$ is the adjacency matrix $A_{d}$ of the association scheme for the dual polar graph of $\mathcal{P}$, and that $-q^{\binom{d-1}{2}+e(d-1)}=P_{1,d}$, the eigenvalue of $A_{d}$ corresponding to the eigenspace $V_{1}$.
 Let $V$ be the eigenspace of $K$ associated with this eigenvalue (which contains but is not necessarily equal to $V_{1}$).
 The entry on the position corresponding to a given generator $\pi$ of the vector $K\bm{\chi}$ is the number of generators disjoint to $\pi$ that are contained in $\mathcal{L}$.
 Hence, this number is $(x-{(\bm{\chi})}_{\pi})q^{\binom{d-1}{2}+e(d-1)}$ if and only if
 \[
  K\bm{\chi}=q^{\binom{d-1}{2}+e(d-1)}(x\bm{j}-\bm{\chi})\;.
 \]
 The number of ones in a row of $K$ equals the number of generators disjoint to a given generator, so by Lemma~\ref{skewgenerators} we know that
 \[
  K\bm{j}=q^{\binom{d}{2}+de}\bm{j}\;.
 \]
 Now, the vector $\bm{\chi}-\frac{x}{q^{d+e-1}+1}\bm{j}$ is contained in $V$ if and only if
 \begin{align*}
  K\left(\bm{\chi}-\frac{x}{q^{d+e-1}+1}\bm{j}\right) & =-q^{\binom{d-1}{2}+e(d-1)}\left(\bm{\chi}-\frac{x}{q^{d+e-1}+1}\bm{j}\right)                  \\
                                                      & =-q^{\binom{d-1}{2}+e(d-1)}\left(\bm{\chi}-x\bm{j}+\frac{xq^{d+e-1}}{q^{d+e-1}+1}\bm{j}\right) \\
                                                      & =q^{\binom{d-1}{2}+e(d-1)}(x\bm{j}-\bm{\chi})-\frac{x}{q^{d+e-1}+1}K\bm{j}\;.
 \end{align*}
 So, the vector $\bm{\chi}-\frac{x}{q^{d+e-1}+1}\bm{j}$ lies in $V$ if and only if
 \[
  K\bm{\chi}=q^{\binom{d-1}{2}+e(d-1)}(x\bm{j}-\bm{\chi})\;,
 \]
 which proves the result.
\end{proof}

\begin{definition}
 The set of all generators of a polar space through a fixed point is called a \emph{point-pencil}.
\end{definition}
From Corollary~\ref{pointpencilsize}, the size of a point-pencil in $\mathcal{P}$ is given by $\prod^{d-1}_{i=1}(q^{d+e-1}+1)$. If we let $\mathcal{L}$ be a point-pencil, then the characteristic vector $\bm{\chi}$ of $\mathcal{L}$ is a column of $A^{t}$, where $A$ is the point-generator adjacency matrix of $\mathcal{P}$.

\par Before presenting the results, we first discuss some lemmata that build up to the main characterisation theorems.

\begin{lemma}\label{spreadintersectiontonumberanddisjoint}
 Let $\mathcal{P}$ be a finite classical polar space over $\F_{q}$ of rank $d$ and with parameter $e$, that admits spreads, and let $\pi$ be a generator of $\mathcal{P}$.
 Let $G$ be an automorphism group of $\mathcal{P}$ that acts transitively on the pairs of disjoint generators of $\mathcal{P}$ and let $\mathcal{C}$ be a set of generator spreads of $\mathcal{P}$ which is a union of some orbits of spreads under the action of $G$.
 Let $\mathcal{L}$ be a set of generators of $\mathcal{P}$ with characteristic vector $\bm{\chi}$. If the intersection number $|\mathcal{L}\cap\mathcal{S}|$ equals $x$ for every spread $\mathcal{S}\in\mathcal{C}$, then
 \begin{itemize}
  \item[(i)] $|\mathcal{L}|=x\prod^{d-2}_{i=0}(q^{e+i}+1)$;
  \item[(ii)] the number of generators of $\mathcal{L}$ disjoint to $\pi$ equals $(x-{(\bm{\chi})}_{\pi})q^{\binom{d-1}{2}+e(d-1)}$.
 \end{itemize}
\end{lemma}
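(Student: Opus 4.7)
The plan is to prove both parts by double-counting incidences between generators and spreads of $\mathcal{C}$, using the $G$-invariance of $\mathcal{C}$ to obtain uniform incidence numbers. Since $G$ is transitive on ordered pairs of disjoint generators (hence in particular on generators themselves), and $\mathcal{C}$ is a union of $G$-orbits of spreads, every generator $\pi$ lies in the same number $\mu$ of spreads of $\mathcal{C}$, and every ordered pair of disjoint generators lies in the same number $\nu$ of spreads of $\mathcal{C}$.

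For part (i), I count incident pairs $(\pi'',\mathcal{S})\in\Omega\times\mathcal{C}$ with $\pi''\in\mathcal{S}$ in two ways to get $|\Omega|\mu=(q^{d+e-1}+1)|\mathcal{C}|$; restricting to $\mathcal{L}\times\mathcal{C}$ and using the hypothesis $|\mathcal{L}\cap\mathcal{S}|=x$ gives $|\mathcal{L}|\mu=x|\mathcal{C}|$. Dividing and invoking Lemma~\ref{subspacesonpolar} for $|\Omega|=\prod_{i=0}^{d-1}(q^{e+i}+1)$ yields $|\mathcal{L}|=x\prod_{i=0}^{d-2}(q^{e+i}+1)$.

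For part (ii), fix $\pi$ and count triples $(\pi'',\mathcal{S})$ with $\mathcal{S}\in\mathcal{C}$, $\pi\in\mathcal{S}$ and $\pi''\in\mathcal{L}\cap\mathcal{S}$. Summing over $\mathcal{S}$ first gives $\mu x$. Summing over $\pi''$ first, the term $\pi''=\pi$ contributes $\mu(\bm{\chi})_\pi$, and, letting $N$ denote the number of generators of $\mathcal{L}$ disjoint from $\pi$, the terms $\pi''\neq\pi$ contribute $\nu N$ (since two distinct generators in a common spread are necessarily disjoint). Hence $N=\mu(x-(\bm{\chi})_\pi)/\nu$. To compute the ratio $\mu/\nu$ I run an auxiliary double count of the pairs $(\pi',\mathcal{S})$ with $\mathcal{S}\in\mathcal{C}$, $\pi\in\mathcal{S}$ and $\pi'\in\mathcal{S}\setminus\{\pi\}$: this equals $\mu\cdot q^{d+e-1}$ on the one hand, and $\nu\cdot q^{\binom{d}{2}+de}$ on the other (by Lemma~\ref{skewgenerators}), so $\mu/\nu=q^{\binom{d}{2}+de-(d+e-1)}=q^{\binom{d-1}{2}+(d-1)e}$ via the elementary identity $\binom{d}{2}-\binom{d-1}{2}=d-1$. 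This yields $N=(x-(\bm{\chi})_\pi)q^{\binom{d-1}{2}+(d-1)e}$, as claimed. The only real subtlety is bookkeeping, together with the observation that the unified expression $x-(\bm{\chi})_\pi$ handles both cases $\pi\in\mathcal{L}$ and $\pi\notin\mathcal{L}$ simultaneously; no deeper ingredients are needed beyond transitivity of $G$ and Lemma~\ref{skewgenerators}.
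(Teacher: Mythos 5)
Your proof is correct and takes essentially the same route as the paper: your constants $\mu$, $\nu$, $|\mathcal{C}|$ are the paper's $n_{1}$, $n_{2}$, $n_{0}$ (well-defined by the same transitivity argument), and both the auxiliary count giving $\mu/\nu=q^{\binom{d-1}{2}+(d-1)e}$ and the two main double counts for (i) and (ii) coincide with the paper's computations.
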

\begin{proof}
 Let $n_{i}$ be the number of spreads in $\mathcal{C}$ containing $i$ given disjoint generators, $i=1,2$, and let $n_{0}$ be the total number of spreads in $\mathcal{C}$. Now $G$ acts transitively on the pairs of disjoint generators. Consequently, the values $n_{i}$ are independent of the chosen generators (and thus well defined) for $i=0,1,2$, since $\mathcal{C}$ is a union of spread orbits.
 \par By counting the tuples $(\tau',S)$, with $S\in\mathcal{C}$ and $\tau'$ an element (generator) of $S$, we find that
 \[
  n_{0}(q^{e+d-1}+1)=n_{1}\prod^{d-1}_{i=0}(q^{e+i}+1) \quad\Leftrightarrow\quad n_{0}=n_{1}\prod^{d-2}_{i=0}(q^{e+i}+1)\;.
 \]
 Now, let $\tau$ be a generator of $\mathcal{P}$. By counting the tuples $(\tau',S)$, with $S\in\mathcal{C}$ containing $\tau$, and $\tau'$ an element of $S$ different (and thus necessarily disjoint) from $\tau$, and using Lemma~\ref{skewgenerators}, we find that
 \[
  n_{1}q^{e+d-1}=n_{2}q^{\binom{d}{2}+de} \quad\Leftrightarrow\quad n_{1}=q^{\binom{d-1}{2}+e(d-1)}n_{2}\;.
 \]
 \par In order to prove (i) we count the tuples $(\pi',S)$, with $S$ a spread of $\mathcal{P}$ in $\mathcal{C}$ and $\pi'$ a generator of $\mathcal{L}\cap S$. On the one hand, we find $n_{0}$ spreads in $\mathcal{C}$, each containing $x$ generators that belong to $\mathcal{L}$. On the other hand, each generator of $\mathcal{L}$ is contained in $n_{1}$ generators. Hence,
 \[
  |\mathcal{L}|=x\frac{n_{0}}{n_{1}}=x\prod^{d-2}_{i=0}(q^{e+i}+1)\;.
 \]
 \par Now we count the tuples $(\pi',S)$, with $S\in\mathcal{C}$ containing $\pi$, and $\pi'$ a generator of $\mathcal{L}\cap S$ disjoint to $\pi$. On the one hand, there are $n_{1}$ spreads $S$ in $\mathcal{C}$ containing $\pi$ and by the assumption, $\mathcal{L}\cap S$ contains $x$ generators, which are necessarily disjoint.
 So, there are $x-{(\bm{\chi})}_{\pi}$ possible choices for $\pi'$ given $S$, namely $x-1$ if $\pi\in\mathcal{L}$ and $x$ if $\pi\notin\mathcal{L}$. On the other hand, for every generator $\pi'$ we choose, there are $n_{2}$ spreads in $\mathcal{C}$ containing $\pi$ and $\pi'$. Consequently, the number of choices for $\pi'$ equals
 \[
  (x-{(\bm{\chi})}_{\pi})\frac{n_{1}}{n_{2}}=(x-{(\bm{\chi})}_{\pi})q^{\binom{d-1}{2}+e(d-1)}\;.
 \]
 This concludes the proof of part (ii).
\end{proof}

Note that the union of orbits in the lemma above might contain just one orbit.

\begin{remark}\label{spreadstransitive}
 For any finite classical polar space its full automorphism group acts transitively on the pairs of disjoint generators by Theorem~\ref{distancetransitive}, so there are groups that fulfil the requirement for the group $G$ in the above lemma.
\end{remark}
\par The following results will be useful in the proofs of the characterisation theorems.


\begin{lemma}\label{spreadineigenspaces}
 Let $\mathcal{P}$ be a finite classical polar space over $\F_{q}$ of rank $d$ and with parameter $e$ and let $V_{0}\perp V_{1}\perp\dots\perp V_{d}$ be the corresponding decomposition of $\R^{\Omega}$, with $\Omega$ the set of generators of $\mathcal{P}$, using the classical ordering. If $\mathcal{S}$ is a generator spread of $\mathcal{P}$ with characteristic vector $\bm{\chi}$, then $\bm{\chi}-\frac{1}{\prod_{i=0}^{d-2}(q^{e+i}+1)}\bm{j}\in (V_{0}\oplus V_{1})^{\perp}$. Moreover, if $e=0$ and $d$ is even, then $\bm{\chi}-\frac{1}{\prod_{i=0}^{d-2}(q^{e+i}+1)}\bm{j}\in (V_{0}\oplus V_{1}\oplus V_{d-1})^{\perp}$; if $e=1$ and $d$ is odd, then $\bm{\chi}-\frac{1}{\prod_{i=0}^{d-2}(q^{e+i}+1)}\bm{j}\in (V_{0}\oplus V_{1}\oplus V_{d})^{\perp}$.
\end{lemma}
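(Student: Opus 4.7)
The plan is to apply the clique bound from Theorem \ref{clique} to the disjointness relation $R_d$, noting that a spread $\mathcal{S}$ is by definition a clique in the disjointness graph $\Gamma_d$ and has size $|\mathcal{S}|=q^{d+e-1}+1$. The valency of $R_d$ (the eigenvalue of $A_d$ on $V_0$) is $k=P_{0,d}=q^{\binom{d}{2}+de}$ by Lemma~\ref{skewgenerators}, and by Theorem~\ref{mineigenvalue} the minimal eigenvalue is $m=-q^{\binom{d-1}{2}+e(d-1)}$. I would then compute
\[
 1-\frac{k}{m}=1+q^{\binom{d}{2}-\binom{d-1}{2}+de-e(d-1)}=1+q^{d+e-1}=|\mathcal{S}|,
\]
so Theorem~\ref{clique} applies with equality and forces $\bm{\chi}\in V^{\perp}$, where $V$ is the eigenspace of $A_d$ for the eigenvalue $m$.

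Next I would identify the eigenspace $V$ in each of the subcases of Theorem~\ref{mineigenvalue}. In the generic situation $V=V_1$; in the exceptional case $\mathcal{P}=\mathcal{Q}^{+}(2d-1,q)$ with $d$ even ($e=0$) we have $V=V_1\oplus V_{d-1}$; and in the exceptional cases $\mathcal{P}=\mathcal{Q}(2d,q)$ or $\mathcal{W}(2d-1,q)$ with $d$ odd ($e=1$) we have $V=V_1\oplus V_d$. In each case, the conclusion $\bm{\chi}\in V^{\perp}$ gives the orthogonality to $V_1$ (and the extra summand in the two exceptional cases).

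Finally I would handle the $V_0$ component by subtracting off the correct multiple of $\bm{j}$. Since $V_0=\langle\bm{j}\rangle$, the orthogonal projection of $\bm{\chi}$ onto $V_0$ is
\[
 \frac{\langle\bm{\chi},\bm{j}\rangle}{\langle\bm{j},\bm{j}\rangle}\bm{j}=\frac{|\mathcal{S}|}{|\Omega|}\bm{j}=\frac{q^{d+e-1}+1}{\prod_{i=0}^{d-1}(q^{e+i}+1)}\bm{j}=\frac{1}{\prod_{i=0}^{d-2}(q^{e+i}+1)}\bm{j},
\]
using Lemma~\ref{subspacesonpolar} for $|\Omega|$. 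Subtracting this from $\bm{\chi}$ removes the $V_0$ component while preserving orthogonality to $V$, giving exactly the claimed membership in $(V_0\oplus V_1)^{\perp}$, respectively $(V_0\oplus V_1\oplus V_{d-1})^{\perp}$ or $(V_0\oplus V_1\oplus V_d)^{\perp}$ in the two exceptional cases.

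There is no real obstacle here: the argument is clean once one recognises a spread as a Hoffman-tight clique in $\Gamma_d$. The only subtlety is the bookkeeping in Theorem~\ref{mineigenvalue} to track which subspaces share the minimal eigenvalue, so that $V^{\perp}$ is correctly identified in each of the three regimes listed in the statement.
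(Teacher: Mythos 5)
Your proof is correct and follows essentially the same route as the paper: both apply the clique bound of Theorem~\ref{clique} to the spread viewed as a tight clique of the disjointness relation, use Theorem~\ref{mineigenvalue} to determine which eigenspaces $V_{j}$ share the eigenvalue $-q^{\binom{d-1}{2}+e(d-1)}$, and strip off the $V_{0}$ component via the projection coefficient $|\mathcal{S}|/|\Omega|=\frac{1}{\prod_{i=0}^{d-2}(q^{e+i}+1)}$. The only nitpick is that $m$ is not literally the minimal eigenvalue for $\mathcal{Q}^{+}(2d-1,q)$ and $\mathcal{H}(2d-1,q)$ with $d$ odd, but this is harmless since Theorem~\ref{clique} only requires $\lambda<0$ (and the paper likewise applies the bound to each negative eigenvalue $P_{j,d}$ rather than only the minimal one).
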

\begin{proof}
 Note that $\mathcal{S}$ is a clique for the disjointness relation of the association scheme of $\mathcal{P}$. So, by Theorem \ref{clique}, we know that $|\mathcal{S}|\leq 1-\frac{P_{0,d}}{P_{j,d}}$ for the integers $j\in\{0,\dots,d\}$ such that $P_{j,d}<0$. Hence for $j$ odd, we have
 \[
  |\mathcal{S}|\leq1-\frac{q^{\binom{d}{2}+de}}{(-1)^{j}q^{\binom{d}{2}+(d-j)(e-j)}}=1+q^{j(d+e-j)}\;.
 \]
 Since $\mathcal{S}$ is a spread we have equality in this bound for $j=1$ and for $j=d+e-1$ (in case $e\leq1$ and $d+e-1$ is an odd integer). It now follows from the second part of Theorem \ref{clique} that  $\bm{\chi}\in V^{\perp}_{1}$, and moreover that also $\bm{\chi}\in V^{\perp}_{d-1}$ if $e=0$ and $d$ is even, and that also $\bm{\chi}\in V^{\perp}_{d}$ if $e=1$ and $d$ is odd. The result now follows from the observation that $\bm{\chi}=\frac{q^{d+e-1}+1}{\prod_{i=0}^{d-1}(q^{e+i}+1)}\bm{j}+\bm{v}$ with $\bm{v}$ a vector in $V_{0}^{\perp}$.
\end{proof}

A part of this result (namely $\bm{\chi}\in V^{\perp}_{1}$) was already proved in \cite[Theorem 4.4.14(ii)]{vanhove1}.

\begin{lemma}\label{spreadvector}
 Let $\mathcal{P}$ be a finite classical polar space over $\F_{q}$ of rank $d$ and with parameter $e$, and let $A$ be the point-generator incidence matrix of $\mathcal{P}$. Let $\mathcal{S}$ be a set of generators of $\mathcal{P}$ with characteristic vector $\bm{\chi}$. For any $m\in\N$, the generator set $\mathcal{S}$ is an $m$-regular system if and only if $\bm{\chi}-\frac{m}{\prod^{d-2}_{i=0}(q^{e+i}+1)}\bm{j}\in\ker(A)$.
\end{lemma}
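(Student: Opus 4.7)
The plan is to unpack both conditions in terms of the action of $A$ on the relevant vectors, and observe that the two conditions translate into the same linear equation. Denote by $N = \prod_{i=0}^{d-2}(q^{e+i}+1)$ the size of a point-pencil, given by Corollary~\ref{pointpencilsize}.

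First I would interpret $A\bm{\chi}$ coordinate-wise. Since $A$ is the point-generator incidence matrix, its rows are indexed by the points of $\mathcal{P}$ and its columns by the generators, with a $1$ in position $(P,\pi)$ precisely when $P\in\pi$. Hence, for any point $P$ of $\mathcal{P}$, the entry $(A\bm{\chi})_{P}$ equals the number of generators of $\mathcal{S}$ through $P$. By definition, $\mathcal{S}$ is an $m$-regular system if and only if this number equals $m$ for every point $P$, i.e.\ if and only if
\[
 A\bm{\chi}=m\bm{j}_{\mathrm{pts}},
\]
where $\bm{j}_{\mathrm{pts}}$ denotes the all-one vector indexed by the points of $\mathcal{P}$.

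Next, I would compute $A\bm{j}$, where $\bm{j}$ is now indexed by generators. The entry $(A\bm{j})_{P}$ counts all generators of $\mathcal{P}$ through $P$, which by Corollary~\ref{pointpencilsize} (with $m=0$ and $k=d-1$) equals $N=\prod_{i=0}^{d-2}(q^{e+i}+1)$. Thus $A\bm{j}=N\bm{j}_{\mathrm{pts}}$. Combining these two facts yields
\[
 A\!\left(\bm{\chi}-\tfrac{m}{N}\bm{j}\right)=A\bm{\chi}-m\bm{j}_{\mathrm{pts}},
\]
so $\bm{\chi}-\tfrac{m}{N}\bm{j}\in\ker(A)$ if and only if $A\bm{\chi}=m\bm{j}_{\mathrm{pts}}$, which by the first step is equivalent to $\mathcal{S}$ being an $m$-regular system. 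There is no real obstacle here: the lemma is essentially the translation of the regularity condition into linear-algebraic form, relying only on the elementary counting identity $A\bm{j}=N\bm{j}_{\mathrm{pts}}$.
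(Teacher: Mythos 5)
Your proof is correct and follows essentially the same route as the paper: both arguments interpret $(A\bm{\chi})_{P}$ as the number of generators of $\mathcal{S}$ through $P$, use the fact that each point lies on $\prod_{i=0}^{d-2}(q^{e+i}+1)$ generators to get $A\bm{j}=N\bm{j}_{\mathrm{pts}}$, and conclude by linearity. No issues.
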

\begin{proof}
 We denote the number of points of $\mathcal{P}$ by $n'$ and the number of generators of $\mathcal{P}$ by $n$. It is immediate that $A\bm{j_{n}}=\left(\prod^{d-2}_{i=0}(q^{e+i}+1)\right)\bm{j_{n'}}$ since every row in $A$ contains $\prod^{d-2}_{i=0}(q^{e+i}+1)$ ones as this is the number of generators through a point.
 \par Now, the set $\mathcal{S}$ is an $m$-regular system if and only if every point of $\mathcal{P}$ is contained in precisely $m$ generators of $\mathcal{S}$, hence if and only if $A\bm{\chi}=m\bm{j_{n'}}$. The first statement now follows from
 \[
  \bm{\chi}-\frac{m}{\prod^{d-2}_{i=0}(q^{e+i}+1)}\bm{j_{n}}\in\ker(A)\quad\Leftrightarrow\quad A\bm{\chi}=\frac{m}{\prod^{d-2}_{i=0}(q^{e+i}+1)}A\bm{j_{n}}\;.\qedhere
 \]
\end{proof}

We now present a first characterisation theorem.

\begin{theorem}\label{characterisationth}
 Let $\mathcal{P}$ be a polar space of rank $d$ and with parameter $e$ over $\F_{q}$. Let $\mathcal{L}$ be a set of generators of $\mathcal{P}$ with characteristic vector $\bm{\chi}$, and let $K$ be the generator disjointness matrix of $\mathcal{P}$. Let $V_{0}\perp\dots\perp V_{d}$ be the classical decomposition in eigenspaces of the association scheme on the generators of $\mathcal{P}$.
 Denote $\frac{|\mathcal{L}|}{\prod^{d-2}_{i=0}(q^{e+i}+1)}$ by $x$. The following statements are equivalent.
 \begin{itemize}
  \item[(i)] For each fixed generator $\pi$ of $\mathcal{P}$, the number of elements of $\mathcal{L}$ disjoint from $\pi$ equals $(x-{(\bm{\chi})}_{\pi})q^{\binom{d-1}{2}+e(d-1)}$.
  \item[(ii)] The vector $\bm{\chi}-\frac{x}{q^{d+e-1}+1}\bm{j}$ is contained in the eigenspace of $K$ for the eigenvalue $-q^{\binom{d-1}{2}+e(d-1)}$.
  \item[(iii)] The characteristic vector $\bm{\chi}$ is contained in $V_{0}\perp V_{1}\perp V_{d-1}$ if $d$ is even and $e=0$, it is contained in $V_{0}\perp V_{1}\perp V_{d}$ if $d$ is odd and $e=1$, and it is contained in $V_{0}\perp V_{1}$ in all other cases.
 \end{itemize}
 If $\mathcal{P}$ admits a spread, then also the next two statements are equivalent to the previous ones.
 \begin{itemize}
  \item[(iv)] $|\mathcal{L}\cap\mathcal{S}|=x$ for every spread $\mathcal{S}$ of $\mathcal{P}$.
  \item[(v)] $|\mathcal{L}\cap\mathcal{S}|=x$ for every spread $\mathcal{S}\in\mathcal{C}$ of $\mathcal{P}$, with $\mathcal{C}$ a class of spreads which is a union of some orbits under the action of an automorphism group of $\mathcal{P}$ that acts transitively on the pairs of disjoint generators of $\mathcal{P}$.
 \end{itemize}
\end{theorem}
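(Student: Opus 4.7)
The plan is to prove (i)--(iii) as a cycle (i)$\Rightarrow$(ii)$\Rightarrow$(iii)$\Rightarrow$(i), and then under the spread hypothesis close the loop (iii)$\Rightarrow$(iv)$\Rightarrow$(v)$\Rightarrow$(i). The equivalence (i)$\Leftrightarrow$(ii) will be an immediate invocation of Lemma~\ref{disjointequiveigenvector}.

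For (ii)$\Leftrightarrow$(iii), I will first observe that since $|\Omega| = \prod_{i=0}^{d-1}(q^{e+i}+1)$ and $|\mathcal{L}| = x\prod_{i=0}^{d-2}(q^{e+i}+1)$, the orthogonal projection of $\bm{\chi}$ onto $V_0 = \langle\bm{j}\rangle$ equals $\tfrac{x}{q^{d+e-1}+1}\bm{j}$. Thus $\bm{\chi} - \tfrac{x}{q^{d+e-1}+1}\bm{j}\in V_0^{\perp}$, and condition (ii) says exactly that $\bm{\chi}$ lies in $V_0$ together with the $(-q^{\binom{d-1}{2}+e(d-1)})$-eigenspace of $K = A_d$. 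Since $-q^{\binom{d-1}{2}+e(d-1)} = P_{1,d}$, Theorem~\ref{mineigenvalue} identifies this latter eigenspace (restricted to $V_0^{\perp}$) as $V_1 \oplus V_{d-1}$ when $e=0$ and $d$ is even, as $V_1\oplus V_d$ when $e=1$ and $d$ is odd, and as $V_1$ in every other case. This matches (iii) exactly.

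For (iii)$\Rightarrow$(iv), assume $\mathcal{P}$ admits spreads and let $\mathcal{S}$ be one, with characteristic vector $\bm{\chi}_{\mathcal{S}}$. I will decompose $\bm{\chi} = \tfrac{x}{q^{d+e-1}+1}\bm{j} + \bm{u}$ and $\bm{\chi}_{\mathcal{S}} = \tfrac{1}{\prod_{i=0}^{d-2}(q^{e+i}+1)}\bm{j} + \bm{v}$ with $\bm{u},\bm{v}\in V_0^{\perp}$. By (iii), $\bm{u}$ lies in $V_1$ (plus the extra summand in the two exceptional cases), while Lemma~\ref{spreadineigenspaces} places $\bm{v}$ in the orthogonal complement of precisely the same subspace, so $\langle \bm{u},\bm{v}\rangle = 0$. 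Expanding $|\mathcal{L}\cap\mathcal{S}| = \langle \bm{\chi},\bm{\chi}_{\mathcal{S}}\rangle$ then collapses to $x$. The implication (iv)$\Rightarrow$(v) is trivial, and (v)$\Rightarrow$(i) will follow from Lemma~\ref{spreadintersectiontonumberanddisjoint}(ii), using Remark~\ref{spreadstransitive} to supply an appropriate group $G$; part~(i) of that lemma also confirms that the intersection number agrees with $x = |\mathcal{L}|/\prod_{i=0}^{d-2}(q^{e+i}+1)$ as defined in the theorem.

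The main obstacle is the case analysis hidden in (ii)$\Leftrightarrow$(iii): one must comb through Theorem~\ref{mineigenvalue} across all six families of classical polar spaces to confirm that the three cases of (iii) are exhaustive, and in particular to verify that for $\mathcal{Q}^{+}(2d-1,q)$ and $\mathcal{H}(2d-1,q)$ with $d$ odd, where $-q^{\binom{d-1}{2}+e(d-1)}$ is \emph{not} the minimum eigenvalue, this value nonetheless appears only on $V_1$, placing these two families in the default third case.
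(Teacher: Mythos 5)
Your proposal is correct and follows essentially the same route as the paper: (i)$\Leftrightarrow$(ii) via Lemma~\ref{disjointequiveigenvector}, (ii)$\Leftrightarrow$(iii) by identifying the $-q^{\binom{d-1}{2}+e(d-1)}$-eigenspace of $K$ through the case analysis of Theorem~\ref{mineigenvalue} (including the two odd-rank families where this is not the minimal eigenvalue but still occurs only on $V_{1}$), and the spread statements via Lemma~\ref{spreadineigenspaces}, Lemma~\ref{spreadintersectiontonumberanddisjoint}(ii) and Remark~\ref{spreadstransitive}, with the same care that (v) is non-vacuous. No gaps.
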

\begin{proof}
 By Lemma~\ref{disjointequiveigenvector}, statements (i) and (ii) are equivalent. Now we assume that (ii) is valid. We consider the association scheme of the generators of $\mathcal{P}$, with matrices $I=A_{0},A_{1},\dots,A_{d}=K$ and corresponding decomposition $\R^{\Omega}=V_{0}\perp V_{1}\perp\dots\perp V_{d}$, with $\Omega$ the set of generators of $\mathcal{P}$, using the ordering as in Section~\ref{sec:prelim}. By Theorem~\ref{mineigenvalue}, $V_{1}$ is the unique subspace in this decomposition admitting $-q^{\binom{d-1}{2}+e(d-1)}$ as eigenvalue unless $d$ is even and $e=0$, or $d$ is odd and $e=1$. If $d$ is even and $e=0$, then $V_{1}\perp V_{d-1}$ admits $-q^{\binom{d-1}{2}+e(d-1)}$ as eigenvalue; if $d$ is odd and $e=1$, then $V_{1}\perp V_{d}$ admits $-q^{\binom{d-1}{2}+e(d-1)}$ as eigenvalue. It now follows from (ii) that
 \[
  \bm{\chi}-\frac{x}{q^{d+e-1}+1}\bm{j}\in \begin{cases}
   V_{1}\perp V_{d-1} & \text{if }d\text{ even and }e=0 \\
   V_{1}\perp V_{d}   & \text{if }d\text{ odd and }e=1  \\
   V_{1}              & \text{else}
  \end{cases}.
 \]
 Statement (iii) follows immediately as $V_{0}=\left\langle\bm{j}\right\rangle$.
 \par We now assume that statement (iii) is valid. Since $\left\langle\bm{\chi},\bm{j}\right\rangle=|\mathcal{L}|$, we know that $\bm{\chi}-\frac{x}{q^{d+e-1}+1}\bm{j}\in V_{1}\perp V'$ with $V'=V_{d-1}$ if $d$ is even and $e=0$, $V'=V_{d}$ if $d$ is odd and $e=1$, and $V'$ the null space otherwise. We find that
 \begin{align*}
  K\left(\bm{\chi}-\frac{x}{q^{d+e-1}+1}\bm{j}\right)=P_{1,d}\left(\bm{\chi}-\frac{x}{q^{d+e-1}+1}\bm{j}\right)=-q^{\binom{d-1}{2}+e(d-1)}\left(\bm{\chi}-\frac{x}{q^{d+e-1}+1}\bm{j}\right)\;,
 \end{align*}
 hence statement (ii).
 \par From now on we assume that $\mathcal{P}$ admits a spread. We note that by Remark~\ref{spreadstransitive} the existence of a spread of $\mathcal{P}$ implies the existence of a class $\mathcal{C}$ as in statement (v). This shows that statement (v) is not an empty statement, which is important for the following argument. We show that (iii) implies (iv) and that (v) implies (i). Since (iv) clearly implies (v), this shows that statements (iv) and (v) are both equivalent to the statements (i)--(iii). Now, we assume that (iii) is valid. Let $\mathcal{S}$ be a spread of $\mathcal{P}$, with characteristic vector $\bm{\chi}_{\mathcal{S}}$. Then,
 \begin{align*}
  |\mathcal{L}\cap\mathcal{S}| & =\left\langle\bm{\chi},\bm{\chi}_{\mathcal{S}}\right\rangle=\left\langle\bm{\chi}-\frac{x}{q^{d+e-1}+1}\bm{j},\bm{\chi}_{\mathcal{S}}\right\rangle+\frac{x}{q^{d+e-1}+1}\left\langle\bm{\chi},\bm{j}\right\rangle=\frac{x|\mathcal{S}|}{q^{d+e-1}+1}=x\;, 
 \end{align*}
 by Lemma \ref{spreadineigenspaces}. This proves (iv). By Lemma~\ref{spreadintersectiontonumberanddisjoint}(ii), statement (v) implies (i).
\end{proof}

\begin{definition}\label{clset}
 Let $\mathcal{P}$ be a finite classical polar space. A generator set $\mathcal{L}$ that fulfils one of the statements in Theorem~\ref{characterisationth} (and consequently all of them) is called a \emph{Cameron-Liebler set with parameter} $x=\frac{|\mathcal{L}|}{\prod^{d-2}_{i=0}(q^{e+i}+1)}$.
\end{definition}

We also want to give a description of these Cameron-Liebler sets using the image of an incidence matrix. For this we make a distinction between several types of polar spaces.

\subsection*{The polar spaces $\mathcal{Q}^{-}(2d+1,q)$, $\mathcal{Q}(4n,q)$, $\mathcal{Q}^{+}(4n+1,q)$, $\mathcal{W}(4n-1,q)$ and $\mathcal{H}(n,q^{2})$}

In this section we present the characterisation theorem using the image of an incidence matrix for almost all classes of finite classical polar spaces, namely the elliptic quadrics, the parabolic quadrics of even rank, the hyperbolic quadrics of odd rank, the symplectic polar spaces of even rank and the Hermitian polar spaces. To simplify the statement of the theorem we will use the following notation.

\begin{notation}
 The polar spaces $\mathcal{Q}^{-}(2d+1,q)$, $\mathcal{Q}(2d,q)$ with $d$ even,
 $\mathcal{Q}^{+}(2d-1,q)$ with $d$ odd, $\mathcal{W}(2d-1,q)$ with $d$ even,
 $\mathcal{H}(2d-1,q)$ with $q$ a square, and $\mathcal{H}(2d,q)$ with $q$ a square,
 are called the polar spaces of \emph{type I}.
\end{notation}

We can immediately state the result.

\begin{theorem}\label{characterisationthI}
 Let $\mathcal{P}$ be a polar space of type I, and let $A$ be the point-generator incidence matrix of $\mathcal{P}$. Let $\mathcal{L}$ be a set of generators of $\mathcal{P}$ with characteristic vector $\bm{\chi}$ and denote $\frac{|\mathcal{L}|}{\prod^{d-2}_{i=0}(q^{e+i}+1)}$ by $x$. The following three statements are equivalent.
 \begin{itemize}
  \item[(i)] $\bm{\chi}\in\im(A^{t})$.
  \item[(ii)] $\bm{\chi}\in{(\ker(A))}^{\perp}$.
  \item[(iii)] $\mathcal{L}$ is a Cameron-Liebler set.
 \end{itemize}
\end{theorem}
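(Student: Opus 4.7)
The plan is to reduce this theorem to a direct application of Theorems \ref{rowAissumeigenspaces} and \ref{characterisationth}, together with one standard linear algebra identity. The crucial observation is that the point-generator incidence matrix $A$ is precisely the matrix $C_1$ in the notation of Theorem \ref{rowAissumeigenspaces} (since points are $0$-spaces, i.e., the $(k-1)$-spaces with $k = 1$). Therefore $\im(A^t) = V_0 \perp V_1$.

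For the equivalence (i) $\Leftrightarrow$ (iii), I would check that the polar spaces of type I are exactly those for which the third clause of Theorem \ref{characterisationth}(iii) applies, i.e., those where the Cameron-Liebler condition reduces to $\bm{\chi} \in V_0 \perp V_1$. Cross-checking the definition of type I against the parameter table, the cases excluded from type I are precisely $\mathcal{Q}^+(2d-1,q)$ with $d$ even (the case $e=0$, $d$ even) and $\mathcal{Q}(2d,q)$, $\mathcal{W}(2d-1,q)$ with $d$ odd (the cases $e=1$, $d$ odd). In every type I case, Theorem \ref{characterisationth}(iii) therefore states that $\mathcal{L}$ is Cameron-Liebler if and only if $\bm{\chi} \in V_0 \perp V_1$, which by the previous paragraph is $\im(A^t)$.

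The equivalence (i) $\Leftrightarrow$ (ii) is a standard identity: for any real matrix $M$, one has $\im(M^t) = (\ker M)^{\perp}$. Indeed, a vector $\bm{v}$ lies in $(\ker M)^{\perp}$ precisely when $\langle \bm{v}, \bm{w}\rangle = 0$ for every $\bm{w}$ with $M\bm{w} = \bm{0}$, which by transposition is equivalent to $\bm{v} \in \im(M^t)$. Applying this with $M = A$ immediately gives the equivalence.

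There is no genuine obstacle in this argument; the content of the theorem is entirely absorbed into Theorems \ref{rowAissumeigenspaces} and \ref{characterisationth}, and the work reduces to a careful verification that the type I list matches the "otherwise" case of Theorem \ref{characterisationth}(iii). The only subtle point is keeping track of the various parities and values of $e$ when performing this matching, but Table \ref{parameter} together with Theorem \ref{mineigenvalue} leaves no ambiguity.
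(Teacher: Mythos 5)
Your proposal is correct and follows essentially the same route as the paper: the paper also proves (i) $\Leftrightarrow$ (ii) via the identity $\im(A^{t})={(\ker(A))}^{\perp}$ and deduces (i) $\Leftrightarrow$ (iii) from Theorem~\ref{rowAissumeigenspaces} (identifying $A$ with $C_{1}$, so $\im(A^{t})=V_{0}\perp V_{1}$) combined with Theorem~\ref{characterisationth}(iii). Your explicit check that the type~I list coincides with the ``otherwise'' case of Theorem~\ref{characterisationth}(iii) is the only verification needed, and you carried it out correctly.
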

\begin{proof}
 Statement (i) and statement (ii) are equivalent since $\im(A^{t})={(\ker(A))}^{\perp}$. The equivalence of statements (i) and (iii) follows from Theorem \ref{rowAissumeigenspaces} and Theorem \ref{characterisationth}(iii).
\end{proof}

We have proved that the size of the intersection of a Cameron-Liebler set and a spread only depends on the parameter of the Cameron-Liebler set. If a spread exists, this is a defining property. We show that for regular systems a similar property holds.

\begin{corollary}\label{CLregularsystemI}
 Let $\mathcal{P}$ be a finite classical polar space of type I over $\F_{q}$ of rank $d$ and with parameter $e$, that admits an $m$-regular system $\mathcal{S}$. If $\mathcal{L}$ is a Cameron-Liebler set with parameter $x$, then $|\mathcal{L}\cap\mathcal{S}|=mx$.
\end{corollary}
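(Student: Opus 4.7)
The plan is to combine Theorem~\ref{characterisationthI} with Lemma~\ref{spreadvector} by computing $|\mathcal{L}\cap\mathcal{S}|$ as the inner product $\langle\bm{\chi}_{\mathcal{L}},\bm{\chi}_{\mathcal{S}}\rangle$, and exploiting the orthogonality between $\im(A^{t})$ and $\ker(A)$.

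First, since $\mathcal{P}$ is of type I and $\mathcal{L}$ is a Cameron-Liebler set with parameter $x$, Theorem~\ref{characterisationthI} gives $\bm{\chi}_{\mathcal{L}}\in\im(A^{t})=(\ker(A))^{\perp}$, where $A$ is the point-generator incidence matrix of $\mathcal{P}$. Next, although Lemma~\ref{spreadvector} is phrased for $m$-regular systems directly (the proof goes through verbatim for any $m\in\N$), applying it to $\mathcal{S}$ yields
\[
 \bm{\chi}_{\mathcal{S}}-\frac{m}{\prod_{i=0}^{d-2}(q^{e+i}+1)}\bm{j}\in\ker(A)\;.
\]
Write $\bm{v}$ for this kernel vector, so that $\bm{\chi}_{\mathcal{S}}=\frac{m}{\prod_{i=0}^{d-2}(q^{e+i}+1)}\bm{j}+\bm{v}$ with $\bm{v}\in\ker(A)$.

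Then I would compute, using that $\bm{j}\in\im(A^{t})$ trivially (in fact $\bm{j}$ is a multiple of the image of the all-ones vector) but more importantly using only that $\bm{\chi}_{\mathcal{L}}\perp\bm{v}$:
\[
 |\mathcal{L}\cap\mathcal{S}|=\langle\bm{\chi}_{\mathcal{L}},\bm{\chi}_{\mathcal{S}}\rangle=\frac{m}{\prod_{i=0}^{d-2}(q^{e+i}+1)}\langle\bm{\chi}_{\mathcal{L}},\bm{j}\rangle+\langle\bm{\chi}_{\mathcal{L}},\bm{v}\rangle=\frac{m\,|\mathcal{L}|}{\prod_{i=0}^{d-2}(q^{e+i}+1)}=mx\;,
\]
where the last step uses the definition $x=|\mathcal{L}|/\prod_{i=0}^{d-2}(q^{e+i}+1)$ from Definition~\ref{clset}.

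There is no real obstacle here: the statement is an immediate consequence of the image/kernel characterisation from Theorem~\ref{characterisationthI} together with the kernel characterisation of regular systems (Lemma~\ref{spreadvector}). The only minor point worth noting is that Lemma~\ref{spreadvector} is stated for an $m$-regular system for any $m\in\N$, so no extension is needed; the whole argument is essentially a two-line orthogonal-decomposition computation.
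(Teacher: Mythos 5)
Your proof is correct and is essentially identical to the paper's: both decompose $\bm{\chi}_{\mathcal{S}}$ via Lemma~\ref{spreadvector} as a multiple of $\bm{j}$ plus a vector in $\ker(A)$, and then use Theorem~\ref{characterisationthI}(ii) to kill the kernel term in the inner product $\langle\bm{\chi},\bm{\chi}_{\mathcal{S}}\rangle$. No gaps.
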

\begin{proof}
 Denote the characteristic vectors of $\mathcal{S}$ and $\mathcal{L}$ by $\bm{\chi}_{\mathcal{S}}$ and $\bm{\chi}$, respectively. By Lemma~\ref{spreadvector} we can find a vector $\bm{w}\in\ker(A)$ such that $\bm{\chi}_{\mathcal{S}}=\bm{w}+\frac{m}{\prod^{d-2}_{i=0}(q^{e+i}+1)}\bm{j}$.
 Then
 \[
  |\mathcal{L}\cap\mathcal{S}|=\left\langle\bm{\chi},\bm{\chi}_{\mathcal{S}}\right\rangle=\left\langle \bm{\chi},\bm{w}\right\rangle+\frac{m}{\prod^{d-2}_{i=0}(q^{e+i}+1)}\left\langle\bm{\chi},\bm{j}\right\rangle=\frac{m\:|\mathcal{L}|}{\prod^{d-2}_{i=0}(q^{e+i}+1)}=mx\;,
 \]
 since $\left\langle \bm{\chi},\bm{w}\right\rangle=0$ by Theorem~\ref{characterisationthI}(ii).
\end{proof}
This property (as compared to property (v) in Theorem \ref{characterisationth}) is clearly not a defining property for $m=0$ or $m=\prod_{i=0}^{d-2}(q^{e+i}+1)$. While these examples could be considered degenerate since they are themselves Cameron-Liebler sets as well as $m$-regular systems, there are other cases where the converse fails to hold. The issue is that a set $\mathcal{S}$ is an $m$-regular system for some $m$ when its characteristic vector $\bm{\chi}$ lies in $V_{1}^{\perp}$; but it is possible that $\bm{\chi}$ is also in $V_{j}^{\perp}$ for some $j > 1$. Then, since the eigenspaces are invariant under the automorphism group $G$ of the polar space, any set whose characteristic vector lies in $V_{0} \perp V_{1} \perp V_{j}$ will have constant intersection size with $\mathcal{S}^{g}$ for every $g \in G$.
\par For example, a set of generators in $\mathcal{Q}^{+}(5,q)$ is a Cameron-Liebler set if and only if its characteristic vector is contained in $\im(A^{t}) = V_{0}\perp V_{1}$. If we let $\Pi_{1}$ and $\Pi_{2}$ be the sets of Latin and Greek planes, respectively, the characteristic vectors $\bm{\chi}_{\Pi_{1}}$ and $\bm{\chi}_{\Pi_{2}}$ of these sets lie in $V_{0}\perp V_{3}$. The sets $\Pi_{1}$ and $\Pi_{2}$ are $(q+1)$-regular systems. Furthermore, $\mathcal{C} = \{ \Pi_{1}, \Pi_{2} \}$ is invariant under the action of $G$. But any set $\mathcal{S}$ of generators whose characteristic vector is contained in $V_{0}\perp V_{1}\perp V_{2}$ will have constant intersection size with every element of $\mathcal{C}$; such a set is not necessarily a Cameron-Liebler set. In fact, in $\mathcal{Q}^{+}(5,2)$, a computational search finds examples of $2$-regular systems whose characteristic vectors lie in $V_{0}\perp V_{2}$, providing counterexamples to the possibility of generalising Theorem~\ref{characterisationth}~(v) to $m$-systems for $m=q+1=3$.

\subsection*{The polar spaces $\mathcal{Q}^{+}(4n-1,q)$}

In this section we investigate the Cameron-Liebler sets for the hyperbolic quadrics of even rank. We call them the polar spaces of \emph{type II}. The discussion of the Cameron-Liebler sets of generators in these polar spaces heavily relies on the fact that there are two classes of generators on a hyperbolic quadric. For the relation with spreads the following lemma is important.

\begin{lemma}\label{spreadsclasses}
 Let $\mathcal{S}$ be a generator spread of $\mathcal{Q}^{+}(4n-1,q)$, and let $\Omega_{1}$ and $\Omega_{2}$ be the two classes of generators. Then either $\mathcal{S}\subseteq\Omega_{1}$ or $\mathcal{S}\subseteq\Omega_{2}$.
\end{lemma}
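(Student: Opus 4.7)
The plan is to apply Remark~\ref{hyperclass} directly, after carefully translating indices. The polar space $\mathcal{Q}^{+}(4n-1,q)$ has rank $2n$, so in the notation of the remark (where $\mathcal{Q}^{+}(2d+1,q)$ has rank $d+1$) the relevant parameter is $d = 2n-1$, which is odd. The remark then applies in the ``$d$ odd'' case, which states that any two generators from different equivalence classes cannot be disjoint. Equivalently, any pair of disjoint generators of $\mathcal{Q}^{+}(4n-1,q)$ must lie in the same equivalence class under $\sim$.

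Now let $\mathcal{S}$ be a generator spread of $\mathcal{Q}^{+}(4n-1,q)$. By definition $\mathcal{S}$ consists of pairwise disjoint generators, so by the above observation every pair of elements of $\mathcal{S}$ lies in the same class. Since $\Omega_{1}$ and $\Omega_{2}$ partition the full set of generators into exactly two classes, transitivity forces $\mathcal{S}$ to be contained entirely in $\Omega_{1}$ or entirely in $\Omega_{2}$. (Note that a spread contains $q^{2n-1}+1 \geq q+1 \geq 2$ generators, so there is always a pair to compare.)

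There is essentially no obstacle in this proof: the whole argument is a direct consequence of the parity statement in Remark~\ref{hyperclass}. The only thing requiring care is the index translation between the forms $\mathcal{Q}^{+}(4n-1,q)$ and $\mathcal{Q}^{+}(2d+1,q)$, to make sure the ``$d$ odd'' clause of the remark is the one being invoked.
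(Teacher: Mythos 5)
Your proof is correct and is essentially the paper's own argument: the paper likewise dismisses the claim in one line by noting that, since the rank of $\mathcal{Q}^{+}(4n-1,q)$ is even (equivalently, $d=2n-1$ is odd in the notation of Remark~\ref{hyperclass}), two generators from different classes cannot be disjoint, so the pairwise disjoint elements of a spread all lie in one class. Your index translation and the transitivity step are both accurate.
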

\begin{proof}
 This is immediate, as two generators of a different class cannot have an empty intersection since the rank of $\mathcal{Q}^{+}(4n-1,q)$ is even.
\end{proof}

Because of the previous lemma we present a characterisation theorem similar to Theorems \ref{characterisationth} and \ref{characterisationthI} for one class of generators instead of the complete set of generators. Some lemmata that we have proved before have an analogue for this situation.

\begin{lemma}\label{disjointequiveigenvectorII}
 Let $\mathcal{G}$ be a class of generators of the hyperbolic quadric $\mathcal{Q}^{+}(2d-1,q)$, $d$ even, and let $K$ be the disjointness matrix of $\mathcal{G}$. Let $\mathcal{L}\subseteq\mathcal{G}$ be a set of generators of $\mathcal{P}$ with characteristic vector $\bm{\chi}$.
 The number of generators of $\mathcal{G}$ disjoint to a given generator $\pi$ of $\mathcal{G}$ equals $(x-{(\bm{\chi})}_{\pi})q^{\binom{d-1}{2}}$ if and only if $\bm{\chi}-\frac{x}{q^{d-1}+1}\bm{j}$ is a vector in the eigenspace of $K$ for the eigenvalue $-q^{\binom{d-1}{2}}$.
\end{lemma}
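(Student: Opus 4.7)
The plan is to mirror the proof of Lemma~\ref{disjointequiveigenvector} verbatim, making only the adjustments forced by restricting to a single class $\mathcal{G}$. The first observation to record is that, since $d$ is even, Remark~\ref{hyperclass} tells us that two disjoint generators of $\mathcal{Q}^{+}(2d-1,q)$ must belong to the same class. Therefore the disjointness matrix $K$ of $\mathcal{G}$ really is (up to a permutation of rows and columns) the restriction to $\mathcal{G}$ of the full disjointness matrix $A_{d}$ of the polar space; in the terminology of Remark~\ref{assoschemehyperbolic}, $K=A'_{d/2}$ in the restricted $\tfrac{d}{2}$-class scheme on $\mathcal{G}$. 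In particular, the eigenvalue $P_{1,d}=-q^{\binom{d-1}{2}}$ (obtained from the formula $P_{j,d}=(-1)^{j}q^{\binom{d}{2}+(d-j)(e-j)}$ with $e=0$, $j=1$) is an eigenvalue of $K$ on the restriction of $V_{1}\perp V_{d-1}$ to $\mathcal{G}$.

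Next I would interpret $K\bm{\chi}$ entry-wise: the entry in position $\pi$ equals the number of generators of $\mathcal{L}$ disjoint from $\pi$. Hence the hypothesis of the lemma is exactly the matrix equation
\[
 K\bm{\chi}=q^{\binom{d-1}{2}}(x\bm{j}-\bm{\chi})\;.
\]
To link this to the eigenvector condition, I would compute $K\bm{j}$. By Lemma~\ref{skewgenerators} the number of generators of the whole polar space disjoint from a fixed generator $\pi$ equals $q^{\binom{d}{2}}$, and by the first paragraph all of these lie in $\mathcal{G}$, so $K\bm{j}=q^{\binom{d}{2}}\bm{j}$ where $\bm{j}$ denotes the all-ones vector indexed by $\mathcal{G}$.

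The remainder is a straightforward algebraic manipulation identical in structure to the one in Lemma~\ref{disjointequiveigenvector}. Writing
\[
 K\!\left(\bm{\chi}-\tfrac{x}{q^{d-1}+1}\bm{j}\right)
 =K\bm{\chi}-\tfrac{x\,q^{\binom{d}{2}}}{q^{d-1}+1}\bm{j}
\]
and using $q^{\binom{d}{2}}=q^{\binom{d-1}{2}}\cdot q^{d-1}$ together with $\tfrac{q^{d-1}}{q^{d-1}+1}+\tfrac{1}{q^{d-1}+1}=1$, one checks that this equals $-q^{\binom{d-1}{2}}\!\left(\bm{\chi}-\tfrac{x}{q^{d-1}+1}\bm{j}\right)$ if and only if $K\bm{\chi}=q^{\binom{d-1}{2}}(x\bm{j}-\bm{\chi})$, which is the required equivalence.

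There is no real obstacle, the only point requiring care is the very first one: one must justify that restricting disjointness to $\mathcal{G}$ loses no information (so that statements about ``all generators disjoint to $\pi$'' become statements purely inside $\mathcal{G}$); this is precisely where the parity of $d$, via Remark~\ref{hyperclass}, is used.
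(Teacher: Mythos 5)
Your proposal is correct and follows essentially the same route as the paper, which simply declares the proof ``analogous to Lemma~\ref{disjointequiveigenvector}'' and records exactly the observation you isolate: since $d$ is even, every generator disjoint from a generator of $\mathcal{G}$ again lies in $\mathcal{G}$, so $K$ is the restriction of $A_{d}$ and $K\bm{j}=q^{\binom{d}{2}}\bm{j}$. The algebraic manipulation you carry out (using $q^{\binom{d}{2}}=q^{\binom{d-1}{2}}q^{d-1}$) is the same computation as in the original lemma with $e=0$.
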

\begin{proof}
 Analogous to the proof of Lemma~\ref{disjointequiveigenvector}. Note that all generators of $\mathcal{Q}^{+}(2d-1,q)$, $d$ even, that are disjoint to a given generator $\pi\in\mathcal{G}$, belong to $\mathcal{G}$.
\end{proof}

\begin{lemma}\label{spreadvectorclass}
 Let $\mathcal{G}$ be a class of generators of the hyperbolic quadric $\mathcal{Q}^{+}(2d-1,q)$, $d$ even, and let $A'$ be the incidence matrix of points of $\mathcal{Q}^{+}(2d-1,q)$ and generators in $\mathcal{G}$. Let $\mathcal{S}\subseteq\mathcal{G}$ be a set of generators with characteristic vector $\bm{\chi}$. Then, the generator set $\mathcal{S}$ is an $m$-regular system of $\mathcal{Q}^{+}(2d-1,q)$ if and only if $\bm{\chi}-\frac{m}{\prod^{d-2}_{i=1}(q^{i}+1)}\bm{j}\in\ker(A)$.
\end{lemma}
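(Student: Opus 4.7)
The strategy is to adapt the argument of Lemma~\ref{spreadvector} to the restricted matrix $A'$. The only nontrivial new ingredient is to compute the row sum of $A'$, that is, the number $N$ of generators in the class $\mathcal{G}$ through a fixed point $P$ of $\mathcal{Q}^{+}(2d-1,q)$. Once this is known, the remainder of the argument is a direct translation of the proof of Lemma~\ref{spreadvector}.

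To obtain $N$, first note that Corollary~\ref{pointpencilsize} (with $e=0$) gives $\prod_{i=0}^{d-2}(q^{i}+1)=2\prod_{i=1}^{d-2}(q^{i}+1)$ generators of $\mathcal{Q}^{+}(2d-1,q)$ through $P$ in total. The two classes of generators of $\mathcal{Q}^{+}(2d-1,q)$ have the same cardinality (they are permuted by any reflection in a non-singular hyperplane of the ambient space), and the subgroup of the isometry group preserving the two classes still acts transitively on the points of the quadric, so the number of generators in $\mathcal{G}$ through a fixed point is independent of the point. A double count of the incident pairs $(P,\pi)$ with $\pi\in\mathcal{G}$, using $|\mathcal{G}|=\tfrac{1}{2}\prod_{i=0}^{d-1}(q^{i}+1)$, the number $\gs{d}{1}{q}$ of points on a fixed generator, and the point count of $\mathcal{Q}^{+}(2d-1,q)$ supplied by Lemma~\ref{subspacesonpolar}, then yields $N=\prod_{i=1}^{d-2}(q^{i}+1)$.

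With $N$ in hand, write $n=|\mathcal{G}|$ and $n'$ for the number of points of $\mathcal{Q}^{+}(2d-1,q)$; the above computation is exactly the statement that $A'\bm{j_{n}}=N\bm{j_{n'}}$. The condition that $\mathcal{S}\subseteq\mathcal{G}$ be an $m$-regular system of $\mathcal{Q}^{+}(2d-1,q)$ means every point is incident with exactly $m$ elements of $\mathcal{S}$, i.e.\ $A'\bm{\chi}=m\bm{j_{n'}}$. The chain of equivalences
\[
 \bm{\chi}-\frac{m}{N}\bm{j_{n}}\in\ker(A')\ \Longleftrightarrow\ A'\bm{\chi}=\frac{m}{N}A'\bm{j_{n}}=m\bm{j_{n'}}
\]
closes the argument. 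The only mildly subtle point is the computation of $N$: it rests on the standard fact that the two generator classes of a hyperbolic quadric have equal cardinality, which lets us split the ordinary point-pencil size evenly between the two classes; beyond that, everything is routine bookkeeping parallel to Lemma~\ref{spreadvector}.
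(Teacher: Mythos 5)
Your proof is correct and follows exactly the route the paper intends, since the paper's own proof is just the one-line remark ``analogous to the proof of Lemma~\ref{spreadvector}''; you correctly supply the single nontrivial detail that analogy requires, namely that the row sum of $A'$ is $\prod_{i=1}^{d-2}(q^{i}+1)$ (half of the full point-pencil size, the two classes through a point being equinumerous), after which the kernel equivalence is verbatim the earlier computation. Note also that you correctly read the $\ker(A)$ in the statement as $\ker(A')$.
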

\begin{proof}
 Analogous to the proof of Lemma~\ref{spreadvector}.
\end{proof}

We also need a counterpart for Theorem \ref{rowAissumeigenspaces}.

\begin{lemma}\label{rowAissumeigenspacesII}
 Let $\mathcal{G}$ be a class of generators of the hyperbolic quadric $\mathcal{P}=\mathcal{Q}^{+}(2d-1,q)$, $d$ even, and let $A'_{0},A'_{1},\dots,A'_{\frac{d}{2}}$ be the matrices of the corresponding association scheme.
 Consider the eigenspace decomposition $\R^{\mathcal{G}}=V'_{0}\perp V'_{1}\perp\dots\perp V'_{\frac{d}{2}}$ related to this association scheme introduced in Remark~\ref{assoschemehyperbolic}. Let $A'$ be the incidence matrix of points of $\mathcal{P}$ and generators in $\mathcal{G}$. Then, $\im(A'^{t})=V'_{0}\perp V'_{1}$.
\end{lemma}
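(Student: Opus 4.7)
The strategy will be to mimic Theorem~\ref{rowAissumeigenspaces}, transferring the equality $\im(A^{t})=V_{0}\perp V_{1}$ for the full polar space to the one-class setting via the involution swapping the two classes of generators.

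First I will introduce the diagonal matrix $S$ on $\R^{\Omega}$ that is $+1$ on coordinates in $\mathcal{G}$ and $-1$ on coordinates in $\Omega\setminus\mathcal{G}$. By Remark~\ref{assoschemehyperbolic}, the scheme matrix $A_{i}$ links same-class generators when $i$ is even and different-class generators when $i$ is odd, so the block structure of $A_{i}$ with respect to $\Omega=\Omega_{1}\sqcup\Omega_{2}$ gives $SA_{i}S=(-1)^{i}A_{i}$ for every $i$. Consequently, for $\bm{v}\in V_{j}$, the vector $S\bm{v}$ is a joint eigenvector of all the $A_{i}$ with eigenvalues $(-1)^{i}P_{j,i}$, and by Remark~\ref{assoschemehyperbolic} this forces $S\bm{v}\in V_{d-j}$. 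Hence $S(V_{j})=V_{d-j}$, and the eigenspaces of the restricted scheme have the form $V'_{j}=(V_{j}\perp V_{d-j})\cap\R^{\mathcal{G}}$.

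Next I will apply Theorem~\ref{rowAissumeigenspaces} with $k=1$ to the full polar space to get $\im(A^{t})=V_{0}\perp V_{1}$. Since $A'$ is obtained from $A$ by keeping only the columns indexed by $\mathcal{G}$, $\im(A'^{t})$ is precisely the image of $V_{0}\perp V_{1}$ under the coordinate-restriction map $\pi:\R^{\Omega}\to\R^{\mathcal{G}}$. For the inclusion $\im(A'^{t})\subseteq V'_{0}\perp V'_{1}$, I will take any $\bm{v}\in V_{0}\perp V_{1}$ and note that $\bm{v}+S\bm{v}$ is supported on $\mathcal{G}$ (being $S$-fixed) and lies in $V_{0}\perp V_{1}\perp V_{d-1}\perp V_{d}$ (since $S\bm{v}\in V_{d}\perp V_{d-1}$); its restriction to $\R^{\mathcal{G}}$ therefore lands in $V'_{0}\perp V'_{1}$.

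Finally I will promote containment to equality by a dimension count. The restriction $\pi$ is injective on $V_{0}\perp V_{1}$, since a kernel element $\bm{v}$ would satisfy $S\bm{v}=-\bm{v}$, placing $\bm{v}$ simultaneously in $V_{0}\perp V_{1}$ and in $V_{d-1}\perp V_{d}$, and hence $\bm{v}=\bm{0}$. This gives $\dim\im(A'^{t})=\dim(V_{0}\perp V_{1})=1+\dim V_{1}$. The same $S$-splitting applied to each $V_{j}\perp V_{d-j}$ yields $\dim V'_{j}=\dim V_{j}$ for $j=0,1$, so $\dim(V'_{0}\perp V'_{1})=1+\dim V_{1}$ as well, and the containment must be an equality. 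The one delicate point is the identification $S(V_{j})=V_{d-j}$; once this is secured via Remark~\ref{assoschemehyperbolic}, the remainder is routine linear algebra on the $S$-eigenspaces.
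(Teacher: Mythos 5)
Your proposal is correct in substance but follows a genuinely different route from the paper, and it has two points that need shoring up.

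The paper proves the inclusion $\im(A'^{t})\subseteq V'_{0}\perp V'_{1}$ by restricting eigenvectors block-wise (as you do, in effect), but it obtains the reverse inclusion by a Bose--Mesner argument: $A'^{t}A'$ lies in the algebra of the restricted scheme, so by Lemma~\ref{rowspacesubspacesum} its image is a sum of eigenspaces $V'_{i}$, and one then exhibits $\bm{j}\in\im(A'^{t})$ and a column of $A'^{t}$ witnessing a nonzero component in $V'_{1}$. You instead use the sign involution $S$ and a dimension count. Both work; your route avoids computing $A'^{t}A'$ but puts all the weight on the identification $S(V_{j})=V_{d-j}$ and on injectivity of the restriction map.

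Two caveats. First, Remark~\ref{assoschemehyperbolic} only records that $P_{j,i}=P_{d-j,i}$ for $i$ even; to conclude $S(V_{j})=V_{d-j}$ from $A_{i}(S\bm{v})=(-1)^{i}P_{j,i}S\bm{v}$ you also need that the permutation of idempotents induced by conjugation with $S$ sends $j$ to $d-j$ rather than fixing $j$. This is settled by one odd relation, e.g.\ $i=1$: from the eigenvalue formula $P_{j,1}=\gs{d-j}{1}{q}-\gs{j}{1}{q}$, so $P_{j,1}\neq -P_{j,1}$ for $j\neq d/2$, forcing $S(V_{j})=V_{d-j}$ for $j\in\{0,1\}$ when $d\geq 4$. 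You flag this as the delicate point but should supply the computation. Second, your injectivity step fails when $d=2$: there a kernel element of $\pi$ on $V_{0}\perp V_{1}$ lies in $(V_{0}\perp V_{1})\cap(V_{1}\perp V_{2})=V_{1}$, which is nonzero, and indeed $\dim(V_{0}\perp V_{1})=2q+1>q+1=\dim\R^{\mathcal{G}}$, so $\pi$ cannot be injective; likewise $\dim V'_{1}\neq\dim V_{1}$ there since $1=d-1$. The lemma is trivially true for $\mathcal{Q}^{+}(3,q)$ (every column of $A'^{t}$ is a standard basis vector, so $\im(A'^{t})=\R^{\mathcal{G}}$), but since $d=2$ is within the stated hypotheses you should dispose of it separately and restrict your dimension count to $d\geq 4$.
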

\begin{proof}
 Let $\overline{\mathcal{G}}$ be the other class of generators, and let $\overline{A}_{0},\dots,\overline{A}_{\frac{d}{2}}$ be the  matrices of the association scheme on $\overline{\mathcal{G}}$. Let $A_{0},\dots,A_{d}$ be the matrices of the association scheme on the generators of $\mathcal{P}$, so on $\mathcal{G}\cup\overline{\mathcal{G}}$, and let $V_{0},V_{1},\dots,V_{d}$ be the eigenspaces of this association scheme. Ordering the generators in such a way that we put first the generators of $\mathcal{G}$ and then the generators of $\overline{\mathcal{G}}$, we see that $A_{2i}=\left(\begin{smallmatrix}A'_{i}&0\\0&\overline{A}_{i}\end{smallmatrix}\right)$. We see that if $\bm{w}=\left(\begin{smallmatrix}\bm{w}'\\\overline{\bm{w}}\end{smallmatrix}\right)$ is an eigenvector of $A_{2i}$ with eigenvalue $\lambda$, then $\bm{w}'$ is an eigenvector of $A'_{i}$ with eigenvalue $\lambda$ (and $\overline{\bm{w}}$ is an eigenvector of $\overline{A}_{i}$ with eigenvector $\lambda$). 
 Using Remark \ref{assoschemehyperbolic} we find that if a vector $\bm{w}=\left(\begin{smallmatrix}\bm{w}'\\\overline{\bm{v}}\end{smallmatrix}\right)$ lies in $V_{k}$ for some $k\in\{0,\dots,d\}$, then $\bm{w}'$ lies in $V'_{k}$.
 \par Now, let $A$ be the incidence matrix of points and generators in $\mathcal{P}$. Every column in $A'^{t}$ can be seen as a restriction of the column in $A^{t}$ corresponding to the same point. Let $\bm{v}$ be a vector in $\im(A^{t})$. It is the restriction of a vector $\overline{\bm{v}}\in\im\left(\overline{A}^{t}\right)$ since it is a linear combination of columns of $A^{t}$. By Theorem \ref{rowAissumeigenspaces} we know that $\im\left(\overline{A}^{t}\right)=V_{0}\perp V_{1}$. By the argument in the previous paragraph, we know now that $\bm{v}'\in V'_{0}\perp V'_{1}$. We conclude that $\im(A^{t})\subseteq V'_{0}\perp V'_{1}$.
 \par Now, consider the matrix $A'^{t}A'$. This matrix has the same image as $A'^{t}$. Furthermore, $A'^{t}A'$ lies in the Bose-Mesner algebra of the association scheme, since the entry ${(A'^{t}A')}_{\pi,\pi'}$ is the number of common points of $\pi$ and $\pi'$, and so
 \[
  A'^{t}A'=\sum_{i=0}^{\frac{d}{2}-1}\gs{d-2i}{1}{q}A'_{i}\;.
 \]
 By Lemma~\ref{rowspacesubspacesum}, this implies that $\im(A'^{t})=\im(A'^{t}A')$ is a sum of eigenspaces $V'_{i}$.
 The image of $A'^{t}$ equals thus $V'_{0}$, $V'_{1}$ or $V'_{0}\perp V'_{1}$ since $\im(A'^{t})\subseteq V'_{0}\perp V'_{1}$ and it is non-empty. On the one hand, it is clear that $\bm{j}\in\im(A'^{t})$ because $A'^{t}\bm{j}=\gs{d}{1}{q}\bm{j}$, hence $V'_{0}\subseteq\im(A'^{t})$.
 On the other hand, consider a column $\bm{v}$ of $A'^{t}$.
 Then $\bm{v}-\frac{1}{q^{d-1}+1}\bm{j}$ is a non-zero vector of $V'_{1}$ by the above and the observation that $\bm{v}\notin\left\langle\bm{j}\right\rangle$.
 Since both $\bm{v}$ and $\bm{j}$ are contained in $\im(A'^{t})$, also $\bm{v}-\frac{1}{q^{d-1}+1}\bm{j}$ is contained in $\im(A'^{t})$ and consequently $V'_{1} \subseteq\im(A'^{t})$.
 So we conclude that $\im(A'^{t})=V'_{0}\perp V'_{1}$.
\end{proof}

Now we present the characterisation theorem.

\begin{theorem}\label{characterisationthII}
 Let $\mathcal{G}$ be a class of generators of the hyperbolic quadric $\mathcal{P}=\mathcal{Q}^{+}(2d-1,q)$, $d$ even. Let $\mathcal{L}\subseteq\mathcal{G}$ be a set of generators of $\mathcal{P}$ with characteristic vector $\bm{\chi}$. Let $A'$ be the incidence matrix of points of $\mathcal{P}$ and generators in $\mathcal{G}$, and let $K$ be the disjointness matrix of $\mathcal{G}$. Let $V'_{0}\perp\dots\perp V'_{\frac{d}{2}}$ be the classical decomposition in eigenspaces of the association scheme on the generators in $\mathcal{G}$.
 Denote $\frac{|\mathcal{L}|}{\prod^{d-2}_{i=1}(q^{i}+1)}$ by $x$. The following five statements are equivalent.
 \begin{itemize}
  \item[(i)] For each fixed generator $\pi\in\mathcal{G}$, the number of elements of $\mathcal{L}$ disjoint from $\pi$ equals $(x-{(\bm{\chi})}_{\pi})q^{\binom{d-1}{2}}$.
  \item[(ii)] The vector $\bm{\chi}-\frac{x}{q^{d-1}+1}\bm{j}$ is contained in the eigenspace of $K$ for the eigenvalue $-q^{\binom{d-1}{2}}$.
  \item[(iii)] $\bm{\chi}\in V'_{0}\perp V'_{1}$.
  \item[(iv)] $\bm{\chi}\in\im(A'^{t})$.
  \item[(v)] $\bm{\chi}\in{(\ker(A'))}^{\perp}$.
 \end{itemize}
 If $\mathcal{P}$ admits a spread, then also the next two statements are equivalent to the previous ones.
 \begin{itemize}
  \item[(vi)] $|\mathcal{L}\cap\mathcal{S}|=x$ for every spread $\mathcal{S}$ of $\mathcal{P}$ whose elements are in $\mathcal{G}$.
  \item[(vii)] $|\mathcal{L}\cap\mathcal{S}|=x$ for every spread $\mathcal{S}\in\mathcal{C}$ of $\mathcal{P}$ whose elements are in $\mathcal{G}$, with $\mathcal{C}$ a class of spreads which is a union of some orbits under the action of an automorphism group that acts transitively on the pairs of disjoint generators of $\mathcal{G}$.
 \end{itemize}
\end{theorem}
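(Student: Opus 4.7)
The plan is to prove the cycle of equivalences (i)$\Leftrightarrow$(ii)$\Leftrightarrow$(iii)$\Leftrightarrow$(iv)$\Leftrightarrow$(v) first, and then show (iii)$\Rightarrow$(vi)$\Rightarrow$(vii)$\Rightarrow$(i) under the spread hypothesis. The overall structure mirrors the proof of Theorem~\ref{characterisationth}, with the necessary adaptations to a single class of generators on a hyperbolic quadric of even rank.

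For (i)$\Leftrightarrow$(ii) I would simply invoke Lemma~\ref{disjointequiveigenvectorII}, which is tailor-made for exactly this equivalence on a single class. For (ii)$\Leftrightarrow$(iii) the key observation is that the disjointness matrix $K$ on $\mathcal{G}$ is the matrix $A'_{d/2}$ of the restricted association scheme described in Remark~\ref{assoschemehyperbolic}, so its eigenspaces are the $V'_j=(V_j\perp V_{d-j})\cap \R^{\mathcal{G}}$. Using the formula $P_{j,d}=(-1)^{j}q^{\binom{d}{2}-j(d-j)}$ (with $e=0$), one checks that the eigenvalue $-q^{\binom{d-1}{2}}$ on the full scheme occurs on $V_1$ and on $V_{d-1}$, so in the restricted scheme it occurs only on $V'_1$; this is consistent with Theorem~\ref{mineigenvalue}. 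Since $\langle\bm{\chi},\bm{j}\rangle=|\mathcal{L}|$, the vector $\bm{\chi}-\frac{x}{q^{d-1}+1}\bm{j}$ is the projection of $\bm{\chi}$ onto $V'_0{}^{\perp}$, so it lies in the $-q^{\binom{d-1}{2}}$-eigenspace of $K$ precisely when $\bm{\chi}\in V'_0\perp V'_1$.

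Statements (iii), (iv), (v) are tied together by Lemma~\ref{rowAissumeigenspacesII}, which states exactly that $\im(A'^{t})=V'_0\perp V'_1$; the equivalence (iv)$\Leftrightarrow$(v) is the standard identity $\im(A'^{t})=\ker(A')^{\perp}$. For the spread part, Lemma~\ref{spreadsclasses} guarantees that any spread $\mathcal{S}$ of $\mathcal{P}$ lies entirely inside one class, so a spread of $\mathcal{P}$ is automatically a spread of $\mathcal{G}$; by Remark~\ref{spreadstransitive} and Theorem~\ref{distancetransitive}, the stabiliser of $\mathcal{G}$ in the full automorphism group acts transitively on pairs of disjoint generators of $\mathcal{G}$, so a class $\mathcal{C}$ as in (vii) exists. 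To show (iii)$\Rightarrow$(vi), I take a spread $\mathcal{S}\subseteq\mathcal{G}$ with characteristic vector $\bm{\chi}_{\mathcal{S}}$ and apply Lemma~\ref{spreadineigenspaces}: the special case $e=0$, $d$ even yields $\bm{\chi}_{\mathcal{S}}-\frac{1}{\prod_{i=0}^{d-2}(q^{i}+1)}\bm{j}\in(V_0\oplus V_1\oplus V_{d-1})^{\perp}$, which restricts to the orthogonal complement of $V'_0\oplus V'_1$ in $\R^{\mathcal{G}}$. Computing $|\mathcal{L}\cap\mathcal{S}|=\langle\bm{\chi},\bm{\chi}_{\mathcal{S}}\rangle$ then gives exactly $x$. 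Finally, (vi)$\Rightarrow$(vii) is immediate, and for (vii)$\Rightarrow$(i) I would prove a single-class analogue of Lemma~\ref{spreadintersectiontonumberanddisjoint} by the same double-counting argument: since spreads of $\mathcal{P}$ live in one class, the counts $n_0,n_1,n_2$ restricted to $\mathcal{C}$ are well defined by the transitivity hypothesis, and the counting identities simplify to give the disjointness count $(x-(\bm{\chi})_\pi)q^{\binom{d-1}{2}}$ for every $\pi\in\mathcal{G}$.

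The main obstacle I anticipate is the bookkeeping around the restricted association scheme: making sure that the minimal eigenvalue of the disjointness matrix on $\mathcal{G}$ is uniquely attached to $V'_1$, and that the projection identity used in (ii)$\Leftrightarrow$(iii) works cleanly with the $\frac{x}{q^{d-1}+1}\bm{j}$ term (one needs to remember that $|\mathcal{G}|=\prod_{i=0}^{d-1}(q^{i}+1)/1$-related, but the normalisation is dictated by Lemma~\ref{disjointequiveigenvectorII}, not by $|\mathcal{G}|$, so the algebra goes through as in the proof of Theorem~\ref{characterisationth}). Once that is sorted, everything else amounts to routine adaptation of the earlier arguments.
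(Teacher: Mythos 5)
Your overall architecture is exactly the paper's: (i)$\Leftrightarrow$(ii) via Lemma~\ref{disjointequiveigenvectorII}, (ii)$\Leftrightarrow$(iii) via the restricted association scheme of Remark~\ref{assoschemehyperbolic} together with the observation that $-q^{\binom{d-1}{2}}$ is attained on $V_{1}$ and $V_{d-1}$ (hence only on $V'_{1}$ after restriction), (iii)$\Leftrightarrow$(iv)$\Leftrightarrow$(v) via Lemma~\ref{rowAissumeigenspacesII}, and the spread statements via the index-two stabiliser of $\mathcal{G}$ and a single-class version of the double count in Lemma~\ref{spreadintersectiontonumberanddisjoint}. Your identification of $\bm{\chi}-\frac{x}{q^{d-1}+1}\bm{j}$ as the projection of $\bm{\chi}$ onto $(V'_{0})^{\perp}$ is correct, and your treatment of (vii)$\Rightarrow$(i) is, if anything, more explicit than the paper's about why the counting constants remain well defined inside one class.

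There is, however, a concrete error in your (iii)$\Rightarrow$(vi) step. Lemma~\ref{spreadineigenspaces} lives in $\R^{\Omega}$ with the normalisation $\frac{1}{\prod_{i=0}^{d-2}(q^{i}+1)}$, and the restriction of $\bm{\chi}_{\mathcal{S}}-\frac{1}{\prod_{i=0}^{d-2}(q^{i}+1)}\bm{j}$ to $\R^{\mathcal{G}}$ is \emph{not} orthogonal to $V'_{0}=\langle\bm{j}_{\mathcal{G}}\rangle$: its inner product with $\bm{j}_{\mathcal{G}}$ equals $(q^{d-1}+1)-\frac{|\mathcal{G}|}{\prod_{i=0}^{d-2}(q^{i}+1)}=\frac{q^{d-1}+1}{2}\neq 0$, because $|\mathcal{G}|$ is only half of $|\Omega|$. (Equivalently, both $\bm{\chi}$ and $\bm{\chi}_{\mathcal{S}}$ have nonzero components in $V_{d}$, so the ambient-space pairing does not vanish.) If you push your computation through as written you obtain $|\mathcal{L}\cap\mathcal{S}|=\frac{|\mathcal{L}|}{\prod_{i=0}^{d-2}(q^{i}+1)}=x/2$, not $x$. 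The fix is exactly what the paper does: keep the orthogonality to $V'_{1}$ (which does survive restriction, since elements of $V'_{1}$ are supported on $\mathcal{G}$), but recompute the $V'_{0}$-component inside $\R^{\mathcal{G}}$, i.e.\ use the constant $\frac{1}{\prod_{i=1}^{d-2}(q^{i}+1)}$ as supplied by Lemma~\ref{spreadvectorclass} (the paper deduces (vi) from (v) via $\ker(A')=\bigl(V'_{0}\perp V'_{1}\bigr)^{\perp}$). With that correction the argument is complete.
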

\begin{proof}
 The proof is analogous to the proofs of Theorems \ref{characterisationth} and \ref{characterisationthI}. For the equivalence between (i), (ii) and (iii) we can rely on Lemma \ref{disjointequiveigenvectorII}, Theorem \ref{mineigenvalue} and Remark \ref{assoschemehyperbolic}. The equivalence between (iii), (iv) and (v) follows from Lemma \ref{rowAissumeigenspacesII}.
 \par From now on we assume that $\mathcal{G}$ admits a spread. We note that by Remark~\ref{spreadstransitive} the existence of a spread of $\mathcal{P}$ implies the existence of a class $\mathcal{C}$ as in statement (vii).
 This shows that statement (vii) is not an empty statement, which is important for the following argument. We show that (v) implies (vi) and that (vii) implies (i). Since (vi) clearly implies (vii), this shows that statements (vi) and (vii) are both equivalent to the statements (i)-(v).
 \par We assume that (vi) is valid. Let $\mathcal{S}\subseteq\mathcal{G}$ be a spread of $\mathcal{P}$, with characteristic vector $\bm{\chi}_{\mathcal{S}}$. By Lemma~\ref{spreadvectorclass} we can find a vector $\bm{w}\in\ker(A')$ such that $\bm{\chi}_{\mathcal{S}}=\bm{w}+\frac{1}{\prod^{d-2}_{i=1}(q^{i}+1)}\bm{j}$. Then
 \[
  |\mathcal{L}\cap\mathcal{S}|=\left\langle\bm{\chi},\bm{\chi}_{\mathcal{S}}\right\rangle=\left\langle \bm{\chi},\bm{w}\right\rangle+\frac{1}{\prod^{d-2}_{i=1}(q^{i}+1)}\left\langle \bm{\chi},\bm{j}\right\rangle=\frac{|\mathcal{L}|}{\prod^{d-2}_{i=1}(q^{i}+1)}=x\;,
 \]
 since $\left\langle \bm{\chi},\bm{w}\right\rangle=0$ by the assumption. This proves (vi).
 \par Finally we assume that (vii) is valid. We denote the class of generators different from $\mathcal{G}$ by $\overline{\mathcal{G}}$. Any automorphism of $\mathcal{P}$ either fixes both $\mathcal{G}$ and $\overline{\mathcal{G}}$, or else switches the sets $\mathcal{G}$ and $\overline{\mathcal{G}}$. Let $H$ be the subgroup of $G$ (of index 2) that fixes both $\mathcal{G}$ and $\overline{\mathcal{G}}$ setwise. The subgroup $H$ acts transitively on the pairs of disjoint generators in $\mathcal{G}$, since $G$ acts transitively on the pairs of disjoint generators of $\mathcal{P}$.
 By Lemma~\ref{spreadintersectiontonumberanddisjoint}(ii) statement (vii) implies (i) since the set of all spreads in $\mathcal{C}$ whose elements are contained in $\mathcal{G}$ is a union of orbits under the action of $H$.
\end{proof}

We use this theorem to define Cameron-Liebler sets also for one class of generators of the hyperbolic quadric of even rank.

\begin{definition}\label{clsetII}
 Let $\mathcal{G}$ be a class of generators of the hyperbolic quadric $\mathcal{Q}^{+}(2d-1,q)$, $d$ even. A generator set $\mathcal{L}\subseteq\mathcal{G}$ that fulfils one of the statements in Theorem~\ref{characterisationthII} (and consequently all of them) is called a \emph{Cameron-Liebler set with parameter} $x=\frac{|\mathcal{L}|}{\prod^{d-2}_{i=1}(q^{i}+1)}$.
\end{definition}

\begin{corollary}
 Let $\mathcal{G}$ be a class of generators of the hyperbolic quadric $\mathcal{Q}^{+}(2d-1,q)$, $d$ even, that admits an $m$-regular system $\mathcal{S}$. If $\mathcal{L}$ is a Cameron-Liebler set with parameter $x$, then $|\mathcal{L}\cap\mathcal{S}|=mx$.
\end{corollary}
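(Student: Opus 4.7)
The plan is to follow the template of Corollary~\ref{CLregularsystemI} almost verbatim, with the unrestricted incidence matrix $A$ replaced by the point-generator incidence matrix $A'$ of the class $\mathcal{G}$, and the characterisation in Theorem~\ref{characterisationthI}(ii) replaced by Theorem~\ref{characterisationthII}(v). The key observation that legitimises this substitution is that an $m$-regular system of $\mathcal{Q}^{+}(2d-1,q)$ whose generators all lie in $\mathcal{G}$ has a characteristic vector living in $\R^{\mathcal{G}}$, which is exactly the ambient space for the matrix $A'$ and for the decomposition $V'_{0}\perp\dots\perp V'_{d/2}$ used in Theorem~\ref{characterisationthII}.

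First I would denote by $\bm{\chi}$ and $\bm{\chi}_{\mathcal{S}}$ the characteristic vectors of $\mathcal{L}$ and $\mathcal{S}$, respectively, both regarded as vectors in $\R^{\mathcal{G}}$. Applying Lemma~\ref{spreadvectorclass} to the $m$-regular system $\mathcal{S}\subseteq\mathcal{G}$, I obtain a vector $\bm{w}\in\ker(A')$ with
\[
 \bm{\chi}_{\mathcal{S}}=\bm{w}+\frac{m}{\prod^{d-2}_{i=1}(q^{i}+1)}\bm{j}\,.
\]
Next I would compute the intersection size by
\[
 |\mathcal{L}\cap\mathcal{S}|=\langle\bm{\chi},\bm{\chi}_{\mathcal{S}}\rangle=\langle\bm{\chi},\bm{w}\rangle+\frac{m}{\prod^{d-2}_{i=1}(q^{i}+1)}\langle\bm{\chi},\bm{j}\rangle\,.
\]
By Theorem~\ref{characterisationthII}(v), the characteristic vector $\bm{\chi}$ of a Cameron-Liebler set lies in ${(\ker(A'))}^{\perp}$, so the first inner product vanishes. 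The second inner product equals $|\mathcal{L}|=x\prod^{d-2}_{i=1}(q^{i}+1)$ by Definition~\ref{clsetII}, and substitution immediately yields $|\mathcal{L}\cap\mathcal{S}|=mx$.

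There is essentially no obstacle here; the only subtle point is making sure one uses the restricted incidence matrix $A'$ (defined on the single class $\mathcal{G}$) consistently, both in the orthogonality statement from Theorem~\ref{characterisationthII}(v) and in Lemma~\ref{spreadvectorclass}. Since the hypothesis that $\mathcal{G}$ admits the $m$-regular system forces $\mathcal{S}\subseteq\mathcal{G}$ (compatibly with Lemma~\ref{spreadsclasses} in the spread case $m=1$), the two restricted objects live in the same ambient space $\R^{\mathcal{G}}$ and the argument of Corollary~\ref{CLregularsystemI} carries over without further modification.
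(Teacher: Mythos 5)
Your proof is correct and is exactly the argument the paper intends: the paper's own proof of this corollary simply says it is analogous to Corollary~\ref{CLregularsystemI} using Lemma~\ref{spreadvectorclass}, which is precisely the substitution you carry out. No issues.
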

\begin{proof}
 Analogous to the proof of Corollary~\ref{CLregularsystemI}, now using Lemma~\ref{spreadvectorclass}.
\end{proof}

Using the definition of Cameron-Liebler sets for a class of generators, we find an alternative definition for the Cameron-Liebler sets for the hyperbolic quadrics of even rank themselves.

\begin{theorem}\label{characterisationthIIbis}
 Let $\mathcal{L}$ be a set of generators of $\mathcal{P}=\mathcal{Q}^{+}(2d-1,q)$, $d$ even, with characteristic vector $\bm{\chi}$, and let $\Omega_{1}$ and $\Omega_{2}$ be the two classes of generators of $\mathcal{P}$. Then, for any $x$, $\mathcal{L}$ is a Cameron-Liebler set with parameter $x$ if and only if both $\mathcal{L}\cap\Omega_{1}$ and $\mathcal{L}\cap\Omega_{2}$ are Cameron-Liebler sets with parameter $x$ of $\Omega_{1}$ and $\Omega_{2}$, respectively.
\end{theorem}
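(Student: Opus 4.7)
The plan is to deduce both directions directly from the disjointness characterisations in Theorem~\ref{characterisationth}(i) and Theorem~\ref{characterisationthII}(i). The essential geometric input is that when $\mathcal{P}=\mathcal{Q}^{+}(2d-1,q)$ has even rank $d$, Remark~\ref{hyperclass} implies that two generators can be disjoint only if they belong to the same class. Therefore, for any generator $\pi\in\Omega_j$, every generator of $\mathcal{P}$ disjoint from $\pi$ lies in $\Omega_j$; writing $\mathcal{L}_j:=\mathcal{L}\cap\Omega_j$, this means the number of elements of $\mathcal{L}$ disjoint from $\pi$ coincides with the number of elements of $\mathcal{L}_j$ disjoint from $\pi$, and also $(\bm{\chi}_{\mathcal{L}})_\pi=(\bm{\chi}_{\mathcal{L}_j})_\pi$.

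For the forward implication, I would assume that $\mathcal{L}$ is a Cameron-Liebler set of $\mathcal{P}$ with parameter $x$. Since $e=0$ for a hyperbolic quadric, Theorem~\ref{characterisationth}(i) gives that the number of elements of $\mathcal{L}$ disjoint from any generator $\pi$ equals $(x-(\bm{\chi}_{\mathcal{L}})_\pi)q^{\binom{d-1}{2}}$. Taking $\pi\in\Omega_j$ and translating both sides through the observation above yields that the number of elements of $\mathcal{L}_j$ disjoint from $\pi$ equals $(x-(\bm{\chi}_{\mathcal{L}_j})_\pi)q^{\binom{d-1}{2}}$, which is exactly the hypothesis of Theorem~\ref{characterisationthII}(i) for the class $\Omega_j$. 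Hence $\mathcal{L}_j$ is a Cameron-Liebler set of $\Omega_j$ with parameter $x$ in the sense of Definition~\ref{clsetII}.

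The converse runs the same argument in reverse: assuming that $\mathcal{L}_j$ is a Cameron-Liebler set of $\Omega_j$ with parameter $x$ for $j=1,2$, Theorem~\ref{characterisationthII}(i) together with the rank-parity observation give that for every generator $\pi$ of $\mathcal{P}$ the number of elements of $\mathcal{L}=\mathcal{L}_1\cup\mathcal{L}_2$ disjoint from $\pi$ equals $(x-(\bm{\chi}_{\mathcal{L}})_\pi)q^{\binom{d-1}{2}}$, after which Theorem~\ref{characterisationth}(i) yields that $\mathcal{L}$ is a Cameron-Liebler set with parameter $x$. The only bookkeeping point is to verify that the two definitions of parameter agree: from $|\mathcal{L}_j|=x\prod_{i=1}^{d-2}(q^i+1)$ one computes $|\mathcal{L}|=2x\prod_{i=1}^{d-2}(q^i+1)=x\prod_{i=0}^{d-2}(q^i+1)$, which matches Definition~\ref{clset}, and the reverse computation is just as immediate. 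I do not foresee a real obstacle here: once the parity statement in Remark~\ref{hyperclass} is invoked, the theorem reduces to a clean bookkeeping on disjointness counts.
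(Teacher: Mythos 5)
Your argument is correct and is essentially the paper's own proof: the paper likewise invokes Remark~\ref{hyperclass} (disjoint generators of $\mathcal{Q}^{+}(2d-1,q)$ with $d$ even lie in the same class) and then deduces the equivalence directly from Theorem~\ref{characterisationth}(i) and Theorem~\ref{characterisationthII}(i). Your extra bookkeeping confirming that the two normalisations of the parameter agree is a harmless elaboration of the same argument.
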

\begin{proof}
 Since $d$ is even, two disjoint generators necessarily belong to the same generator class, see Remark~\ref{hyperclass}. Hence, it follows from Theorem \ref{characterisationth}(i) and Theorem \ref{characterisationthII}(i) that the equivalence in the statement of the theorem is true.
\end{proof}

Our goal in this subsection was to present a characterisation theorem for the Cameron-Liebler sets of the hyperbolic quadrics of even rank using the image of an incidence matrix. Using the previous results we can now do this.

\begin{theorem}
 Let $\mathcal{L}$ be a set of generators of $\mathcal{P}=\mathcal{Q}^{+}(2d-1,q)$, $d$ even, with characteristic vector $\bm{\chi}$, and let $\Omega_{1}$ and $\Omega_{2}$ be the two classes of generators of $\mathcal{P}$. Let $B$ be the incidence matrix whose rows are indexed by the tuples $(P,i)$ with $P$ a point of $\mathcal{P}$ and $i\in\{1,2\}$, whose columns are indexed by the generators of $\mathcal{P}$ and whose entry on the row corresponding to $(P,i)$ and in the column corresponding to $\pi$ equals one if $P$ is contained in $\pi$ and $\pi\in\Omega_{i}$, and zero otherwise. Denote $\frac{|\mathcal{L}|}{\prod^{d-1}_{i=0}(q^{e+i}+1)}$ by $x$. The following three statements are equivalent.
 \begin{itemize}
  \item[(i)] $\bm{\chi}\in\im(B^{t})$.
  \item[(ii)] $\bm{\chi}\in{(\ker(B))}^{\perp}$.
  \item[(iii)] $\mathcal{L}$ is a Cameron-Liebler set.
 \end{itemize}
\end{theorem}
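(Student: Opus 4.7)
The equivalence (i)$\Leftrightarrow$(ii) is immediate from the identity $\im(B^{t})={(\ker(B))}^{\perp}$, which holds for any real matrix, so the real content of the theorem is (i)$\Leftrightarrow$(iii). My plan is to reduce this to the single-class characterisation already available in Theorem~\ref{characterisationthII} via Theorem~\ref{characterisationthIIbis}.

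The key observation is the block structure of $B$. Order the columns of $B$ so that the generators in $\Omega_{1}$ come first and the generators in $\Omega_{2}$ second, and order the rows so that all pairs $(P,1)$ come first and all pairs $(P,2)$ second. With $A'_{i}$ denoting the point-generator incidence matrix of the class $\Omega_{i}$ (as in Theorem~\ref{characterisationthII}), this ordering gives
\[
 B=\begin{pmatrix}A'_{1}&0\\0&A'_{2}\end{pmatrix},\qquad B^{t}=\begin{pmatrix}A'^{t}_{1}&0\\0&A'^{t}_{2}\end{pmatrix}.
\]
In particular, if $U_{i}\subseteq\R^{\Omega}$ denotes the subspace of vectors supported on $\Omega_{i}$, then $\im(B^{t})=\im(A'^{t}_{1})\perp\im(A'^{t}_{2})$, with each summand sitting inside the corresponding $U_{i}$.

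Now write the characteristic vector of $\mathcal{L}$ as $\bm{\chi}=\bm{\chi}_{1}+\bm{\chi}_{2}$, where $\bm{\chi}_{i}$ is the characteristic vector of $\mathcal{L}\cap\Omega_{i}$ viewed inside $U_{i}$. Since the decomposition $\R^{\Omega}=U_{1}\perp U_{2}$ is orthogonal and respects the block structure above, $\bm{\chi}\in\im(B^{t})$ if and only if $\bm{\chi}_{i}\in\im(A'^{t}_{i})$ for both $i\in\{1,2\}$. By Theorem~\ref{characterisationthII}(iv), the latter is equivalent to $\mathcal{L}\cap\Omega_{i}$ being a Cameron-Liebler set in $\Omega_{i}$ for each $i$. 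Finally, Theorem~\ref{characterisationthIIbis} states that this in turn is equivalent to $\mathcal{L}$ being a Cameron-Liebler set of $\mathcal{P}$, with the parameter being preserved (noting that $\prod_{i=0}^{d-1}(q^{e+i}+1)=2\prod_{i=1}^{d-2}(q^{i}+1)\cdot(q^{d-1}+1)$ in the $e=0$ case, so the denominators in Definitions~\ref{clset} and~\ref{clsetII} match up correctly after accounting for the factor of $2$ coming from the two generator classes).

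There is no real obstacle here: the argument is purely structural, and the only step one must be careful about is checking that the parameter $x$ of $\mathcal{L}$ in the sense of Definition~\ref{clset} agrees with the common parameter $x$ of the two sets $\mathcal{L}\cap\Omega_{i}$ in the sense of Definition~\ref{clsetII}. This follows from the counting identity above together with the fact (contained in Theorem~\ref{characterisationthIIbis}) that a Cameron-Liebler set of $\mathcal{P}$ with parameter $x$ restricts to a Cameron-Liebler set of each class with the same parameter $x$.
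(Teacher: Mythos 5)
Your proposal is correct and follows essentially the same route as the paper, whose entire proof is the one-line citation of Theorem~\ref{characterisationthIIbis} together with Theorem~\ref{characterisationthII}(iv); you have simply made explicit the block-diagonal structure of $B$ that justifies splitting $\im(B^{t})$ as $\im(A'^{t}_{1})\perp\im(A'^{t}_{2})$ and hence reducing to the two generator classes. The parameter bookkeeping you mention is fine (and in fact the parameter $x$ plays no role in statements (i)--(iii)), so nothing is missing.
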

\begin{proof}
 This follows from Theorem \ref{characterisationthIIbis} and Theorem \ref{characterisationthII}(iv).
\end{proof}

\begin{remark}
 So, we have found a characterisation of the Cameron-Liebler sets of the hyperbolic quadric $\mathcal{P}=\mathcal{Q}^{+}(2d-1,q)$, $d$ even, using the image of a matrix. However it is not the point-generator incidence matrix $A$ that we used for the polar spaces of type I. One can easily check that all generator sets whose incidence vector is contained in $\im(A^{t})$ are indeed Cameron-Liebler sets of $\mathcal{P}$.
 \par We now check that the converse is not true: $\im(A^{t})$ does not contain the incidence vectors of all Cameron-Liebler sets of $\mathcal{P}$. Let $P$ and $P'$ be two noncollinear points of $\mathcal{P}$, and let $\mathcal{L}$ be the set of all generators of class $\Omega_{1}$ through $P$ and all generators of class $\Omega_{2}$ through $P'$. Then, $\mathcal{L}$ is a Cameron-Liebler set of $\mathcal{P}$ with parameter 1. Denote the characteristic vector of $\mathcal{L}$ by $\bm{\chi}$. Denote the row of $A$ corresponding to the point $Q$ by $\bm{r}^{t}_{Q}$. We show that $\bm{\chi}\notin\im(A^{t})$. Assume that $\bm{\chi}=\sum_{Q\in\mathcal{P}}a_{Q}\bm{r}_{Q}$ for some $a_{Q}\in\R$. Let $\bm{v}_{P^{\perp},j}$ be the characteristic vector of the set of all generators in $\Omega_{j}$ through $P$, $j=1,2$. Then we know that
 \begin{align*}
  \prod_{i=1}^{d-1}(q^{i}+1) & =\left\langle\chi,\bm{v}_{P^{\perp},1}\right\rangle=a_{P}\prod_{i=0}^{d-1}(q^{i}+1)+\prod_{i=0}^{d-2}(q^{i}+1)\sum_{Q\in P^{\perp}\setminus P}a_{Q}\quad\text{and} \\
  0                          & =\left\langle\chi,\bm{v}_{P^{\perp},2}\right\rangle=a_{P}\prod_{i=0}^{d-1}(q^{i}+1)+\prod_{i=0}^{d-2}(q^{i}+1)\sum_{Q\in P^{\perp}\setminus P}a_{Q}\;,
 \end{align*}
 a contradiction.
\end{remark}

\subsection*{The polar spaces $\mathcal{Q}(4n+2,q)$, all $q$, and $\mathcal{W}(4n+1,q)$, $q$ even}

In this section we present the characterisation theorem for two classes of polar spaces, namely the parabolic quadrics of odd rank and the symplectic polar spaces of odd rank over a field with even characteristic. The main reason for treating these two polar spaces separately from the other polar spaces is that by Lemma~\ref{mineigenvalue} there is an eigenspace different from $V_{1}$ that corresponds to the same eigenvalue of the disjointness matrix as $V_{1}$. To simplify the statements of the theorem we will use the following notation.

\begin{notation}
 The polar spaces $\mathcal{Q}(2d,q)$, with $d$ odd, and $\mathcal{W}(2d-1,q)$, with $d$ odd and $q$ even, are called the polar spaces of \emph{type III}.
\end{notation}

Note that all polar spaces of type III have parameter $e=1$.

\begin{remark}\label{nucleus}
 There is an isomorphism between the parabolic quadrics and the symplectic polar spaces III if $q$ is even. Consider the parabolic quadric $\mathcal{Q}(2d,q)$, $q$ even, and let $N$ be its nucleus. This is the unique point such that any line through $N$ is a tangent line to $\mathcal{Q}(2d,q)$. Projecting $\mathcal{Q}(2d,q)$ from $N$ onto a hyperplane $\alpha$ disjoint to $N$ yields the symplectic polar space $\mathcal{W}(2d-1,q)$, $q$ even. For more information about this link between $\mathcal{W}(2d-1,q)$, $q$ even, and $\mathcal{Q}(2d,q)$, $q$ even, we refer to~\cite[Chapter 22]{ht} and~\cite[Chapter 11]{taylor}.
\end{remark}

By this remark we only have to consider parabolic quadrics in this subsection.

\begin{remark}\label{hyperbolicclass}
 Consider the parabolic quadric $\mathcal{P}=\mathcal{Q}(2d,q)$ with $d$ odd. Any non-tangent hyperplane meets $\mathcal{P}$ in either a hyperbolic quadric $\mathcal{Q}^{+}(2d-1,q)$ or in an elliptic quadric $\mathcal{Q}^{-}(2d-1,q)$.
 Let $\mathcal{S}$ be an $m$-regular system of $\mathcal{P}$ and let $\alpha$ be a hyperplane meeting $\mathcal{Q}(2d,q)$ in a hyperbolic quadric $Q$, necessarily also of rank $d$. We know that $|\mathcal{S}|=m(q^{d}+1)$, and then by counting the points of $Q$, we find that precisely $2m$ of the generators in $\mathcal{S}$ are contained in $Q$.
 We know that the generators of $Q$ can be divided in two classes and since $d$ is odd, any two disjoint generators must belong to different classes. Hence, if $\mathcal{S}$ is a spread ($m=1$), the two generators of $\mathcal{S}$ that are contained in $Q$ necessarily belong to different classes.
 \par From the previous observation it follows that the set of all generators of one class of a hyperbolic quadric $\mathcal{Q}^{+}(2d-1,q)$ embedded in $\mathcal{Q}(2d,q)$, $d$ odd, has precisely one generator in common with every spread.
 This generator set is called a \emph{hyperbolic class}. Note that $\mathcal{P}$ does not necessarily contain a spread.
\end{remark}

Now we present a lemma regarding the image of the matrix $B^{t}$ that we introduced in the previous lemma. This is an analogue of Theorem~\ref{rowAissumeigenspaces} and Lemma~\ref{rowAissumeigenspacesII}.

\begin{lemma}\label{rowBissumeigenspaces}
 Let $\mathcal{P}$ be a polar space of type III of rank $d$ over $\F_{q}$, and let $A_{0},A_{1},\dots,A_{d}$ be the incidence matrices of the corresponding association scheme.
 Consider the eigenspace decomposition $\R^{\Omega}=V_{0}\perp V_{1}\perp\dots\perp V_{d}$ related to this association scheme using the classical order, with $\Omega$ the set of all generators of $\mathcal{P}$. Let $B$ be the incidence matrix of hyperbolic classes and generators of $\mathcal{P}$. Then, $\im(B^{t})=V_{0}\perp V_{1}\perp V_{d}$.
\end{lemma}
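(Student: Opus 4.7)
The plan is to mirror Lemma~\ref{rowAissumeigenspacesII}: first establish $\im(B^t)\subseteq V_0\perp V_1\perp V_d$, then argue via $B^tB$ that $\im(B^t)$ is a direct sum of eigenspaces, and finally verify that each of $V_0$, $V_1$, $V_d$ is a summand.

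For the inclusion, each column of $B^t$ is the characteristic vector $\bm{\chi}_H$ of some hyperbolic class $H$, and I would show that $H$ is a Cameron--Liebler set with parameter $1$; Theorem~\ref{characterisationth}(iii) then places $\bm{\chi}_H$ in $V_0\perp V_1\perp V_d$. Condition~(i) of Theorem~\ref{characterisationth} is checked by case analysis on a generator $\pi$ of $\mathcal{P}$, writing $Q=\alpha\cap\mathcal{P}$ for the hyperbolic section carrying $H$. If $\pi\in H$, Remark~\ref{hyperclass} applied to the rank-$d$ quadric $Q$ (with $d$ odd) forces $\pi\cap\pi'\neq\emptyset$ for every $\pi'\in H$. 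If $\pi$ is a generator of $Q$ in the class opposite to $H$, Lemma~\ref{skewgenerators} for $Q$ yields $q^{\binom{d}{2}}$ disjoint generators, all of which lie in $H$ since in an odd-rank hyperbolic quadric disjoint generators belong to opposite classes. If $\pi\not\subset Q$, then $\beta:=\pi\cap\alpha$ is a $(d-2)$-space of $Q$ and $\pi\cap\pi'=\pi'\cap\beta$ for $\pi'\in H$; the number of such $\pi'$ with $\pi'\cap\beta=\emptyset$ reduces, via the residue of $Q$ at the vector line $L=\pi'\cap\beta^\perp$ (which must lie in the unique generator $\pi_1\in H$ through $\beta$) and Lemma~\ref{skewgenerators} in the rank-$(d-1)$ hyperbolic residue, to $q^{d-1}\cdot q^{\binom{d-1}{2}}=q^{\binom{d}{2}}$, as required.

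Since $(B^tB)_{\pi,\pi'}$ counts the hyperbolic classes containing both $\pi$ and $\pi'$, a quantity depending only on $\dim(\pi\cap\pi')$, $B^tB$ lies in the Bose--Mesner algebra, and Lemma~\ref{rowspacesubspacesum} gives $\im(B^t)=\im(B^tB)=\bigoplus_{i\in I}V_i$ for some $I\subseteq\{0,1,d\}$. The relation $B^t\bm{j}=c\bm{j}$ with $c>0$ (constant by transitivity on generators) gives $0\in I$. For $d\in I$, fix a hyperbolic section $Q$ with classes $H_1,H_2$ and consider $\bm{w}:=\bm{\chi}_{H_1}-\bm{\chi}_{H_2}\in\im(B^t)$: the equality $A\bm{w}=0$ holds because the generators of $Q$ through any point of $Q$ split evenly between $H_1$ and $H_2$ (via the residue at that point), while no generator of $H_1\cup H_2$ meets a point outside $Q$; hence $\bm{w}\in\ker(A)=V_2\perp\cdots\perp V_d$, which combined with $\bm{w}\in V_0\perp V_1\perp V_d$ forces a nonzero $\bm{w}\in V_d$. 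If $1\notin I$, then $\bm{\chi}_H=c_0\bm{j}+\bm{v}_d$ with $\bm{v}_d\in V_d\subseteq\ker(A)$, making $A\bm{\chi}_H$ a constant vector; but $(A\bm{\chi}_H)_P$ equals a positive constant for $P\in Q$ and vanishes for $P\notin Q$, a contradiction.

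The main obstacle is the third case of the Cameron--Liebler verification: the quotient argument demands correctly identifying the class in $Q$ of a generator $\pi'$ disjoint from $\beta$, which is determined by which of the two generators of $Q$ through $\beta$ contains the auxiliary line $L=\pi'\cap\beta^\perp$, and hinges once again on the odd-rank property that disjoint generators of $Q$ lie in opposite classes.
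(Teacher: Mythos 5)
Your argument is correct, and although it shares the paper's overall skeleton --- establish $\im(B^{t})\subseteq V_{0}\perp V_{1}\perp V_{d}$, use $B^{t}B\in\mathcal{A}$ together with Lemma~\ref{rowspacesubspacesum} to see that the image is a sum of eigenspaces, then check that all three summands occur --- both substantive steps are carried out by genuinely different means. For the inclusion, the paper splits the two class vectors of one hyperbolic section into $\bm{v}_{\mathcal{G}}\pm\bm{v}_{\mathcal{L}}$: the sum is written explicitly as a linear combination of point-pencil vectors, hence lies in $\im(A^{t})=V_{0}\perp V_{1}$, and the difference is shown by direct computation to satisfy $A_{1}(\bm{v}_{\mathcal{G}}-\bm{v}_{\mathcal{L}})=-\gs{d}{1}{q}(\bm{v}_{\mathcal{G}}-\bm{v}_{\mathcal{L}})$, which identifies the eigenvalue $P_{d,1}$ and places it in $V_{d}$. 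You instead verify by geometric counting that a hyperbolic class satisfies Theorem~\ref{characterisationth}(i) with $x=1$ and invoke the implication (i)$\Rightarrow$(iii); your count in the case $\pi\not\subset Q$ is sound (the $q^{d-1}$ points of $\pi_{1}\setminus\beta$ times the $q^{\binom{d-1}{2}}$ generators of the rank-$(d-1)$ residue disjoint from $\pi_{1}/L$, with the class of $\pi'$ pinned down because $\pi'$ is disjoint from the \emph{other} generator through $\beta$ and disjoint generators of $Q$ lie in opposite classes). Note that this inverts the paper's logical order: the paper deduces that hyperbolic classes are Cameron--Liebler sets (Example~\ref{hypclass}) \emph{from} this lemma, whereas you prove that fact first --- which is legitimate, since Theorem~\ref{characterisationth} does not depend on the present lemma. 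For the last step you replace the paper's eigenvalue computations by the observations that $\bm{\chi}_{H_{1}}-\bm{\chi}_{H_{2}}\in\ker(A)=V_{2}\perp\dots\perp V_{d}$ and that $A\bm{\chi}_{H}$ is visibly non-constant; both are correct and arguably cleaner. The trade-off is that the paper's route is purely linear-algebraic and self-contained, while yours requires the careful residue counts but yields the Cameron--Liebler property of hyperbolic classes as a byproduct rather than a corollary.
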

\begin{proof}
 Let $Q$ be a hyperbolic quadric $\mathcal{Q}^{+}(2d-1,q)$ embedded in $\mathcal{P}$, and let $\mathcal{G}$ and $\mathcal{L}$ be the two classes of generators of $Q$. Note that $\mathcal{G},\mathcal{L}\subset\Omega$. Let $\bm{v}_{\mathcal{G}}$ and $\bm{v}_{\mathcal{L}}$ be the characteristic vectors of $\mathcal{G}$ and $\mathcal{L}$ respectively. Then both $\bm{v}_{\mathcal{G}}^{t}$ and $\bm{v}_{\mathcal{L}}^{t}$ are rows of $B$.
 \par We now look at the vector $A_{1}\bm{v}_{\mathcal{G}}$. The entry of $A_{1}\bm{v}_{\mathcal{G}}$ corresponding to the generator $\pi$ gives the number of generators in $\mathcal{G}$ which have a $(d-2)$-space in common with $\pi$.
 If $\pi\in\mathcal{G}$, then ${\left(A_{1}\bm{v}_\mathcal{G}\right)}_{\pi}$ equals $0$ since two generators of the same class cannot have a $(d-2)$-space in common.
 If $\pi\in\mathcal{L}$, then ${\left(A_{1}\bm{v}_\mathcal{G}\right)}_{\pi}$ equals $\gs{d}{1}{q}$ since there are $\gs{d}{1}{q}$ different $(d-2)$-spaces in $\pi$, each contained in a unique generator of $\mathcal{G}$.
 If $\pi\notin\mathcal{G}\cup\mathcal{L}$, so $\pi$ is not contained in $Q$, then the entry ${\left(A_{1}\bm{v}_{\mathcal{G}}\right)}_{\pi}$ equals $1$ since the $(d-2)$-space $\pi\cap Q$ is contained in a unique generator belonging to $\mathcal{G}$.
 So,
 \[
  A_{1}\bm{v}_{\mathcal{G}}=\gs{d}{1}{q}\bm{v}_{\mathcal{L}}+(\bm{j}- \bm{v}_{\mathcal{G}}-\bm{v}_{\mathcal{L}})\;.
 \]
 Analogously,
 \[
  A_{1}\bm{v}_{\mathcal{L}}=\gs{d}{1}{q}\bm{v}_{\mathcal{G}}+(\bm{j}- \bm{v}_{\mathcal{G}}-\bm{v}_{\mathcal{L}})\;.
 \]
 This implies that
 \[
  A_{1}\left(\bm{v}_{\mathcal{G}}-\bm{v}_{\mathcal{L}}\right)=-\gs{d}{1}{q}\left(\bm{v}_{\mathcal{G}}-\bm{v}_{\mathcal{L}}\right)\;,
 \]
 and hence that $\bm{v}_{\mathcal{G}}-\bm{v}_{\mathcal{L}}$ is an eigenvector of $A_{1}$ with eigenvalue $-\gs{d}{1}{q}$.
 \par It follows directly from Theorem~\ref{rowAissumeigenspaces}, that the eigenvalues $P_{j,1}$ of the association scheme of a polar space are pairwise different. Hence, we see that $\bm{v}_{\mathcal{G}}-\bm{v}_{\mathcal{L}}\in V_{d}$ since $P_{d,1}=-\gs{d}{1}{q}$.
 \par Let $A$ be the point-generator incidence matrix of $\mathcal{P}$ and let $\bm{v}_{P}^{t}$ be the row of $A$ corresponding to $P$, so $\bm{v}_{P}$ is the characteristic vector of the point-pencil through $P$. Let $\mathcal{H}$ be the set of points in $Q$ and let $\mathcal{H}'$ be the set of points of $\mathcal{P}$ not contained in $Q$. Then,
 \[
  \bm{v}_{\mathcal{G}}+\bm{v}_{\mathcal{L}}=\frac{1}{\gs{d}{1}{q}}\sum_{P\in\mathcal{H}}\bm{v}_{P}-\frac{\gs{d-1}{1}{q}}{\gs{d}{1}{q}q^{d-1}}\sum_{P\in\mathcal{H}'}\bm{v}_{P}\;,
 \]
 since each generator in $\mathcal{G}\cup\mathcal{L}$ contains $\gs{d}{1}{q}$ points of $\mathcal{H}$ and no points of $\mathcal{H}'$, while every generator in $\Omega\setminus(\mathcal{G}\cup\mathcal{L})$ contains $\gs{d-1}{1}{q}$ points of $\mathcal{H}$ and $q^{d-1}$ points of $\mathcal{H}'$.
 Consequently, $\bm{v}_{\mathcal{G}}+\bm{v}_{\mathcal{L}}\in\im(A^{t})$. By Theorem~\ref{rowAissumeigenspaces} we know that $\im(A^{t})=V_{0}\perp V_{1}$.
 \par Since $\bm{v}_{\mathcal{G}}-\bm{v}_{\mathcal{L}}\in V_{d}$ and $\bm{v}_{\mathcal{G}}+\bm{v}_{\mathcal{L}}\in\im(A^{t})$, we know that $\bm{v}_{\mathcal{G}},\bm{v}_{\mathcal{L}}\in\im(A^{t})\oplus V_{d}$.
 The vectors $\bm{v}_{\mathcal{G}}^{t}$ and $\bm{v}_{\mathcal{L}}^{t}$ are arbitrary rows of $B$ since $Q$ can be chosen arbitrarily. It follows that $\im(B^{t})\subseteq\im(A^{t})\perp V_{d}=V_{0}\perp V_{1}\perp V_{d}$.
 \par Now, consider the matrix $B^{t}B$. This matrix has the same image as $B^{t}$. Furthermore,
 \[
  B^{t}B=\sum_{i=0}^{d}q^{d-i}A_{i}
 \]
 since the entry ${(B^{t}B)}_{\pi,\pi'}$ is the number of hyperbolic classes containing both generators $\pi$ and $\pi'$, which equals $q^{k+1}$ if $\dim(\pi\cap\pi')=k$. So, $B^{t}B$ lies in the Bose-Mesner algebra of the association scheme of $\mathcal{P}$.
 By Lemma~\ref{rowspacesubspacesum}, this implies that $\im(B^{t})=\im(B^{t}B)$ is a sum of eigenspaces $V_{i}$.
 Since $\im(B^{t})\subseteq V_{0}\perp V_{1}\perp V_{d}$, the space $\im(B^{t})$ is a sum of some of the eigenspaces $V_{0},V_{1},V_{d}$.
 Let $\mathcal{G}$ and $\mathcal{L}$ be as in the first part of the proof.
 We know on the one hand that $\bm{v}_{\mathcal{G}}-\bm{v}_{\mathcal{L}}$ is a non-zero vector in $V_{d}$ and hence $\im(B^{t})\not\subseteq\im(A^{t})$.
 On the other hand, $\bm{v}_{\mathcal{G}}+\bm{v}_{\mathcal{L}}$ is a non-zero vector in $\im(A^{t})$ and since
 \[
  A_{1}\left(\bm{v}_{\mathcal{G}}+\bm{v}_{\mathcal{L}}\right)=\left(\gs{d}{1}{q}-2\right)\left(\bm{v}_{\mathcal{G}}+\bm{v}_{\mathcal{L}}\right)+2\bm{j}\;,
 \]
 the vector $\bm{v}_{\mathcal{G}}+\bm{v}_{\mathcal{L}}$ is not contained in $V_{0}$ or $V_{1}$. We conclude that $\im(B^{t})=V_{0}\perp V_{1}\perp V_{d}$.
\end{proof}

\begin{theorem}\label{characterisationthIII}
 Let $\mathcal{P}$ be a polar space of type III, and let $B$ be the incidence matrix of hyperbolic classes and generators of $\mathcal{P}$. Let $\mathcal{L}$ be a set of generators of $\mathcal{P}$ with characteristic vector $\bm{\chi}$, and
 denote $\frac{|\mathcal{L}|}{\prod^{d-2}_{i=0}(q^{e+i}+1)}$ by $x$. The following three statements are equivalent.
 \begin{itemize}
  \item[(i)] $\bm{\chi}\in\im(B^{t})$.
  \item[(ii)] $\bm{\chi}\in{(\ker(B))}^{\perp}$.
  \item[(iii)] $\mathcal{L}$ is a Cameron-Liebler set.
 \end{itemize}
\end{theorem}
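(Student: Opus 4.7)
The proof will essentially be a short synthesis of the preceding results. The equivalence of (i) and (ii) is just the standard fact that for any real matrix $\im(B^{t}) = (\ker B)^{\perp}$, so there is nothing to do beyond citing this linear algebra identity.

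For the equivalence of (i) and (iii), the plan is to combine Lemma~\ref{rowBissumeigenspaces} with Theorem~\ref{characterisationth}(iii). By Lemma~\ref{rowBissumeigenspaces}, we have
\[
 \im(B^{t}) = V_{0}\perp V_{1}\perp V_{d}\;.
\]
On the other hand, every polar space of type III has parameter $e=1$ and odd rank $d$, so the third condition in Theorem~\ref{characterisationth}(iii) applies and characterises $\mathcal{L}$ as a Cameron-Liebler set exactly by the condition $\bm{\chi}\in V_{0}\perp V_{1}\perp V_{d}$. Putting these two facts side by side yields the equivalence of (i) and (iii). It is worth noting explicitly that the size assumption $x=|\mathcal{L}|/\prod_{i=0}^{d-2}(q^{e+i}+1)$ is automatic once $\bm{\chi}\in V_{0}\perp V_{1}\perp V_{d}$, since $\langle\bm{\chi},\bm{j}\rangle=|\mathcal{L}|$ and $V_{0}=\langle\bm{j}\rangle$ forces the $V_{0}$-component of $\bm{\chi}$ to equal $\frac{x}{q^{d+e-1}+1}\bm{j}$; this is exactly the reasoning already used in the proof of Theorem~\ref{characterisationth}.

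There is really no main obstacle here: all the heavy lifting has been done in Lemma~\ref{rowBissumeigenspaces} (identifying the image of $B^{t}$ as the relevant sum of eigenspaces, using the hyperbolic-class vectors $\bm{v}_{\mathcal{G}}\pm\bm{v}_{\mathcal{L}}$) and in Theorem~\ref{characterisationth} (identifying Cameron-Liebler sets with vectors in the same sum of eigenspaces in the $e=1$, $d$ odd case). The only minor point to be careful about is that the two results must be applied with matching type-III parameters; once this matching is observed the proof is a one-line citation of each.
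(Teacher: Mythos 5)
Your proposal is correct and matches the paper's own proof exactly: the paper likewise disposes of (i)$\Leftrightarrow$(ii) via $\im(B^{t})=(\ker(B))^{\perp}$ and obtains (i)$\Leftrightarrow$(iii) by combining Lemma~\ref{rowBissumeigenspaces} with Theorem~\ref{characterisationth}(iii), using that type III means $e=1$ and $d$ odd. Your added remark about the normalisation of the $V_{0}$-component is a harmless elaboration of reasoning already present in Theorem~\ref{characterisationth}.
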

\begin{proof}
 Statement (i) and statement (ii) are equivalent since $\im(B^{t})={(\ker(B))}^{\perp}$. The equivalence of statements (i) and (iii) follows from Theorem \ref{rowBissumeigenspaces} and Theorem \ref{characterisationth}(iii).
\end{proof}

The main difference between the polar spaces of type I and the polar spaces of type III is that the hyperbolic classes have taken the role of the points (or more precisely the point-pencils); the matrix $B$ has taken the role of the matrix $A$.

\subsection*{The polar spaces $\mathcal{W}(4n+1,q)$, $q$ odd}

In the previous subsections we have proved a characterisation result using the image of a matrix for Cameron-Liebler sets of generators for all polar spaces except the symplectic polar spaces $\mathcal{W}(4n+1,q)$, $q$ odd. For these spaces we were not able to produce a characterisation result similar to Theorems~\ref{characterisationthI},~\ref{characterisationthII} and~\ref{characterisationthIII}. This is the first open problem we want to pose.
\par We know that the image of the transposed point-generator incidence matrix does not correspond to a sum of eigenspaces of the generator disjointness matrix: $\im(A)=V_{0}\perp V_{1}$, but $V_{1}$ is not an eigenspace of the disjointness matrix (being part of the eigenspace $V_{1}\perp V_{d}$). We observed this non-correspondence also in the polar spaces of type III, however for those we could use the incidence matrix of hyperbolic classes and generators as a substitute, which we cannot do here, since no hyperbolic quadrics are embedded in $\mathcal{W}(4n+1,q)$, $q$ odd.
\par Recall from Table~\ref{spreadtable} that the symplectic polar spaces $\mathcal{W}(4n+1,q)$, $q$ odd, admit spreads. So, in Theorem \ref{characterisationth} actually all five statements are present. Hence, we do know some equivalent characterisations for Cameron-Liebler sets of generators for polar spaces $\mathcal{W}(4n+1,q)$, $q$ odd.
\par For the characterisation results in Section \ref{sec:classification} we will call the symplectic polar spaces $\mathcal{W}(4n+1,q)$, with $q$ odd, the polar spaces of \emph{type IV}.

\section{Properties and examples}\label{sec:examples}

The following properties of Cameron-Liebler sets can easily be proved.

\begin{lemma}\label{basicoperations}
 Let $\mathcal{P}$ be a finite classical polar space of type I or III, or one class of generators of $\mathcal{Q}^{+}(2d-1,q)$, $d$ even. If $\mathcal{L}$ and $\mathcal{L}'$ are Cameron-Liebler sets of $\mathcal{P}$ with parameters $x$ and $x'$ respectively, then the following statements are valid.
 \begin{itemize}
  \item[(i)] $0\leq x\leq q^{e+d-1}+1$.
  \item[(ii)] The set of all generators not in $\mathcal{L}$ is a Cameron-Liebler set of $\mathcal{P}$ with parameter $q^{e+d-1}+1-x$.
  \item[(iii)] If $\mathcal{L}\cap\mathcal{L}'=\emptyset$, then $\mathcal{L}\cup\mathcal{L}'$ is a Cameron-Liebler set of $\mathcal{P}$ with parameter $x+x'$.
  \item[(iv)] If $\mathcal{L}'\subset\mathcal{L}$, then $\mathcal{L}\setminus\mathcal{L}'$ is a Cameron-Liebler set of $\mathcal{P}$ with parameter $x-x'$.
 \end{itemize}
\end{lemma}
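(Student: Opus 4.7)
The plan is to exploit the eigenspace characterisations proved above: for each of the three kinds of object mentioned (polar space of type I, polar space of type III, one generator class of a hyperbolic quadric of even rank), there is a fixed subspace $W \subseteq \R^{\Omega}$ which is a sum of eigenspaces of the relevant association scheme, containing $V_{0} = \left\langle\bm{j}\right\rangle$, and such that a generator set $\mathcal{L}$ is a Cameron-Liebler set if and only if its characteristic vector lies in $W$. Concretely, $W = \im(A^{t})$ for type I (Theorem~\ref{characterisationthI}), $W = \im(B^{t})$ for type III (Theorem~\ref{characterisationthIII}), and $W = \im(A'^{t}) = V'_{0} \perp V'_{1}$ for a class of generators (Theorem~\ref{characterisationthII}). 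Since $W$ is a vector subspace containing $\bm{j}$, it is closed under addition, subtraction and complementation with $\bm{j}$. All four statements will follow from this single observation together with straightforward bookkeeping of the parameters.

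For (i), the lower bound $x \geq 0$ is immediate from $x = |\mathcal{L}|/\prod_{i=0}^{d-2}(q^{e+i}+1) \geq 0$. For the upper bound, note that $|\mathcal{L}|$ is at most the number of generators in the ambient structure: by Lemma~\ref{subspacesonpolar} this is $\prod_{i=0}^{d-1}(q^{e+i}+1)$ in the type I or III case, and $\frac{1}{2}\prod_{i=0}^{d-1}(q^{i}+1) = \prod_{i=1}^{d-1}(q^{i}+1)$ in the hyperbolic-class case (where $e = 0$). In either case this equals $(q^{e+d-1}+1)$ times the denominator appearing in the definition of $x$, so $x \leq q^{e+d-1}+1$.

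For (ii), the characteristic vector of the complement is $\bm{j} - \bm{\chi}$, which lies in $W$ since both $\bm{j} \in V_{0} \subseteq W$ and $\bm{\chi} \in W$. The size of the complement is (total number of generators) $- |\mathcal{L}|$, and dividing through by $\prod_{i=0}^{d-2}(q^{e+i}+1)$ gives parameter $(q^{e+d-1}+1) - x$. For (iii) and (iv), disjointness respectively containment yields $\bm{\chi}_{\mathcal{L}\cup\mathcal{L}'} = \bm{\chi} + \bm{\chi}'$ and $\bm{\chi}_{\mathcal{L}\setminus\mathcal{L}'} = \bm{\chi} - \bm{\chi}'$; both vectors lie in $W$ because $W$ is a subspace. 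The cardinalities add or subtract, and dividing by the common denominator gives parameters $x+x'$ and $x-x'$ respectively.

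There is essentially no hard step: the only thing to watch is that the three cases have superficially different ``ambient'' vector spaces and subspaces $W$, so one must quote the correct characterisation theorem for each case, and in the hyperbolic-class case be careful that $|\Omega|$ refers to the size of a single class of generators (equivalently, the denominator $\prod_{i=1}^{d-2}(q^{i}+1)$ of Definition~\ref{clsetII}), not to the full generator set.
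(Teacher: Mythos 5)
Your proposal is correct and is exactly the argument the paper has in mind when it says the lemma "can easily be proved" (no proof is given in the paper): in each of the three cases the Cameron-Liebler property is equivalent to membership of the characteristic vector in a fixed subspace $W$ containing $\bm{j}$, so closure under complement, disjoint union and difference is immediate, and your parameter bookkeeping (including the hyperbolic-class normalisation $\prod_{i=1}^{d-2}(q^{i}+1)$) is accurate.
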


Now we present some examples of Cameron-Liebler sets.

\begin{example}\label{pointpencil}
 Let $\mathcal{P}$ be a finite classical polar space of type I or III, or one class of generators of $\mathcal{Q}^{+}(2d-1,q)$, $d$ even. The set $\mathcal{L}$ of all generators of $\mathcal{P}$ through a fixed point is a Cameron-Liebler set with parameter $1$.
 Note that this is a point-pencil if $\mathcal{P}$ is a polar space of type I or III.
 \par For polar spaces of type I and a class of generators of $\mathcal{Q}^{+}(2d-1,q)$, $d$ even, this is immediate as the incidence vector of $\mathcal{L}$ is a row in the point-generator incidence matrix.
 For polar spaces of type III this follows from the observation that the image of the transposed point-generator incidence matrix is a subspace of the image of the transposed incidence matrix of hyperbolic classes and generators (see Theorem~\ref{rowAissumeigenspaces} and Lemma~\ref{rowBissumeigenspaces}).
\end{example}

\begin{remark}
 A set of points in a polar space such that each generator contains at most one point of the set is called a \emph{partial ovoid}; it is called an \emph{ovoid} if each generator contains precisely one point of the set. It is immediate that an ovoid of a finite classical polar space of rank $d$ with parameter $e$ over $\F_{q}$ contains $q^{d+e-1}+1$ pairwise non-collinear points. Ovoids and partial ovoids of polar spaces are well-studied and many (non-)existence results are known. There is a close connection between ovoids and spreads. We refer to~\cite[Sections 3 and 4]{dbkm} for more background, details and results.
 \par We note here that it follows from Lemma~\ref{basicoperations}(iii) and Example~\ref{pointpencil} that a finite classical polar space of rank $d$ with parameter $e$ over $\F_{q}$ that admits a partial ovoid of size $\left\lfloor\frac{q^{d+e-1}+1}{2}\right\rfloor$ has a Cameron-Liebler set for all possible parameter values. In particular all polar spaces that admit an ovoid have a Cameron-Liebler set for all possible parameter values.
\end{remark}

\begin{example}\label{embedded}
 Let $\mathcal{P}$ be a finite classical polar space of rank $d$ with parameter $e$ over $\F_{q}$ and let $\mathcal{P}'$ be a finite classical polar space of the same rank with parameter $e-1$ over $\F_{q}$ that is embedded in $\mathcal{P}$, $e\geq1$; all examples of such polar spaces $\mathcal{P}$ and $\mathcal{P}'$ are listed below.
 Let $\mathcal{L}$ be the set of generators contained in $\mathcal{P}'$ and let $\bm{\chi}_{\mathcal{L}}$ be its characteristic vector.
 We denote the point-generator incidence matrix of $\mathcal{P}$ by $A$ and for any point $P\in\mathcal{P}$ we denote the corresponding row of $A$ by $\bm{v}_{P}^{t}$, so $\bm{v}_{P}$ is the characteristic vector of the point-pencil through $P$. Let $\mathcal{H}$ be the set of points in $\mathcal{P}'$ and let $\mathcal{H}'$ be the set of points of $\mathcal{P}$ not contained in $\mathcal{P}'$.
 \par Considering the list of examples we see that all embeddings of $\mathcal{P}'$ in $\mathcal{P}$ arise from a hyperplane intersection (for $\mathcal{W}(2d-1,q)$, $q$ even, we use the viewpoint of Remark \ref{nucleus}). So, any generator of $\mathcal{P}$ that is not contained in $\mathcal{P}'$ has precisely $\gs{d-1}{1}{q}$ points in common with $\mathcal{P}'$. Hence,
 \[
  \bm{\chi}_{\mathcal{L}}=\frac{1}{\gs{d}{1}{q}}\sum_{P\in\mathcal{H}}\bm{v}_{P}-\frac{\gs{d-1}{1}{q}}{\gs{d}{1}{q}q^{d-1}}\sum_{P\in\mathcal{H}'}\bm{v}_{P}\;,
 \]
 since each generator in $\mathcal{L}$ contains $\gs{d}{1}{q}$ points of $\mathcal{H}$ and no points of $\mathcal{H}'$, while every generator not in $\mathcal{L}$ contains $\gs{d-1}{1}{q}$ points of $\mathcal{H}$ and $q^{d-1}$ points of $\mathcal{H}'$.
 Consequently, $\bm{\chi}_{\mathcal{L}}$ is contained in $\im(A^{t})$. So, if $\mathcal{P}$ is a polar space of type I or of type III, then $\mathcal{L}$ is a Cameron-Liebler set with parameter $q^{e-1}+1$.
 \par We give all examples that arise from this construction.
 \begin{itemize}
  \item Consider the elliptic quadric $\mathcal{P}=\mathcal{Q}^{-}(2d+1,q)$, which is of type I. Any non-tangent hyperplane meets $\mathcal{P}$ in a parabolic quadric $\mathcal{Q}(2d,q)$. The set of all generators in such an embedded parabolic quadric is a Cameron-Liebler set of $\mathcal{P}$ with parameter $q+1$.
  \item Consider the parabolic quadric $\mathcal{P}=\mathcal{Q}(2d,q)$, which is of type I if $d$ is even and of type III if $d$ is odd. Any non-tangent hyperplane meets $\mathcal{P}$ in either a hyperbolic quadric $\mathcal{Q}^{+}(2d-1,q)$ or an elliptic quadric $\mathcal{Q}^{-}(2d-1,q)$. The embedded hyperbolic quadrics have the same rank as $\mathcal{P}$. The set of all generators in such an embedded hyperbolic quadric is a Cameron-Liebler set of $\mathcal{P}$ with parameter $2$. Recall that the symplectic variety $\mathcal{W}(2d-1,q)$ is isomorphic to $\mathcal{P}$ if $q$ is even, see Remark~\ref{nucleus}.
  \item Consider the Hermitian polar space $\mathcal{P}=\mathcal{H}(2d,q^{2})$, which is of type I. Any non-tangent hyperplane meets $\mathcal{P}$ in a Hermitian polar space $\mathcal{H}(2d-1,q^{2})$. The set of all generators in such an embedded Hermitian variety is a Cameron-Liebler set of $\mathcal{P}$ with parameter $q+1$.
 \end{itemize}
\end{example}

\begin{example}\label{hypclass}
 Let $\mathcal{P}$ be a finite classical polar space of type III.\@ In Remark~\ref{hyperbolicclass} we introduced the hyperbolic class. Any hyperbolic class of $\mathcal{P}$ is a Cameron-Liebler set with parameter $1$. This is immediate as the incidence vector of a hyperbolic class is a row in the incidence matrix of points and hyperbolic classes.
\end{example}

\begin{example}\label{baseplane}
 Let $\mathcal{P}$ be a finite classical polar space of type III of rank $3$ over $\F_{q}$, hence $\mathcal{P}=\mathcal{W}(5,q)$ or $\mathcal{P}=\mathcal{Q}(6,q)$. Let $\pi$ be a generator (a plane) and let $\mathcal{L}$ be the set of all planes meeting $\pi$ in a line or a plane. It is easy to see that all elements of $\mathcal{L}$ meet each other.
 A plane $\tau\notin\mathcal{L}$ either meets $\pi$ in a point or else has an empty intersection with $\pi$. If $\pi\cap\tau$ is a point, then $\tau$ meets $\pi$ and the $q$ planes through a line of $\pi$ passing through $\pi\cap\tau$. If $\pi\cap\tau=\emptyset$, then through every point of $\tau$ there is a unique plane of $\mathcal{L}$.
 So, in both cases the number of elements of $\mathcal{L}$ meeting $\tau$ equals $q^{2}+q+1$. By Theorem~\ref{characterisationthIII}(iii) $\mathcal{L}$ is a Cameron-Liebler set of $\mathcal{P}$ with parameter $1$.
\end{example}

\begin{example}\label{basesolid}
 Let $\mathcal{P}$ be one class of generators of the quadric $\mathcal{Q}^{+}(7,q)$. Let $\pi$ be a generator ($3$-space) belonging to the other class and let $\mathcal{L}$ be the set of generators of $\mathcal{P}$ meeting $\pi$ in a plane. It is immediate that the elements of $\mathcal{L}$ pairwise meet.
 A generator $\tau\in\mathcal{P}\setminus\mathcal{L}$ meets $\pi$ in a point. If $\tau$ and an element $\rho\in\mathcal{L}$ meet non-trivially, then the intersection $\tau\cap\rho$ must be contained in $\pi$. Hence, $\tau$ meets the $q^{2}+q+1$ elements of $\mathcal{L}$ through a plane of $\pi$ containing $\pi\cap\tau$.
 It follows from Theorem~\ref{characterisationthII}(iii) that $\mathcal{L}$ is a Cameron-Liebler set of $\mathcal{P}$ with parameter $1$. We call this set of generators a \emph{base-solid (with center $\pi$)}.
\end{example}

Recall that we defined the parameter $x$ of a Cameron-Liebler set $\mathcal{L}$ as a fraction in Definitions~\ref{clset} and~\ref{clsetII}. If the polar space admits a spread, each spread has $x$ generators in common with $\mathcal{L}$, and so it is clear that $x$ has to be an integer. Now we prove that $x$ is always an integer.

\begin{theorem}
 Let $\mathcal{P}$ be a finite classical polar space or one class of generators of $\mathcal{Q}^{+}(2d-1,q)$, $d$ even. Let $\mathcal{L}$ be a Cameron-Liebler set of $\mathcal{P}$ with parameter $x$. Then $x\in\N$.
\end{theorem}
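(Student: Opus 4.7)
The plan is to establish integrality of $x$ (non-negativity follows immediately from $|\mathcal{L}|\geq 0$) by combining two divisibility constraints and exploiting that certain integers are coprime to $q$.

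The first constraint comes from the disjointness characterisations. By Theorem~\ref{characterisationth}(i) for a finite classical polar space, and by Theorem~\ref{characterisationthII}(i) for one class of generators of $\mathcal{Q}^{+}(2d-1,q)$ with $d$ even, the number of elements of $\mathcal{L}$ disjoint from any generator $\pi$ equals $(x-(\bm{\chi})_\pi)q^{E}$, a non-negative integer; here $E=\binom{d-1}{2}+e(d-1)$ in the first setting and $E=\binom{d-1}{2}$ in the second. Unless $\mathcal{L}$ is either empty or the whole set of generators (both trivial cases), we can pick $\pi$ with $(\bm{\chi})_\pi=0$, which forces $xq^{E}\in\Z$.

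For the second constraint I aim to produce a positive integer $D'$ coprime to $q$ with $xD'\in\Z$. In every case other than the whole hyperbolic quadric $\mathcal{Q}^{+}(2d-1,q)$, the defining relation $|\mathcal{L}|=xD'$ from Definitions~\ref{clset} and~\ref{clsetII} already does the job: $D'$ is a product of factors $q^{e+i}+1$ with $e+i\geq 1/2$, and each such factor is $\equiv 1\pmod p$ (where $p$ is the characteristic of $\F_q$, noting that whenever $e$ is a half-integer the field $\F_q$ is a proper square so $q^{1/2}$ is a positive power of $p$). For the whole hyperbolic quadric the denominator $\prod_{i=0}^{d-2}(q^i+1)=2\prod_{i=1}^{d-2}(q^i+1)$ contains the factor $q^0+1=2$, which can fail to be coprime to $q$. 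I will bypass this by a direct double count: invoking Remark~\ref{hyperclass}, ordered disjoint pairs $(\pi,\pi')$ with $\pi\in\Omega_1$ and $\pi'\in\mathcal{L}\cap\Omega_j$ (with $j=1$ if $d$ is odd and $j=2$ if $d$ is even) are counted once via the disjointness characterisation applied to each $\pi\in\Omega_1$, and once via Lemma~\ref{skewgenerators} applied to each $\pi'$. Solving the resulting identity yields $|\mathcal{L}\cap\Omega_1|=x\prod_{i=1}^{d-2}(q^i+1)$, so $D'=\prod_{i=1}^{d-2}(q^i+1)$ (each factor coprime to $q$) works.

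Combining $xq^{E}\in\Z$ and $xD'\in\Z$ with $\gcd(q^{E},D')=1$ now forces $x\in\Z$. The main technical step is the double-counting argument in the whole hyperbolic case, where one has to keep track of the parity of $d$ in order to identify the class containing the disjoint mates and verify that the same formula for $|\mathcal{L}\cap\Omega_1|$ emerges in both parities; everything else reduces to unwinding the definitions and a routine coprimality observation.
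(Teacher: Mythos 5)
Your proposal is correct, and its core coincides with the paper's argument: both combine the integrality of the disjointness count $(x-(\bm{\chi})_{\pi})q^{E}$ with the integrality of $xD'=|\mathcal{L}|$ and the coprimality of $q^{E}$ with the denominator $D'$. The genuine difference is your treatment of the hyperbolic quadrics. The paper handles the full $\mathcal{Q}^{+}(2d-1,q)$ with $d$ even by reducing to one class of generators via Theorem~\ref{characterisationthIIbis}, and folds $\mathcal{Q}^{+}(2d-1,q)$ with $d$ odd into the generic case under the justification that $e\neq 0$ because $\mathcal{P}$ is not of type II; but odd-rank hyperbolic quadrics are of type I with $e=0$, so the denominator $\prod_{i=0}^{d-2}(q^{i}+1)$ contains the factor $q^{0}+1=2$ and the coprimality claim fails when $q$ is even (as written, the paper's argument only yields $2x\in\Z$ there). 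Your separate double count for the whole hyperbolic quadric, which gives $|\mathcal{L}\cap\Omega_{1}|=x\prod_{i=1}^{d-2}(q^{i}+1)$ and so replaces the denominator by one genuinely coprime to $q$, closes that case for both parities of $d$; I verified that the identity you describe, together with its mirror image under swapping $\Omega_{1}$ and $\Omega_{2}$ (needed when $d$ is even to isolate $|\mathcal{L}\cap\Omega_{1}|$ from $|\mathcal{L}\cap\Omega_{2}|$), does produce the stated formula. In short: same main mechanism, but your route is more careful where the paper is slightly glib, at the cost of an extra counting argument that the paper avoids by invoking Theorem~\ref{characterisationthIIbis}.
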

\begin{proof}
 We present the proof for a finite classical polar space of type different from II. The proof is (very) similar for a class of generators of $\mathcal{Q}^{+}(2d-1,q)$, $d$ even, and by Theorem \ref{characterisationthIIbis} it also follows for the polar spaces of type II. We denote the rank of $\mathcal{P}$ by $d$ and its parameter by $e$. We denote the underlying finite field by $\F_{q}$.
 By Theorem~\ref{characterisationth}(i) the number of elements of $\mathcal{L}$ that are disjoint to a given generator $\pi$ equals
 \[
  \frac{|\mathcal{L}|\:q^{\binom{d-1}{2}+e(d-1)}}{\prod^{d-2}_{i=0}(q^{i+e}+1)}-\delta q^{\binom{d-1}{2}+e(d-1)}\;,
 \]
 with $\delta$ equal to $1$ if $\pi\in\mathcal{S}$ and $0$ if $\pi\notin\mathcal{S}$. Hence
 \[
  \frac{|\mathcal{L}|\:q^{\binom{d-1}{2}+e(d-1)}}{\prod^{d-2}_{i=0}(q^{i+e}+1)}
 \]
 must be an integer. We know that $\gcd(q^{a},q^{b}+1)=1$ for integers $a$ and $b$ with $b>0$. Since $e\neq0$ ($\mathcal{P}$ is not of type II), we have $\gcd(q^{\binom{d-1}{2}+e(d-1)},\prod^{d-2}_{i=0}(q^{i+e}+1))=1$. So $\prod^{d-2}_{i=0}(q^{i+e}+1)$ must be a divisor of $|\mathcal{L}|$ and hence $x$ is an integer.
\end{proof}

\begin{lemma}\label{distance}
 Let $\mathcal{P}$ be a finite classical polar space of type I of rank $d$ with parameter $e$ and let $\mathcal{L}$ be a Cameron-Liebler set of $\mathcal{P}$ with parameter $x$. The number of elements of $\mathcal{L}$ meeting a generator $\pi$ in a $(d-i-1)$-space, $i=0,\dots,d$, equals
 \[
  \begin{cases}
   \left((x-1)\gs{d-1}{i-1}{q}+q^{i+e-1}\gs{d-1}{i}{q}\right)q^{\binom{i-1}{2}+(i-1)e} & \text{if }\pi\in\mathcal{L}    \\
   x\gs{d-1}{i-1}{q}q^{\binom{i-1}{2}+(i-1)e}                                          & \text{if }\pi\notin\mathcal{L}
  \end{cases}\;.
 \]
\end{lemma}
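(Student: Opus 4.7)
The plan is to read off this calculation from the spectral characterization. By Theorem~\ref{characterisationthI} combined with Theorem~\ref{characterisationth}(iii), the characteristic vector $\bm{\chi}$ of a Cameron-Liebler set $\mathcal{L}$ in a polar space of type I lies in $V_{0}\perp V_{1}$. Writing $\bm{\chi}=\bm{\chi}_{0}+\bm{\chi}_{1}$ with $\bm{\chi}_{j}\in V_{j}$, the $V_{0}$-component is the orthogonal projection $\bm{\chi}_{0}=\frac{|\mathcal{L}|}{|\Omega|}\bm{j}=\frac{x}{q^{d+e-1}+1}\bm{j}$. The number of elements of $\mathcal{L}$ meeting $\pi$ in a $(d-i-1)$-space is exactly the $\pi$-entry of $A_{i}\bm{\chi}$, where $A_{i}$ is the distance-$i$ adjacency matrix of the dual polar graph. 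Since $V_{0},V_{1}$ are eigenspaces of $A_{i}$ with respective eigenvalues $P_{0,i}$ and $P_{1,i}$, I obtain
\[
 A_{i}\bm{\chi}=P_{1,i}\bm{\chi}+(P_{0,i}-P_{1,i})\frac{x}{q^{d+e-1}+1}\bm{j}\;.
\]

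Next, I specialise the closed formula for $P_{j,i}$ given in Section~\ref{sec:prelim}. For $j=0$ only $u=0$ contributes, yielding $P_{0,i}=\gs{d}{i}{q}q^{\binom{i}{2}+ei}$. For $j=1$ only $u=0,1$ contribute, giving
\[
 P_{1,i}=-\gs{d-1}{i-1}{q}q^{\binom{i-1}{2}+e(i-1)}+\gs{d-1}{i}{q}q^{\binom{i}{2}+ei}\;.
\]
Applying the Pascal-type identity $\gs{d}{i}{q}=\gs{d-1}{i}{q}+q^{d-i}\gs{d-1}{i-1}{q}$ together with $\binom{i}{2}-\binom{i-1}{2}=i-1$ collapses the difference to
\[
 P_{0,i}-P_{1,i}=\gs{d-1}{i-1}{q}q^{\binom{i-1}{2}+e(i-1)}\bigl(q^{d+e-1}+1\bigr)\;,
\]
so that the $\bm{j}$-contribution per coordinate becomes precisely $x\gs{d-1}{i-1}{q}q^{\binom{i-1}{2}+e(i-1)}$.

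Substituting back, $(A_{i}\bm{\chi})_{\pi}=P_{1,i}(\bm{\chi})_{\pi}+x\gs{d-1}{i-1}{q}q^{\binom{i-1}{2}+e(i-1)}$. For $\pi\notin\mathcal{L}$ this is already the claimed expression. For $\pi\in\mathcal{L}$ I add $P_{1,i}$ and rewrite the second term of $P_{1,i}$ as $q^{i+e-1}\gs{d-1}{i}{q}q^{\binom{i-1}{2}+(i-1)e}$, again using $\binom{i}{2}+ei=\binom{i-1}{2}+(i-1)e+i+e-1$, to match the stated formula. The only real technical step is the Gaussian-binomial bookkeeping needed to produce the common factor $q^{\binom{i-1}{2}+(i-1)e}$; the spectral reduction is immediate from the characterisation theorems.
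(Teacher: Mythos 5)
Your proposal is correct and follows essentially the same route as the paper: both decompose $\bm{\chi}$ as $\bm{v}+\frac{x}{q^{d+e-1}+1}\bm{j}$ with $\bm{v}\in V_{1}$ via Theorem~\ref{characterisationth}(iii), apply $A_{i}$ using the eigenvalues $P_{0,i}$ and $P_{1,i}$, and reduce to the stated expression by Gaussian-binomial manipulation. Your simplification of $P_{0,i}-P_{1,i}$ via the Pascal-type identity is just a slightly tidier organisation of the same algebra the paper carries out term by term.
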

\begin{proof}
 Let $V_{0},\dots,V_{d}$ be the eigenspaces of the association scheme of $\mathcal{P}$, using the ordering as in Section~\ref{sec:prelim}, and let $A_{0},\dots,A_{d}$ be the matrices of the association scheme. Denote the characteristic vector of the generator set $\mathcal{L}$ by $\bm{\chi}$. By Theorem~\ref{characterisationth}(iii) we know that there is a vector $\bm{v}\in V_{1}$ such that $\bm{\chi}=\bm{v}+\frac{x}{q^{d+e-1}+1}\bm{j}$.
 \par The matrix $A_{i}$ is the incidence matrix of the relation $R_{i}$ which describes whether the intersection dimension of two generators equals $d-i-1$ or not. Hence on the position corresponding to a generator $\pi$ the vector $A_{i}\bm{\chi}$ has the number of generators in $\mathcal{L}$ meeting $\pi$ in a $(d-i-1)$-space. We find
 \begin{align*}
  A_{i}\bm{\chi} & =A_{i}\bm{v}+\frac{x}{q^{d+e-1}+1}A_{i}\bm{j}=P_{1,i}\bm{v}+\frac{x}{q^{d+e-1}+1}P_{0,i}\bm{j}                                                                                                         \\
                 & =\left(\gs{d-1}{i}{q}q^{\binom{i}{2}+ie}-\gs{d-1}{i-1}{q}q^{\binom{i-1}{2}+(i-1)e}\right)\bm{v}+\frac{x}{q^{d+e-1}+1}\gs{d}{i}{q}q^{\binom{i}{2}+ie}\bm{j}                                             \\
                 & =\left(\gs{d-1}{i}{q}q^{\binom{i}{2}+ie}-\gs{d-1}{i-1}{q}q^{\binom{i-1}{2}+(i-1)e}\right)\left(\bm{\chi}-\frac{x}{q^{d+e-1}+1}\bm{j}\right)+\frac{x}{q^{d+e-1}+1}\gs{d}{i}{q}q^{\binom{i}{2}+ie}\bm{j} \\
                 & =\frac{x}{q^{d+e-1}+1}q^{\binom{i-1}{2}+(i-1)e}\left(\gs{d-1}{i-1}{q}-\gs{d-1}{i}{q}q^{i+e-1}+\gs{d}{i}{q}q^{i+e-1}\right)\bm{j}                                                                       \\
                 & \qquad+\left(\gs{d-1}{i}{q}q^{i+e-1}-\gs{d-1}{i-1}{q}\right)q^{\binom{i-1}{2}+(i-1)e}\bm{\chi}                                                                                                         \\
                 & =\left(x\gs{d-1}{i-1}{q}\bm{j}+\left(\gs{d-1}{i}{q}q^{i+e-1}-\gs{d-1}{i-1}{q}\right)\bm{\chi}\right)q^{\binom{i-1}{2}+(i-1)e}\;.
 \end{align*}
 The lemma follows.
\end{proof}


When applying the previous lemma for $i=0$, we need to calculate $\binom{-1}{2}=\frac{(-1)(-2)}{2}=1$. Obviously, we find in the lemma 1 if $\pi\in\mathcal{L}$ and 0 otherwise.
\par For the Cameron-Liebler sets of one class of generators of a hyperbolic quadric $\mathcal{Q}^{+}(2d-1,q)$, $d$ even, we can prove a similar result. Recall that two generators in the same class necessarily meet each other in a $j$-space for some $j\equiv d-1\pmod{2}$ (see Remark~\ref{hyperclass}).

\begin{lemma}\label{distanceII}
 Let $\mathcal{G}$ be a class of generators of the hyperbolic quadric $\mathcal{Q}^{+}(2d-1,q)$, $d$ even, and let $\mathcal{L}$ be a Cameron-Liebler set of $\mathcal{G}$ with parameter $x$. The number of elements of $\mathcal{L}$ meeting a generator $\pi$ in a $(d-2i-1)$-space, $i=0,\dots,\frac{d}{2}$, equals
 \[
  \begin{cases}
   \left((x-1)\gs{d-1}{2i-1}{q}+q^{2i-1}\gs{d-1}{2i}{q}\right)q^{\binom{2i-1}{2}} & \text{if }\pi\in\mathcal{L}    \\
   x\gs{d-1}{2i-1}{q}q^{\binom{2i-1}{2}}                                          & \text{if }\pi\notin\mathcal{L}
  \end{cases}\;.
 \]
\end{lemma}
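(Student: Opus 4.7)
The plan is to mirror the proof of Lemma~\ref{distance}, replacing the full association scheme on the generators of $\mathcal{Q}^{+}(2d-1,q)$ by the $\frac{d}{2}$-class association scheme on a single class $\mathcal{G}$ described in Remark~\ref{assoschemehyperbolic}. Let $A'_0,\dots,A'_{d/2}$ be its incidence matrices, where $A'_i$ is the restriction of $A_{2i}$ to the rows and columns corresponding to $\mathcal{G}$, and let $V'_0\perp\dots\perp V'_{d/2}$ be the corresponding eigenspace decomposition. Two generators of the same class meet in a $j$-space only for $j\equiv d-1\pmod{2}$, so the entry of $A'_i\bm{\chi}$ at the position of a generator $\pi\in\mathcal{G}$ is precisely the number of elements of $\mathcal{L}$ meeting $\pi$ in a $(d-2i-1)$-space.

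Next, by Theorem~\ref{characterisationthII}(iii), the characteristic vector $\bm{\chi}$ of $\mathcal{L}$ lies in $V'_0\perp V'_1$, so I can write
\[
 \bm{\chi}=\bm{v}+\frac{x}{q^{d-1}+1}\bm{j}
\]
with $\bm{v}\in V'_1$. As observed in Remark~\ref{assoschemehyperbolic}, the eigenspace $V'_1$ comes from $V_1$ of the full scheme, and the eigenvalue of $A'_i$ on $V'_1$ coincides with the eigenvalue of $A_{2i}$ on $V_1$, namely $P_{1,2i}$ evaluated at $e=0$ (since $\mathcal{Q}^{+}(2d-1,q)$ has parameter $0$).

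The computation then proceeds exactly as in the proof of Lemma~\ref{distance}, but with the substitutions $i\mapsto 2i$ and $e\mapsto 0$. Applying $A'_i$ to the decomposition of $\bm{\chi}$, one gets
\[
 A'_i\bm{\chi}=\left(x\gs{d-1}{2i-1}{q}\bm{j}+\left(\gs{d-1}{2i}{q}q^{2i-1}-\gs{d-1}{2i-1}{q}\right)\bm{\chi}\right)q^{\binom{2i-1}{2}}\;,
\]
after the same arithmetic simplification carried out in Lemma~\ref{distance}. Reading off the entry of $A'_i\bm{\chi}$ at position $\pi$ gives the two claimed values according as $\pi\in\mathcal{L}$ or $\pi\notin\mathcal{L}$.

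The main point requiring care is the eigenvalue bookkeeping: one must check that passing to the restricted scheme does not alter the eigenvalue on $V'_1$, and that the formula from Theorem~\ref{rowAissumeigenspaces}/the expression for $P_{j,i}$ evaluated with $e=0$ and index $2i$ is the right ingredient. Once this is confirmed, there is no further obstacle and the rest is a routine repetition of the calculation in Lemma~\ref{distance}.
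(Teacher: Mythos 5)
Your proposal is correct and follows essentially the same route as the paper: the paper's proof also writes $\bm{\chi}=\bm{v}+\frac{x}{q^{d-1}+1}\bm{j}$ with $\bm{v}\in V'_{1}$ via Theorem~\ref{characterisationthII}(iii), applies $A'_{i}$ using the eigenvalues $P_{1,2i}$ and $P_{0,2i}$ (i.e.\ the formulas of Lemma~\ref{distance} with $i\mapsto 2i$ and $e=0$), and simplifies to the same expression $\left(x\gs{d-1}{2i-1}{q}\bm{j}+\left(\gs{d-1}{2i}{q}q^{2i-1}-\gs{d-1}{2i-1}{q}\right)\bm{\chi}\right)q^{\binom{2i-1}{2}}$. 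The eigenvalue bookkeeping you flag is handled exactly as you describe, via Remark~\ref{assoschemehyperbolic}.
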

\begin{proof}
 Let $V'_{0},\dots,V'_{\frac{d}{2}}$ be the eigenspaces of the association scheme of $\mathcal{G}$, using the ordering as in Remark \ref{assoschemehyperbolic}, and let $A'_{0},\dots,A'_{\frac{d}{2}}$ be the matrices of the association scheme. Denote the characteristic vector of the generator set $\mathcal{L}$ by $\bm{\chi}$. By Theorem ~\ref{characterisationthII}(iii) we know that there is a vector $\bm{v}\in V'_{1}$ such that $\bm{\chi}=\bm{v}+\frac{x}{q^{d-1}+1}\bm{j}$.
 \par The matrix $A'_{i}$ is the incidence matrix of the relation $R'_{i}$ which describes whether the intersection dimension of two generators equals $d-2i-1$ or not. Hence on the position corresponding to a generator $\pi$ the vector $A'_{i}\bm{\chi}$ has the number of generators in $\mathcal{L}$ meeting $\pi$ in a $(d-2i-1)$-space. We find
 \begin{align*}
  A'_{i}\bm{\chi} & =A'_{i}\bm{v}+\frac{x}{q^{d-1}+1}A'_{i}\bm{j}=P_{1,2i}\bm{v}+\frac{x}{q^{d-1}+1}P_{0,2i}\bm{j}                                                                                              \\
                  & =\left(\gs{d-1}{2i}{q}q^{\binom{2i}{2}}-\gs{d-1}{2i-1}{q}q^{\binom{2i-1}{2}}\right)\bm{v}+\frac{x}{q^{d-1}+1}\gs{d}{2i}{q}q^{\binom{2i}{2}}\bm{j}                                           \\
                  & =\left(\gs{d-1}{2i}{q}q^{\binom{2i}{2}}-\gs{d-1}{2i-1}{q}q^{\binom{2i-1}{2}}\right)\left(\bm{\chi}-\frac{x}{q^{d-1}+1}\bm{j}\right)+\frac{x}{q^{d-1}+1}\gs{d}{2i}{q}q^{\binom{2i}{2}}\bm{j} \\
                  & =\frac{x}{q^{d-1}+1}q^{\binom{2i-1}{2}}\left(\gs{d-1}{2i-1}{q}-\gs{d-1}{2i}{q}q^{2i-1}+\gs{d}{2i}{q}q^{2i-1}\right)\bm{j}                                                                   \\
                  & \qquad+\left(\gs{d-1}{2i}{q}q^{2i-1}-\gs{d-1}{2i-1}{q}\right)q^{\binom{2i-1}{2}}\bm{\chi}                                                                                                   \\
                  & =\left(x\gs{d-1}{2i-1}{q}\bm{j}+\left(\gs{d-1}{2i}{q}q^{2i-1}-\gs{d-1}{2i-1}{q}\right)\bm{\chi}\right)q^{\binom{2i-1}{2}}\;.
 \end{align*}
 The lemma follows.
\end{proof}

\section{Generalised quadrangles}\label{sec:quadrangles}

In Section~\ref{sec:prelim} we introduced the generalised quadrangles (polar spaces of rank $2$) separately from the polar spaces of rank at least three (see Definition~\ref{defgq}) as for rank 2 there are also non-classical examples. In this section we will introduce the Cameron-Liebler sets for generalised quadrangles as line sets with specific properties. Recall that the dual of a generalised quadrangle arises by interchanging the points and lines.
\par Because of duality, Cameron-Liebler sets for a generalised quadrangle $\mathcal{Q}$, which we want to introduce, can also be described as point sets in the dual generalised quadrangle $\mathcal{Q}^{D}$. However, in this way we find precisely the tight sets of generalised quadrangles, introduced in~\cite{pay87}. We refer to~\cite{blp} for a good introduction. For a generalised quadrangle we use $P^{\perp}$ to denote the set of points collinear with $P$, the point $P$ included.

\begin{definition}\label{tightset}
 A point set $\mathcal{T}$ of a generalised quadrangle of order $(s,t)$ is an \emph{$i$-tight set} if for any point $P$ we have
 \[
  |P^{\perp}\cap\mathcal{T}|=\begin{cases}
   s+i & P\in\mathcal{T}    \\
   i   & P\notin\mathcal{T}
  \end{cases}\;.
 \]
\end{definition}

The characterisation theorem below follows from results for tight sets, see \cite[Section 2]{bds} and \cite[Theorem 2.3.11]{vanhove1}. Of course it can be proved in a way analogous to the proofs of Theorems~\ref{characterisationth},~\ref{characterisationthI} and~\ref{characterisationthII}.

\begin{theorem}\label{characterisationthgq}
 Let $\mathcal{Q}$ be a generalised quadrangle of order $(s,t)$ and let $\mathcal{L}$ be a set of lines of $\mathcal{Q}$ with characteristic vector $\bm{\chi}$.
 Let $A$ be the point-line incidence matrix of $\mathcal{Q}$, and let $K$ be the line disjointness matrix of $\mathcal{Q}$. Denote $\frac{|\mathcal{L}|}{t+1}$ by $x$. The following five statements are equivalent.
 \begin{itemize}
  \item[(i)] $\bm{\chi}\in\im(A^{t})$.
  \item[(ii)] $\bm{\chi}\in{(\ker(A))}^{\perp}$.
  \item[(iii)] For each fixed line $\ell$ of $\mathcal{Q}$, the number of elements of $\mathcal{L}$ disjoint from $\ell$ equals $(x-{(\bm{\chi})}_{\ell})t$.
  \item[(iv)] For each fixed line $\ell$ of $\mathcal{Q}$, the number of elements of $\mathcal{L}$ meeting $\ell$ in a point equals $x+{(\bm{\chi})}_{\ell}(t-1)$.
  \item[(v)] The vector $\bm{\chi}-\frac{x}{st+1}\bm{j}$ is contained in the eigenspace of $K$ for the eigenvalue $-t$.
 \end{itemize}
\end{theorem}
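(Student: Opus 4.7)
The plan is to follow the template set by Theorems~\ref{characterisationth},~\ref{characterisationthI} and~\ref{characterisationthII}. The disjointness relation on the lines of $\mathcal{Q}$, together with the identity and the ``meets in one point'' relation, forms a $2$-class association scheme. Let $\Omega$ be the line set of $\mathcal{Q}$, with eigenspace decomposition $\R^{\Omega}=V_{0}\perp V_{1}\perp V_{2}$ and $V_{0}=\langle\bm{j}\rangle$; let $V$ denote the eigenspace of $K$ for the eigenvalue $-t$. The equivalence (i)$\Leftrightarrow$(ii) is the standard identity $\im(A^{t})=(\ker A)^{\perp}$. The equivalence (iii)$\Leftrightarrow$(iv) is a counting step: any line $\ell$ is met in a point by exactly $(s+1)t$ lines and is disjoint from exactly $st^{2}$ others, so partitioning $\mathcal{L}$ into $\{\ell\}\cap\mathcal{L}$, lines of $\mathcal{L}$ concurrent with $\ell$, and lines of $\mathcal{L}$ disjoint from $\ell$, and using $|\mathcal{L}|=x(t+1)$, the two local counts determine one another.

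For (iii)$\Leftrightarrow$(v), I would mirror Lemma~\ref{disjointequiveigenvector}. Since $K\bm{j}=st^{2}\bm{j}$, a direct computation shows
\[
 K\!\left(\bm{\chi}-\tfrac{x}{st+1}\bm{j}\right)=-t\!\left(\bm{\chi}-\tfrac{x}{st+1}\bm{j}\right)\quad\Longleftrightarrow\quad K\bm{\chi}=t(x\bm{j}-\bm{\chi}),
\]
and the right-hand side is exactly the entrywise statement in (iii).

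The main work is (i)$\Leftrightarrow$(v), for which I would show that $\im(A^{t})=V_{0}\oplus V$ as in Lemma~\ref{rowAissumeigenspacesII}. Counting the common points of two lines gives $A^{t}A=sI+J-K$, which lies in the Bose-Mesner algebra; hence $\im(A^{t})=\im(A^{t}A)$ is a direct sum of eigenspaces by Lemma~\ref{rowspacesubspacesum}. Inclusion $V_{0}\subseteq\im(A^{t})$ follows from $A^{t}\bm{j}_{\text{points}}=(s+1)\bm{j}$. To show $V\subseteq\im(A^{t})$, I would use the GQ axiom that for any point $P\notin\ell$ there is a unique line through $P$ meeting $\ell$: this yields $K\bm{v}_{P}=t(\bm{j}-\bm{v}_{P})$ for the point-pencil characteristic vector $\bm{v}_{P}$, so that $\bm{v}_{P}-\tfrac{1}{st+1}\bm{j}$ is a nonzero $(-t)$-eigenvector of $K$ lying in $\im(A^{t})$. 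Combining everything closes the equivalence chain.

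The main obstacle is the exclusion of the third eigenspace $V_{2}$ from $\im(A^{t})$. One clean route is to observe that $A^{t}A=sI+J-K$ acts on $V_{2}$ as multiplication by $s-\mu$, where $\mu$ is the remaining eigenvalue of $K$; a short check using the strongly-regular parameters of the line graph (equivalently, using $\rk(A)=(s+1)(st+1)$ together with the multiplicities $1, f_{1}, f_{2}$ summing to $(t+1)(st+1)$) pins down $\mu=s$ and hence $V_{2}\not\subseteq\im(A^{t})$. This bookkeeping is the only place requiring genuine care; everything else is a direct transcription of the rank-$d$ proofs to $d=2$.
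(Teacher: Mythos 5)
Your argument is correct, and it is exactly the ``proof analogous to Theorems~\ref{characterisationth}, \ref{characterisationthI} and~\ref{characterisationthII}'' that the paper alludes to but does not write out (the paper instead cites the tight-set results of \cite{bds} and \cite[Theorem 2.3.11]{vanhove1} for the dual quadrangle). The key computation checks out: $A^{t}A=sI+J-K$ lies in the Bose--Mesner algebra of the $2$-class scheme on lines and acts as $(s+1)(t+1)$, $s+t$ and $0$ on $V_{0}$, $V_{1}$ and $V_{2}$ respectively, so $\im(A^{t})=V_{0}\perp V_{1}$; the remaining equivalences are the elementary counts you give, and the eigenspace of $K$ for $-t$ is indeed exactly $V_{1}$ since the other eigenvalues $st^{2}$ and $s$ cannot equal $-t$. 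One small correction: the parenthetical alternative for pinning down $\mu=s$ asserts $\rk(A)=(s+1)(st+1)$, which is false in general --- $AA^{t}$ equals $(t+1)I$ plus the point collinearity matrix, whose eigenvalue $-(t+1)$ forces $AA^{t}$ to be singular, so $A$ does not have full row rank. This does no harm, because your primary route (reading the eigenvalue $s$ of $K=J-I-A_{1}$ off the strongly regular parameters of the line graph, i.e.\ the point graph of the dual quadrangle of order $(t,s)$, whose smallest eigenvalue is $-(s+1)$) is correct and suffices.
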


\begin{definition}\label{clclassgq}
 Let $\mathcal{Q}$ be a generalised quadrangle. A line set $\mathcal{L}$ that fulfils one of the statements in Theorem~\ref{characterisationthgq} (and consequently all of them) is called a \emph{Cameron-Liebler set with parameter} $x=\frac{|\mathcal{L}|}{t+1}$.
\end{definition}

\begin{theorem}
 The line set $\mathcal{L}$ in the generalised quadrangle $\mathcal{Q}$ is a Cameron-Liebler set with parameter $x$ if and only if the point set $\mathcal{L}^{D}$ is an $x$-tight set in $\mathcal{Q}^{D}$.
\end{theorem}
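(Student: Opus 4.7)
The plan is to reduce the equivalence to characterization~(iv) of Theorem~\ref{characterisationthgq} via a direct unpacking of the point-line duality. Under the duality between $\mathcal{Q}$ and $\mathcal{Q}^{D}$, the lines of $\mathcal{Q}$ become the points of $\mathcal{Q}^{D}$, and two distinct lines $\ell,\ell'$ of $\mathcal{Q}$ meet in a point if and only if the corresponding points of $\mathcal{Q}^{D}$ are collinear (lying on the line of $\mathcal{Q}^{D}$ dual to the point $\ell\cap\ell'$ of $\mathcal{Q}$). Moreover, $\mathcal{Q}^{D}$ has order $(t,s)$, so the role of the first parameter $s$ in Definition~\ref{tightset} is now played by $t$.

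Fixing a line $\ell$ of $\mathcal{Q}$ and letting $P$ be the corresponding point of $\mathcal{Q}^{D}$, and using the convention $P\in P^{\perp}$, I would first establish the purely combinatorial identity
\[
 |P^{\perp}\cap\mathcal{L}^{D}| = (\bm{\chi})_{\ell} + |\{\ell'\in\mathcal{L}\setminus\{\ell\} : \ell'\cap\ell\neq\emptyset\}|\;,
\]
where the first term records whether $P$ itself lies in $\mathcal{L}^{D}$ and the second counts the remaining points of $\mathcal{L}^{D}$ collinear with $P$ in $\mathcal{Q}^{D}$. No Cameron-Liebler hypothesis is used here.

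Now I would invoke Theorem~\ref{characterisationthgq}(iv): $\mathcal{L}$ is a Cameron-Liebler set with parameter $x$ if and only if for every line $\ell$ of $\mathcal{Q}$ the number of elements of $\mathcal{L}$ meeting $\ell$ in a point equals $x+(\bm{\chi})_{\ell}(t-1)$. Substituting this into the previous display yields
\[
 |P^{\perp}\cap\mathcal{L}^{D}| = (\bm{\chi})_{\ell} + x + (\bm{\chi})_{\ell}(t-1) = x + (\bm{\chi})_{\ell}\cdot t\;,
\]
which equals $t+x$ when $P\in\mathcal{L}^{D}$ and $x$ when $P\notin\mathcal{L}^{D}$. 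By Definition~\ref{tightset} applied to $\mathcal{Q}^{D}$ of order $(t,s)$, this is exactly the condition that $\mathcal{L}^{D}$ is an $x$-tight set. The converse follows by reading the same chain of equalities in reverse: if $\mathcal{L}^{D}$ is $x$-tight, then the count of $\ell'\in\mathcal{L}\setminus\{\ell\}$ meeting $\ell$ equals $x+(\bm{\chi})_{\ell}(t-1)$, which is characterization~(iv).

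The only real care needed is bookkeeping: one must correctly track the indicator contribution of $P$ itself to $|P^{\perp}\cap\mathcal{L}^{D}|$, and remember that passing to the dual swaps the two parameters of the generalised quadrangle. There is no substantive obstacle; the content of the theorem lies entirely in the duality dictionary combined with characterisation~(iv).
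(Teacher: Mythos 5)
Your argument is correct: the identity $|P^{\perp}\cap\mathcal{L}^{D}|=(\bm{\chi})_{\ell}+x+(\bm{\chi})_{\ell}(t-1)=x+t(\bm{\chi})_{\ell}$ matches Definition~\ref{tightset} for $\mathcal{Q}^{D}$ of order $(t,s)$ exactly, and the reverse reading recovers Theorem~\ref{characterisationthgq}(iv) (the only unstated detail, that the $x$ appearing in the tight-set condition automatically equals $|\mathcal{L}|/(t+1)$, follows from a one-line double count). The paper offers no proof at all, regarding the statement as immediate from duality, and your write-up is precisely the intended argument, so this is essentially the same approach.
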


An $m$-regular system of a generalised quadrangle is a set of lines such that any point is on precisely $m$ of them. A spread of a generalised quadrangle is a $1$-regular system. The dual of an $m$-regular system is an $m$-ovoid. It is known that an $i$-tight set and an $m$-ovoid always have $mi$ points in common (see \cite[Theorem 4.3]{blp}), so the first part of the following result is immediate. The second part can be proven in a way analogous to the proof of  Theorems~\ref{characterisationth}.

\begin{theorem}
 Let $\mathcal{Q}$ be a generalised quadrangle of order $(s,t)$ and let $\mathcal{L}$ be a Cameron-Liebler set with parameter $x$ of $\mathcal{Q}$. For every $m$-regular system  $\mathcal{S}$ of $\mathcal{Q}$, we have $|\mathcal{L}\cap\mathcal{S}|=mx$.
 \par On the other hand, if $\mathcal{Q}$ admits a spread and has an automorphism group $G$ that acts transitively on the pairs of disjoint lines of $\mathcal{P}$ and $\mathcal{L}$ is a set of lines such that $|\mathcal{L}\cap\mathcal{S}|=x$ for every spread $\mathcal{S}\in\mathcal{C}$ of $\mathcal{Q}$, with $\mathcal{C}$ a class of spreads which is a union of orbits under the action of $G$, then $\mathcal{L}$ is a Cameron-Liebler set with parameter $x$.
\end{theorem}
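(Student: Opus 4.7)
The plan is to handle the two directions separately, exactly as the excerpt's hints indicate. For the forward direction the fastest route is via duality: the theorem just before this one identifies Cameron-Liebler sets of parameter $x$ in $\mathcal{Q}$ with $x$-tight sets in $\mathcal{Q}^{D}$, and by duality an $m$-regular system of $\mathcal{Q}$ becomes an $m$-ovoid in $\mathcal{Q}^{D}$. The classical result \cite[Theorem 4.3]{blp} then gives $|\mathcal{T}\cap\mathcal{O}|=mx$ for an $x$-tight set $\mathcal{T}$ and an $m$-ovoid $\mathcal{O}$, which is exactly what we need. An alternative, self-contained route is to imitate Corollary~\ref{CLregularsystemI}: an $m$-regular system $\mathcal{S}$ has $A\bm{\chi}_{\mathcal{S}}=m\bm{j}$ (where $A$ is the point-line incidence matrix of $\mathcal{Q}$), so $\bm{\chi}_{\mathcal{S}}=\bm{w}+\tfrac{m}{t+1}\bm{j}$ with $\bm{w}\in\ker(A)$, and then $\langle\bm{\chi},\bm{\chi}_{\mathcal{S}}\rangle=\langle\bm{\chi},\bm{w}\rangle+\tfrac{m}{t+1}|\mathcal{L}|=mx$ since $\bm{\chi}\in\im(A^{t})=\ker(A)^{\perp}$ by Theorem~\ref{characterisationthgq}.

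For the converse I would reproduce the double-counting argument of Lemma~\ref{spreadintersectiontonumberanddisjoint}, adapted to the generalised-quadrangle setting, and then invoke Theorem~\ref{characterisationthgq}(iii). Set $n_{0}=|\mathcal{C}|$, $n_{1}$ the number of spreads in $\mathcal{C}$ through a given line, and $n_{2}$ the number of spreads in $\mathcal{C}$ through a given pair of disjoint lines; the values $n_{1}$ and $n_{2}$ are well defined because $G$ acts transitively on (ordered) pairs of disjoint lines and $\mathcal{C}$ is a union of $G$-orbits. Counting incidences (line, spread) gives $n_{0}(st+1)=n_{1}(t+1)(st+1)$, i.e.~$n_{0}=(t+1)n_{1}$; counting triples (spread through $\ell$, other line of that spread) gives $n_{1}\cdot st=n_{2}\cdot st^{2}$, hence $n_{1}=t\,n_{2}$, using that $\ell$ has exactly $st^{2}$ disjoint partners in $\mathcal{Q}$. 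Fix a line $\ell$ of $\mathcal{Q}$ and count pairs $(\ell',S)$ with $S\in\mathcal{C}$ containing $\ell$ and $\ell'\in\mathcal{L}\cap S$ disjoint from $\ell$: on one side this is $n_{1}(x-(\bm{\chi})_{\ell})$ by hypothesis, and on the other side it is $n_{2}$ times the number of elements of $\mathcal{L}$ disjoint from $\ell$. Combining the two yields exactly $(x-(\bm{\chi})_{\ell})t$ elements of $\mathcal{L}$ disjoint from $\ell$, which is Theorem~\ref{characterisationthgq}(iii), so $\mathcal{L}$ is a Cameron-Liebler set with parameter $x$.

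The only delicate point is making sure that the constants $n_{0},n_{1},n_{2}$ are genuinely well defined: this requires both that $G$ acts transitively on pairs of disjoint lines (given by hypothesis) and that $\mathcal{C}$ is closed under $G$, which is why $\mathcal{C}$ is required to be a union of $G$-orbits. Once that is observed the counting is routine, and the value $st^{2}$ for the number of lines disjoint from a fixed line (a one-line computation from the axioms of a generalised quadrangle of order $(s,t)$) replaces the quantity $q^{\binom{d}{2}+de}$ used in Lemma~\ref{spreadintersectiontonumberanddisjoint}. No appeal to the eigenspace decomposition is needed in this half; the counting directly produces statement (iii) of Theorem~\ref{characterisationthgq}.
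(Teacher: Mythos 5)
Your proposal is correct and follows essentially the same route as the paper: the first part is obtained exactly as you suggest, via the duality with $x$-tight sets and $m$-ovoids together with \cite[Theorem 4.3]{blp} (your alternative argument imitating Corollary~\ref{CLregularsystemI} is also valid), and the second part is the rank-$2$ instance of the double counting in Lemma~\ref{spreadintersectiontonumberanddisjoint}, which is precisely what the paper means by ``analogous to the proof of Theorem~\ref{characterisationth}''. Your counts $n_{0}=(t+1)n_{1}$, $n_{1}=t\,n_{2}$ and the value $st^{2}$ for the number of lines disjoint from a fixed line all check out, and your concluding appeal to Theorem~\ref{characterisationthgq}(iii) is the right one.
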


We present a classification result on tight sets of generalised quadrangles. This yields an equivalent result for Cameron-Liebler sets of generalised quadrangles, but we will state it here in the context of tight sets. We first recall a known result.

\begin{theorem}[{\cite[II.5]{pay87}}]\label{onaline}
 Let $\mathcal{Q}$ be a generalised quadrangle of order $(s,t)$. If $\mathcal{T}$ is an $i$-tight set, with $i\leq s$, then a line of $\mathcal{Q}$ is contained in $\mathcal{T}$ or contains at most $i$ points of $\mathcal{T}$.
\end{theorem}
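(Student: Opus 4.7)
The plan is to give a direct proof by contrapositive: assume a line $\ell$ is not contained in $\mathcal{T}$ and deduce that $|\ell\cap\mathcal{T}|\leq i$. The entire argument should reduce to applying Definition~\ref{tightset} to a single cleverly chosen point.

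First, I would fix a line $\ell$ with $\ell\not\subseteq\mathcal{T}$ and pick any point $P\in\ell\setminus\mathcal{T}$; such a point exists by assumption. By definition of an $i$-tight set, since $P\notin\mathcal{T}$, we have $|P^{\perp}\cap\mathcal{T}|=i$. The key observation is that every point of $\ell$ lies in $P^{\perp}$, because any two points on a common line of a generalised quadrangle are collinear. Consequently $\ell\cap\mathcal{T}\subseteq P^{\perp}\cap\mathcal{T}$, and taking cardinalities yields $|\ell\cap\mathcal{T}|\leq|P^{\perp}\cap\mathcal{T}|=i$, which is exactly the desired bound.

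The hypothesis $i\leq s$ does not actually enter the argument; its role is only to ensure that the conclusion is meaningful, since any line contains only $s+1$ points and so a bound of $i$ is a non-trivial restriction exactly when $i\leq s$. I would briefly mention this at the end of the proof so the reader sees why the hypothesis is stated.

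There is no real obstacle here: the proof is essentially a one-line unpacking of the defining property of tight sets together with the axiom that collinear points on a common line lie in each other's $P^{\perp}$. The only point to double-check is the convention that $P^{\perp}$ contains $P$ itself, which matters for the case $P\in\mathcal{T}$ but is immaterial here since we chose $P\notin\mathcal{T}$; the bound $|\ell\cap\mathcal{T}|\leq i$ then follows cleanly without any case analysis. An alternative, slightly longer, route would be to double-count incident pairs $(Q,R)$ with $Q\in\ell\setminus\mathcal{T}$, $R\in(P^{\perp}\setminus\ell)\cap\mathcal{T}$ and invoke the generalised quadrangle axiom from Definition~\ref{defgq}, but this is unnecessary for the stated bound and would only obscure the short argument above.
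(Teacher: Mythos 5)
Your argument is correct: since the paper's convention is that $P^{\perp}$ consists of all points collinear with $P$ (including $P$), any point $P\in\ell\setminus\mathcal{T}$ gives $\ell\subseteq P^{\perp}$ and hence $|\ell\cap\mathcal{T}|\leq|P^{\perp}\cap\mathcal{T}|=i$ by Definition~\ref{tightset}, which is exactly the intended one-line proof. The paper itself only cites this result from Payne without reproducing a proof, so there is nothing to compare against beyond noting that your observation about the hypothesis $i\leq s$ (it only serves to make the bound non-vacuous, since a line has $s+1$ points) is also accurate.
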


\begin{theorem}\label{classfors>t}
 Let $\mathcal{Q}$ be a generalised quadrangle of order $(s,t)$, with $s\geq t$. If $\mathcal{T}$ is a tight set of $\mathcal{Q}$ with parameter $x\leq\frac{s}{t}+1$, then $\mathcal{T}$ is the union of the point set of $x$ pairwise disjoint lines, or $x=\frac{s}{t}+1$ and the points of $\mathcal{T}$ form a subquadrangle of $\mathcal{Q}$ of order $\left(\frac{s}{t},t\right)$.
\end{theorem}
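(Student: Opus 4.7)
The plan is to induct on the parameter $x$, using Theorem~\ref{onaline} as the main tool. The base case $x=0$ is trivial (such a tight set has size $0 \cdot (s+1) = 0$, so it is the union of $0$ lines). For the inductive step, with $0 < x \leq s/t + 1$, I split on whether $\mathcal{T}$ contains a line of $\mathcal{Q}$ as a subset of its point set. First I would apply Theorem~\ref{onaline} (noting $x \leq s$ in the relevant cases): for each $P \in \mathcal{T}$, a line through $P$ is either in $\mathcal{T}$ or contains at most $x$ points of $\mathcal{T}$. Writing $a_P$ for the number of lines through $P$ lying in $\mathcal{T}$, the identity $|P^\perp \cap \mathcal{T}| = s+x$ becomes $s+x-1 \leq a_P s + (t+1-a_P)(x-1)$, which rearranges to $a_P(s-x+1) \geq s - t(x-1)$. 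Since $x \leq s/t + 1$, the right-hand side is nonnegative, with strict inequality iff $x < s/t+1$; so for $x < s/t+1$ every point of $\mathcal{T}$ lies on some line contained in $\mathcal{T}$.

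For the first case, suppose some line $\ell$ of $\mathcal{Q}$ is contained in $\mathcal{T}$, and set $\mathcal{T}' = \mathcal{T} \setminus \ell$. A routine check (splitting on whether $P$ lies in $\mathcal{T}'$, in $\ell$, or outside $\mathcal{T}$, and using that $|P^\perp \cap \ell| = 1$ when $P \notin \ell$ and $= s+1$ when $P \in \ell$) shows $\mathcal{T}'$ is an $(x-1)$-tight set. Since $x-1 \leq s/t < s/t+1$, the inductive hypothesis gives that $\mathcal{T}'$ is the point set of $x-1$ pairwise disjoint lines (the subquadrangle option is ruled out because it requires parameter exactly $s/t+1$). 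None of these lines can meet $\ell$, for if $P$ were in such an intersection then $P \in \ell$ forces $P \notin \mathcal{T}'$, contradicting that $P$ lies on a line contained in $\mathcal{T}'$. Hence $\mathcal{T}$ is the union of $x$ pairwise disjoint lines.

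For the second case, suppose no line of $\mathcal{Q}$ lies in $\mathcal{T}$. The counting argument above forces $x = s/t+1$ and forces equality throughout, so for every $P \in \mathcal{T}$ each of the $t+1$ lines through $P$ contains exactly $x$ points of $\mathcal{T}$. In particular, any line that meets $\mathcal{T}$ at all meets it in exactly $x = s/t+1$ points. I then define a substructure $\mathcal{S}$ whose points are $\mathcal{T}$ and whose lines are the lines of $\mathcal{Q}$ meeting $\mathcal{T}$ in exactly $x$ points. Then every line of $\mathcal{S}$ has $s/t+1$ points, every point lies on $t+1$ lines of $\mathcal{S}$, and two distinct points lie on at most one common line (inherited from $\mathcal{Q}$). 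A size check $|\mathcal{T}| = x(s+1) = (s/t+1)(s+1)$ matches the number of points of a generalised quadrangle of order $(s/t,t)$.

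The main obstacle is verifying the third GQ axiom for $\mathcal{S}$: given $P \in \mathcal{T}$ and a line $\ell$ of $\mathcal{S}$ with $P \notin \ell$, one needs the unique $\mathcal{Q}$-point $R \in \ell$ collinear to $P$ to belong to $\mathcal{T}$ (then the line $PR$ automatically has two points in $\mathcal{T}$ and so lies in $\mathcal{S}$). The key observation is that if $Q \in \ell \setminus \mathcal{T}$, then $|Q^\perp \cap \mathcal{T}| = x$ and the $x$ points of $\ell \cap \mathcal{T}$ already account for all of $Q^\perp \cap \mathcal{T}$, so no line through $Q$ other than $\ell$ meets $\mathcal{T}$. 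Applying this to $Q = R$ would contradict $P \in PR \cap \mathcal{T}$ (since $PR \neq \ell$ as $P \notin \ell$), so $R \in \mathcal{T}$ as required. This closes the induction and completes the classification.
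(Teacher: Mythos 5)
Your proof is correct and follows essentially the same route as the paper: the inequality obtained from Theorem~\ref{onaline} (your $a_P$ count specialises at $a_P=0$ to the paper's $x+s-1\leq(x-1)(t+1)$), induction by stripping off a line contained in $\mathcal{T}$, and in the extremal case $x=\frac{s}{t}+1$ the verification that the lines meeting $\mathcal{T}$ form a subquadrangle. Your check of the third GQ axiom via the point $Q\in\ell\setminus\mathcal{T}$ is just the contrapositive of the paper's observation that two concurrent lines of the substructure must meet in a point of $\mathcal{T}$, and is if anything slightly more explicit.
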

\begin{proof}
 We assume that $\mathcal{T}$ is non-empty and does not contain a line, hence on every line there is a point not in $\mathcal{T}$. We consider the point $P\in\mathcal{T}$.  We know that there are $x+s-1$ points in $\mathcal{T}$ collinear with $P$, not including $P$. By Theorem~\ref{onaline} there are at most $x-1$ points of $\mathcal{T}\setminus\{P\}$ on a line through $P$ since on this line there must be a point not in $\mathcal{T}$. Consequently,
 \begin{align}\label{eqtight}
  x+s-1\leq(x-1)(t+1)\quad & \Leftrightarrow\quad x\geq\frac{s}{t}+1\;.
 \end{align}
 \par If $x<\frac{s}{t}+1$, we find a contradiction. So, if $\mathcal{T}$ is a non-empty tight set with parameter $x<\frac{s}{t}+1$, then it contains the point set $\mathcal{T}_{\ell}$ of a line $\ell$. It follows that $\mathcal{T}\setminus\mathcal{T}_{\ell}$ is a tight set with parameter $x-1$, and moreover on each line meeting $\ell$ there is a point not in $\mathcal{T}$.
 We can proceed using induction. We find that $\mathcal{T}$ is the union of $x$ disjoint lines.
 \par If $x=\frac{s}{t}+1$, we can have equality in~\eqref{eqtight} if and only if on each line through a point of $\mathcal{T}$ there are precisely $x$ points of $\mathcal{T}$, equivalently on each line there are $0$ or $x$ points of $\mathcal{T}$. If $\mathcal{T}$ is a tight set with parameter $x=\frac{s}{t}+1$, then either it contains the point set of a line or else it does not.
 In the former case $\mathcal{T}$ is the union of a line and a tight set with parameter $\frac{s}{t}$, hence the union of $\frac{s}{t}+1$ lines. In the latter case we denote the set of lines containing $\frac{s}{t}+1$ points of $\mathcal{T}$ by $\mathcal{L}$.
 Now the lines of $\mathcal{L}$ and the points of $\mathcal{T}$ do form a subquadrangle of $\mathcal{Q}$ of order $\left(\frac{s}{t},t\right)$ since
 \begin{itemize}
  \item each point of $\mathcal{T}$ lies on $t+1$ lines of $\mathcal{L}$,
  \item on every line of $\mathcal{L}$ there are $\frac{s}{t}+1$ points of $\mathcal{T}$,
  \item any two lines of $\mathcal{L}$ that meet, meet in a point of $\mathcal{T}$.\qedhere
 \end{itemize}
\end{proof}

\begin{remark}
 For the generalised quadrangles of order $(s,s)$ this coincides with the classical result from~\cite[II.3 and II.4]{pay87}, which classified the $i$-tight sets with $i=1,2$ for all generalised quadrangles.
\end{remark}

By the previous remark we will focus on generalised quadrangles of order $(s,t)$ with $s>t$ in the discussion of Theorem~\ref{classfors>t}.

\begin{remark}
 Theorem~\ref{classfors>t} does not improve the known classification results for the classical generalised quadrangles with $s>t$, except for $\mathcal{H}(3,q^{2})$, with $q=2,3,4$. See Table~\ref{overviewtightsets} for an overview. For the non-classical generalised quadrangle ${\mathcal{H}(4,q^{2})}^{D}$ of order $(q^{3},q^{2})$, Theorem~\ref{classfors>t} gives the best known result.
 \par The non-classical generalised quadrangle $T_{3}(O)$, with $O$ an ovoid of $\PG(3,q)$, has order $(q,q^{2})$. For its construction we refer to~\cite[3.1.2]{pt}.
 It is known that $T_{3}(O)$ is isomorphic to $\mathcal{Q}^{-}(5,q)$ if and only if $O$ is an elliptic quadric (see~\cite[3.2.4]{pt}), so ${T_{3}(O)}^{D}$ is isomorphic to $\mathcal{H}(3,q^{2})$ if and only if $O$ is an elliptic quadric. It is a classical result that all ovoids in $\PG(3,q)$, $q$ odd, are elliptic quadrics (see~\cite{bar,pan}), but in $\PG(3,q)$, $q=2^{h}$ and $h\geq3$ odd, ovoids that are not projectively equivalent to elliptic quadrics are known (see \cite{tit}).
 For the generalised quadrangles ${T_{3}(O)}^{D}$, with $O$ an ovoid that is not an elliptic quadric (hence not isomorphic to $\mathcal{H}(3,q^{2})$), Theorem~\ref{classfors>t} gives the best classification result for tight sets.
\end{remark}

In Table~\ref{overviewtightsets} we give an overview of the classification results for tight sets. Recall that the complement of an $x$-tight set in a generalised quadrangle of order $(s,t)$ is an $(st-x+1)$-tight set, so we restrict to the results for $x\leq\frac{st+1}{2}$.
We only include the generalised quadrangles for which there is a classification result for the tight sets with parameter at least 3 (as for generalised quadrangles the classification of $x$-tight sets with $x\leq 2$ is given in~\cite[II.3 and II.4]{pay87}).
Recall that $\mathcal{Q}(4,q)\cong{\mathcal{W}(3,q)}^{D}$, that $\mathcal{Q}(4,q)\cong{\mathcal{Q}(4,q)}^{D}$ iff $q$ is even, and that $\mathcal{Q}^{-}(5,q)\cong{\mathcal{H}(3,q^{2})}^{D}$ (see~\cite[Section 3.2]{pt}).

\begin{table*}[ht]
 \centering
 \begin{tabular}{|c|ccc|}
  \hline
  $x$-tight set                & classification                         & condition    & Reference                 \\ \hline\hline
  grid                         & complete                               &              & trivial                   \\ \hline
  dual grid                    & complete                               &              & trivial                   \\ \hline
  $\mathcal{H}(3,q^{2})$       & $x<\frac{\sqrt[3]{q^{4}}-1}{2}$        & $q\geq9$ odd & \cite[Theorem 2.17]{ns}   \\
                               & $x<\frac{\sqrt[4]{q^{5}}}{\sqrt{2}}+1$ & $q>4$        & \cite[Theorem 3.9]{dghs}  \\
                               & $x<\epsilon_{q^{2}}=q+1$               &              & \cite[Theorem 15]{bklp}   \\
                               & $x\leq q+1$                            &              & Theorem~\ref{classfors>t} \\ \hline
  $\mathcal{Q}^{-}(5,2)$       & complete                               &              & \cite[IV]{pay87}          \\ \hline
  $\mathcal{Q}(4,q)$, $q$ even & $x<\frac{\sqrt[8]{q^{5}}}{\sqrt{2}}+1$ & $q$ square   & \cite[Theorem 3.12]{dghs} \\
                               & $x<\epsilon_{q}$                       &              & \cite[Theorem 15]{bklp}   \\
                               & complete                               & $q=2$        & \cite[IV]{pay87}          \\
                               & $x\leq 3$                              & $q=4$        & \cite[VII]{pay87}         \\  \hline
  $\mathcal{Q}(4,3)$           & complete                               &              & \cite[V and VI]{pay87}    \\ \hline
  $\mathcal{W}(3,q)$, $q$ odd  & $x<\frac{\sqrt[3]{q^{2}}-1}{2}$        & $q\geq81$    & \cite[Corollary 2.24]{ns} \\
                               & $x<\frac{\sqrt[8]{q^{5}}}{\sqrt{2}}+1$ & $q$ square   & \cite[Theorem 3.12]{dghs} \\
                               & $x<\epsilon_{q}$                       &              & \cite[Theorem 15]{bklp}   \\
                               & complete                               & $q=3$        & \cite[V and VI]{pay87}    \\  \hline
  ${\mathcal{H}(4,q^{2})}^{D}$ & $x\leq q+1$                            &              & Theorem~\ref{classfors>t} \\ \hline
  ${T_{3}(O)}^{D}$             & $x\leq q+1$                            &              & Theorem~\ref{classfors>t} \\ \hline
 \end{tabular}
 \caption{Overview of the results on $x$-tight sets in generalised quadrangles, equivalently Cameron-Liebler sets in the dual generalised quadrangle. Here $O$ is an ovoid in $\PG(3,q)$, $q$ even, that is not an elliptic quadric. The value $\epsilon_{q}$ is such that $q+\epsilon_{q}$ is the size of the smallest blocking set in $\PG(2,q)$ not containing a line.}\label{overviewtightsets}
\end{table*}

\section{Classification results}\label{sec:classification}

In this section we present some classification results for Cameron-Liebler sets of generators in polar spaces. We obtain a strong result in the case of polar spaces of type I with $e\geq1$. For the hyperbolic quadrics of even rank we will focus on classification results for one class of generators because of Theorem \ref{characterisationthIIbis}.
\par Some arguments in the proofs of this section rely on results about Erd\H{o}s-Ko-Rado sets of generators in polar spaces. The Erd\H{o}s-Ko-Rado problem is a well-known problem in combinatorics, which finds its origins in~\cite{ekr}, but which has been studied in many contexts.
For background on the Erd\H{o}s-Ko-Rado problem for generators of polar spaces we refer to~\cite{psv,vanhove1}. As we will see below it is closely related to the Cameron-Liebler problem.

\begin{definition}
 An \emph{Erd\H{o}s-Ko-Rado} set of generators of a polar space is a set of pairwise not disjoint generators.
\end{definition}

The following lemmata will be useful in the proof of the first classification theorem.

\begin{lemma}[{\cite[Lemma 4]{psv}}]\label{psvlemma}
 Let $\pi_{1}$, $\pi_{2}$ and $\pi_{3}$ be pairwise non-disjoint generators in a classical polar space. The intersections $\pi_{1}\cap\pi_{2}$ and $\pi_{1}\cap\pi_{3}$ cannot be complementary subspaces of $\pi_{1}$.
\end{lemma}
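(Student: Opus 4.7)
The plan is a short argument by contradiction using only the definition of a generator (a maximal totally isotropic subspace with respect to the form defining the polar space). Suppose $\pi_1 \cap \pi_2$ and $\pi_1 \cap \pi_3$ are complementary subspaces of $\pi_1$; write $\sigma = \pi_1 \cap \pi_2$ and $\tau = \pi_1 \cap \pi_3$. Complementarity means $\sigma \cap \tau = \emptyset$ and $\langle \sigma, \tau \rangle = \pi_1$. By hypothesis $\pi_2 \cap \pi_3 \neq \emptyset$, so we may pick a point $R \in \pi_2 \cap \pi_3$.

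The key observation is then the following. Because $\pi_2$ is totally isotropic and $R \in \pi_2$, the point $R$ is collinear (in the polar space) with every point of $\pi_2$, and in particular with every point of $\sigma \subseteq \pi_2$. Similarly, since $R \in \pi_3$ and $\tau \subseteq \pi_3$, the point $R$ is collinear with every point of $\tau$. Collinearity is determined by the form, so the set of points collinear with $R$ is a subspace; therefore $R$ is collinear with every point of $\langle \sigma, \tau \rangle = \pi_1$. Consequently $\langle R, \pi_1 \rangle$ is a totally isotropic subspace of the ambient projective space containing $\pi_1$.

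Since $\pi_1$ is a generator, i.e.~a \emph{maximal} totally isotropic subspace, we must have $R \in \pi_1$. Combined with $R \in \pi_2$ and $R \in \pi_3$, this forces $R \in \pi_1 \cap \pi_2 \cap \pi_1 \cap \pi_3 = \sigma \cap \tau$, contradicting $\sigma \cap \tau = \emptyset$.

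I expect no serious obstacle: the only ingredient beyond elementary projective geometry is the maximality of generators, and the ``collinear with every point of $\pi_1$'' step is immediate from linearity of the form. The one subtlety worth stating cleanly is that $\sigma$ and $\tau$ being complementary in $\pi_1$ must be interpreted in the projective sense ($\sigma \cap \tau = \emptyset$ and $\langle \sigma, \tau \rangle = \pi_1$), so that no point of $\pi_1$ escapes the span.
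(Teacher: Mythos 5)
The paper does not actually prove this lemma---it is imported verbatim from \cite[Lemma 4]{psv}---but your argument is correct and is essentially the proof given in that reference: any point $R\in\pi_{2}\cap\pi_{3}$ satisfies $R\in\sigma^{\perp}\cap\tau^{\perp}=\langle\sigma,\tau\rangle^{\perp}=\pi_{1}^{\perp}$, so $\langle R,\pi_{1}\rangle$ is totally isotropic, maximality of the generator $\pi_{1}$ forces $R\in\pi_{1}$, and then $R\in\sigma\cap\tau=\emptyset$ gives the contradiction. The one step to phrase a little more carefully is the passage from ``$R$ is collinear with every point of $\pi_{1}$'' to ``$\langle R,\pi_{1}\rangle$ is totally isotropic'': for the quadrics one must verify that the quadratic form vanishes on all of $\langle R,\pi_{1}\rangle$ and not merely that $\pi_{1}\subseteq R^{\perp}$ for the associated bilinear form, which follows from $Q(\lambda r+\mu p)=\lambda^{2}Q(r)+\mu^{2}Q(p)+\lambda\mu B(r,p)=0$; with that remark your proof is complete.
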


\begin{lemma}[{\cite[Corollaries 5 and 12]{kms}}]\label{kmslemma}
 Let $\pi$ and $\pi'$ be two generators of a polar space $\mathcal{P}$ that meet in a $v$-space.
 \begin{itemize}
  \item If $\mathcal{P}=\mathcal{Q}^{+}(2d+1,q)$ and $v\equiv d\pmod{2}$, the number of generators disjoint to both $\pi$ and $\pi'$ equals
        \[
         q^{\frac{(d+v+2)(d+v)}{4}-\binom{v+1}{2}}\prod^{(d-v)/2}_{i=1}(q^{2i-1}-1)\;.
        \]
  \item If $\mathcal{P}=\mathcal{H}(2d+1,q^{2})$, the number of generators disjoint to both $\pi$ and $\pi'$ equals
        \[
         q^{{(d+1)}^2-\binom{d-v+1}{2}}\prod^{d-v}_{i=1}(q^{i}+{(-1)}^{i})\;.
        \]
 \end{itemize}
\end{lemma}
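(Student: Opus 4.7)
The plan is to prove both cases by the same coordinate strategy: use Witt's theorem to place $\pi$ and $\pi'$ in a standard position, parameterize the generators $\pi''$ disjoint from $\pi$ by a concrete matrix space, and then translate the condition $\pi''\cap\pi'=\varnothing$ into an invertibility condition on a block of that matrix.

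First I would invoke Witt's theorem for classical polar spaces: the isometry group of $\mathcal{P}$ acts transitively on pairs of generators meeting in a $v$-space (in the hyperbolic quadric case also requiring the pair to lie in the same class, which corresponds exactly to $v\equiv d\pmod 2$). Hence one may fix
\[
 \pi=\langle e_0,e_2,\dots,e_{2d}\rangle,\qquad \pi'=\langle e_0,e_2,\dots,e_{2v},e_{2v+3},e_{2v+5},\dots,e_{2d+1}\rangle,
\]
relative to the standard form $X_0X_1+\dots+X_{2d}X_{2d+1}$ for $\mathcal{Q}^+(2d+1,q)$, and the analogous split Witt form for $\mathcal{H}(2d+1,q^2)$. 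Writing vectors as $(u,w)$ with $u,w\in\mathbb{F}^{d+1}$ via even/odd coordinates, every generator $\pi''$ with $\pi''\cap\pi=\varnothing$ has a unique description $\pi''=\{(Mw,w):w\in\mathbb{F}^{d+1}\}$ for a $(d+1)\times(d+1)$ matrix $M$. Total isotropy forces $M$ to be alternating in the quadric case, and Hermitian (up to a fixed nonzero scalar) in the Hermitian case; the global count of such $M$ already recovers Lemma \ref{skewgenerators}.

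Next I would compute $\pi''\cap\pi'$ explicitly. Splitting $w=(w_1,w_2)$ according to the first $v+1$ and last $d-v$ coordinates, and writing $M=\left(\begin{smallmatrix} A & B \\ \pm B^* & C\end{smallmatrix}\right)$ in the compatible block form, an elementary check shows that $(Mw,w)\in\pi'$ exactly when $w_1=0$ and $Cw_2=0$. Thus $\pi''\cap\pi'=\varnothing$ projectively if and only if the lower-right block $C$ of $M$ is invertible. The blocks $A$ and $B$ are then unconstrained (apart from the alternating/Hermitian condition on $A$), while $C$ ranges over invertible alternating (resp.\ Hermitian) matrices of size $d-v$. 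The three block-counts are classical: invertible alternating $(d-v)\times(d-v)$ matrices over $\mathbb{F}_q$ number $q^{(d-v)(d-v-2)/4}\prod_{i=1}^{(d-v)/2}(q^{2i-1}-1)$ (via the ratio $|\mathrm{GL}|/|\mathrm{Sp}|$, and this requires $d-v$ even, i.e.\ $v\equiv d\pmod 2$); invertible Hermitian $(d-v)\times(d-v)$ matrices over $\mathbb{F}_{q^2}$ number $q^{\binom{d-v}{2}}\prod_{i=1}^{d-v}(q^i+(-1)^i)$ (via $|\mathrm{GL}_n(q^2)|/|\mathrm{U}_n(q)|$); and $A,B$ contribute the obvious powers of $q$.

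The main obstacle is the exponent bookkeeping at the end: verifying the identity
\[
 \tbinom{v+1}{2}+(v+1)(d-v)+\tfrac{(d-v)(d-v-2)}{4}=\tfrac{(d+v+2)(d+v)}{4}-\tbinom{v+1}{2}
\]
for the quadric, and its Hermitian counterpart. These are elementary polynomial identities in $d$ and $v$ but must be checked carefully. A secondary subtlety is the characteristic~2 quadric case, where ``alternating'' is strictly stronger than ``skew-symmetric''; the standard coordinates above handle this uniformly, but the isotropy condition on $M$ should be recomputed with care. Finally, the boundary cases $v=d$ (which must reduce to Lemma \ref{skewgenerators}) and $v=-1$ with $d$ odd (where the $A$ and $B$ blocks are empty) provide useful sanity checks before finalising the general exponent identities.
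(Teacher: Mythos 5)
The paper does not prove this lemma at all: it is quoted verbatim from Corollaries 5 and 12 of Klein--Metsch--Storme \cite{kms}, so any argument you give is necessarily ``a different route.'' Your route is sound, and I checked the places where it could go wrong. The graph parameterisation $\pi''=\{(Mw,w)\}$ of the generators opposite $\pi$ is correct and unique, total isotropy does force $M$ alternating (resp.\ anti-Hermitian, which is Hermitian up to a scalar $\lambda$ with $\lambda^{q}=-\lambda$), and with $\pi'$ in the position you chose, a vector $(Mw,w)$ of $\pi''$ lies in $\pi'$ precisely when $w_{1}=0$ and $Cw_{2}=0$, so $\pi''\cap\pi'=\varnothing$ is exactly invertibility of $C$. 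The three counts are right: $|\mathrm{GL}_{2m}(q)|/|\mathrm{Sp}_{2m}(q)|=q^{m(m-1)}\prod_{i=1}^{m}(q^{2i-1}-1)$ and $|\mathrm{GL}_{n}(q^{2})|/|\mathrm{U}_{n}(q)|=q^{\binom{n}{2}}\prod_{i=1}^{n}(q^{i}+(-1)^{i})$, and both exponent identities close (setting $s=d-v$, one reduces to $2v^{2}+2v+4vs+2s+s^{2}$ on both sides in the quadric case, and to $(v+1)^{2}+2(v+1)s+\frac{s^{2}-s}{2}$ in the Hermitian case). The sanity checks also pass: $v=d$ recovers $q^{\binom{d+1}{2}}$ and $q^{(d+1)^{2}}$, matching Lemma~\ref{skewgenerators} for rank $d+1$ and $e=0$ resp.\ $e=\tfrac12$. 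Two small things to make explicit when writing this up: the transitivity on ordered pairs of generators meeting in a $v$-space is most cleanly extracted from distance-transitivity of the dual polar graph (Theorem~\ref{distancetransitive}) rather than re-derived from Witt's theorem, and in characteristic $2$ the isotropy condition $w^{t}M^{t}w=0$ must be imposed for the quadratic form itself (giving symmetric with zero diagonal), not merely for the associated bilinear form --- but the count $q^{\binom{n}{2}}$ is unaffected. What your approach buys over the paper's bare citation is a self-contained, uniform derivation of both cases; what it costs is the block-matrix bookkeeping, which you have correctly identified as the main thing to verify.
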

We present a first classification theorem, for Cameron-Liebler sets with parameter 1.

\begin{theorem}\label{par1}
 Let $\mathcal{P}$ be a finite classical polar space of type I, III or IV, or one class of generators of $\mathcal{Q}^{+}(2d-1,q)$, $d$ even, and let $\mathcal{L}$ be a Cameron-Liebler set of $\mathcal{P}$ with parameter $1$.
 \begin{itemize}
  \item If $\mathcal{P}$ is a polar space of type I or IV, then $\mathcal{L}$ is a point-pencil.
  \item If $\mathcal{P}$ is one class of generators of $\mathcal{Q}^{+}(2d-1,q)$, $d$ even, then $\mathcal{L}$ is a point-pencil, or $d=4$ and $\mathcal{L}$ is a base-solid (described in Example~\ref{basesolid}).
  \item If $\mathcal{P}$ is a polar space of type III, then $\mathcal{L}$ is a point-pencil or a hyperbolic class, or $d=3$ and $\mathcal{L}$ is given by Example~\ref{baseplane}.
 \end{itemize}
\end{theorem}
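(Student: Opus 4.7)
The starting point is to recognise that a Cameron-Liebler set with parameter $x=1$ is nothing but a maximum-size Erd\H{o}s-Ko-Rado (EKR) set of generators. Indeed, setting $x=1$ in Theorem~\ref{characterisationth}(i) (or Theorem~\ref{characterisationthII}(i) for a single class of $\mathcal{Q}^{+}$ of even rank), every $\pi\in\mathcal{L}$ is disjoint from exactly $(1-1)q^{\binom{d-1}{2}+e(d-1)}=0$ other elements of $\mathcal{L}$, so $\mathcal{L}$ is pairwise non-disjoint. The size $|\mathcal{L}|=\prod_{i=0}^{d-2}(q^{e+i}+1)$ (respectively $\prod_{i=1}^{d-2}(q^{i}+1)$ in the hyperbolic-class case) is exactly the ratio (Delsarte) bound on independent sets in the disjointness graph, using Lemma~\ref{skewgenerators} for the degree $q^{\binom{d}{2}+de}$ and Theorem~\ref{mineigenvalue} for the minimum eigenvalue $-q^{\binom{d-1}{2}+e(d-1)}$. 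So $\mathcal{L}$ is an EKR set attaining the bound.

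The plan is therefore to invoke the classification of maximum-size EKR sets of generators of a finite classical polar space, due essentially to Pepe, Storme and Vanhove (see \cite{psv} and \cite[Chapter~4]{vanhove1}). For polar spaces of type I or IV the only such EKR sets are point-pencils, yielding the first bullet. For polar spaces of type III the list also contains the hyperbolic classes of Remark~\ref{hyperbolicclass}, and in the exceptional rank-$3$ case the base-plane of Example~\ref{baseplane}. For one class of $\mathcal{Q}^{+}(2d-1,q)$ with $d$ even the list consists of the point-pencils within the class, and for $d=4$ also the base-solids of Example~\ref{basesolid}. That each item in these lists is a Cameron-Liebler set with parameter~$1$ is already verified in Examples~\ref{pointpencil}, \ref{hypclass}, \ref{baseplane} and~\ref{basesolid}, so the theorem reduces to this EKR classification.

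The main technical obstacle is to carry out the classification in each family. The generic tool is Lemma~\ref{kmslemma}: once two distinct elements $\pi,\pi'\in\mathcal{L}$ are fixed with a given intersection dimension, it counts the generators disjoint from both, and this count must be zero on $\mathcal{L}$ because $\mathcal{L}$ is EKR. Combined with the intersection distribution supplied by Lemma~\ref{distance} (respectively Lemma~\ref{distanceII}) evaluated at $x=1$, this pins down the possible intersection patterns between any $\pi\in\mathcal{L}$ and the rest of $\mathcal{L}$. A splitting argument then shows that either the common intersection $\bigcap_{\pi\in\mathcal{L}}\pi$ is a single point, in which case $\mathcal{L}$ is forced to be the whole point-pencil through that point (by cardinality), or it is empty. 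In the latter case, Lemma~\ref{psvlemma} excludes complementary intersections inside a fixed $\pi\in\mathcal{L}$ and, combined with a parity/counting argument on how the pairwise intersections sit inside $\pi$, leaves only the hyperbolic-class configuration and the small-rank sporadic examples.

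The delicate part is the rank restriction. For type III in rank $d\geq 4$ and for one class of $\mathcal{Q}^{+}(2d-1,q)$ in rank $d\geq 6$, one must check that no base-plane/base-solid analogue of the correct size exists: this boils down to comparing $|\mathcal{L}|=\prod_{i=0}^{d-2}(q^{e+i}+1)$ with the number of generators through a fixed subspace of the proposed centre (via Corollary~\ref{pointpencilsize}) and using Lemma~\ref{kmslemma} to rule out the missing disjointness pairs. The type IV case $\mathcal{W}(4n+1,q)$ with $q$ odd needs extra care because the matrix-image characterisations of Theorems~\ref{characterisationthI} and~\ref{characterisationthIII} are not available here; the argument must proceed purely from the eigenspace description of Theorem~\ref{characterisationth}(iii) and the EKR classification for these symplectic spaces, which is where I expect the most work to be needed.
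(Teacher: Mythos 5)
Your opening reduction has a genuine gap that hits exactly the hardest cases of the theorem. You claim that a Cameron--Liebler set with parameter $1$ is a \emph{maximum}-size Erd\H{o}s--Ko--Rado set because its size $\prod_{i=0}^{d-2}(q^{e+i}+1)$ equals the Delsarte/ratio bound computed with ``the minimum eigenvalue $-q^{\binom{d-1}{2}+e(d-1)}$''. But by Theorem~\ref{mineigenvalue} that number is \emph{not} the minimal eigenvalue of the disjointness graph for $\mathcal{Q}^{+}(4n+1,q)$ and for $\mathcal{H}(4n+1,q)$ with $q$ a square (both of which are of type I and hence covered by the first bullet): there the minimal eigenvalue is $-q^{\binom{d}{2}}$, attained on $V_{d}$, the ratio bound is strictly larger than the point-pencil size, and the maximum EKR sets are genuinely bigger than point-pencils (for $\mathcal{Q}^{+}(4n+1,q)$ a whole class of generators, of size $\prod_{i=1}^{d-1}(q^{i}+1)$, is an EKR set). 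So in these two families a parameter-$1$ Cameron--Liebler set is an EKR set that is \emph{not} of maximum size, and the classification of maximum EKR sets from \cite{psv} says nothing about it. This is precisely why the paper, after citing \cite{psv} for all the other families (where your reduction is correct and is the paper's own first step), spends the bulk of the proof on a bespoke local argument for $\mathcal{Q}^{+}(4n+1,q)$ and $\mathcal{H}(4n+1,q)$: it first proves, via Lemma~\ref{psvlemma} and the $x=1$ intersection distribution of Lemma~\ref{distance}, that for each $\pi\in\mathcal{L}$ all elements of $\mathcal{L}$ meeting $\pi$ in a point or a $(d-2)$-space pass through a common point $P_{\pi}$, and then uses Lemma~\ref{kmslemma} to \emph{produce} a generator of $\mathcal{L}$ disjoint from any hypothetical element avoiding $P_{\pi}$, contradicting the EKR property.

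Your sketch of the ``main technical obstacle'' does name the right lemmata, but it does not fill this hole: the statement that the count in Lemma~\ref{kmslemma} ``must be zero on $\mathcal{L}$ because $\mathcal{L}$ is EKR'' misreads how that lemma is used (it counts generators of the whole polar space disjoint from two given ones, and is used to exhibit such a generator inside $\mathcal{L}$ and derive a contradiction, not to set a count to zero), and the ``splitting argument'' showing that $\bigcap_{\pi\in\mathcal{L}}\pi$ is a point or empty is asserted rather than proved. For the residual odd-rank hyperbolic and Hermitian cases an actual argument in the spirit of the paper's $P_{\pi}$-claim is required; without it the first bullet of the theorem is not established.
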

\begin{proof}
 It follows immediately from Theorems~\ref{characterisationth}(i) and ~\ref{characterisationthII}(i) that the elements of $\mathcal{L}$ pairwise meet, hence $\mathcal{L}$ is an Erd\H{o}s-Ko-Rado set (which is not necessarily maximal). Moreover, we know that $|\mathcal{L}|$ is the size of a point-pencil of $\mathcal{P}$ by Definitions \ref{clset} and \ref{clsetII}.
 \par If $\mathcal{P}$ is a generalised quadrangle (or one class of lines of $\mathcal{Q}^{+}(3,q)$), then any Erd\H{o}s-Ko-Rado set is a subset of a point-pencil. The theorem is thus immediate in this case. Now, we assume that $\mathcal{P}$ is an elliptic quadric $\mathcal{Q}^{-}(2d+1,q)$, $d\geq3$, a parabolic quadric $\mathcal{Q}(2d,q)$, $d\geq3$, one class of generators of a hyperbolic quadric $\mathcal{Q}^{+}(4n-1,q)$, $n\geq2$, a symplectic variety $\mathcal{W}(2d-1,q)$, a Hermitian variety $\mathcal{H}(4n-1,q)$, $n\geq2$ and $q$ a square, or a Hermitian variety $\mathcal{H}(2d,q)$, $d\geq3$ and $q$ a square. In these cases the theorem follows immediately from~\cite[Theorems 15, 21--24]{psv}, summarized in~\cite[Section 9]{psv}. Note that we know by Examples~\ref{pointpencil},~\ref{hypclass},~\ref{baseplane} and~\ref{basesolid} that the described Erd\H{o}s-Ko-Rado sets are indeed Cameron-Liebler sets with parameter 1.
 \par Now we assume that $\mathcal{P}$ is either a hyperbolic quadric $\mathcal{Q}^{+}(4n+1,q)$ or a Hermitian variety $\mathcal{H}(4n+1,q)$, $q$ a square. Note that $\mathcal{P}$ is a classical polar space of type I with rank $d=2n+1$.
 First we prove the following claim: for any generator $\pi$ in $\mathcal{L}$ the subset of $\mathcal{L}$ of generators meeting $\pi$ in precisely a point or in precisely a $(d-2)$-space is the set of all generators through a fixed point $P_{\pi}$ that meet the given generator in precisely a point, or the set of all generators through $P_{\pi}$ that meet the given generator in precisely a $(d-2)$-space, respectively.
 \par Let $\pi$ be an element of $\mathcal{L}$. Denote the subset of $\mathcal{L}$ of generators meeting $\pi$ in a $(d-2)$-space by $\mathcal{L}'$ and the subset of $\mathcal{L}$ of generators meeting $\pi$ in precisely a point by $\mathcal{L}''$.
 By Lemma~\ref{distance} we know that $\mathcal{L}''$ is non-empty. Let $\pi'$ be a generator of $\mathcal{L}''$ meeting $\pi$ in a point $P$. Then, by Lemma~\ref{psvlemma}, any generator in $\mathcal{L}'$ and $\pi'$ meet non-trivially in $\pi$. Hence, all elements of $\mathcal{L}'$ contain $P$. By Lemma~\ref{distance} we know that $|\mathcal{L}'|=\gs{d-1}{1}{q}q^{e}$.
 So, $\mathcal{L}'$ is the set of all generators of $\mathcal{P}$ that meet $\pi$ in a $(d-2)$-space through $P$ by Lemma~\ref{skewgenerators}. Now, again by Lemma~\ref{psvlemma} all elements of $\mathcal{L}''$ pass through $P$, and by Lemmas~\ref{skewgenerators} and~\ref{distance} we know that any generator meeting $\pi$ in precisely $P$ is contained in $\mathcal{L}''\subset\mathcal{L}$. This finishes the proof of the claim with $P_{\pi}=P$.
 \par We consider the case in which $\mathcal{P}$ is a Hermitian variety $\mathcal{H}=\mathcal{H}(4n+1,q)$, $q$ a square. Let $\pi$ be an element of $\mathcal{L}$. We know that $\mathcal{L}$ contains all generators through $P_{\pi}$ meeting $\pi$ in precisely a point.
 Assume that $\sigma$ is a generator in $\mathcal{L}$ that does not contain $P_{\pi}$. Let $\alpha$ be a $(4n-1)$-space in the tangent space $T_{P_{\pi}}$ to $\mathcal{H}(4n+1,q)$ at $P_{\pi}$, such that $P_{\pi}\notin\alpha$ and such that $\alpha\supset\sigma\cap T_{P_{\pi}}=\overline{\sigma}$.
 Denote the Hermitian variety $\alpha\cap\mathcal{H}$ by $\mathcal{H}'$. Note that $\overline{\sigma}$ is a generator of $\mathcal{H}'$. Furthermore, any generator $\tau$ of $\mathcal{H}$ through $P_{\pi}$ corresponds to the generator $\alpha\cap\tau$ of $\mathcal{H}'$; let $\overline{\pi}$ be the generator of $\mathcal{H}'$ corresponding to $\pi$. By Lemma~\ref{kmslemma} we know that there is a generator $\overline{\sigma}'$ of $\mathcal{H}'$ disjoint to both $\overline{\pi}$ and $\overline{\sigma}$.
 The subspace $\sigma'=\left\langle\overline{\sigma}',P_{\pi}\right\rangle$ is a generator of $\mathcal{H}$ meeting $\pi$ in precisely the point $P_{\pi}$ and disjoint to $\sigma$. By the above claim $\sigma'$ is a generator in $\mathcal{L}$, however $\sigma'$ is disjoint to $\sigma$, a contradiction as we assumed $\sigma\in\mathcal{L}$. We conclude that all generators in $\mathcal{L}$ must contain the point $P_{\pi}$, hence $\mathcal{L}$ is a point-pencil.
 \par Finally, we consider the case in which $\mathcal{P}$ is a hyperbolic quadric $\mathcal{Q}=\mathcal{Q}^{+}(4n+1,q)$. By Remark~\ref{hypclass} we know that there are two classes of generators on $\mathcal{Q}$, which we will denote by $\Omega_{1}$ and $\Omega_{2}$. Let $\pi$ be an element of $\mathcal{L}$. Without loss of generality we can assume $\pi\in\Omega_{1}$.
 We know that $\mathcal{L}$ contains all generators through $P_{\pi}$ meeting $\pi$ in precisely a point; all generators meeting $\pi$ in precisely a point belong to $\Omega_{1}$. Let $\pi'\in\mathcal{L}$ be a generator such that $\pi\cap\pi'$ is the point $P_{\pi}$. Then $P_{\pi'}=P_{\pi}$ by the above claim since $\pi$ is a generator in $\mathcal{L}$ meeting $\pi'$ in precisely a point.
 \par Now, let $\alpha$ be a $(4n-1)$-space in the tangent space $T_{P_{\pi}}$ to $\mathcal{Q}^{+}(4n+1,q)$ at $P_{\pi}$, such that $P_{\pi}\notin\alpha$. Denote the hyperbolic quadric $\alpha\cap\mathcal{Q}$ by $\mathcal{Q}'$.
 Any generator $\tau$ of $\mathcal{Q}$ through $P_{\pi}$ corresponds to the generator $\alpha\cap\tau$ of $\mathcal{Q}'$; let $\overline{\pi}$ be the generator of $\mathcal{H}'$ corresponding to $\pi$. Consider a generator $\sigma\in\Omega_{1}$ containing $P_{\pi}$ and let $\overline{\sigma}$ be its corresponding generator in $\mathcal{Q}'$.
 Since $\pi$ and $\sigma$ belong to the same generator class of $\mathcal{Q}$, the generators $\overline{\pi}$ and $\overline{\sigma}$ belong to the same class of $\mathcal{Q}'$. Hence, by Lemma~\ref{kmslemma} we know there is a generator $\overline{\sigma}'$ of $\mathcal{Q}'$ disjoint to both $\overline{\pi}$ and $\overline{\sigma}$.
 The subspace $\sigma'=\left\langle\overline{\sigma}',P_{\pi}\right\rangle$ is a generator of $\mathcal{Q}$ meeting $\pi$ in precisely the point $P_{\pi}$, hence $\sigma'\in\mathcal{L}$.
 We observed before that $P_{\sigma'}=P_{\pi}$. So, since $\sigma'$ meets $\sigma$ in precisely $P_{\sigma'}$ (because $\overline{\sigma}$ and $\overline{\sigma}'$ are disjoint), $\sigma$ is contained in $\mathcal{L}$.
 \par So far, we conclude that the set $\mathcal{L}_{1}$ of all generators of the class $\Omega_{1}$ through $P_{\pi}$ is contained in $\mathcal{L}$. By the above claim we also know that $\mathcal{L}$ contains generators of the class $\Omega_{2}$, namely those meeting $\pi$ in a $(2n-1)$-space containing $P_{\pi}$. Let $\rho$ be such a generator of $\Omega_{2}$.
 Analogously, we can prove that the set $\mathcal{L}_{2}$ of all generators of the class $\Omega_{2}$ through $P_{\rho}$ is contained in $\mathcal{L}$. It is immediate that $|\mathcal{L}|=|\mathcal{L}_{1}|+|\mathcal{L}_{2}|$ as $|\mathcal{L}|$ is the size of a point-pencil.
 Consequently, $\mathcal{L}=\mathcal{L}_{1}\cup\mathcal{L}_{2}$. If $P_{\pi}\neq P_{\rho}$, then $\mathcal{L}$ contains clearly pairwise disjoint generators, contradicting that $\mathcal{L}$ is an Erd\H{o}s-Ko-Rado set. We conclude that $P_{\pi}=P_{\rho}$ and that $\mathcal{L}$ is a point-pencil.
\end{proof}

For the polar spaces of type I with $e\geq1$ we extend the previous classification result. We classify all Cameron-Liebler sets with parameter at most $q^{e-1}+1$. First we prove a lemma.

\begin{lemma}\label{zj}
 Let $\mathcal{P}$ be a finite classical polar space of type I of rank $d$ and with parameter $e$, and let $\mathcal{L}$ be a Cameron-Liebler set of $\mathcal{P}$ with parameter $x$. Let $\pi$ be a generator not in $\mathcal{L}$ and let $P$ be a point in $\pi$.
 Now, let $z_{j}$ be the number of elements of $\mathcal{L}$ meeting $\pi$ in a $j$-space through $P$, $0\leq j\leq d-2$. Then, $z_{d-j-2}=z_{d-2}\gs{d-2}{j}{q}q^{\binom{j}{2}+je}$, for $j=0,\dots,d-2$.
\end{lemma}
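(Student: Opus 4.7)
The plan is to reduce the statement to a computation in the residue polar space $\bar{\mathcal{P}} := P^{\perp}/P$, which has rank $d-1$ and parameter $e$. The map $\rho \mapsto \bar{\rho} := \rho/P$ is a bijection between the generators of $\mathcal{P}$ through $P$ and the generators of $\bar{\mathcal{P}}$, and $\dim(\pi \cap \rho) = \dim(\bar{\pi} \cap \bar{\rho}) + 1$ for every $\rho$ through $P$ (with the convention $\dim\emptyset = -1$). Set $\mathcal{L}_{P} := \{\rho \in \mathcal{L} : P \in \rho\}$ and $\bar{\mathcal{L}}_{P} := \{\bar{\rho} : \rho \in \mathcal{L}_{P}\}$. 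Since $\pi \notin \mathcal{L}$ and $P \in \pi$, we have $\bar{\pi} \notin \bar{\mathcal{L}}_{P}$; moreover $z_{j}$ equals the number of elements of $\bar{\mathcal{L}}_{P}$ meeting $\bar{\pi}$ in a $(j-1)$-space.

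The key step is to show that $\bm{\chi}_{\bar{\mathcal{L}}_{P}}$ lies in $V'_{0} \perp V'_{1}$, where the $V'_{i}$ are the eigenspaces of the association scheme of $\bar{\mathcal{P}}$. By Theorem~\ref{characterisationthI} we have $\bm{\chi}_{\mathcal{L}} = A^{t} \bm{y}$ for some vector $\bm{y}$ indexed by the points of $\mathcal{P}$, hence $\chi_{\mathcal{L}}(\rho) = \sum_{Q \in \rho} y_{Q}$ for every generator $\rho$. For $\rho$ through $P$, the points of $\rho \setminus \{P\}$ partition into the $q$-subsets $\ell_{\bar{Q}} \setminus \{P\}$, where $\bar{Q}$ ranges over the points of $\bar{\rho}$ and $\ell_{\bar{Q}}$ is the line of $\mathcal{P}$ through $P$ corresponding to $\bar{Q}$. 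Setting $\bar{y}_{\bar{Q}} := \sum_{Q \in \ell_{\bar{Q}} \setminus \{P\}} y_{Q}$, this gives
\[
 \chi_{\mathcal{L}}(\rho) = y_{P} + \sum_{\bar{Q} \in \bar{\rho}} \bar{y}_{\bar{Q}},
\]
so $\bm{\chi}_{\bar{\mathcal{L}}_{P}} = y_{P} \bar{\bm{j}} + \bar{A}^{t} \bar{\bm{y}} \in \langle \bar{\bm{j}} \rangle + \im(\bar{A}^{t}) = \im(\bar{A}^{t}) = V'_{0} \perp V'_{1}$, where $\bar{A}$ is the point-generator incidence matrix of $\bar{\mathcal{P}}$ and the last equality is Theorem~\ref{rowAissumeigenspaces} applied to $\bar{\mathcal{P}}$.

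Granted this, the computation in the proof of Lemma~\ref{distance} applies verbatim to $\bm{\chi}_{\bar{\mathcal{L}}_{P}}$ in $\bar{\mathcal{P}}$: that argument uses only a decomposition $\bm{\chi} = \bm{v} + c\bm{j}$ with $\bm{v} \in V_{1}$, which is exactly what we have established in $\bar{\mathcal{P}}$, regardless of the type of the residue. Setting $\bar{x} := |\bar{\mathcal{L}}_{P}| / \prod_{i=0}^{d-3}(q^{e+i}+1)$, the argument yields, for each $j \in \{0,\ldots,d-2\}$, that the number of elements of $\bar{\mathcal{L}}_{P}$ meeting $\bar{\pi}$ in a $(d-j-3)$-space equals $\bar{x} \gs{d-2}{j}{q} q^{\binom{j}{2}+je}$. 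By the dimension translation this is $z_{d-j-2}$, and the case $j = 0$ gives $\bar{x} = z_{d-2}$, whence $z_{d-j-2} = z_{d-2} \gs{d-2}{j}{q} q^{\binom{j}{2}+je}$, as claimed.

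The main obstacle is the lifting in the middle paragraph: the residue of a type-I polar space need not itself be of type I (for example, the residue of $\mathcal{Q}(2d,q)$ with $d$ even is of type III, and that of $\mathcal{Q}^{+}(2d-1,q)$ with $d$ odd of type II), so one cannot simply invoke the type-I characterisation in $\bar{\mathcal{P}}$ to declare $\bar{\mathcal{L}}_{P}$ Cameron-Liebler there. The explicit point-weight lifting above bypasses this issue and gives directly the stronger conclusion $\bm{\chi}_{\bar{\mathcal{L}}_{P}} \in V'_{0} \perp V'_{1}$ needed for the Lemma~\ref{distance} computation to go through.
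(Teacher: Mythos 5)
Your proof is correct, but it takes a genuinely different route from the paper. The paper proves the lemma by induction on $j$, via an elaborate double count of the pairs $(\sigma,\alpha)$ with $\sigma\in\mathcal{L}$ meeting $\pi$ in a $(d-j-2)$-space and $\alpha$ a hyperplane of $\pi$ avoiding $P$ and containing $\sigma\cap\pi$; the count of the auxiliary triples $(\rho,\tau,\alpha)$ of three sorts, fed by Lemma~\ref{distance} and the induction hypothesis, eventually collapses to the stated recursion. You instead localise at $P$: you pass to the residue $P^{\perp}/P$ and, crucially, you do not merely argue that $\bar{\mathcal{L}}_{P}$ is Cameron--Liebler there (which, as you correctly observe, would be too weak when the residue is of type II or III), but you lift the point-weight representation $\bm{\chi}=A^{t}\bm{y}$ to obtain the stronger conclusion $\bm{\chi}_{\bar{\mathcal{L}}_{P}}\in V'_{0}\perp V'_{1}$; the eigenvalue computation from the proof of Lemma~\ref{distance} then applies verbatim in the residue with the fractional parameter $\bar{x}=z_{d-2}$. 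All the supporting details check out: the dimension shift $\dim(\pi\cap\rho)=\dim(\bar{\pi}\cap\bar{\rho})+1$, the identity $\bar{\bm{j}}\in\im(\bar{A}^{t})$, the applicability of Theorem~\ref{rowAissumeigenspaces} to the residue (a classical polar space of rank $d-1$ and the same parameter $e$), and the translation of indices giving $z_{d-j-2}=\bar{x}\gs{d-2}{j}{q}q^{\binom{j}{2}+je}$ with $\bar{x}=z_{d-2}$ at $j=0$. What the paper's argument buys is self-containedness --- it uses nothing beyond Lemma~\ref{distance} and counting. What your argument buys is brevity and structure: it exhibits the generators of $\mathcal{L}$ through a point as a ``fractional'' Cameron--Liebler set in the residue in the strong $V_{0}\perp V_{1}$ sense, which immediately yields all intersection numbers with every generator through $P$ (not just with $\pi$) and makes the recursion transparent rather than the outcome of a long cancellation.
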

\begin{proof}
 We prove this lemma using induction on $j$. It is trivially fulfilled for $j=0$, so we prove the induction step: assuming the lemma is true for all values up to (and including) $j-1$, we prove it for $j$.
 \par By Lemma~\ref{distance} we know that there are $x\gs{d-1}{j}{q}q^{\binom{j}{2}+je}$ elements of $\mathcal{L}$ meeting $\pi$ in a $(d-j-2)$-space, $z_{d-j-2}$ of them through $P$. We want to count the elements of $\mathcal{L}$ meeting $\pi$ in a $(d-j-2)$-space not through $P$.
 Denote the number of $(d-2)$-spaces in $\pi$ not through $P$ that are contained in $i$ elements of $\mathcal{L}$ by $t_{i}$. Then it is immediate that $\sum_{i\geq0}t_{i}=q^{d-1}$ and that $\sum_{i\geq0}it_{i}=x-z_{d-2}$. We perform a double counting of the following set:
 \[
  \mathcal{T}=\{(\sigma,\alpha)\mid\sigma\in\mathcal{L},\dim(\sigma\cap\pi)=d-j-2,\dim(\alpha)=d-2,\alpha\subset\pi,P\notin\alpha,\sigma\cap\pi\subseteq\alpha\}\;.
 \]
 For a generator $\sigma\in\mathcal{L}$ with $P\notin\sigma$ and $\dim(\sigma\cap\pi)=d-j-2$ we can find $q^{j}$ different $(d-2)$-spaces in $\pi$ passing through $\sigma\cap\pi$ and not containing $P$. Now, we look at a $(d-2)$-space $\alpha\subset\pi$ not containing $P$. Let $\tau$ be a generator of $\mathcal{P}$ meeting $\pi$ in $\alpha$. For a generator $\rho\in\mathcal{L}$ meeting $\tau$ in a $(d-j-1)$-space, there are three possibilities:
 \begin{itemize}
  \item $\rho\cap\pi$ is a $(d-j-2)$-space contained in $\tau\cap\pi=\alpha$ ($(\rho,\tau,\alpha)$ is of sort 1);
  \item $\rho\cap\pi$ is a $(d-j-1)$-space contained in $\alpha$ ($(\rho,\tau,\alpha)$ is of sort 2);
  \item $\rho\cap\pi$ is a $(d-j)$-space that meets $\alpha$ in a $(d-j-1)$-space ($(\rho,\tau,\alpha)$ is of sort 3).
 \end{itemize}
 By Lemma~\ref{distance} we know the number of elements of $\mathcal{L}$ meeting $\tau$ in a $(d-j-1)$-space, hence the number of tuples $(\rho,\tau,\alpha)$ with $\alpha$ a $(d-2)$-space in $\pi$ not through $P$, $\tau$ a generator meeting $\pi$ in $\alpha$ and $\rho$ an element of $\mathcal{L}$ meeting $\tau$ in a $(d-j-1)$-space:
 \begin{align*}
   & \sum_{i\geq0}t_{i}\left[i\left((x-1)\gs{d-1}{j-1}{q}+q^{j+e-1}\gs{d-1}{j}{q}\right)q^{\binom{j-1}{2}+(j-1)e}+(q^{e}-i)x\gs{d-1}{j-1}{q}q^{\binom{j-1}{2}+(j-1)e}\right] \\
   & =q^{\binom{j-1}{2}+(j-1)e}\left(xq^{e}\gs{d-1}{j-1}{q}\sum_{i\geq0}t_{i}+\left(q^{j+e-1}\gs{d-1}{j}{q}-\gs{d-1}{j-1}{q}\right)\sum_{i\geq0}it_{i}\right)                \\
   & =q^{\binom{j-1}{2}+(j-1)e}\left(xq^{d+e-1}\gs{d-1}{j-1}{q}+\left(q^{j+e-1}\gs{d-1}{j}{q}-\gs{d-1}{j-1}{q}\right)(x-z_{d-2})\right)\;.
 \end{align*}
 We denote the set of these tuples $(\rho,\tau,\alpha)$ by $\mathcal{T}'$. Note that for any tuple $(\rho,\alpha)\in\mathcal{T}$ the generator $\tau\supset\alpha$, meeting $\rho$ in a $(d-j-1)$-space is uniquely determined; hence for any tuple $(\rho,\alpha)\in\mathcal{T}$ there is a unique tuple $(\rho,\tau,\alpha)\in\mathcal{T}'$ of sort 1. If $(\rho,\tau,\alpha)\in\mathcal{T}'$ is of sort 1, then $(\rho,\alpha)\in\mathcal{T}$.
 \par If $(\rho,\tau,\alpha)\in\mathcal{T}'$ is of sort 2, then $(\rho,\alpha)\notin\mathcal{T}$ and there are $q^{e}-1$ choices for $\tau$ given $\rho$ and $\alpha$. The number of tuples $(\rho,\tau,\alpha)\in\mathcal{T}'$ of sort $2$ is
 \[
  \left(x\gs{d-1}{j-1}{q}q^{\binom{j-1}{2}+(j-1)e}-z_{d-j-1}\right)q^{j-1}(q^{e}-1)\;,
 \]
 since there are $x\gs{d-1}{j-1}{q}q^{\binom{j-1}{2}+(j-1)e}-z_{d-j-1}$ elements of $\mathcal{L}$ meeting $\pi$ in a $(d-j-1)$-space not through $P$ by Lemma~\ref{distance} and there are $q^{j-1}$ different $(d-2)$-spaces in $\pi$ not through $P$, containing a given $(d-j-1)$-space.
 \par If $(\rho,\tau,\alpha)\in\mathcal{T}'$ is of sort 3, then $(\rho,\alpha)\notin\mathcal{T}$. Note that we may assume here that $j\geq2$ since $\rho\neq\pi$. To count the number of these tuples $(\rho,\tau,\alpha)\in\mathcal{T}'$ of sort 3 we must distinguish between the generators $\rho$ that meet $P$ and the generators $\rho$ that do not.
 The total number of generators $\rho$ such that $\dim(\rho\cap\pi)=d-j$ equals $x\gs{d-1}{j-2}{q}q^{\binom{j-2}{2}+(j-2)e}$ by Lemma~\ref{distance}, of which $z_{d-j}$ pass through $P$.
 If $\rho$ passes through $P$, there are $q^{d-1}$ choices for $\alpha$ and for a given $\alpha$ and $\rho$ precisely $q^{e}$ choices for $\tau$. If $\rho$ does not pass through $P$, there are $q^{d-1}-q^{j-2}$ choices for $\alpha$, and for a given $\alpha$ and $\rho$ there are precisely $q^{e}$ choices for $\tau$.
 The number of tuples $(\rho,\tau,\alpha)\in\mathcal{T}'$ of sort 3 hence equals
 \begin{align*}
   & z_{d-j}q^{d-1}q^{e}+\left(x\gs{d-1}{j-2}{q}q^{\binom{j-2}{2}+(j-2)e}-z_{d-j}\right)(q^{d-1}-q^{j-2})q^{e} \\
   & =z_{d-j}q^{j-2}q^{e}+x\gs{d-1}{j-2}{q}q^{\binom{j-2}{2}+(j-2)e}(q^{d-1}-q^{j-2})q^{e}\;.
 \end{align*}
 Consequently,
 \begin{align*}
  |\mathcal{T}| & =q^{\binom{j-1}{2}+(j-1)e}\left(xq^{d+e-1}\gs{d-1}{j-1}{q}+\left(q^{j+e-1}\gs{d-1}{j}{q}-\gs{d-1}{j-1}{q}\right)(x-z_{d-2})\right)                \\
                & \qquad-\left(x\gs{d-1}{j-1}{q}q^{\binom{j-1}{2}+(j-1)e}-z_{d-j-1}\right)q^{j-1}(q^{e}-1)                                                          \\
                & \qquad-z_{d-j}q^{j-2}q^{e}-x\gs{d-1}{j-2}{q}q^{\binom{j-2}{2}+(j-2)e}(q^{d-1}-q^{j-2})q^{e}                                                       \\
                & =xq^{\binom{j-1}{2}+(j-1)e}\left(q^{j+e-1}\left(q^{d-j}\gs{d-1}{j-1}{q}+\gs{d-1}{j}{q}-\gs{d-1}{j-1}{q}\right)+(q^{j-1}-1)\gs{d-1}{j-1}{q}\right. \\
                & \qquad\left.-(q^{d-j+1}-1)\gs{d-1}{j-2}{q}\right)-q^{\binom{j-1}{2}+(j-1)e}\left(q^{j+e-1}\gs{d-1}{j}{q}-\gs{d-1}{j-1}{q}\right)z_{d-2}           \\
                & \qquad+z_{d-j-1}q^{j-1}(q^{e}-1)-z_{d-j}q^{j-2}q^{e}                                                                                              \\
                & \stackrel{IH}{=}xq^{\binom{j}{2}+je}q^{j}\gs{d-1}{j}{q}-q^{\binom{j-1}{2}+(j-1)e}\left(q^{j+e-1}\gs{d-1}{j}{q}-\gs{d-1}{j-1}{q}\right)z_{d-2}     \\
                & \qquad+z_{d-2}\gs{d-2}{j-1}{q}q^{\binom{j-1}{2}+(j-1)e}q^{j-1}(q^{e}-1)-z_{d-2}\gs{d-2}{j-2}{q}q^{\binom{j-2}{2}+(j-2)e}q^{j-2}q^{e}              \\
                & =xq^{\binom{j}{2}+je}q^{j}\gs{d-1}{j}{q}-z_{d-2}q^{\binom{j-1}{2}+(j-1)e}\left(q^{j+e-1}\left(\gs{d-1}{j}{q}-\gs{d-2}{j-1}{q}\right)\right.       \\
                & \qquad\left.+\left(q^{j-1}\gs{d-2}{j-1}{q}+\gs{d-2}{j-2}{q}-\gs{d-1}{j-1}{q}\right)\right)                                                        \\
                & =xq^{\binom{j}{2}+je}q^{j}\gs{d-1}{j}{q}-z_{d-2}q^{\binom{j}{2}+je}q^{j}\gs{d-2}{j}{q}\;.
 \end{align*}
 We find that the number of generators $\sigma\in\mathcal{L}$, with $P\notin\sigma$ and $\dim(\sigma\cap\pi)=d-j-2$, equals
 \[
  \frac{|\mathcal{T}|}{q^{j}}=xq^{\binom{j}{2}+je}\gs{d-1}{j}{q}-z_{d-2}q^{\binom{j}{2}+je}\gs{d-2}{j}{q}\;.
 \]
 Now it follows from the arguments at the beginning of the proof that
 \begin{align*}
  x\gs{d-1}{j}{q}q^{\binom{j}{2}+je} & =z_{d-j-2}+xq^{\binom{j}{2}+je}\gs{d-1}{j}{q}-z_{d-2}q^{\binom{j}{2}+je}\gs{d-2}{j}{q} \\
  \Leftrightarrow \qquad z_{d-j-2}   & =z_{d-2}q^{\binom{j}{2}+je}\gs{d-2}{j}{q}\;.
 \end{align*}
 This finishes the proof of the induction step.
\end{proof}


Before presenting the main classification theorem, we first mention a characterisation result about embedded polar spaces.

\begin{theorem}[{\cite[Theorem 1.7]{dd}}]\label{embeddepolarspace}
 Let $\mathcal{P}$ be a finite classical polar space of rank $d\geq3$ and with parameter $e\geq1$, embedded in a projective space over $\F_{q}$ and let $\mathcal{S}$ be a set of generators of $\mathcal{P}$ such that
 \begin{itemize}
  \item[(i)] for every $i=0,\dots,d$, the number of elements of $\mathcal{S}$ meeting a generator $\pi$ in a $(d-i-1)$-space equals
        \[
         \begin{cases}
          \left(\gs{d-1}{i-1}{q}+q^{i}\gs{d-1}{i}{q}\right)q^{\binom{i-1}{2}+ie-1} & \text{if }\pi\in\mathcal{S}    \\
          (q^{e-1}+1)\gs{d-1}{i-1}{q}q^{\binom{i-1}{2}+(i-1)e}                     & \text{if }\pi\notin\mathcal{S}
         \end{cases}\;;
        \]
  \item[(ii)] for every point $P$ of $\mathcal{P}$ there is a generator $\pi\notin\mathcal{S}$ through $P$;
  \item[(iii)] for every point $P$ of $\mathcal{P}$ and every generator $\pi\notin\mathcal{S}$ through $P$, there are either $(q^{e-1}+1)\gs{d-2}{j}{q}q^{\binom{j}{2}+je}$ generators of $\mathcal{L}$ through $P$ meeting $\tau$ in a $(d-j-2)$-space, for all $j=0,\dots, d-2$, or there are no generators of $\mathcal{L}$ through $P$ meeting $\tau$ in a $(d-j-2)$-space, for all $j=0,\dots, d-2$.
 \end{itemize}
 Then $\mathcal{S}$ is the set of generators of a classical polar space of rank $d$ and with parameter $e-1$ embedded in $\mathcal{P}$.
\end{theorem}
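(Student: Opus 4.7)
The plan is to show that $\mathcal{H}:=\bigcup_{\sigma\in\mathcal{S}}\sigma$ is the point set of a classical polar space $\mathcal{P}'$ of rank $d$ and parameter $e-1$ embedded in $\mathcal{P}$, with $\mathcal{S}$ being precisely its generator set. As a preliminary, I would match condition~(i) with Lemma~\ref{distance}: factoring $q^{e-1}$ out of the first summand of~(i) and simplifying $q^{e-1}\cdot q^{(i-1)e}=q^{ie-1}$ recovers exactly the expression in Lemma~\ref{distance} with $x=q^{e-1}+1$. Hence $\mathcal{S}$ is a Cameron-Liebler set of $\mathcal{P}$ with parameter $q^{e-1}+1$, and in particular $|\mathcal{S}|=(q^{e-1}+1)\prod_{i=0}^{d-2}(q^{e+i}+1)$.

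Next I would extract the local combinatorics at each point. By~(ii) every point of $\mathcal{P}$ lies on some $\pi\notin\mathcal{S}$, so~(iii) is applicable; moreover, for $P\in\mathcal{H}$ and $\sigma\in\mathcal{S}$ through $P$ one has $\dim(\sigma\cap\pi)\in\{0,\dots,d-2\}$, which rules out the degenerate alternative of~(iii) at such $P$. Summing~(iii) over $j$ and applying Lemma~\ref{qbinomialtheorem} with $t=q^{e}$ gives
\[
\sum_{j=0}^{d-2}(q^{e-1}+1)\gs{d-2}{j}{q}q^{\binom{j}{2}+je}=(q^{e-1}+1)\prod_{k=0}^{d-3}(q^{k+e}+1)=\prod_{i=0}^{d-2}(q^{i+e-1}+1)=:N\;,
\]
so each $P\in\mathcal{H}$ lies on exactly $N$ generators of $\mathcal{S}$, while each $P\notin\mathcal{H}$ lies on none. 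A double count then yields $|\mathcal{H}|=\gs{d}{1}{q}(q^{d+e-2}+1)$, precisely the point count of a polar space of rank $d$ and parameter $e-1$ (Lemma~\ref{subspacesonpolar}).

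The main obstacle is the geometric step of showing that $\mathcal{H}$ is subspace-closed: every line $\ell$ of $\mathcal{P}$ meeting $\mathcal{H}$ in at least two points lies entirely in $\mathcal{H}$. Given $P,Q\in\ell\cap\mathcal{H}$ and $\sigma\in\mathcal{S}$ through $P$, either $\ell\subseteq\sigma\subseteq\mathcal{H}$ and we are done, or $\sigma\cap\ell=\{P\}$; in the second case I would take a generator $\pi\notin\mathcal{S}$ of $\mathcal{P}$ through $\ell$ (which exists unless every generator through $\ell$ already belongs to $\mathcal{S}$, a situation that can be handled directly) and apply~(iii) at both $P$ and $Q$, double-counting the elements of $\mathcal{S}$ that meet $\pi$ in prescribed dimensions. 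The equations forced by~(iii) at both endpoints, combined with the pencil size from the previous step, should force every point of $\ell$ to lie on some element of $\mathcal{S}$. This step is delicate because~(i)--(iii) only describe data at individual points of $\mathcal{P}$, so subspace closure must be teased out by careful bookkeeping of intersection patterns between generators in $\mathcal{S}$ and generators of $\mathcal{P}$ through a prospective line.

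Once subspace closure is in hand, $\mathcal{H}$ has the correct size, the correct pencil size at each of its points, and is closed under lines of $\mathcal{P}$, so it satisfies the hypotheses of a classical Lef\`evre--Percsy-type characterisation of embedded polar subspaces. This identifies $\mathcal{H}$ with the point set of an embedded classical polar space $\mathcal{P}'$ of rank $d$ and parameter $e-1$. Since the generator count of $\mathcal{P}'$ equals $|\mathcal{S}|$ and each such generator is contained in $\mathcal{H}$, size matching forces $\mathcal{S}$ to be precisely the generator set of $\mathcal{P}'$, as required.
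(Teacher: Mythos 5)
First, a point of order: the paper does not prove this statement at all --- it is quoted from \cite{dd} (Theorem 1.7 there), so there is no in-paper proof to compare against and your sketch has to stand on its own. Its first half does. Matching condition (i) with Lemma~\ref{distance} at $x=q^{e-1}+1$ is correct (the instance $i=d$ already gives Theorem~\ref{characterisationth}(i), and a double count with Lemma~\ref{skewgenerators} then yields $|\mathcal{S}|=\prod_{i=0}^{d-1}(q^{e-1+i}+1)$), and your use of (ii) and (iii) to show that every point of $\mathcal{H}=\bigcup_{\sigma\in\mathcal{S}}\sigma$ lies on exactly $N=\prod_{i=0}^{d-2}(q^{i+e-1}+1)$ elements of $\mathcal{S}$, whence $|\mathcal{H}|=\gs{d}{1}{q}(q^{d+e-2}+1)$, is sound.

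The gap is that everything after that --- which is the actual content of the theorem --- remains a plan. The line-closure step is asserted (``should force every point of $\ell$ to lie on some element of $\mathcal{S}$'') but the double count is never set up, and the statement you aim for is not even the one the count delivers: taking $\pi\notin\mathcal{S}$ through $\ell$, the case $j=0$ of (iii) says each point of $\ell\cap\mathcal{H}$ lies on exactly $q^{e-1}+1$ elements of $\mathcal{S}$ meeting $\pi$ in a hyperplane of $\pi$, while (i) with $i=1$ says there are only $q^{e-1}+1$ such elements in total; since a hyperplane of $\pi$ meets $\ell\subseteq\pi$ in one point or all of $\ell$, the inequality $|\ell\cap\mathcal{H}|(q^{e-1}+1)=\sum_{\sigma}|\sigma\cap\ell|\leq q^{e-1}+1$ fails once $|\ell\cap\mathcal{H}|\geq2$ unless some $\sigma\in\mathcal{S}$ contains $\ell$ outright. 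So the step is completable, but you have not done it, and the conclusion is $\ell\subseteq\sigma\subseteq\mathcal{H}$ rather than mere pointwise coverage. More seriously, the final step is a black-box appeal: ``subspace-closed with the right size and pencil numbers'' does not place you inside any stated characterisation theorem. You would still need to check that $(\mathcal{H},\{\text{lines of }\mathcal{P}\text{ contained in }\mathcal{H}\})$ inherits the one-or-all axiom from line-closure, that it is nondegenerate (using the existence of disjoint pairs in $\mathcal{S}$ from (i) with $i=d$), that its rank is exactly $d$, then invoke Tits for $d\geq3$ together with the classification of fully embedded classical polar spaces, and finally read off the parameter $e-1$ from $N$ and match generator counts to conclude that $\mathcal{S}$ is the full generator set. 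None of this appears in the proposal, and the ``Lef\`evre--Percsy-type'' theorem you lean on is never stated. As written, the argument establishes the numerology of the target object but defers its identification, which is the theorem.
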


\begin{theorem}\label{mainclassification}
 Let $\mathcal{P}$ be a finite classical polar space of type I of rank $d$ and with parameter $e\geq1$, and let $\mathcal{L}$ be a Cameron-Liebler set of $\mathcal{P}$ with parameter $x$.
 If $x\leq q^{e-1}+1$, then $\mathcal{L}$ is the union of $x$ point-pencils whose vertices are pairwise non-collinear or $x=q^{e-1}+1$ and $\mathcal{L}$ is the set of generators of an embedded polar space of rank $d$ and with parameter $e-1$ (Example~\ref{embedded}).
\end{theorem}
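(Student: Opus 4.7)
The plan is induction on the parameter $x$. The case $x = 0$ is vacuous and the case $x = 1$ is exactly Theorem~\ref{par1}, which identifies every parameter-one Cameron-Liebler set in a type~I polar space as a point-pencil. Fix $2 \leq x \leq q^{e-1}+1$ and assume the classification is established for all smaller parameters; the inductive step splits along the dichotomy ``$\mathcal{L}$ contains a full point-pencil or not''.

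If some point-pencil $\mathcal{F}_P$ is contained in $\mathcal{L}$, then by Example~\ref{pointpencil} and Lemma~\ref{basicoperations}(iv) the set $\mathcal{L} \setminus \mathcal{F}_P$ is a Cameron-Liebler set with parameter $x-1 \leq q^{e-1}$, strictly less than $q^{e-1}+1$, so the induction hypothesis forces it to be a union of $x-1$ point-pencils $\mathcal{F}_{P_1},\dots,\mathcal{F}_{P_{x-1}}$ with pairwise non-collinear vertices (the embedded-polar-space alternative being excluded by the strict inequality). The vertex $P$ must be non-collinear with each $P_i$, because $P \sim P_i$ would force every generator through the line $PP_i$ into both $\mathcal{F}_P$ and $\mathcal{F}_{P_i} \subseteq \mathcal{L} \setminus \mathcal{F}_P$, contradicting their disjointness; hence $\mathcal{L}$ is a union of $x$ point-pencils with pairwise non-collinear vertices.

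If instead $\mathcal{L}$ contains no full point-pencil, the plan is to identify $\mathcal{L}$ as the generator set of an embedded polar space of rank $d$ and parameter $e-1$ via Theorem~\ref{embeddepolarspace}. Hypothesis~(ii) is immediate. A direct side-by-side comparison shows that Lemma~\ref{distance} matches the formula required by hypothesis~(i), both for $\pi \in \mathcal{L}$ and for $\pi \notin \mathcal{L}$, precisely when $x = q^{e-1}+1$, by factoring $q^{e-1}$ out of the $x-1$ term and rearranging the $q$-power exponent. Applying Lemma~\ref{zj} at a point $P$ and a generator $\pi \notin \mathcal{L}$ through $P$, and summing the quantities $z_{d-j-2}$ over $j$ by means of the $q$-binomial identity (Lemma~\ref{qbinomialtheorem}), one obtains $m(P) = y_P \prod_{i=0}^{d-3}(q^{e+i}+1)$ with $y_P := z_{d-2}$ depending only on $P$ and not on $\pi$, so that hypothesis~(iii) translates into the single condition $y_P \in \{0,\, q^{e-1}+1\}$ for every $P$.

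The main obstacle is thus to simultaneously prove $x = q^{e-1}+1$ and $y_P \in \{0,\, q^{e-1}+1\}$ for every $P$ in the irreducible case. My strategy is to localise: the family $\mathcal{L}_P := \{\rho \in \mathcal{L} : P \in \rho\}$, viewed through the residue at $P$, is a set of generators in a polar space $\mathcal{P}_P$ of rank $d-1$ and parameter $e$, and the count $z_0 = y_P\, q^{\binom{d-2}{2}+(d-2)e}$ coming from Lemma~\ref{zj} together with the spectral characterisation of Theorem~\ref{characterisationth}(ii) should identify $\mathcal{L}_P$ as a Cameron-Liebler set of $\mathcal{P}_P$ of parameter $y_P$. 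A second induction on the rank $d$, with the tight-set classification of Section~\ref{sec:quadrangles} as the base $d = 2$, together with the bound $x \leq q^{e-1}+1$ and the global identity $\sum_P y_P = x(q^{d+e-2}+1)\gs{d}{1}{q}$ obtained by double counting, should then pin $y_P$ to the binary $\{0,\, q^{e-1}+1\}$ and force $x = q^{e-1}+1$; a technical subtlety to watch for is that the residue of a type~I polar space need not itself be of type~I (for instance the residue of $\mathcal{Q}(2d,q)$ with $d$ even is $\mathcal{Q}(2d-2,q)$, of type~III), so the rank induction must be formulated uniformly across all six families.
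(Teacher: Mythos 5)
Your inductive frame and the first branch of your dichotomy are sound: removing a contained point-pencil and invoking the induction hypothesis, plus the non-collinearity argument, is essentially what the paper does once it has \emph{found} a point-pencil inside $\mathcal{L}$. Your translation of hypotheses (i) and (iii) of Theorem~\ref{embeddepolarspace} via Lemma~\ref{distance} and Lemma~\ref{zj} is also correct. But the heart of the theorem is precisely the step you leave as a sketch: showing that if no full point-pencil lies in $\mathcal{L}$, then necessarily $x=q^{e-1}+1$ and $z_{d-2}\in\{0,q^{e-1}+1\}$ at every point. Your proposed route --- pass to the residue at $P$, claim $\mathcal{L}_{P}$ is a Cameron-Liebler set of parameter $y_{P}$ there, and run a second induction on the rank --- is not carried out and would not close the gap as stated. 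First, Lemma~\ref{zj} is only proved for generators $\pi\notin\mathcal{L}$, so you have no control over the disjointness counts in the residue relative to generators of $\mathcal{L}_{P}$ itself; the Cameron-Liebler property of $\mathcal{L}_{P}$ in $\mathcal{P}_{P}$ is therefore unverified. Second, even granting it, the rank induction would classify $\mathcal{L}_{P}$ as a union of $y_{P}$ point-pencils for \emph{any} $0\le y_{P}\le q^{e-1}+1$, so it cannot by itself force $y_{P}$ into the two-element set $\{0,q^{e-1}+1\}$, nor force $x=q^{e-1}+1$; your global identity $\sum_{P}y_{P}=x(q^{d+e-2}+1)\gs{d}{1}{q}$ together with $0\le y_{P}\le x$ is far too weak to do so.

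What is missing is the averaging argument that drives the paper's proof. Fix $\pi\in\mathcal{L}$ and count pairs $(Q,\sigma)$ with $\sigma\in\mathcal{L}$ and $Q\in\pi\cap\sigma$ using Lemma~\ref{distance} and Lemma~\ref{qbinomialtheorem}; this gives
\[
N=\Bigl(x\gs{d-1}{1}{q}+q^{d-1}(q^{e-1}+1)\Bigr)\prod^{d-3}_{j=0}\bigl(q^{j+e}+1\bigr)\;,
\]
so some point $P\in\pi$ lies on at least $N\gs{d}{1}{q}^{-1}$ elements of $\mathcal{L}$. If some generator $\tau\ni P$ is not in $\mathcal{L}$, Lemma~\ref{zj} shows the number of elements of $\mathcal{L}$ through $P$ equals $z_{d-2}\prod^{d-3}_{j=0}(q^{j+e}+1)\le x\prod^{d-3}_{j=0}(q^{j+e}+1)$, and comparing the two bounds yields exactly $q^{e-1}+1\le x$. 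This is what produces the dichotomy: for $x<q^{e-1}+1$ a full point-pencil must be present (so your ``no point-pencil'' branch is in fact empty there), and for $x=q^{e-1}+1$ either some inequality is strict (and one again extracts a point-pencil) or all are equalities, which delivers $z_{d-2}=q^{e-1}+1$ at every relevant pair $(P,\tau)$ and hence the hypotheses of Theorem~\ref{embeddepolarspace}. Without this lower-bound-versus-upper-bound tension your proof does not get off the ground in the second branch.
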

\begin{proof}
 We proceed by induction on $x$. The statement is trivially fulfilled for $x=0$, so we consider the case $x>0$. Let $\pi\in\mathcal{L}$ be a generator. By Lemma~\ref{distance} we know that there are
 \[
  \left((x-1)\gs{d-1}{i-1}{q}+q^{i+e-1}\gs{d-1}{i}{q}\right)q^{\binom{i-1}{2}+(i-1)e}
 \]
 elements of $\mathcal{L}$ meeting $\pi$ in a $(d-i-1)$-space. So, the number of tuples $(Q,\sigma)$ with $\sigma\in\mathcal{L}$ and $Q$ a point in $\pi\cap\sigma$ equals
 \begin{align*}
  N & =\sum^{d-1}_{i=0}\left((x-1)\gs{d-1}{i-1}{q}+q^{i+e-1}\gs{d-1}{i}{q}\right)q^{\binom{i-1}{2}+(i-1)e}\gs{d-i}{1}{q}                                                                   \\
  \intertext{where the term for $i=0$ corresponds to the tuples with $\sigma=\pi$, so}
  N & =\frac{x-1}{q-1}\sum^{d-1}_{i=0}q^{\binom{i-1}{2}+(i-1)e}\gs{d-1}{i-1}{q}\left(q^{d-i}-1\right)+\frac{1}{q-1}\sum^{d-1}_{i=0}q^{\binom{i}{2}+ie}\gs{d-1}{i}{q}\left(q^{d-i}-1\right) \\
    & =\frac{x-1}{q-1}\sum^{d-1}_{i=0}q^{\binom{i}{2}+ie}\gs{d-1}{i}{q}\left(q^{d-i-1}-1\right)+\frac{1}{q-1}\sum^{d-1}_{i=0}q^{\binom{i}{2}+ie}\gs{d-1}{i}{q}\left(q^{d-i}-1\right)       \\
    & =\frac{x-1}{q-1}\left[q^{d-1}\sum^{d-1}_{i=0}q^{\binom{i}{2}}\gs{d-1}{i}{q}q^{i(e-1)}-\sum^{d-1}_{i=0}q^{\binom{i}{2}}\gs{d-1}{i}{q}q^{ie}\right]                                    \\
    & \qquad+\frac{1}{q-1}\left[q^{d}\sum^{d-1}_{i=0}q^{\binom{i}{2}}\gs{d-1}{i}{q}q^{i(e-1)}-\sum^{d-1}_{i=0}q^{\binom{i}{2}}\gs{d-1}{i}{q}q^{ie}\right]                                  \\
    & \stackrel{(\text{Lem. }\ref{qbinomialtheorem})}{=}\frac{x-1+q}{q-1}q^{d-1}\prod^{d-2}_{j=0}\left(q^{j+e-1}+1\right)-\frac{x}{q-1}\prod^{d-2}_{j=0}\left(q^{j+e}+1\right)               \\
    & =\left(\frac{x}{q-1}\left(q^{d-1}(q^{e-1}+1)-(q^{d+e-2}+1)\right)+q^{d-1}(q^{e-1}+1)\right)\prod^{d-3}_{j=0}\left(q^{j+e}+1\right)                                                   \\
    & =\left(x\gs{d-1}{1}{q}+q^{d-1}(q^{e-1}+1)\right)\prod^{d-3}_{j=0}\left(q^{j+e}+1\right)\;.
 \end{align*}
 It is immediate that there is a point $P\in\pi$ which is contained in at least $N\gs{d}{1}{q}^{-1}$ of these tuples, hence in at least $N\gs{d}{1}{q}^{-1}$ elements of $\mathcal{L}$ meeting $\pi$ non-trivially. Assume there is a generator $\tau$ through $P$ that is not contained in $\mathcal{L}$.
 Let $z_{j}$ be the number of elements of $\mathcal{L}$ meeting $\tau$ in a $j$-space through $P$, $0\leq j\leq d-2$. By Lemma~\ref{zj}, $z_{d-j-2}=z_{d-2}\gs{d-2}{j}{q}q^{\binom{j}{2}+je}$, for $j=0,\dots,d-2$. So, the number of elements of $\mathcal{L}$ meeting $\tau$ in a subspace containing $P$ equals
 \begin{align*}
  \sum^{d-2}_{j=0}z_{d-j-2} & =\sum^{d-2}_{j=0}z_{d-2}\gs{d-2}{j}{q}q^{\binom{j}{2}+je}
  \stackrel{(\text{Lem. }\ref{qbinomialtheorem})}{=}z_{d-2}\prod^{d-3}_{j=0}(q^{j+e}+1)\;.
 \end{align*}
 It follows that
 \[
  \frac{\left(x\gs{d-1}{1}{q}+q^{d-1}(q^{e-1}+1)\right)\prod^{d-3}_{j=0}\left(q^{j+e}+1\right)}{\gs{d}{1}{q}}\leq z_{d-2}\prod^{d-3}_{j=0}(q^{j+e}+1)\leq x\prod^{d-3}_{j=0}(q^{j+e}+1)\;.
 \]
 Note that $z_{d-2}\leq x$ by Lemma~\ref{distance}. Now,
 \begin{align*}
                       & \frac{\left(x\gs{d-1}{1}{q}+q^{d-1}(q^{e-1}+1)\right)\prod^{d-3}_{j=0}\left(q^{j+e}+1\right)}{\gs{d}{1}{q}}\leq x\prod^{d-3}_{j=0}(q^{j+e}+1) \\
  \Leftrightarrow\quad & q^{d-1}(q^{e-1}+1)\leq x\left(\gs{d}{1}{q}-\gs{d-1}{1}{q}\right)                                                                              \\
  \Leftrightarrow\quad & q^{e-1}+1\leq x\;.
 \end{align*}
 \par If $x<q^{e-1}+1$, then this is a contradiction. Consequently, in this case all generators through $P$ are contained in $\mathcal{L}$; let $\mathcal{L}'$ be the point-pencil with vertex $P$. Then $\mathcal{L}\setminus\mathcal{L}'$ is a Cameron-Liebler set of $\mathcal{P}$ with parameter $x-1$. This finishes the induction argument.
 Note that no element of $\mathcal{L}\setminus\mathcal{L}'$ contains $P$, hence none of the vertices of the $x-1$ point-pencils in $\mathcal{L}\setminus\mathcal{L}'$ is collinear with $P$.
 \par If $x=q^{e-1}+1$, then either all of the previous inequalities are equalities or some of them are not. In the latter case, we can repeat the argument of the case $x<q^{e-1}+1$ and we find the union of $x$ point-pencils with pairwise non-collinear vertices.
 In the former case we find that $\mathcal{L}$ is a set of $\prod^{d-2}_{j=-1}(q^{j+e}+1)=\prod^{d-1}_{j=0}(q^{j+(e-1)}+1)$ generators admitting the relations of Lemma~\ref{distance} for $x=q^{e-1}+1$ such that for any point there is a generator not in $\mathcal{L}$ through it
 and such that for any point $P$ and generator $\tau\notin\mathcal{L}$ containing $P$ there are either $(q^{e-1}+1)\gs{d-2}{j}{q}q^{\binom{j}{2}+je}$ generators of $\mathcal{L}$ through $P$ meeting $\tau$ in a $(d-j-2)$-space, for all $j=0,\dots, d-2$, or there are no generators of $\mathcal{L}$ through $P$ meeting $\tau$ in a $(d-j-2)$-space, for all $j=0,\dots, d-2$.
 By Theorem~\ref{embeddepolarspace} $\mathcal{L}$ has to be a polar space of rank $d$ and with parameter $e-1$ embedded in $\mathcal{P}$.
\end{proof}

The previous result deals with Cameron-Liebler sets in polar spaces of type I. Now we look at the hyperbolic quadrics of even rank. The classification of the Cameron-Liebler sets in (one class of generators of) $\mathcal{Q}^{+}(3,q)$ is trivial, so we will look at Cameron-Liebler sets in one class of generators of $\mathcal{Q}^{+}(7,q)$.

We recall from Theorem~\ref{distancetransitive} that the automorphism group of a finite classical polar space acts transitively on pairs of disjoint generators. We will now look at triples of pairwise disjoint generators. The following result seems to be known among geometers, but the authors did not find a direct proof. It does follow from~\cite[Proposition 8 and Remark 9]{cp}. However, the proof there uses Segre varieties. For the sake of completeness we will provide here a direct proof.

\begin{lemma}\label{triples}
 The automorphism group of $\mathcal{Q}^{+}(4n-1,q)$, $n\geq1$, acts transitively on triples of pairwise disjoint generators.
\end{lemma}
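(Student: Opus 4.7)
The plan is to reduce the problem to the stabilizer of an ordered pair of disjoint generators and then parameterize the ``third'' generator explicitly. Given two triples $(\pi_1,\pi_2,\pi_3)$ and $(\pi_1',\pi_2',\pi_3')$ of pairwise disjoint generators of $\mathcal{Q}^{+}(4n-1,q)$, Theorem~\ref{distancetransitive} furnishes an automorphism mapping $(\pi_1,\pi_2)$ to $(\pi_1',\pi_2')$; after replacing the first triple by its image, it suffices to show that the stabilizer of this common pair acts transitively on the generators disjoint from both. Note that applying Remark~\ref{hyperclass} with $2d+1=4n-1$ (so $d=2n-1$ is odd) shows that any pair of disjoint generators lies in one class, so the three generators of a pairwise disjoint triple all lie in the same class and this is automatically respected by the argument.

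For the parameterization, fix disjoint generators $\pi_1,\pi_2$ of $\mathcal{Q}^{+}(4n-1,q)$. They span the ambient projective space and the polar bilinear form induces a non-degenerate pairing between them, so we may choose bases $e_1,\ldots,e_{2n}$ of $\pi_1$ and $f_1,\ldots,f_{2n}$ of $\pi_2$ dual under this pairing, realizing the quadratic form as $Q\bigl(\sum a_i e_i+\sum b_j f_j\bigr)=\sum a_i b_i$. Any generator $\pi_3$ disjoint from both projects isomorphically onto $\pi_2$ from $\pi_1$ and so takes the graph form $\pi_3=\{(Ay,y):y\in\F_q^{2n}\}$ for a unique $A\in GL_{2n}(\F_q)$. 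The condition that $\pi_3$ be totally singular translates to $A$ being an \emph{invertible alternating matrix}: $A+A^{\top}=0$ in all characteristics, together with $A_{ii}=0$ in characteristic two. Conversely, every invertible alternating $A$ gives such a generator.

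Finally, we exhibit enough automorphisms fixing the pair. For any $M\in GL_{2n}(\F_q)$ the map $(x,y)\mapsto(Mx,(M^{\top})^{-1}y)$ preserves the form $Q$ (since $(Mx)^{\top}(M^{\top})^{-1}y=x^{\top}y$), fixes $\pi_1$ and $\pi_2$ setwise, and transforms the parameter matrix of $\pi_3$ by the congruence $A\mapsto MAM^{\top}$. The classical theorem that all non-degenerate alternating bilinear forms on $\F_q^{2n}$ are congruent (equivalently, any two invertible alternating matrices over $\F_q$ are $GL_{2n}$-congruent via $A\mapsto MAM^{\top}$) then gives transitivity on the parameter set, and therefore on third generators disjoint from $(\pi_1,\pi_2)$. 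Combined with the reduction in the first paragraph, this establishes the lemma.

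The main obstacle is purely bookkeeping: one must be careful in characteristic two to impose the quadratic condition $Q|_{\pi_3}=0$ on top of the bilinear one in order to identify the parameter matrices correctly as alternating (rather than merely symmetric). Once this translation is made, the conceptual engine is the classical transitivity of $GL_{2n}(\F_q)$ on non-degenerate alternating forms, which is precisely the existence of a symplectic basis.
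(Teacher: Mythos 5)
Your proposal is correct and follows essentially the same route as the paper: reduce via transitivity on ordered pairs of disjoint generators, coordinatize the third generator as a graph over one of the two fixed generators so that it corresponds to an invertible alternating matrix, and conclude by the congruence of all non-degenerate alternating forms under $A\mapsto MAM^{t}$. Your explicit remark about imposing the quadratic (not merely bilinear) condition in characteristic two is a welcome clarification of a point the paper's displayed computation passes over quickly.
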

\begin{proof}
 Without loss of generality we can assume that $\mathcal{Q}=\mathcal{Q}^{+}(4n-1,q)$ is given by the quadratic form $Q(x_{0},\dots,x_{4n-1})=x_{0}x_{2n}+\dots+x_{2n-1}x_{4n-1}$, represented by the matrix $\left(\begin{smallmatrix}0&I\\0&0\end{smallmatrix}\right)$, where all blocks are $(2n\times 2n)$-matrices.
 Let $G$ be the collineation group (automorphism group) of $\mathcal{Q}$; its elements are given by a $(4n\times 4n)$-matrix and an $\F_{q}$-automorphism. Let $H\leq G$ be the subgroup of isometries of $\mathcal{Q}$, the collineations whose corresponding field automorphism is trivial.
 We will prove that for four pairwise disjoint generators $\pi_{1},\pi_{2},\pi_{3},\pi_{4}$ there is an element of $H$ that fixes $\pi_{1}$ and $\pi_{2}$, and that maps $\pi_{3}$ onto $\pi_{4}$.
 Since, by Theorem~\ref{distancetransitive}, $G$ acts transitively on the pairs of pairwise disjoint generators of $\mathcal{Q}$, we can choose $\pi_{1}=\col\left(\left(\begin{smallmatrix}I\\ 0\end{smallmatrix}\right)\right)$ and $\pi_{2}=\col\left(\left(\begin{smallmatrix}0\\ I\end{smallmatrix}\right)\right)$.
 \par The elements of $H$ that fix both $\pi_{1}$ and $\pi_{2}$ are given by the matrices $A=\left(\begin{smallmatrix}A_{1}&0\\0&A_{2}\end{smallmatrix}\right)$, with $A_{1}$ and $A_{2}$ non-singular $(2n\times 2n)$-matrices. For $A$ to be an isometry, moreover we must have
 \[
  \begin{pmatrix}0&I\\0&0\end{pmatrix}=\begin{pmatrix}A_{1}&0\\0&A_{2}\end{pmatrix}\begin{pmatrix}0&I\\0&0\end{pmatrix}\begin{pmatrix}A_{1}&0\\0&A_{2}\end{pmatrix}^{t}=\begin{pmatrix}0&A_{1}A^{t}_{2}\\0&0\end{pmatrix}\;,
 \]
 hence $A_{1}A^{t}_{2}=I$. Any generator that is disjoint to both $\pi_{1}$ and $\pi_{2}$ can be described as $\col\left(\left(\begin{smallmatrix}I\\ X\end{smallmatrix}\right)\right)$ with $X$ a non-singular $(2n\times2n)$-matrix.
 A $(2n-1)$-space $\col\left(\left(\begin{smallmatrix}I\\ X\end{smallmatrix}\right)\right)$ is a generator of $\mathcal{Q}$ if and only if
 \[
  \begin{pmatrix}I&X^{t}\end{pmatrix}\begin{pmatrix}0&I\\I&0\end{pmatrix}\begin{pmatrix}I\\X\end{pmatrix}=0\quad\Leftrightarrow\quad X+X^{t}=0\;,
 \]
 where $X$, $I$ and $0$ are $(2n\times 2n)$-matrices. It follows that $X$ is an alternating matrix. Note that $\left(\begin{smallmatrix}0&I\\I&0\end{smallmatrix}\right)$ is the matrix of the bilinear form associated with $Q$.
 \par Let $C$ and $C'$ be the non-singular alternating $(2n\times2n)$-matrices such that $\pi_{3}=\col\left(\left(\begin{smallmatrix}I\\ C\end{smallmatrix}\right)\right)$ and $\pi_{4}=\col\left(\left(\begin{smallmatrix}I\\ C'\end{smallmatrix}\right)\right)$.
 Since two non-singular alternating matrices of the same dimension are conjugate (up to isomorphism there is only one symplectic polar space of a given rank), there is a non-singular matrix $D$ such that $C'=DCD^{t}$. Now,
 \[
  \begin{pmatrix}
   {(D^{t})}^{-1} & 0 \\0&D
  \end{pmatrix}
  \begin{pmatrix}
   I \\ C
  \end{pmatrix}=
  \begin{pmatrix}
   {(D^{t})}^{-1} \\DC
  \end{pmatrix}
  \text{ and }\col\left(\left(\begin{smallmatrix}{(D^{t})}^{-1}\\
     DC\end{smallmatrix}\right)\right)=\col\left(\left(\begin{smallmatrix}{(D^{t})}^{-1}D^{t}\\
     DCD^{t}\end{smallmatrix}\right)\right)=\col\left(\left(\begin{smallmatrix}I\\
     C'\end{smallmatrix}\right)\right)\;.
 \]
 So, the isometry defined by the matrix $\left(\begin{smallmatrix}{(D^{t})}^{-1}&0\\0&D\end{smallmatrix}\right)$ maps $\pi_{3}$ onto $\pi_{4}$, and it fixes both $\pi_{1}$ and $\pi_{2}$.
 This finishes the proof.
\end{proof}

From~\cite[Proposition 4 and Remark 5]{cp} the analogous statement for the Hermitian varieties $\mathcal{H}(2d-1,q^{2})$ follows. The proof in~\cite{cp} also uses Segre varieties, but it is probably possible to give a proof similar to the above.
We do not go into detail because we do not need this result in this paper. It is immediate that an analogous result cannot hold for the polar spaces $\mathcal{Q}^{+}(4n+1,q)$, $n\geq1$, $\mathcal{Q}(4n,q)$, $n\geq1$, $\mathcal{Q}^{-}(2d+1,q)$, $d\geq1$, and $\mathcal{H}(2d,q^{2})$, $d\geq1$.

In the next lemmata we will consider those hyperbolic quadrics $\mathcal{Q}^{+}(4n-1,q)$ that admit spreads. Recall from Table~\ref{spreadtable} that we know that the hyperbolic quadric $\mathcal{Q}^{+}(4n-1,q)$ admits a spread if $q$ is even.
If $q$ is odd, it admits a spread in case $n=1$ or in case $n=2$ and $q$ is prime or $q\not\equiv1\pmod{3}$; if $q$ is odd and $n\geq3$, or if $q\equiv1\pmod{3}$ is non-prime and odd and  $n=2$, no existence or non-existence results are known, up to the knowledge of the authors. Also, recall that two disjoint generators in $\mathcal{Q}^{+}(4n-1,q)$ belong to the same class.
\par We first prove two lemmata.

\begin{lemma}\label{disjointotwo}
 Let $\mathcal{P}$ be one class of generators of a hyperbolic quadric $\mathcal{Q}^{+}(4n-1,q)$ that admits spreads, and let $\pi$ and $\pi'$ be two disjoint generators of $\mathcal{P}$.
 If $\mathcal{L}$ is a Cameron-Liebler set of $\mathcal{P}$ with parameter $x$, with characteristic vector $\bm{\chi}$, then the number of generators of $\mathcal{L}$ disjoint to both $\pi$ and $\pi'$ equals $(x-{(\bm{\chi})}_{\pi}-{(\bm{\chi})}_{\pi'})q^{n(n-1)}\prod^{n-1}_{i=1}(q^{2i-1}-1)$.
\end{lemma}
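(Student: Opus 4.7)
The plan is to mimic the double-counting argument of Lemma~\ref{spreadintersectiontonumberanddisjoint}, now using spreads of $\mathcal{P}$ passing through a pair, respectively through a triple, of pairwise disjoint generators. For the resulting counts to be independent of the choice of generators we need transitivity of an appropriate automorphism group on pairs and on triples of pairwise disjoint generators of $\mathcal{P}$; the latter is exactly what Lemma~\ref{triples} provides.

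First I would let $G$ be the (index $2$) subgroup of the automorphism group of $\mathcal{Q}^{+}(4n-1,q)$ that stabilises the class $\mathcal{P}$ setwise. Because any pair or triple of pairwise disjoint generators is automatically contained in a single class, Theorem~\ref{distancetransitive} together with Lemma~\ref{triples} imply that $G$ acts transitively on the ordered pairs and on the ordered triples of pairwise disjoint generators of $\mathcal{P}$. The group $G$ permutes the spreads of $\mathcal{P}$ (all of whose elements lie in one class by Lemma~\ref{spreadsclasses}), so the number $n_{2}$ of spreads of $\mathcal{P}$ through a fixed pair of disjoint generators, and the number $n_{3}$ of spreads of $\mathcal{P}$ through a fixed triple of pairwise disjoint generators, do not depend on the chosen generators. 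Both are positive because $\mathcal{P}$ is assumed to admit a spread (any spread produces explicit triples lying inside a spread).

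Next, by Lemma~\ref{kmslemma} applied with $d=2n-1$ and $v=-1$, the total number of generators of $\mathcal{P}$ disjoint from both $\pi$ and $\pi'$ equals $N := q^{n(n-1)}\prod_{i=1}^{n}(q^{2i-1}-1)$. Counting the pairs $(\tau,S)$ with $S$ a spread of $\mathcal{P}$ through $\pi$ and $\pi'$ and $\tau\in S\setminus\{\pi,\pi'\}$ (noting that a spread of $\mathcal{P}$ has $q^{2n-1}+1$ elements and that such $\tau$ is automatically disjoint from both $\pi$ and $\pi'$) in two ways then gives
\[
 n_{2}(q^{2n-1}-1)=N\cdot n_{3},\qquad\text{hence}\qquad\frac{n_{2}}{n_{3}}=q^{n(n-1)}\prod_{i=1}^{n-1}(q^{2i-1}-1).
\]

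Finally, let $D$ denote the quantity to be computed. By Theorem~\ref{characterisationthII}(vi), every spread of $\mathcal{P}$ meets $\mathcal{L}$ in exactly $x$ generators, so each spread through $\pi$ and $\pi'$ contains $x-(\bm{\chi})_{\pi}-(\bm{\chi})_{\pi'}$ elements of $(\mathcal{L}\cap S)\setminus\{\pi,\pi'\}$. A second double count, now of the pairs $(\sigma,S)$ with $S$ a spread of $\mathcal{P}$ through $\pi$ and $\pi'$ and $\sigma\in(\mathcal{L}\cap S)\setminus\{\pi,\pi'\}$, yields $n_{2}(x-(\bm{\chi})_{\pi}-(\bm{\chi})_{\pi'})=D\cdot n_{3}$; dividing by $n_{3}$ and substituting the ratio above produces the asserted value. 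The only delicate ingredient is the invocation of Lemma~\ref{triples} to ensure that $n_{3}$ is well defined; once that is in hand, everything else is elementary double counting combined with the spread-intersection characterisation of Cameron-Liebler sets.
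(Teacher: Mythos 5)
Your proposal is correct and follows essentially the same route as the paper's own proof: both establish the well-definedness of the counts $n_{2}$ and $n_{3}$ via Lemma~\ref{triples}, compute $n_{2}/n_{3}$ by double-counting spread members disjoint from $\pi$ and $\pi'$ using Lemma~\ref{kmslemma}, and then conclude with a second double count using the constant spread-intersection property from Theorem~\ref{characterisationthII}. Your explicit remarks on why $n_{3}>0$ and on the specialisation $d=2n-1$, $v=-1$ in Lemma~\ref{kmslemma} are small but welcome additions to what the paper leaves implicit.
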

\begin{proof}
 The proof is similar to the proof of Lemma~\ref{spreadintersectiontonumberanddisjoint}. Denote the number of spreads of $\mathcal{P}$ containing $i$ given pairwise disjoint generators by $n_{i}$, $i=2,3$.
 By Lemma~\ref{triples} we know these numbers are independent of the chosen generators.
 \par We count the tuples $(\tau,S)$ with $\tau$ a generator of $\mathcal{P}$ disjoint to both $\pi$ and $\pi'$ and $S$ a spread of $\mathcal{P}$ containing $\pi$, $\pi'$ and $\tau$. Using Lemmas~\ref{kmslemma} and~\ref{skewgenerators} we find that
 \[
  n_{2}(q^{2n-1}-1)=n_{3}q^{n(n-1)}\prod^{n}_{i=1}(q^{2i-1}-1)\quad\Leftrightarrow\quad\frac{n_{2}}{n_{3}}=q^{n(n-1)}\prod^{n-1}_{i=1}(q^{2i-1}-1)\;.
 \]
 Now we count the tuples $(\tau,S)$ with $\tau$ a generator of $\mathcal{P}$ disjoint to both $\pi$ and $\pi'$ that is contained in $\mathcal{L}$, and $S$ a spread of $\mathcal{P}$ containing $\pi$, $\pi'$ and $\tau$.
 On the one hand, there are $n_{2}$ spreads containing $\pi$ and $\pi'$, and by Theorem~\ref{characterisationthII}(v) there are $x-{(\bm{\chi})}_{\pi}-{(\bm{\chi})}_{\pi'}$ elements of $\mathcal{L}$ disjoint to both $\pi$ and $\pi'$ contained in this spread.
 On the other hand, for a given $\tau$, there are $n_{3}$ spreads containing $\pi$, $\pi'$ and $\tau$. So, the number of generators of $\mathcal{L}$ disjoint to both $\pi$ and $\pi'$ equals
 \[
  \frac{n_{2}}{n_{3}}(x-{(\bm{\chi})}_{\pi}-{(\bm{\chi})}_{\pi'})=(x-{(\bm{\chi})}_{\pi}-{(\bm{\chi})}_{\pi'})q^{n(n+1)}\prod^{n-1}_{i=1}(q^{2i-1}-1)\;.\qedhere
 \]
\end{proof}

\begin{lemma}\label{nocdisjoint}
 Let $\mathcal{P}$ be one class of generators of a hyperbolic quadric $\mathcal{Q}^{+}(7,q)$ that admits spreads, and let $\mathcal{L}$ be a Cameron-Liebler set of $\mathcal{P}$ with parameter $x$. If
 \[
  (x-1)q^{3}< c(q^{3}+x(q^{2}+q+1))-\binom{c+1}{2}(2q^{2}+x(q+1))\;,
 \] then there are no $c+1$ pairwise disjoint generators in $\mathcal{L}$.
\end{lemma}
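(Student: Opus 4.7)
The plan is to assume for contradiction that $\pi_{0}, \pi_{1}, \ldots, \pi_{c}$ are $c+1$ pairwise disjoint generators all belonging to $\mathcal{L}$, and to derive the reverse of the hypothesised inequality via a Bonferroni-type argument on the ``meeting sets''
\[
 M_{i} := \{\sigma \in \mathcal{L} : \sigma \cap \pi_{i} \neq \emptyset\}, \qquad i = 0, \ldots, c.
\]

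First I would compute $|M_{i}|$. Since $\mathcal{Q}^{+}(7,q)$ has rank $d=4$, Definition~\ref{clsetII} gives $|\mathcal{L}| = x(q+1)(q^{2}+1)$, and Theorem~\ref{characterisationthII}(i) says each $\pi_{i} \in \mathcal{L}$ is disjoint from exactly $(x-1)q^{\binom{d-1}{2}} = (x-1)q^{3}$ members of $\mathcal{L}$. A short expansion yields
\[
 |M_{i}| = |\mathcal{L}| - (x-1)q^{3} = q^{3} + x(q^{2}+q+1).
\]
Next I would compute the pairwise intersections. Since the $\pi_{i}$ are pairwise disjoint and belong to $\mathcal{L}$, Lemma~\ref{disjointotwo} (with $n=2$) says the number of elements of $\mathcal{L}$ disjoint from both $\pi_{i}$ and $\pi_{j}$ equals $(x-2)q^{2}(q-1)$. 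Combining inclusion--exclusion for two sets, $|M_{i} \cap M_{j}| = 2|M_{0}| - |M_{i} \cup M_{j}|$, with the identity $|M_{i} \cup M_{j}| = |\mathcal{L}| - (x-2)q^{2}(q-1)$, a routine simplification yields
\[
 |M_{i} \cap M_{j}| = 2q^{2} + x(q+1).
\]

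The key step is then the Bonferroni inequality
\[
 \Big| \bigcup_{i=0}^{c} M_{i} \Big| \geq \sum_{i=0}^{c} |M_{i}| - \sum_{0 \leq i < j \leq c} |M_{i} \cap M_{j}|,
\]
combined with the obvious bound $|\bigcup_{i} M_{i}| \leq |\mathcal{L}|$. Substituting the two counts together with the identity $|\mathcal{L}| = |M_{0}| + (x-1)q^{3}$, and cancelling one copy of $|M_{0}|$ on each side, one obtains
\[
 c\big(q^{3} + x(q^{2}+q+1)\big) - \binom{c+1}{2}\big(2q^{2} + x(q+1)\big) \leq (x-1)q^{3},
\]
which contradicts the hypothesis of the lemma, proving the desired non-existence.

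The substantive input is Lemma~\ref{disjointotwo}; the rest is bookkeeping and Bonferroni. The only point requiring a touch of care is to verify that the counts $|M_{i}|$ and $|M_{i} \cap M_{j}|$ are independent of the choice of $c+1$ pairwise disjoint generators in $\mathcal{L}$, which follows because each count depends only on the membership $\pi_{i} \in \mathcal{L}$ and on $\pi_{i} \cap \pi_{j} = \emptyset$ -- both built into the assumption -- so that the Bonferroni sum collapses to the symmetric expression appearing in the statement.
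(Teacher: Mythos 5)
Your proposal is correct and follows essentially the same route as the paper: the same meeting sets (the paper calls them $S_{i}$ and $S_{ij}$), the same counts $q^{3}+x(q^{2}+q+1)$ and $2q^{2}+x(q+1)$ obtained from Theorem~\ref{characterisationthII} and Lemma~\ref{disjointotwo}, and the same Bonferroni/inclusion--exclusion bound against $|\mathcal{L}|=x(q+1)(q^{2}+1)$. The only difference is that you spell out the derivation of $|M_{i}\cap M_{j}|$ via complementation, which the paper leaves as ``immediate''.
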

\begin{proof}
 We follow the approach from~\cite[Lemma 5.2 and Lemma 6.5]{rsv} and~\cite[Lemma 2.4]{met3}.
 \par Assume that $\pi_{0},\dots,\pi_{c}$ are $c+1$ pairwise disjoint generators in $\mathcal{L}$. We denote the set of generators in $\mathcal{L}$ that have a non-empty intersection with $\pi_{i}$ by $S_{i}$ and the set of generators that have non-empty intersections with both $\pi_{i}$ and $\pi_{j}$ by $S_{ij}$.
 It follows immediately from Theorem~\ref{characterisationthII} and Lemma~\ref{disjointotwo} that $|S_{i}|=q^{3}+x(q^{2}+q+1)$ for all $i=0,\dots,c$, and that $|S_{ij}|=2q^{2}+x(q+1)$ for all $0\leq i\neq j\leq c$.
 \par Using the inclusion-exclusion principle we find that
 \[
  |\mathcal{L}|\geq\left|\bigcup^{c}_{i=0}S_{i}\right|\geq \sum^{c}_{i=0}|S_{i}|-\sum_{0\leq i<j\leq c}|S_{ij}|\;,
 \]
 and hence,
 \begin{align*}
                       & x(q+1)(q^{2}+1)\geq(c+1)(q^{3}+x(q^{2}+q+1))-\binom{c+1}{2}(2q^{2}+x(q+1)) \\
  \Leftrightarrow\quad & (x-1)q^{3}\geq c(q^{3}+x(q^{2}+q+1))-\binom{c+1}{2}(2q^{2}+x(q+1))\;,
 \end{align*}
 a contradiction.
\end{proof}

\par Before presenting the classification theorem for Cameron-Liebler sets of a class of generators of $\mathcal{Q}^{+}(7,q)$, we recall the Erd\H{o}s-Ko-Rado theorem for a class of generators of $\mathcal{Q}^{+}(7,q)$.

\begin{theorem}[{\cite[Theorem 22]{psv} and~\cite[Remark 2.3.2]{deb}}]\label{ekrq+7q}
 A maximal Erd\H{o}s-Ko-Rado set of generators of one class $\mathcal{P}$ of a hyperbolic quadric $\mathcal{Q}^{+}(7,q)$ is a point-pencil (restricted to $\mathcal{P}$) or a base-solid (described in Example~\ref{basesolid}).
\end{theorem}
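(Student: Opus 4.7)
The plan is to combine an algebraic upper bound (Hoffman/Delsarte) with a geometric classification of the equality cases. Let $\Gamma$ be the disjointness graph on one class $\mathcal{P}$ of generators of $\mathcal{Q}^{+}(7,q)$. Since $\mathcal{Q}^{+}(7,q)$ has rank $d=4$ and parameter $e=0$, and two disjoint generators of $\mathcal{Q}^{+}(7,q)$ lie in a common class (Remark~\ref{hyperclass}), Remark~\ref{assoschemehyperbolic} shows that the association scheme on $\mathcal{P}$ has only two nontrivial classes and $\Gamma$ is strongly regular. Using Lemma~\ref{skewgenerators} and the formula for $P_{j,d}$, one reads off that $\Gamma$ is $q^{6}$-regular on $|\mathcal{P}|=(q+1)(q^{2}+1)(q^{3}+1)$ vertices, with spectrum $\{q^{6},q^{2},-q^{3}\}$, the eigenvalue $-q^{3}$ occurring on $V'_{1}=(V_{1}\perp V_{3})\cap\R^{\mathcal{P}}$.

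Hoffman's coclique bound, equivalently Theorem~\ref{clique} applied to the relation $R'_{1}$ of ``meeting in a line'', then yields
\[
|\mathcal{L}|\le \frac{|\mathcal{P}|\cdot q^{3}}{q^{6}+q^{3}}=(q+1)(q^{2}+1),
\]
which is exactly the common size of a point-pencil of $\mathcal{P}$ (Example~\ref{pointpencil}) and of a base-solid (Example~\ref{basesolid}). The second part of Theorem~\ref{clique} moreover forces the characteristic vector of any EKR set attaining the bound to lie in $V'_{0}\perp V'_{1}$, so by Theorem~\ref{characterisationthII}(iii) $\mathcal{L}$ is automatically a Cameron-Liebler set of $\mathcal{P}$ with parameter $1$.

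To finish I would classify these extremal Cameron-Liebler sets geometrically. Fix $\pi\in\mathcal{L}$. Lemma~\ref{distanceII} (with $d=4$, $x=1$) guarantees that every other member of $\mathcal{L}$ meets $\pi$ in a line, and that the total number of such intersections is $q^{3}+q^{2}+q$. Set $\mathcal{R}_{\pi}=\{\sigma\cap\pi:\sigma\in\mathcal{L}\setminus\{\pi\}\}$. Lemma~\ref{psvlemma} prevents any two members of $\mathcal{R}_{\pi}$ from being skew inside $\pi\cong\PG(3,q)$; combined with the multiplicity constraint ``each line of $\pi$ is contained in at most $q$ generators of $\mathcal{P}$ besides $\pi$'' and the count $q^{3}+q^{2}+q$, a short combinatorial argument forces $\mathcal{R}_{\pi}$ to be either the set of all lines through a fixed point $P\in\pi$ or the set of all lines of a fixed plane $\alpha\subset\pi$. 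In the first case $\mathcal{L}$ is the point-pencil of $\mathcal{P}$ through $P$; in the second, $\alpha$ is contained in a unique generator $\tau$ of the other class, and $\mathcal{L}$ is the base-solid with centre $\tau$.

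The main obstacle is precisely this local dichotomy. In $\PG(3,q)$ a family of pairwise intersecting lines may also admit transversal configurations, for instance arising from a ruling of an embedded hyperbolic quadric $\mathcal{Q}^{+}(3,q)$, so ruling those out requires simultaneously exploiting (i) the multiplicity bound $m_{\ell}\le q$, (ii) the precise total count $q^{3}+q^{2}+q$ supplied by the Cameron-Liebler property, and (iii) the same analysis with a second base generator $\pi'\in\mathcal{L}$. Consistency between the two choices of base then forces the distinguished point (resp.\ plane) to be independent of the chosen $\pi$, which identifies $\mathcal{L}$ globally as either a point-pencil in $\mathcal{P}$ or a base-solid.
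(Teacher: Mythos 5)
The paper does not prove this statement; it is imported from \cite[Theorem 22]{psv} and \cite[Remark 2.3.2]{deb}, so your attempt has to be judged on its own terms. Your spectral data are correct (the disjointness relation on one class of generators is $q^{6}$-regular on $(q+1)(q^{2}+1)(q^{3}+1)$ vertices with eigenvalues $q^{6},q^{2},-q^{3}$), the ratio bound does give $|\mathcal{L}|\leq(q+1)(q^{2}+1)$, and the equality analysis combined with Lemma~\ref{psvlemma}, the multiplicity bound $q$ on the number of further generators of the class through a line of $\pi$, and the count $q^{3}+q^{2}+q$ is a reasonable route to the point-pencil/base-solid dichotomy for sets \emph{attaining} the bound (modulo the classification of intersecting line families of size $q^{2}+q+1$ in $\PG(3,q)$, which you invoke implicitly).

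The genuine gap is that the theorem is about \emph{inclusion-maximal} Erd\H{o}s-Ko-Rado sets, not about EKR sets of maximum size, and your entire argument is conditioned on $|\mathcal{L}|=(q+1)(q^{2}+1)$. This is not a pedantic distinction: the statement is applied in the proof of Theorem~\ref{q+7qc} to a set $L_{i}$ that is only known to be an EKR set, hence contained in \emph{some} inclusion-maximal EKR set $\mathcal{L}_{i}$; nothing at that point guarantees $\mathcal{L}_{i}$ meets the Hoffman bound. A priori there could be an inclusion-maximal EKR set of size strictly less than $(q+1)(q^{2}+1)$ that extends to neither a point-pencil nor a base-solid, and the equality case of Theorem~\ref{clique} says nothing about such a set. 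What is actually needed (and what \cite{psv} and \cite{deb} supply by geometric arguments) is that \emph{every} EKR set of generators in one class of $\mathcal{Q}^{+}(7,q)$ is contained in a point-pencil or a base-solid; your local analysis of $\mathcal{R}_{\pi}$ would have to be carried out without assuming the exact total $q^{3}+q^{2}+q$, i.e.\ starting only from pairwise non-skewness of the intersection lines in $\pi$ and maximality under inclusion. As written, the proposal proves a weaker statement than the one the paper uses.
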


\begin{theorem}\label{q+7qc}
 Let $\mathcal{P}$ be one class of generators of a hyperbolic quadric $\mathcal{Q}^{+}(7,q)$ that admits spreads, and let $\mathcal{L}$ be a Cameron-Liebler set of $\mathcal{P}$ with parameter $x$.
 If $x\leq\frac{\sqrt{3}-1}{2}q$, then $\mathcal{L}$ is the union of $x$ point-pencils whose vertices are pairwise non-collinear or $\mathcal{L}$ is the union of $x$ base-solids whose centers are pairwise disjoint.
\end{theorem}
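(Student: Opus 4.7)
The plan is to proceed by induction on $x$. The base case $x = 1$ follows from Theorem~\ref{par1} applied to one class of generators of $\mathcal{Q}^{+}(7,q)$: in this setting ($d = 4$), the Cameron-Liebler sets of parameter $1$ are precisely the point-pencils (Example~\ref{pointpencil}) and the base-solids (Example~\ref{basesolid}), which matches the conclusion of the theorem with $x = 1$.

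For the inductive step, assume the theorem holds at parameter $x - 1$ and let $\mathcal{L}$ be a Cameron-Liebler set with $2 \leq x \leq \frac{\sqrt{3}-1}{2}q$. The core reduction is to exhibit inside $\mathcal{L}$ a subset $\mathcal{L}^{*}$ that is itself a Cameron-Liebler set of parameter $1$, i.e.\ a point-pencil or a base-solid; Lemma~\ref{basicoperations}(iv) then gives that $\mathcal{L} \setminus \mathcal{L}^{*}$ is Cameron-Liebler with parameter $x - 1$, and the induction hypothesis takes over. A final homogeneity check is required: one must show that $\mathcal{L}^{*}$ is of the same type as the $x-1$ pieces in $\mathcal{L} \setminus \mathcal{L}^{*}$, by arguing that a point-pencil at a vertex $P$ and a base-solid at a centre $\pi^{*}$ always share at least one generator of $\mathcal{P}$ (namely, any generator through $P$ that meets $\pi^{*}$ in a plane), contradicting disjointness of pieces. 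Similar shared-generator arguments enforce the pairwise non-collinearity of vertices (respectively disjointness of centres) in the resulting decomposition.

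To exhibit the embedded $\mathcal{L}^{*}$ I plan to combine Lemma~\ref{nocdisjoint}, Lemma~\ref{disjointotwo} and Theorem~\ref{ekrq+7q}. Lemma~\ref{nocdisjoint} gives, for each admissible $c$, an upper bound on the maximum number of pairwise disjoint generators in $\mathcal{L}$; optimising the parameter $c$ and substituting $x = \alpha q$, the resulting inequality becomes tight in the regime $\alpha \leq \frac{\sqrt{3}-1}{2}$, which is where the numerical threshold in the theorem statement originates. Combined with the size identity $|\mathcal{L}| = x(q^{3}+q^{2}+q+1)$, equal to $x$ times the common size of a point-pencil and of a base-solid, the resulting control on independent sets in the disjointness graph restricted to $\mathcal{L}$ should force a clean decomposition of $\mathcal{L}$ into $x$ maximum Erd\H{o}s-Ko-Rado sets, each of which is then identified as a point-pencil or a base-solid by Theorem~\ref{ekrq+7q}.

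The main obstacle lies precisely in this last step of turning an independence-number bound into a matching clique cover: for a general graph the inequality chromatic number $\geq$ clique number is strict, so additional structure is required. Here one expects to invoke the association-scheme structure of the generators in one class of $\mathcal{Q}^{+}(7,q)$, exploiting the tight two-point count of pairwise disjoint elements from Lemma~\ref{disjointotwo} to upgrade the inclusion-exclusion bound in Lemma~\ref{nocdisjoint} to a second-moment or stability statement (controlling, for each $\pi \in \mathcal{L}$, the distribution of $|\{\sigma \in \mathcal{L} : \sigma \cap \pi = \emptyset\}|$ over pairs). It is in this calibration that the specific constant $\frac{\sqrt{3}-1}{2}$ is expected to be pinned down, and this is the most delicate part of the argument.
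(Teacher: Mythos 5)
Your overall strategy (bound the number of pairwise disjoint members of $\mathcal{L}$ via Lemma~\ref{nocdisjoint}, then split $\mathcal{L}$ into $x$ pairwise disjoint maximal Erd\H{o}s-Ko-Rado sets and identify each one via Theorem~\ref{ekrq+7q}) is the right one, and your closing observations (a point-pencil and a base-solid always share a generator, so the pieces are all of the same type; disjointness of the pieces forces non-collinear vertices, respectively disjoint centers) match the paper's endgame. But the decisive step is missing, and you flag it yourself: you never actually produce the decomposition, only assert that the independence-number bound ``should force'' it via an unspecified second-moment or stability argument. This is a genuine gap, not a calibration detail. The paper's device is concrete and elementary: let $m$ be the maximum number of pairwise disjoint generators $\pi_{1},\dots,\pi_{m}$ in $\mathcal{L}$ (so $m\leq\bigl(1-\tfrac{\sqrt{3}}{3}\bigr)q$ by Lemma~\ref{nocdisjoint}), and for each $i$ let $L_{i}$ be the set of elements of $\mathcal{L}$ disjoint from every $\pi_{j}$ with $j\neq i$. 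Maximality of $m$ forces any two elements of $L_{i}$ to meet, so $L_{i}$ lies in some maximal Erd\H{o}s-Ko-Rado set $\mathcal{L}_{i}$; inclusion--exclusion together with Lemma~\ref{disjointotwo} gives $|L_{i}|\geq q^{3}+x(q^{2}+q+1)-(m-1)(2q^{2}+x(q+1))$; and if $\mathcal{L}_{i}$ contained a generator outside $L_{i}$, that generator would have to meet every element of $L_{i}$ while meeting only $x(q^{2}+q+1)$ members of $\mathcal{L}$ in total (Lemma~\ref{distanceII}), which is impossible because $(m-1)(2q^{2}+x(q+1))<q^{3}$ in the stated range of $x$. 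Hence $L_{i}=\mathcal{L}_{i}$ is itself a maximal Erd\H{o}s-Ko-Rado set, the $L_{i}$ are pairwise disjoint, and a size count against $|\mathcal{L}|=x(q+1)(q^{2}+1)$ yields the partition into exactly $x$ pieces. Note also that the threshold $\tfrac{\sqrt{3}-1}{2}q$ is pinned down jointly by the two inequalities $f(b)\geq0$, $f(b-1)\geq0$ in the application of Lemma~\ref{nocdisjoint} \emph{and} by the final inequality $(m-1)(2q^{2}+x(q+1))<q^{3}$, not by optimising the parameter $c$ alone.

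A secondary remark: your proposed induction on $x$ (peeling off one parameter-$1$ Cameron-Liebler set at a time via Lemma~\ref{basicoperations}(iv) and Theorem~\ref{par1}) would be a valid skeleton if you could exhibit a full point-pencil or base-solid inside $\mathcal{L}$, but that existence statement is exactly as hard as the decomposition itself, and the paper does not argue inductively --- it extracts all the pieces simultaneously through the sets $L_{i}$. Without that construction (or a genuine substitute for it), the proof is incomplete.
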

\begin{proof}
 We follow the approach from~\cite[Lemma 5.3, Theorem 5.4 and Corollary 5.5]{rsv} and~\cite[Lemma 2.5, Lemma 3.1 and Theorem 3.2]{met3}. Let $m$ be the largest integer such that there exist $m$ pairwise disjoint generators $\pi_{1},\dots,\pi_{m}$ in $\mathcal{L}$. First we prove that $m\leq b=\left(1-\frac{\sqrt{3}}{3}\right)q$, by showing that there cannot be $\left\lfloor b\right\rfloor+1$ pairwise disjoint generators in $\mathcal{L}$.
 We want to use Lemma~\ref{nocdisjoint} and for this we have to prove that $f(\left\lfloor b\right\rfloor)\geq0$ with
 \[
  f(c)=c(q^{3}+x(q^{2}+q+1))-\binom{c+1}{2}(2q^{2}+x(q+1))-(x-1)q^{3}\;.
 \]
 As $f$ is a quadratic function with a negative leading coefficient, it is sufficient to check that $f(b)\geq0$ and $f(b-1)\geq0$. A straightforward calculation shows that
 \begin{align*}
  f(b)\geq0\    & \Leftrightarrow\ x\leq \frac{\sqrt{3}-1}{2}q+f_{1}(q)\quad\text{with}\quad f_{1}(q)=\frac{(2+\sqrt{3})q^{2}+(2\sqrt{3}-3)q}{4q^{2}-(\sqrt{3}-1)q-3+\sqrt{3}}\quad\text{and}                    \\
  f(b-1)\geq0\  & \Leftrightarrow\ x\leq \frac{\sqrt{3}-1}{2}q+f_{2}(q)\quad\text{with}\quad f_{2}(q)=\frac{(5-2\sqrt{3})q^{3}-(6-3\sqrt{3})q^{2}- (3\sqrt{3}-3)q}{4q^{3}+(\sqrt{3}+1)q^{2}+(3\sqrt{3}-3)q+6}\;.
 \end{align*}
 Since $f_{i}(q)\geq0$ for $q\geq2$ and $i=1,2$, we find the statement to be true: we have $m\leq b$.
 \par We denote the set of generators that have a non-empty intersection with both $\pi_{i}$ and $\pi_{j}$ by $S_{ij}$, $1\leq i\neq j\leq m$, and we denote the set of generators in $\mathcal{L}$ that are disjoint to $\pi_{j}$, for all $j\neq i$, by $L_{i}$, $i=1,\dots,m$.
 We know that $|S_{ij}|=2q^{2}+x(q+1)$, for all $1\leq i\neq j\leq m$, by Theorem~\ref{characterisationthII} and Lemma~\ref{disjointotwo}.
 \par It is easy to see that $\pi_{i}\in L_{i}$ for any $i=1,\dots,m$. Moreover any two generators in $L_{i}$ must have a non-empty intersection since there are no $m+1$ pairwise disjoint generators in $\mathcal{L}$ by assumption. Hence, the set $L_{i}$ is an Erd\H{o}s-Ko-Rado set of generators, and consequently, $L_{i}\subseteq\mathcal{L}_{i}$, with $\mathcal{L}_{i}$ a maximal Erd\H{o}s-Ko-Rado set of generators.
 It is straightforward that $L_{i}$ contains all elements of $\mathcal{L}$ meeting $\pi_{i}$ that are not in $\cup_{j\neq i}S_{ij}$. Hence, $|L_{i}|\geq q^{3}+x(q^{2}+q+1)-(m-1)(2q^{2}+x(q+1))$ by Lemma~\ref{distanceII}.
 \par Assume that there exists a generator $\pi\in\mathcal{L}_{i}\setminus L_{i}$ for some $i$. Again by Lemma~\ref{distanceII}, we know that $\pi$ meets $x(q^{2}+q+1)$ generators of $\mathcal{L}$ non-trivially. But, as $\pi$ is contained in the maximal Erd\H{o}s-Ko-Rado set $\mathcal{L}_{i}$, it meets all generators in $L_{i}$ non-trivially. Consequently,
 \begin{align*}
                       & q^{3}+x(q^{2}+q+1)-(m-1)(2q^{2}+x(q+1))\leq x(q^{2}+q+1) \\
  \Leftrightarrow\quad & q^{3}\leq(m-1)(2q^{2}+x(q+1))\;.
 \end{align*}
 However,
 \begin{align*}
  (m-1)(2q^{2}+x(q+1)) & \leq \left(\left(1-\frac{\sqrt{3}}{3}\right)q-1\right)\left(2q^{2}+\frac{\sqrt{3}-1}{2}q(q+1)\right) \\
                       & =q^{3}-\left(\frac{5}{2}-\frac{\sqrt{3}}{3}\right)q^{2}-\frac{\sqrt{3}-1}{2}q                        \\
                       & <q^{3}\;.
 \end{align*}
 We find a contradiction. Hence, $L_{i}=\mathcal{L}_{i}$ for all $i=1,\dots,m$. So, $\mathcal{L}$ is the  union of the maximal Erd\H{o}s-Ko-Rado sets $\mathcal{L}_{i}$, $i=1,\dots,x$, which necessarily have to be pairwise disjoint.
 \par By Theorem~\ref{ekrq+7q} each of these sets $\mathcal{L}_{i}$ is either a point-pencil of $\mathcal{P}$ or else a base-solid. Two point-pencils in $\mathcal{P}$ are disjoint if and only if their vertices are non-collinear. Analogously, two base-solids in $\mathcal{P}$ are disjoint if and only if their centers are disjoint.
 Finally, a point-pencil and a base-solid in $\mathcal{P}$ cannot be disjoint. This finishes the proof of the theorem.
\end{proof}

In the previous theorem we did not discuss the hyperbolic quadric $\mathcal{Q}^{+}(4n-1,q)$ for general $n$, but only the hyperbolic quadric $\mathcal{Q}^{+}(7,q)$. We know that the largest Erd\H{o}s-Ko-Rado sets of $\mathcal{Q}^{+}(4n-1,q)$, $n\geq3$,  are point-pencils.
So, to apply the approach that we used in the previous proof, we need a result that gives an upper bound on the size of the maximal Erd\H{o}s-Ko-Rado sets of $\mathcal{Q}^{+}(4n-1,q)$ different from a point-pencil. However, such a result is not available at the moment.

\subsection*{Summary}

We summarize the classification results of this section in Table \ref{tablesummary}.

\begin{table*}[ht]
 \centering
 \begin{tabular}{|c||c|c|c|}
  \hline
  Polar space                               & condition                                       & classification                   & Reference                        \\ \hline\hline
  $\mathcal{Q}^{-}(2d+1,q)$                 & $x\leq q+1$                                     & $x$ point-pencils or             & Theorem~\ref{mainclassification} \\
                                            &                                                 & Example~\ref{embedded} ($x=q+1$) &                                  \\ \hline
  $\mathcal{Q}(4n,q)$                       & $x\leq 2$                                       & $x$ point-pencils or             & Theorem~\ref{mainclassification} \\
                                            &                                                 & Example~\ref{embedded} ($x=2$)   &                                  \\ \hline
  $\mathcal{Q}(4n+2,q)$, $n\geq2$           & $x=1$                                           & point-pencil or hyperbolic class & Theorem~\ref{par1}               \\  \hline
  $\mathcal{Q}(6,q)$                        & $x=1$                                           & point-pencil, hyperbolic class   & Theorem~\ref{par1}               \\
                                            &                                                 & or Example~\ref{baseplane}       &                                  \\ \hline
  $\mathcal{Q}^{+}(4n+1,q)$                 & $x=1$                                           & point-pencil                     & Theorem~\ref{par1}               \\ \hline
  one class of $\mathcal{Q}^{+}(4n+3,q)$    & $x=1$                                           & point-pencil                     & Theorem~\ref{par1}               \\ \hline
  one class of $\mathcal{Q}^{+}(7,q)$       & $x\leq\frac{\sqrt{3}-1}{2}q$                    & $x$ point-pencils or             & Theorem~\ref{q+7qc}              \\
  that admits a spread                      &                                                 & $x$ base-solids                  &                                  \\ \hline
  $\mathcal{H}(2d-1,q^{2})$                 & $x= 1$                                          & point-pencil                     & Theorem~\ref{par1}               \\ \hline
  $\mathcal{H}(2d,q^{2})$                   & $x\leq q+1$                                     & $x$ point-pencils or             & Theorem~\ref{mainclassification} \\
                                            &                                                 & Example~\ref{embedded} ($x=q+1$) &                                  \\ \hline
  $\mathcal{W}(4n+1,q)$, $q$ even, $n\geq2$ & $x=1$                                           & point-pencil or hyperbolic class & Theorem~\ref{par1}               \\ \hline
  $\mathcal{W}(5,q)$, $q$ even              & $x=1$                                           & point-pencil, hyperbolic class   & Theorem~\ref{par1}               \\
                                            &                                                 & or Example~\ref{baseplane}       &                                  \\ \hline
  $\mathcal{W}(4n+3,q)$, $q$ even           & $x\leq 2$                                       & $x$ point-pencils                & Theorem~\ref{mainclassification} \\
                                            &                                                 & Example~\ref{embedded} ($x=2$)   &                                  \\ \hline
  $\mathcal{W}(4n+3,q)$, $q$ odd            & $x\leq 2$                                       & $x$ point-pencils                & Theorem~\ref{mainclassification} \\ \hline
  $\mathcal{W}(4n+1,q)$, $n\geq2$           & \multicolumn{3}{|c|}{no characterisation known}                                                                       \\ \hline
 \end{tabular}
 \caption{Overview of the classification results}\label{tablesummary}
\end{table*}

\paragraph*{Acknowledgement:} The research of Maarten De Boeck was supported by the BOF--UGent (Special Research Fund of Ghent University). The research of Andrea \v{S}vob has been supported by the Croatian Science Foundation under the Project 6732 and has been supported by a grant from the COST project \emph{Random network coding and designs over GF(q)} (COST IC1104). The authors thank Francesco Pavese (Politecnico di Bari) for his comments on $m$-regular systems. The authors express their gratitude to the referees for their valuable comments.


\begin{thebibliography}{99}
 \bibitem{bklp} J. Bamberg, S. Kelly, M. Law and T. Penttila. Tight sets and $m$-ovoids of finite polar spaces. \textit{J. Combin. Theory Ser. A}, 114:1293--1314, 2007.
 \bibitem{blp} J. Bamberg, M. Law and T. Penttila. Tight sets and $m$-ovoids of generalised quadrangles. \textit{Combinatorica}, 29(1):1--17, 2009.
 \bibitem{bds} J. Bamberg, A. Devillers and J. Schillewaert. Weighted intriguing sets of finite generalised quadrangles. \textit{J. Algebraic Combin.}, 36(1):149--173, 2012.
 \bibitem{bar} A. Barlotti. Un'estensione del teorema di Segre-Kustaanheimo. \textit{Boll. Un. Mat. Ital.}, 10:96--98, 1955.
 \bibitem{bs} R.C. Bose and T. Shimamoto. Classification and analysis of partially balanced incomplete block designs with two associate classes. \textit{J. Amer. Statist. Assoc.}, 47:151--184, 1952.
 \bibitem{bcn} A.E. Brouwer, A.M. Cohen and A. Neumaier. Distance-regular graphs. Ergebnisse der Mathematik und ihrer Grenzgebiete (3), vol. 18. Springer-Verlag, Berlin, 1989.
 \bibitem{bd} A.A. Bruen and K. Drudge. The construction of Cameron-Liebler line classes in $\PG(3,q)$. \textit{Finite Fields Appl.}, 5(1):35--45, 1999.
 \bibitem{cl} P.J. Cameron and R.A. Liebler. Tactical decompositions and orbits of projective groups. \textit{Linear Algebra Appl.}, 46:91--102, 1982.
 \bibitem{cp} A. Cossidente and F. Pavese. Maximal partial spreads of polar spaces. \textit{Electron. J. Combin.}, 24(2):P2.12, 2017.
 \bibitem{dd} J. De Beule and M. De Boeck. A combinatorial characterisation of embedded polar spaces. \textit{Discrete Math.}, 341(10):2841--2845, 2018.
 \bibitem{ddmr} J. De Beule, J. Demeyer, K. Metsch and M. Rodgers. A new family of tight sets in $\mathcal{Q}^{+}(5,q)$. \textit{Des. Codes Cryptogr.}, 78(3):655--678, 2016.
 \bibitem{dghs} J. De Beule, P. Govaerts, A. Hallez and L. Storme. Tight sets, weighted $m$-covers, weighted $m$-ovoids, and minihypers. \textit{Des. Codes Cryptogr.}, 50(2):187--201, 2009.
 \bibitem{dbkm} J. De Beule, A. Klein and K. Metsch. Substructures of finite classical polar spaces.
 In J. De Beule and L. Storme, editors, Current Research Topics in Galois Geometry, Mathematics Research Developments, 35--61. Nova Science Publishers, 2011.
 \bibitem{deb} M. De Boeck. Intersection problems in finite geometries. PhD thesis, UGent, 2014. \url{http://cage.ugent.be/geometry/theses.php}
 \bibitem{dss} M. De Boeck, L. Storme and A. \v{S}vob. The Cameron-Liebler problem for sets. \textit{Discrete Math.}, 339(2):470--474, 2016.
 \bibitem{del1} P. Delsarte. An algebraic approach to the association schemes of coding theory. Philips Res. Rep. Suppl. 10, pp.\ vi+97, 1973.
 \bibitem{del2} P. Delsarte. Association schemes and $t$-designs in regular semilattices. \textit{J. Combin. Theory Ser. A}, 20(2):230--243, 1976.
 \bibitem{drudge} K. Drudge. Extremal sets in projective and polar spaces. PhD thesis, University of Western Ontario, 1998.
 \bibitem{eis} J. Eisfeld. The eigenspaces of the Bose-Mesner algebras of the association schemes corresponding to projective spaces and polar spaces. \textit{Des. Codes Cryptogr.}, 17(1--3):129--150, 1999.
 \bibitem{ekr} P. Erd\H{o}s, C. Ko and R. Rado. Intersection theorems for systems of finite sets. \textit{Quart. J. Math. Oxford Ser. (2)}, 12:313--320, 1961.
 \bibitem{TF:14} T. Feng, K. Momihara and Q. Xiang. Cameron-Liebler line classes with parameter $x=\frac{q^2-1}{2}$. \textit{J. Combin. Theory Ser. A}, 133:307--338, 2015.
 \bibitem{gmp} A.L. Gavrilyuk, I. Matkin and T. Penttila. Derivation of Cameron--Liebler line classes. \textit{Des. Codes Cryptogr.},  86(1):231--236, 2018.
 \bibitem{gmet} A.L. Gavrilyuk and K. Metsch. A modular equality for Cameron-Liebler line classes. \textit{J. Combin. Theory Ser. A}, 127:224--242, 2014.
 \bibitem{gmol} A.L. Gavrilyuk and I.Y. Mogilnykh. Cameron-Liebler line classes in $\PG(n,4)$. \textit{Des. Codes Cryptogr.}, 73(3):969--982, 2014.
 \bibitem{ht} J.W.P. Hirschfeld and J.A. Thas. General Galois Geometries. \textit{Oxford Mathematical Monographs}, Oxford University Press, Oxford, 1991.
 \bibitem{kms} A. Klein, K. Metsch and L. Storme. Small maximal partial spreads in classical finite polar spaces. \textit{Adv. Geom.}, 10:379--402, 2010.
 \bibitem{met} K. Metsch. The non-existence of Cameron-Liebler line classes with parameter $2<x\leq q$. \textit{Bull. Lond. Math. Soc.}, 42(6):991--996, 2010.
 \bibitem{met2} K. Metsch. An improved bound on the existence of Cameron-Liebler line classes. \textit{J. Combin. Theory Ser. A}, 121:89--93, 2014.
 \bibitem{met3} K. Metsch. A gap result for Cameron-Liebler $k$-classes. \textit{Discrete Math.}, 340:1311--1318,  2017.
 \bibitem{ns} A. Naki\'{c} and L. Storme. Tight sets in finite classical polar spaces. \textit{Adv. Geom.}, 17(1):109--129, 2017.
 \bibitem{pan} G. Panella. Caratterizzazione delle quadriche di uno spazio (tridimensionale) lineare sopra un corpo finito. \textit{Boll. Un. Mat. Ital.}, 10:507--513, 1955.
 \bibitem{pay87} S.E. Payne. Tight pointsets in finite generalized quadrangles. \textit{Congr. Numer.}, 60:243--260, 1987. Eighteenth Southeastern International Conference on Combinatorics, Graph Theory and Computing (Boca Raton, Fla., 1987).
 \bibitem{pt} S.E. Payne and J.A. Thas. Finite generalized quadrangles. Research Notes in Mathematics, 110. Pitman (Advanced Publishing Program), Boston, vi+312 pp., 1984.
 \bibitem{penn} T. Penttila. Cameron-Liebler line classes in $\mathrm{PG}(3,q)$. \textit{Geom. Dedicata}, 37(3):245--252, 1991.
 \bibitem{psv} V. Pepe, L. Storme and F. Vanhove. Theorems of Erd\H{o}s-Ko-Rado-type in polar spaces. \textit{J. Combin. Theory Ser. A}, 118(4):1291--1312, 2011.
 \bibitem{rod} M. Rodgers. Cameron-Liebler line classes. \textit{Des. Codes Cryptogr.}, 68:33--37, 2013.
 \bibitem{rsv} M. Rodgers, L. Storme and A. Vansweevelt. Cameron-Liebler $k$-classes in $\PG(2k+1,q)$. \textit{Combinatorica}, doi:10.1007/s00493-016-3482-y, 15 pp., 2017.
 \bibitem{seg} B. Segre. Lectures on Modern Geometry (with an appendix by L. Lombardo-Radice). \textit{Consiglio Nazionale delle Ricerche, Monografie Matematiche}, Edizioni Cremonese, Roma, 1961. 479 pp.
 \bibitem{sta} D. Stanton. Some $q$-Krawtchouk polynomials on Chevalley groups. \textit{Amer. J. Math.}, 102(4): 625--662, 1980.
 \bibitem{taylor} D.E. Taylor. The geometry of the classical groups. \textit{Sigma Series in Pure Mathematics}, vol. 9, Heldermann Verlag, Berlin, 1992.
 \bibitem{tits} J. Tits. Buildings of Spherical Type and Finite BN-Pairs. Lecture Notes in Mathematics, vol. 386. Springer--Verlag, Berlin--Heidelberg, 1974.
 \bibitem{tit} J. Tits. Ovoïdes et groupes de Suzuki. \textit{Arch. Math.}, 13:187--189, 1962.
 \bibitem{vanhove1} F. Vanhove. Incidence geometry from an algebraic graph theory point of view. PhD thesis, UGent, 2011. \url{http://cage.ugent.be/geometry/theses.php}
 \bibitem{vanhove2} F. Vanhove. Antidesigns and regularity of partial spreads in dual polar graphs. \textit{J. Combin. Des.}, 19(3):202--216, 2011.
 \bibitem{veld1} F.D. Veldkamp. Polar geometry. I, II, III, IV. \textit{Indag. Math.}, 21:512--551, 1959. (also \textit{Nederl. Akad. Wetensch. Proc. Ser. A}, 62).
 \bibitem{veld2} F.D. Veldkamp. Polar geometry. V. \textit{Indag. Math.}, 22:207--212, 1959. (also \textit{Nederl. Akad. Wetensch. Proc. Ser. A}, 63).
\end{thebibliography}
\end{document}